\newcommand{\R}{\mathbb{R}}
\newcommand{\N}{\mathbb{N}}
\newcommand{\pr}{\mathbb{P}}
\newcommand{\ex}{\mathbb{E}}
\newcommand{\F}{\mathcal{F}}
\newcommand{\h}{\mathcal{H}}
\newcommand{\blangle}{\big\langle}
\newcommand{\brangle}{\big\rangle}
\newtheorem{lem}{Lemma}
\newtheorem{cor}{Corollary}
\newtheorem{prop}{Proposition}
\newtheorem{thm}{Theorem}
\newtheorem{dfn}{Definition}
\newenvironment{customthm}[1]
{\innercustomthm}
{\endinnercustomthm}
\theoremstyle{definition}
\newtheorem{rem}{Remark}
\newcommand{\leqnomode}{\tagsleft@true\let\veqno\@@leqno}
\newcommand{\reqnomode}{\tagsleft@false\let\veqno\@@eqno}
\numberwithin{lem}{section}
\numberwithin{cor}{section}
\numberwithin{prop}{section}
\numberwithin{thm}{section}
\numberwithin{dfn}{section}
		\title{  Moderate deviations for systems of slow-fast stochastic reaction-diffusion equations        }
		\author{Ioannis Gasteratos\ , \ Michael Salins\  and \ Konstantinos Spiliopoulos\;\;* }
		\subjclass[2010]{60F10, 60H15, 35K57, 70K70}
		\keywords{moderate deviations, stochastic reaction-diffusion equations, multiscale processes, weak convergence method, optimal control}
		\thanks{*Department of Mathematics and Statistics, Boston University, 111 Cummington Mall, Boston, MA , 02215, USA. E-mails: igaster@bu.edu, msalins@bu.edu, kspiliop@bu.edu.  This work was partially supported by the National Science Foundation (DMS 1550918, DMS 2107856) and Simons Foundation Award  672441.}
			\date{}
\begin{document}
		\maketitle
	
\begin{abstract}
\noindent The goal of this paper is to study the Moderate Deviation Principle (MDP) for a system of stochastic reaction-diffusion equations with a time-scale separation in slow and fast components and small noise in the slow component. Based on weak convergence methods in infinite dimensions and related stochastic control arguments, we obtain an exact form for the moderate deviations rate function in different regimes as the small noise and time-scale separation parameters vanish. Many issues that appear due to the infinite dimensionality of the problem are completely absent in their finite-dimensional counterpart. In comparison to corresponding Large Deviation Principles, the moderate deviation scaling necessitates a more delicate approach to establishing tightness and properly identifying the limiting behavior of the underlying controlled problem. The latter involves regularity properties of a solution of an associated elliptic Kolmogorov equation on Hilbert space along with a finite-dimensional approximation argument.
\end{abstract}

		%\tableofcontents

		\section{Introduction}

		\noindent
In this paper we study the asymptotic tail behavior of the following system of stochastic reaction-diffusion equations (SRDEs) with slow-fast dynamics on the interval $(0,L)\subset\R$ :
		
			\begin{equation}\label{model}
		\hspace*{-0.5cm}
		\left \{\begin{aligned}
		&\partial_tX^\epsilon(t,\xi)= \mathcal{A}_1 X^\epsilon(t,\xi)+ f\big(\xi, X^\epsilon(t,\xi),Y^\epsilon(t,\xi)\big)+\sqrt{\epsilon}\sigma\big(\xi, X^\epsilon(t,\xi),Y^\epsilon(t,\xi)\big)\partial_tw_1(t,\xi) \\&
		\partial_tY^\epsilon(t,\xi)= \frac{1}{\delta}\big[\mathcal{A}_2 Y^\epsilon(t,\xi)+ g\big(\xi, X^\epsilon(t,\xi),Y^\epsilon(t,\xi)\big)\big]+ \frac{1}{\sqrt{\delta}} \partial_tw_2(t,\xi)
		\\&
		X^\epsilon (0,\xi)= x_0(\xi)\;, Y^\epsilon(0,\xi)=y_0(\xi)\;,\;\; \xi\in(0,L)
		\\&
		\mathcal{N}_1X^\epsilon(t,\xi)= \mathcal{N}_2Y^\epsilon(t,\xi)=0\;\;,\;t\geq 0,\xi\in\{0,L\}.
		\end{aligned} \right.
		\end{equation}
		Here, $\epsilon$ is considered a small parameter,  $\delta=\delta(\epsilon) \rightarrow 0$ as $\epsilon\to 0$ and $L>0$.
		The operators $\mathcal{A}_1, \mathcal{A}_2$ are  second-order uniformly elliptic differential operators which encode the diffusive behavior of the dynamics, while the reaction terms are given by the (nonlinear) measurable functions  $f,g:[0,L]\times \R^2\rightarrow\R$. The operators $\mathcal{N}_1, \mathcal{N}_2$ correspond to either Dirichlet or Robin boundary conditions and the initial values $x_0,y_0$ are assumed to be in $L^2(0,L)$.
		
		The system is driven by two independent space-time white noises $\partial_tw_1, \partial_tw_2,$  defined on a complete filtered probability space $(\Omega,\F,\{\mathcal{F}_t\}_{t\geq 0},\pr)$. These are interpreted  as the distributional time-derivatives of two independent cylindrical Wiener processes $w_1, w_2$. The coefficient $\sigma:[0,L]\times\R^2\rightarrow\R$ is a measurable function multiplied by the noise $\partial_tw_1$.
		
	 Since $1/\delta$ is large as $\epsilon\to 0$,  we see that the first equation is perturbed by a small multiplicative noise of intensity $\sqrt{\epsilon}$ while the second contains large parameters and, at least formally, runs on a time-scale of order $1/\delta$. Thus, one can think of the solution $X^\epsilon$ of the former as the "slow" process (or slow motion) and the solution $Y^{\epsilon}$ of the latter as the "fast" process (or fast motion). Note that, since $\delta$ has a functional dependence on $\epsilon$, the $\delta$-dependence is suppressed from the notation.
	
	   As $\epsilon$ (and hence $\delta$) are taken to $0$ one expects, on the one hand, that the small noise will vanish. On the other hand, assuming that the fast dynamics exhibit ergodic behavior, $Y^{\epsilon}$ will converge in distribution to an equilibrium and its contribution to the limiting dynamics of $X^{\epsilon}$ will be averaged out with respect to the invariant measure. In \cite{cerrai2009khasminskii}, Cerrai demonstrated the validity of such an averaging principle for a system of reaction-diffusion equations in spatial dimension $d\geq 1$, perturbed by multiplicative (colored) noise in both components. The setting of the present paper is closer to that of \cite{cerrai2009averaging}, where Cerrai and Freidlin proved an averaging principle in spatial dimension $d=1$ and with (additive) noise only in the fast equation. In particular, letting $x\in L^2(0,L)$ and assuming that the coefficients are sufficiently regular, the fast process $Y^{\epsilon, x}$ with "frozen" slow component $x$ admits a unique strongly mixing invariant measure $\mu^x$ and the slow process $\{X^\epsilon\}_\epsilon$ converges in probability, as $\epsilon\to 0$, to the unique (deterministic) solution $\bar{X}$ of the averaged PDE
	   	
	   	\begin{equation}\label{x-aved}
	   	\left \{
	   	\begin{aligned}
	   	&\partial_t{\bar{X}(t,\xi)}= \mathcal{A}_1 \bar{X}(t,\xi)+ \bar{F}(\bar{X}(t))(\xi)\\&  \bar{X} (0,\xi)= x_0(\xi)\;,\;\; \xi\in(0,L)\\&
	   	\mathcal{N}_1\bar{X}(t,\xi)=0\;,\;\; t\geq 0, \xi\in\{0,L\}.
	   	\end{aligned}\right.
	   	\end{equation}
	   	The nonlinearity $\bar{F}$ is given by the averaged reaction term
	   	
	   	\begin{equation}\label{Nemave} \bar{F}(x)(\xi)=\bigg(\int_{\h} f(\cdot, x(\cdot), y(\cdot))\;d\mu^x(y)\bigg)(\xi).\end{equation}

        The averaging principle describes the typical dynamics of the slow process and thus can be viewed as a "Law of Large Numbers" for $X^{\epsilon}$. One may then study the problem of characterizing large deviations from the averaging limit. In the Large Deviation theory of multiscale stochastic dynamics, the relative rate at which the intensity of the small noise and the scale separation parameter vanish plays a significant role. In particular, we distinguish the following asymptotic regimes:
        \begin{equation}\label{Regimes}
        \begin{aligned}
        \lim_{\epsilon\to 0}\frac{\sqrt{\delta}}{\sqrt{\epsilon}}=\begin{cases}
        &0 \;,\;\;\quad\quad\quad \quad\text{Regime 1}\\&
        \gamma\in(0,\infty) \;,\;\;\text{Regime 2}\\&
        \infty\;,\;\;\quad\quad\quad\;\;\text{Regime 3}.
        \end{cases}
        \end{aligned}
        \end{equation}

       The problem of Large Deviations for slow-fast systems of stochastic reaction-diffusion equations has been considered in \cite{wang2012large} in dimension one, with additive noise in the fast motion and no noise component in the slow motion. In \cite{WSS}, the authors proved a Large Deviation Principle (LDP) in Regime 1, for a system with spatial dimension $d\geq 1$ and multiplicative noise,  using the weak convergence approach developed in \cite{budhiraja2008large}.

       Moderate deviations characterize the decay rates of rare event probabilities  that lie on an asymptotic regime between the Central Limit Theorem (CLT) and the corresponding LDP. The goal of the present paper is to prove a Moderate Deviation Principle (MDP) for system \eqref{model} in Regimes 1 and 2.  The latter is equivalent to deriving an LDP for the process
       	\begin{equation*}
       \eta^{\epsilon}(t,\xi)=\frac{X^{\epsilon}(t,\xi)-\bar{X}(t,\xi)}{\sqrt{\epsilon}h(\epsilon)}
       \end{equation*}
        with speed $h^2(\epsilon)$. The scaling factor $h(\epsilon)$ is such that \begin{equation}\label{h}
    h(\epsilon)\longrightarrow \infty\;\;,\;\;     \sqrt{\epsilon}h(\epsilon)\longrightarrow 0\;\;\text{as}\;\; \epsilon\to 0.
        \end{equation}
       Note that if we set $h\equiv 1$ and let $\epsilon\to 0$ we would observe the behavior of normal deviations (CLT) around $\bar{X}$ while if we naively set $h(\epsilon)=1/\sqrt{\epsilon}$ we would observe the Large Deviations behavior. Hence, the MDP fills an asymptotic gap between the CLT and the LDP and, as such, it inherits characteristics of both.

       One of the most effective methods in proving statements about the behavior of rare events (such as LDPs and MDPs)  is the weak convergence method (see \cite{boue1998variational}, \cite{budhiraja2008large}, as well as the books \cite{budhiraja2019analysis} and \cite{dupuis2011weak}) which is the method we are using in this paper. The core of this approach lies in the use of a variational representation of exponential functionals of Wiener processes (see \cite{boue1998variational} for SDEs and \cite{budhiraja2008large} for SPDEs). Roughly speaking, one can represent the exponential functional of the moderate deviation process $\eta^{\epsilon}$ that appears in the Laplace Principle (LP) (which is equivalent to an MDP) as a variational infimum of a family of controlled moderate deviation processes $\eta^{\epsilon,u}$, plus a quadratic cost, over a suitable family of stochastic controls $u$. In particular, for any bounded continuous function $\Lambda: C([0,T]; L^2(0,L))\rightarrow \R$:
       	\begin{equation}\label{varrepintro}
       	\small
       -\frac{1}{h^2(\epsilon)}\log\;\ex\big[ e^{-h^2(\epsilon)\Lambda(\eta^\epsilon)}\big]=\inf_{u\in\mathcal{P}^T(L^2(0,L)^2)}\ex\bigg[ \frac{1}{2} \int_{0}^{T} \big( \|u_1(t) \|^2_{L^2(0,L)}+\|u_2(t) \|^2_{L^2(0,L)}\big)\;dt+ \Lambda\big(\eta^{\epsilon,u}  \big)  \bigg],
       \end{equation}
       where $u=(u_1,u_2)$ and $\mathcal{P}^T(L^2(0,L)^2)$ is the family of $L^2(0,L)^2$-valued  progressively-measurable control processes,  where $u_i$ is measurable with respect to the filtration $\mathcal{F}^w_T$ generated by\\ $\{(w_1(t),w_2(t))\;, t\in[0,T]\}$ ($i=1,2$) and has finite $L^2([0,T]; L^2(0,L))$-norm.

      The  process $\eta^{\epsilon,u}$ that appears on the right hand side of \eqref{varrepintro} is defined by
      \begin{equation}\label{etau}
       \eta^{\epsilon,u}(t,\xi)=\frac{X^{\epsilon,u}(t,\xi)-\bar{X}(t,\xi)}{\sqrt{\epsilon}h(\epsilon)}\;.
      \end{equation}
      Here, $X^{\epsilon,u}$ corresponds to a controlled slow-fast system $(X^{\epsilon,u}, Y^{\epsilon,u})$ (see \eqref{controlledsystem} below) which results from \eqref{model} by perturbing the paths of the noise by an appropriately re-scaled control. It is due to the latter that this representation is called variational.

       In light of \eqref{varrepintro}, we see that in order to obtain a limit as $\epsilon\to 0$ of the Laplace
functional  (i.e. to prove an MDP),
one needs to analyze the limiting behavior of $\eta^{\epsilon,u}$ and, before doing so, obtain a priori estimates for the underlying controlled slow-fast system given in $\eqref{controlledsystem}$. The latter is the first technical part of the current work (Section \ref{Sec2}).   As in the LDP case, the difficulty in these estimates is in that the stochastic controls are only known to be square integrable.

Compared to the corresponding LDP,  the essential source of additional complexity in Moderate Deviations lies in the proof of tightness of the family $\{\eta^{\epsilon,u}; \epsilon,u\}$. What complicates the analysis is the singular moderate deviation scaling $1/\sqrt{\epsilon}h(\epsilon)$. We overcome this difficulty by following, in spirit, the general method developed by Papanicolaou, Stroock and Varadhan in \cite{papanicolaou3martingale}. This involves the study of fluctuations with the aid of an elliptic Kolmogorov equation, associated to the fast dynamics and posed on the infinite-dimensional space $L^2(0,L)$. After projecting the controlled fast process $Y^{\epsilon,u}$ to an $n$-dimensional eigenspace of the elliptic operator $\mathcal{A}_2$, we are able to apply It\^o's formula to the solution $\Phi^\epsilon$ of the Kolmogorov equation and derive an expression for $\eta^{\epsilon,u}$ that is free from asymptotically singular coefficients. Using the a priori estimates from Section \ref{Sec2} along with regularity results for $\Phi^\epsilon$ from \cite{cerrai2009averaging} and \cite{cerrai2001second} we are then able to show tightness (Section \ref{Sec4}).

Regarding the characterization of the limit in distribution of the process $\eta^{\epsilon,u}$, note that the presence of stochastic controls $u$ leads to a limiting invariant measure of the controlled fast process $Y^{\epsilon,u}$  which a priori depends on $u$. In order to deal with this in a unified manner across regimes we use the so-called “viable pair” construction (see \cite{WSS} and \cite{dupuis2012large}, \cite{spiliopoulos2013large} for the finite and infinite-dimensional settings respectively) to characterize the limit. The latter is a pair of a trajectory and measure $(\psi,P)$ that captures both the limit averaging dynamics of $\eta^{\epsilon,u}$ and the invariant measure of the controlled fast process $Y^{\epsilon,u}$.  In particular, the function $\psi$ is the solution of the limiting averaged equation for $\eta^{\epsilon,u}$  and the probability measure $P$  characterizes both the structure of the invariant measure of $Y^{\epsilon,u}$  and the control $u$. Although, in general, these two objects are intertwined and coupled together into the measure $P$, Regimes 1 and 2 lead to a decoupling of the form $P(dudydt) = \nu_t(du|y)\mu^{\bar{X}(t)}(dy)dt$, where $ \nu_t(du|y)$ is a stochastic kernel characterizing the control and $\mu^{\bar{X}(t)}$ is the local invariant measure.

The measure P is obtained as the limit of a family of  occupation measures $P^{\epsilon, \Delta}$,  that live on the product space of fast motion and control, with $\Delta=\Delta(\epsilon)\rightarrow 0$ to be specified later on.  The result on the weak convergence of the pair $(\eta^{\epsilon,u}, P^{\epsilon, \Delta})$ in Regimes 1 and 2 is the content of Theorem \ref{viablim1}.

With the analysis of the limit and the construction of a viable pair, we then prove the Laplace Principle (equivalently LDP) for the moderate deviation process $\eta^\epsilon$ in Regimes 1 and 2 (Section \ref{MDPsec}). The main result of the paper is stated in Theorem \ref{MDP1}. Proving the Laplace principle amounts to
 finding an appropriate functional $S$ such that for any bounded and continuous function
  $\Lambda : C([0, T ]; L^2(0,L))\rightarrow \R$
 \begin{equation*}
 \begin{aligned}
 &\lim_{\epsilon\to 0}\frac{1}{h^2(\epsilon)}\log\;\ex\big[ e^{-h^2(\epsilon)\Lambda(\eta^\epsilon)}\big]=-\inf_{\phi\in C([0,T];
 	L^2(0,L))}\big[ S(\phi)+\Lambda(\phi) \big].
 \end{aligned}
 \end{equation*}
 As is common in the relevant literature, the Laplace principle upper bound can be proven using the weak convergence of the pair $(\eta^{\epsilon,u}, P^{\epsilon, \Delta})$ per Theorem \ref{viablim1}. The situation is more complicated for the Laplace principle lower bound for which we need to construct nearly optimal controls in feedback form (i.e. they are functions of both time and the fast motion) that achieve the bound.

  In finite dimensions, the Large Deviation theory  for multiscale diffusions  with periodic coefficients has been established in all three interaction Regimes and with the use of the weak convergence approach    (see \cite{dupuis2012large}, \cite{spiliopoulos2013large} and the references therein). The problem of Moderate Deviations in finite dimensions has been treated in \cite{CL10,DupuisJohnson2015,Guillin2003,GuillinLipster2005,morse2017moderate} under different settings and assumptions. Specifically, the finite-dimensional work of \cite{morse2017moderate} makes use of solutions to associated elliptic equations to treat Regimes 1 and 2. While the well-posedness and regularity theory of such equations are well-studied in finite dimensions (see e.g. \cite{pardoux2001poisson}), their analysis on infinite-dimensional spaces becomes quite more involved and the relevant literature is more limited. The absence of available regularity results for a general class of such equations is the main reason why we only consider the fast equation with additive noise.

 To the best of our knowledge, the problem of moderate deviations for systems of slow-fast stochastic reaction-diffusion is being considered for the first time in this paper. Its contribution is twofold:

 On a theoretical level, it provides a way to study rare events for the infinite-dimensional dynamics in both Regimes 1 and 2. In the LDP setting, Regime 2 remains open as it does not lead to a decoupling of the limiting invariant measure of $Y^{\epsilon,u}$ and the control $u$. The regularity of the optimal controls  has been studied in finite dimensions using their characterization through solutions to Hamilton-Jacobi-Bellman equations (see \cite{spiliopoulos2013large}). Such techniques have not been established on an infinite-dimensional setting. However, as shown in this paper, Regime 2 can be studied in the context of Moderate Deviations. In this regime the control of the fast equation survives in the limit. This reflects the fact that we are studying fluctuations very close to the CLT and a certain derivative of the Kolmogorov equation (see the term $\Psi_2^0u_2$ in Theorem \ref{viablim1}) captures the contribution of these fluctuations. It is worth noting that normal deviations from the averaging limit for slow-fast stochastic reaction-diffusion equations have been studied in \cite{cerrai2009normal}. This was done with different techniques and no explicit connection was drawn between the covariance of the limiting Gaussian process and the solution of the Kolmogorov equation. More recently, the authors of \cite{rockner2020asymptotic} generalized the results of \cite{cerrai2009normal} and studied normal deviations from the averaging limit using the Kolmogorov equation approach.

 On a computational level, the solution to the stochastic control problem gives vital information for the design of efficient Monte Carlo methods for the approximation of rare event probabilities on the moderate deviation range. In particular, the fact that the limiting equation is affine in $\eta^{\epsilon,u}$  is expected to make moderate deviation-based importance sampling for stochastic PDE easier to implement than its large deviation-based counterpart, see \cite{spiliopoulos2020importance} for the related situation in finite dimensions. We plan to explore this in a future work.

 The outline of this paper is as follows: in Section \ref{notation} we give background definitions, set-up as well as our assumptions. In Section \ref{weakconv} we review basic facts about the weak convergence method in infinite dimensions and we define viable pairs and occupation measures as well as state our main results on averaging for the controlled moderate deviation process $\eta^{\epsilon,u}$ and the MDP. In Section \ref{Sec2} we prove a priori bounds for the solution of the controlled system $(X^{\epsilon,u}, Y^{\epsilon,u})$. In Section \ref{Sec3} we prove a priori bounds for the process $\eta^{\epsilon,u}$ with the aid of the elliptic Kolmogorov equation while Section \ref{Sec4} is devoted to the analysis of the limit of the pairs $(\eta^{\epsilon,u}, P^{\epsilon,\Delta})$. In Section \ref{MDPsec} we prove the MDP. Finally, Appendix \ref{AppA} contains some classical regularity results for stochastic convolutions adapted to our multiscale setting while Appendix \ref{AppB} contains the proof of Lemma \ref{IVitodeclem}.

 	\section{Notation and Assumptions}\label{notation}
 	
 We denote by $\h$ the Hilbert space $L^2(0,L)$ endowed with the usual inner product $\langle \cdot,\cdot\rangle_\h$. The norm induced by the inner product is denoted by $\|\cdot\|_\h$.
 Throughout this paper, $\oplus$ denotes the Hilbert space direct sum.  The closed unit ball of any Banach space $\mathcal{X}$, i.e. the set $\{x\in\mathcal{X} : \|x\|_{\mathcal{X}}\leq 1\}$, will be denoted by $B_\mathcal{X}$. The lattice notation $\wedge, \vee$ is used to indicate minimum and maximum respectively.

 For $\theta> 0$, we denote by $H^\theta(0,L)$ the fractional Sobolev space of $x\in\h$ such that
 \begin{equation*}
 [x]_{H^\theta}:=\int_{[0,L]^2}\frac{|x(\xi_2)-x(\xi_1)|^2}{|\xi_2-\xi_1|^{2\theta+1}}d\uplambda_2(\xi_1, \xi_2)<\infty\;,
 \end{equation*}
 where $\uplambda_2$ denotes Lebesgue measure on $[0,L]^2$. $H^\theta(0,L)$ is a Banach space when endowed with the norm $\|\cdot\|_{H^\theta}:=\|\cdot\|_\h+[\cdot]_{H^\theta}$.

 Moreover, for $T>0$ and $\beta\in[0,1)$, we denote by $C^\beta([0,T];\h)$ the space of $\beta$-H\"older continuous $\h$-valued paths defined on the interval $[0,T]$. $C^\beta([0,T];\h)$ is a Banach space when endowed with the norm \begin{equation*}
 \|X\|_{C^\beta([0,T];\h)}:= \|X\|_{C([0,T];\h)}+[X]_{C^\beta([0,T];\h)}:=\sup_{t\in[0,T]}\|X(t)\|_{\h}+\sup_{\overset{s,t\in[0, T]}{ t\neq s}}\frac{\|X(t)-X(s)\|_\h}{|t-s|^\beta}\;.
 \end{equation*}
For any two Banach spaces $\mathcal{X}, \mathcal{Y}$ and $k\in \N$ we denote the space of $k$-linear bounded operators $Q: \mathcal{X}^k\rightarrow\mathcal{Y}$ by $\mathscr{L}^k(\mathcal{X}; \mathcal{Y})$. The latter is a Banach space when endowed with the norm
 \begin{equation*}
 \|Q\|_{\mathscr{L}^k(\mathcal{X}; \mathcal{Y})}:=\sup_{x\in B^k_\mathcal{X}}\|Qx\|_{\mathcal{Y}}\;.
\end{equation*}
 When the domain coincides with the co-domain, we use the simpler notation $\mathscr{L}^k(\mathcal{X})$ while for $k=1$ we often omit the superscript and write $\mathscr{L}(\mathcal{X};\mathcal{Y})\equiv \mathscr{L}^1(\mathcal{X}; \mathcal{Y})$.

 The spaces of trace-class and Hilbert-Schmidt linear operators $B:\h\rightarrow\h$ are denoted by $\mathscr{L}_1(\h)$ and $\mathscr{L}_2(\h)$ respectively. The former is a Banach space
 when endowed with the norm
 \begin{equation*}
 \|B\|_{\mathscr{L}_1(\h)}:=\text{tr}(\sqrt{B^*B})
 \end{equation*}
 while the latter is a Hilbert space when endowed with the inner product
 \begin{equation*}
 \langle B_1, B_2\rangle_{\mathscr{L}_2(\h)}:= \text{tr}( B_2^* B_1).
 \end{equation*}

 The class of (globally) Lipschitz real-valued functions on $\h$ is denoted by $Lip(\h)$ and the space of $k$-times Fr\'echet differentiable real-valued functions on $\h$ with bounded and uniformly continuous derivatives up to the $k$-th order ($k\in\N$) is denoted by $C_b^k(\h)$. The latter is a Banach space when endowed with the norm
 \begin{equation*}
 \|X\|_{C_b^k(\h)}:=\sup_{x\in\h} |X(x)|+ \sup_{x\in\h}\|DX(x)\|_{\h}+ \sum_{i=2}^{k}\sup_{x\in\h}\|D^iX(x)\|_{\mathscr{L}^{i-1}(\h)}\;.
 \end{equation*}
 For $k=0$ we often omit the superscript and write $C_b(\h)\equiv C_b^0(\h)$ for the space of bounded uniformly continuous functions on $\h$.

 The operators $\mathcal{A}_1, \mathcal{A}_2,$ appearing in \eqref{model}, are uniformly elliptic  second-order differential operators with continuous coefficients on $[0,L]$. The operators $\mathcal{N}_1$ and $\mathcal{N}_2$ act on the boundary $\{0,L\}$ and can be either the identity operator (corresponding to Dirichlet boundary conditions) or first-order differential operators of the type
 \begin{equation*}
 \mathcal{N}u(\xi)=b(\xi)u'(\xi)+c(\xi)u(\xi)\;,\;\xi\in\{0, L\}
 \end{equation*}
 for some $b,c\in C^1[0,L]$ such that $b\neq 0$ on $\{0,L\}$ (corresponding to Neumann or Robin boundary conditions).

 For $i=1,2,$ $A_i$  denotes the realization of the differential operator $\mathcal{A}_i$ in $\h$, endowed with the boundary condition $\mathcal{N}_i$. It is  defined on the dense subspace
 \begin{equation*}
 Dom(A_i)=\{ x\in H^2(0,L): \mathcal{N}_ix(0)=\mathcal{N}_ix(L) =0  \}
 \end{equation*}
 and generates a $C_0$, analytic semigroup of operators $S_i=\{S_i(t)\}_{t\geq 0}\subset\mathscr{L}(\h)$.

 \noindent Regarding the spectral properties of $A_i$, we make the following assumptions:
 \begin{customthm}{1(a)}\label{A1a} For $i=1,2$ the operator $-A_i$ is self-adjoint. As a result (see Theorem 8.8.37 in \cite{gilbarg2015elliptic}),  there exists a countable complete orthonormal basis $\{e_{i,n}\}_{n\in\N}\subset\h$ of eigenvectors of $-A_i$. The corresponding  sequence of nonnegative eigenvalues is denoted by $\{a_{i,n}\}_{n\in\N}$.
 \end{customthm}
 \noindent As a consequence, for each $x\in\h, t\geq 0$, $i=1,2$, we have
 \begin{equation}
 \label{semignorm}
 \|S_i(t)x\|_\h^2=\sum_{n=1}^{\infty}e^{-2a_{i,n}t}\langle x, e_{i,n}\rangle^2_\h\leq e^{-2t \underset{n\in\N}{\inf} a_{i,n}}\|x\|_\h\leq \|x\|_\h\;.
 \end{equation}
 \begin{customthm}{1(b)} \label{A1b}
 	For $i=1,2$ we assume that \begin{equation}\label{eigenbound}
 	\sup_{n\in\N}\|e_{i,n}\|_{L^\infty(0,L)}<\infty.
 	\end{equation}
 \end{customthm}
 \begin{customthm}{1(c)} \label{A1c}
 	$A_2$ is self-adjoint and satisfies the strict dissipativity condition
 	\begin{equation}\label{A_2dis}\lambda:=\inf_{n\in\N} a_{2,n}>0.\end{equation}
 \end{customthm}
 \noindent Under this assumption it is straightforward to verify that
 \begin{equation}\label{S2decay}
 \|S_2(t)\|_{\mathscr{L}(\h)}\leq e^{-\lambda t}\;,\; t\geq 0.
 \end{equation}

 \begin{rem} Without loss of generality, we can replace the operator $A_1$ by $\tilde{A}_1=A_1-cI$ for some $c>0$ and the reaction term $f$ in \eqref{model}, by $\tilde{f}(\xi, x(\xi), y(\xi)):=f(\xi, x(\xi), y(\xi))+cx(\xi)$. The slow equation is invariant under this  transformation and, in light of Hypothesis \ref{A1a}, it follows that
 	$\|\tilde{S}_1(t)\|_{\mathscr{L}(\h)}\leq e^{-c t}$. 
 	Throughout the rest of this work we will be using $\tilde{A_1},\tilde{S_1}$ and $ \tilde{f}$ with no further distinction in notation.
 	
 \end{rem}
 Let $i=1,2$ and $\theta\geq 0$.  In view of Hypotheses \ref{A1a} and \ref{A1c}, along with the previous remark, it follows that $0$ is in the resolvent set of $A_i$. Hence the operator $-A_i$, restricted to its image, has a densely defined bounded inverse  $(-A_i)^{-1}$ which can then be uniquely extended to all of $\h$.
 One can then define $(-A_i)^{-\theta}$ via interpolation and show that it is also injective.

 Letting $(-A_i)^{\frac{\theta}{2}}:= ((-A_i)^{-\frac{\theta}{2}})^{-1}$ we define
 $\h_i^\theta:= Dom(-A_i)^\frac{\theta}{2}= Range(-A_i)^{-\frac{\theta}{2}}\subset\h$. The latter is a Banach space when endowed with the norm \begin{equation*}
 \|x\|_{\h_i^\theta}:=\big\|(-A_i)^\frac{\theta}{2}x\big\|_\h\;.
 \end{equation*}
 This norm is equivalent, due to injectivity, to the graph norm (see \cite{lunardi2012analytic}, Chapter 2.2).

 \begin{rem} For $\theta\in(0,\frac{1}{2})$ the spaces $H^\theta(0,L)$ and $\h_i^\theta$ coincide, in light of the identity
 	\begin{equation*}
 	\hspace*{-0.4cm}H^\theta(0,L)=\h^\theta_i
 	=\big\{x\in\h : \|x\|_{\theta,\infty}:=\sup_{t\in (0,1] }t^{-\theta/2} \|S_i(t)x-x\|_\h<\infty      \big\},
 	\end{equation*}
 	which holds with equivalence of norms. The latter implies that for each $t\geq 0$, the linear operator $S_i(t)-I\in\mathscr{L}(H^\theta;\h)$ and there exists a constant $C>0$ such that
 	\begin{equation}\label{sobcont}
 	\big\|S_i(t)-I\big\|_{\mathscr{L}(H^\theta;\h)}\leq Ct^{\theta/2}.
 	\end{equation}
 \end{rem}

 The analytic semigroups $S_i$ possess the following regularizing properties (see e.g. section 4.1.1 in \cite{cerrai2001second})     :

 (i) For $0\leq s\leq r\leq \frac{1}{2}$ and $t>0$, $S_i$ maps $H^{s}(0,L)$ to $H^{r}(0,L)$ and
 \begin{equation}\label{Sobosmoothing}
 \|S_i(t)x\|_{H^{r}}\leq C_{r,s}(t\wedge 1)^{-\frac{r-s}{2}}e^{c_{r,s}t}\|x\|_{H^{s}}\;\;,\;x\in H^{s}(0,L),
 \end{equation}
 for some positive constants $c_{r,s}, C_{r,s}$.

 (ii) $S_i$ is \textit{ultracontractive}, i.e. for $t>0,$ $S_i(t)$ maps $\h$ to $L^{\infty}(0,L)$ and furthermore, for any $1\leq p\leq r\leq\infty$,
 \begin{equation}\label{Lpsmoothing}
 \|S_i(t)x\|_{L^r(0,L)}\leq C(t\wedge 1)^{-\frac{r-p}{2pr}}\|x\|_{L^p(0,L)}\;\;,\;x\in L^p(0,L).
 \end{equation}

 \begin{rem}\label{selfad} The assumption that $A_1$ is self-adjoint is made to simplify the exposition and is not necessary for the results of this paper to hold. Indeed, assuming that $A_1$ has $C^1$ coefficients and in view of section 2.1 of \cite{cerrai2003stochastic}, we can write $A_1=C_1+L_1$, where $C_1$ is a non-positive uniformly elliptic self-adjoint operator  and $L_1$ a densely defined first-order operator. Moreover, we have $Dom(L_1)=Dom(L_1^*) =Dom((-C_1)^\frac{1}{2})$. The fractional powers of $-A_1$ can then be substituted throughout by fractional powers of $-C_1$. Finally, the mild formulations for $X^{\epsilon,u}$ and $\eta^{\epsilon,u}$ can be re-expressed in terms of the analytic semigroup $S_{C_1}$, generated by $C_1$, with the addition of a linear term corresponding to the operator $L_1$ (see Definition 3.1 and Proposition 3.1 in \cite{cerrai2003stochastic}).
 \end{rem}

 The next set of assumptions concerns the regularity of the nonlinear reaction terms in \eqref{model}.
 In particular, we assume that	$f,g:[0,L]\times\R^2\rightarrow\R$ are measurable functions and:
 \begin{customthm}{2(a)} \label{A2a}
 	For almost all $\xi\in(0,L)$, the map $(\mathrm{x}, \mathrm{y})\mapsto f(\xi,\mathrm{x}, \mathrm{y})$ is in $C^2(\R^2)$ and its derivatives are uniformly bounded with respect to  $\xi,\mathrm{x}, \mathrm{y}$.
 \end{customthm}
 \begin{customthm}{2(b)} \label{A2b}
 	(i) For almost all $\xi\in(0,L)$ and all $\mathrm{y}\in\R$, the map $\mathrm{x}\mapsto g(\xi,\mathrm{x}, \mathrm{y})$ is in $C^2(\R)$ and its derivatives are uniformly bounded with respect to $\xi,\mathrm{x},\mathrm{y}$ .
 	
 	\noindent  (ii) For almost all $\xi\in(0,L)$ and all $\mathrm{x}\in\R$, the map $\mathrm{y}\mapsto g(\xi,\mathrm{x}, \mathrm{y})$ is in $C^3(\R)$ with uniformly bounded derivatives with respect to $\xi,\mathrm{x},\mathrm{y}$ and
 	\begin{equation}\label{diss2}
 	\sup_{\xi,\mathrm{x}, \mathrm{y}}\big| \partial_{\mathrm{y}}g(\xi,\mathrm{x}, \mathrm{y})\big|=:L_g<\lambda,
 	\end{equation}
 	\noindent with $\lambda$ as in \eqref{A_2dis}.

 \end{customthm}
 \begin{customthm}{2(c)}\label{A2c} With $\lambda, L_g$ as in Hypothesis \ref{A2b} we assume that
 	\begin{equation}\label{extradiss}
 	\omega:= \frac{\lambda-3L_g}{2}>0.
 	\end{equation}
 \end{customthm}
 \noindent Hypothesis \ref{A2c} is used to prove that a partial Fr\'{e}chet derivative of the solution of the Kolmogorov equation associated to the fast process converges, as $\epsilon\to0$, to an operator-valued map that is Lipschitz continuous with respect to its arguments (see Lemma \ref{varicontlem} and Corollary \ref{Psicont}).

 \noindent 	The last set of assumptions concerns the behavior of the diffusion coefficient $\sigma$. In particular, we assume that $\sigma:[0,L]\times\R^2\rightarrow\R$ is measurable and satisfies either :
 \begin{customthm}{3(a)}\label{A3a} There exists $c>0$ and $\nu\in[0,1/2)$ such that for almost all $\xi\in[0,L]$ and all $(\mathrm{x}, \mathrm{y})\in\R^2$
 	\begin{equation}
 	\label{sigma}
 	|\sigma(\xi,\mathrm{x}, \mathrm{y})|\leq c(1+|\mathrm{x}|+| \mathrm{y}|^\nu ).
 	\end{equation}
 \end{customthm}
 or:
 \begin{customthm}{3(a')}\label{A3a'} There exist $c_1, c_2>0$  such that for almost all $\xi\in[0,L]$ and all $(\mathrm{x}, \mathrm{y})\in\R^2$
 	\begin{equation}
 	\label{sigma'}
 	c_1\leq\sigma(\xi,\mathrm{x}, \mathrm{y})\leq c_2.
 	\end{equation}
 \end{customthm}
 \begin{rem}The diffusion coefficient $\sigma$ is allowed to grow at most like $|\mathrm{y}|^{1/2}$ in the third argument. This is due to the fact that the stochastic controls are only known to be square integrable. As a result we can obtain estimates for $Y^{\epsilon,u}$ in $L^p([0,T];\h)$, for $p\leq 2$ (see \eqref{Yint} and \eqref{yapriori} in Section \ref{Sec2} below). \end{rem}
 \begin{customthm}{3(b)}\label{A3b} There exists $L_\sigma>0$ such that for almost all $\xi\in[0,L],$ the map $(\mathrm{x}, \mathrm{y})\mapsto \sigma(\xi,\mathrm{x}, \mathrm{y})$ is $L_\sigma$-Lipschitz continuous.
 \end{customthm}
 \begin{rem} The a priori estimates in Sections \ref{Sec2}-\ref{Sec3} hold by assuming only Hypothesis \ref{A3a}. For the analysis of the limit  (Section \ref{Sec4})  we assume \ref{A3a} along with \ref{A3b}. Finally, we  strengthen the assumptions on $\sigma$ and use the strictly stronger Hypothesis \ref{A3a'} along with \ref{A3b} to prove the Laplace Principle upper and lower bounds (Sections \ref{LPup} and \ref{LPlo} respectively).
 \end{rem}

 The reaction terms $f,g$ induce nonlinear superposition (or Nemytskii) operators denoted, respectively, by $F,G:\h\times\h\rightarrow\h$  and defined by
 \begin{equation}\label{Nem}
 F(x,y)(\xi)=f(\xi,x(\xi),y(\xi)), \;\;G(x,y)(\xi)=g(\xi,x(\xi),y(\xi))\;,\;\;\xi\in[0,L].
 \end{equation}
 In view of Hypotheses \ref{A2a} and \ref{A2b}, $F$ and $G$ are (globally) Lipschitz continuous. Moreover, $F$ and $G$ are G\^ateaux differentiable with respect to both variables and along the direction of any $\chi\in\h$. Their G\^ateaux derivatives are given by
 \begin{equation}\label{DF}
 D_xF(x,y)(\chi)(\xi)=\partial_{\mathrm{x}}f(\xi, x(\xi), y(\xi))\chi(\xi)\;,\;\; D_yF(x,y)(\chi)(\xi)=\partial_{\mathrm{y}}f(\xi, x(\xi), y(\xi))\chi(\xi)
 \end{equation}
 and
 \begin{equation*}
 D_xG(x,y)(\chi)(\xi)=\partial_{\mathrm{x}}g(\xi, x(\xi), y(\xi))\chi(\xi)\;,\;\; D_yG(x,y)(\chi)(\xi)=\partial_{\mathrm{y}}g(\xi, x(\xi), y(\xi))\chi(\xi)
 \end{equation*}
 for $\xi\in[0,L]$.
 Furthermore, for each fixed $y\in\h$ and $\chi_1\in \h$, the map
 $$\h\ni x\longmapsto D_xF(x,y)(\chi_1)\in L^1(0,L)$$ is G\^ateaux differentiable along the direction of any $\chi_2\in\h$.
 Equivalently, the nonlinear operator $F$, when considered as a map from $\h$ to $L^1(0,L)$, is twice G\^ateaux differentiable with respect to $x$, along any direction in $\h\times\h$. Its second partial G\^ateaux derivative is given by
 \begin{equation}\label{DxxF}
 D^2_{x}F(x,y)(\chi_1, \chi_2)(\xi)=\partial^2_{\mathrm{x}\mathrm{x}}f(\xi, x(\xi), y(\xi))\chi_1(\xi)\chi_2(\xi)\;,\;\xi\in[0,L].
 \end{equation}
 \begin{rem} Note that, for fixed $x,y$, all the first-order partial G\^ateaux derivatives above are in $\mathscr{L}(\h)$ and $D^2_{x}F(x,y)\in\mathscr{L}^2(\h; L^1(0,L))$. Nevertheless, $F$ and $G$,  considered as maps from $\h\times\h$ to $\h$, are not Fr\'echet differentiable with respect to any of their variables. In fact, it can be shown that a Nemytskii operator from $\h$ to $\h$ is Fr\'echet differentiable if and only if it is an affine map (see Proposition 2.8 in \cite{ambrosetti1995primer}).
 	
 \end{rem}

 The diffusion coefficient $\sigma$ is considered as a function multiplied by the noise and hence induces, for each $x,y\in\h$, a multiplication operator
 \begin{equation*}
 \big[\Sigma(x,y)\chi\big](\xi):=\sigma(\xi, x(\xi), y(\xi))\chi(\xi),\; \chi\in\h,\; \xi\in(0,L).
 \end{equation*}
 In view of Hypothesis \ref{A3a} it follows that $\Sigma(x,y)\in\mathscr{L}(L^\infty(0,L);\h)\cap \mathscr{L}(\h; L^1(0,L))$. Moreover, under Hypothesis \ref{A3a'}, we have $\Sigma(x,y)\in\mathscr{L}(\h)$.

 For the purposes of this paper we consider a Polish space to be a completely metrizable, separable topological space. For a given topological space $\mathcal{E}$ we denote the Borel $\sigma$-algebra by $\mathscr{B}(\mathcal{E})$ and the space of Borel probability measures on $\mathcal{E}$ by $\mathscr{P}(\mathcal{E})$. If $\mathcal{E}$ is Polish then $\mathscr{P}(\mathcal{E})$, endowed with the topology of weak convergence of measures, is also a Polish space.

 	\section{Weak convergence method and moderate deviations}\label{weakconv}

 In this section we review the weak convergence approach to large and moderate deviations (see \cite{dupuis2011weak} as well as the more recent \cite{budhiraja2019analysis}) and then we state our main results of the paper on the averaging principle for the controlled process $\eta^{\epsilon,u}$ (see \eqref{etau}) and on the moderate deviations for $\{X^{\epsilon}\}$.

 Let $j=1,2$ and consider the cylindrical Wiener process $w_j:[0,\infty)\times\h\rightarrow L^2(\Omega)$ appearing in \eqref{model}. For each fixed $t$, $\{w_j(t,\chi)\}_{\chi\in\h}$ is a Gaussian family of random variables and for each $t_1,t_2\geq 0$, $\chi_1,\chi_2\in\h$
 $$\ex[w_j(t_1,\chi_1)w_j(t_2,\chi_2)         ]=t_1\wedge t_2\langle \chi_1, \chi_2\rangle_\h.$$
 The first step of the weak convergence method relies on a variational representation for functionals of the driving noise. For the infinite-dimensional setting of this paper, we will use the variational representation for $Q$-Wiener processes that was proved in \cite{budhiraja2008large}, Theorem 3. In order to apply this result in the context of space-time white noise, we introduce  a separable Hilbert space $(\h_1, \langle .\;,.\rangle_{\h_1})$ such that $\h$ is a linear subspace of $\h_1$ and the inclusion map $\h\overset{i}{\rightarrow}\h_1$ is Hilbert-Schmidt (for more details on this construction we refer the reader to \cite{gross1962measurable}, \cite{gross1967abstract}). Given a complete orthonormal basis $\{e_{j,n}\}_{n\in\N}\subset\h$, the process
 $$\tilde{w}_j(t)=\sum_{n=1}^{\infty} w_j(t,e_{j,n})\;i(e_{j,n})\;\;,t\geq 0$$
 is an $\h_1$-valued $Q$-Wiener process with trace-class covariance operator $Q=ii^*\in\mathscr{L}_1(\h_1)$. In particular, for each $t,s\geq 0$ and $\chi_1,\chi_2\in\h_1$ we have
 $$ \ex[\langle \chi_1, \tilde{w}_j(t)\rangle_{\h_1} \langle \chi_2, \tilde{w}_j(s)\rangle_{\h_1}  ]= t\wedge s\langle \chi_1, Q\chi_2\rangle_{\h_1}= t\wedge s\langle i^*(\chi_1), i^*(\chi_2)\rangle_{\h}.$$
This construction allows us to use the following representation.
 \begin{thm}[\cite{budhiraja2008large}, Theorem 3]\label{budhirep} Let $T<\infty$, $\Uplambda: C([0,T];\h_1)\rightarrow \R$ be a bounded, Borel measurable map and $W$ be an $\h_1$-valued $Q$-Wiener process. Moreover, let  $\mathcal{P}^T(\h)$ denote the family of $\h$-valued, progressively-measurable stochastic processes for which
 	\begin{equation*}\label{L2-controls}
 	\pr\bigg[	\int_{0}^{T}\|u(s)\|^2_\h\;ds<\infty\bigg]=1.
 	\end{equation*}
 Then:
 	\begin{equation*}\label{varrep1}
 	-\log\;\ex[\exp(-\Uplambda(W))]=\inf_{u\in\mathcal{P}^T(\h)}\ex\bigg[ \frac{1}{2} \int_{0}^{T}  \|u(s)\|^2_\h\;ds+ \Uplambda\bigg( W+\int_{0}^{\cdot} u(s)\; ds   \bigg)  \bigg].
 	\end{equation*}
 \end{thm}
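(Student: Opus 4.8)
The plan is to follow the classical argument of Bou\'e--Dupuis \cite{boue1998variational} in the Hilbert-space setting, with the Donsker--Varadhan/Gibbs variational formula for relative entropy as its engine: for a Polish space $\mathcal{E}$, a probability measure $\mu$ on it and a bounded Borel $\varphi:\mathcal{E}\to\R$,
\[-\log\int_\mathcal{E} e^{-\varphi}\,d\mu=\inf_{\nu\in\mathscr{P}(\mathcal{E})}\Big[\int_\mathcal{E}\varphi\,d\nu+R(\nu\,\|\,\mu)\Big],\]
$R$ denoting relative entropy and the infimum being attained at $\nu^{*}=e^{-\varphi}\mu/\int e^{-\varphi}d\mu$. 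Taking $\mathcal{E}=C([0,T];\h_1)$, $\mu$ the law of $W$ and $\varphi=\Uplambda$ rewrites $-\log\ex[e^{-\Uplambda(W)}]$ as an infimum over probability measures on path space; the content of the theorem is that this equals the infimum over controls, and the bridge between the two descriptions is Girsanov's theorem together with the martingale representation theorem for $W$. The infinite-dimensional stochastic calculus is set up through the Hilbert--Schmidt embedding $i:\h\hookrightarrow\h_1$: writing $W=\sum_n\beta_n\,i(e_n)$ with $\{\beta_n\}$ i.i.d.\ standard scalar Brownian motions, one puts $\int_0^{\cdot}\langle u_s,dW_s\rangle_\h:=\sum_n\int_0^{\cdot}\langle u_s,e_n\rangle_\h\,d\beta_n(s)$ and $\mathscr{E}_t(u):=\exp\!\big(\int_0^t\langle u_s,dW_s\rangle_\h-\tfrac12\int_0^t\|u_s\|_\h^2\,ds\big)$, and invokes Girsanov's and the representation theorems either coordinatewise or in their Hilbert-space forms.

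\emph{The inequality $-\log\ex[e^{-\Uplambda(W)}]\le\inf_u J(u)$}, where $J(u):=\tfrac12\ex\int_0^T\|u_s\|_\h^2\,ds+\ex[\Uplambda(W^u)]$ and $W^u:=W+\int_0^{\cdot}u_s\,ds$. Fix $u\in\mathcal{P}^T(\h)$; since $\Uplambda$ is bounded we may assume $\ex\int_0^T\|u_s\|_\h^2\,ds<\infty$ (otherwise $J(u)=+\infty$), and by truncating along $\tau_N:=\inf\{t:\int_0^t\|u_s\|_\h^2\,ds\ge N\}\wedge T$ we may further assume $u$ bounded, so that Novikov's criterion makes $\mathscr{E}_T(-u)$ a true martingale. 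With $R_T:=\mathscr{E}_T(-u)$ and $P^u$ defined by $dP^u/dP=R_T$, Girsanov's theorem makes $P^u$ a probability measure under which $W^u$ is a $Q$-Wiener process, so that $\ex_P[e^{-\Uplambda(W)}]=\ex_{P^u}[e^{-\Uplambda(W^u)}]=\ex_P[R_T\,e^{-\Uplambda(W^u)}]$. Since $R_T e^{-\Uplambda(W^u)}=\exp\!\big(-\Uplambda(W^u)-\int_0^T\langle u_s,dW_s\rangle_\h-\tfrac12\int_0^T\|u_s\|_\h^2\,ds\big)$, Jensen's inequality applied to the convex function $-\log$ (under $P$), together with the fact that $\int_0^{\cdot}\langle u_s,dW_s\rangle_\h$ is a mean-zero $P$-martingale, gives $-\log\ex_P[e^{-\Uplambda(W)}]\le\ex_P[\Uplambda(W^u)]+\tfrac12\ex_P\int_0^T\|u_s\|_\h^2\,ds=J(u)$. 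One then removes the truncation ($u^N\to u$ in $L^2(dt\otimes dP)$ and $W^{u^N}\to W^u$ in $C([0,T];\h_1)$ $P$-a.s.) and, since $\Uplambda$ is only Borel, runs the argument first for bounded continuous $\Uplambda$ and extends by a monotone-class argument.

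\emph{The reverse inequality.} I would produce an essentially optimal control from the tilted measure $P^{*}:=e^{-\Uplambda(W)}P/\ex_P[e^{-\Uplambda(W)}]$, for which Gibbs attainment reads $-\log\ex_P[e^{-\Uplambda(W)}]=R(P^{*}\|P)+\ex_{P^{*}}[\Uplambda(W)]$. Since $\Uplambda$ is bounded, $dP^{*}/dP$ is strictly positive and $\F_T$-measurable, so $M_t:=\ex_P[dP^{*}/dP\mid\F_t]$ is a strictly positive $P$-martingale with $M_0=1$; by the martingale representation theorem for $W$, $M_t=1+\int_0^t\langle\zeta_s,dW_s\rangle_\h$, and setting $u^{*}_s:=\zeta_s/M_s$ turns this into $M_t=\mathscr{E}_t(u^{*})$, i.e.\ $dP^{*}/dP=\mathscr{E}_T(u^{*})$. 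From $R(P^{*}\|P)\le 2\|\Uplambda\|_\infty<\infty$, together with Girsanov (under $P^{*}$, $\beta^{*}:=W-\int_0^{\cdot}u^{*}_s\,ds$ is a $Q$-Wiener process and $R(P^{*}\|P)=\tfrac12\ex_{P^{*}}\int_0^T\|u^{*}_s\|_\h^2\,ds$), one gets $\int_0^T\|u^{*}_s\|_\h^2\,ds<\infty$ $P^{*}$-a.s., hence $P$-a.s.\ (as $P^{*}\sim P$), so $u^{*}\in\mathcal{P}^T(\h)$; and because $W=\beta^{*}+\int_0^{\cdot}u^{*}_s\,ds$ under $P^{*}$,
\[-\log\ex_P[e^{-\Uplambda(W)}]=\tfrac12\,\ex_{P^{*}}\!\int_0^T\|u^{*}_s\|_\h^2\,ds+\ex_{P^{*}}\!\Big[\Uplambda\Big(\beta^{*}+\textstyle\int_0^{\cdot}u^{*}_s\,ds\Big)\Big],\]
which is precisely the value of $J$ at $u^{*}$, read on the stochastic basis $(\Omega,\F,\{\F_t\},P^{*})$ on which $\beta^{*}$ is a $Q$-Wiener process. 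Combined with the previous step this yields the identity.

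\emph{Main obstacle.} The genuine difficulties are twofold. First is the strong-versus-weak-control point: the optimizer $u^{*}$ is adapted to, and its driving process $\beta^{*}$ defined on, the tilted space rather than the original one, and $u^{*}$ need not be a measurable functional of $\beta^{*}$ --- the ``drift equation'' $W=\beta^{*}+\int_0^{\cdot}u^{*}(W)$ has merely measurable coefficients, so no pathwise solvability is available --- so matching this with an infimum over controls on one fixed stochastic basis forces one to admit controls adapted to filtrations larger than the one generated by $W$, with respect to which $W$ is still a $Q$-Wiener process, and to verify the two infima coincide. Second, each classical ingredient --- Girsanov's theorem, Novikov's criterion, the martingale representation theorem, and the identity $R(P^u\|P)=\tfrac12\ex_{P^u}\int_0^T\|u_s\|_\h^2\,ds$ for $dP^u/dP=\mathscr{E}_T(u)$ --- must be valid for the $\h_1$-valued $Q$-Wiener process $W$; this is exactly the role of the auxiliary space $\h_1$ and the embedding $i$, which make $W$ a genuine Hilbert-space-valued martingale and allow these results to be transferred from the scalar theory. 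The remaining steps --- the truncation limit, the extension from continuous to Borel $\Uplambda$, and the vanishing in expectation of the stochastic integrals --- are routine.
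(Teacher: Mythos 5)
This theorem is not proved in the paper at all: it is imported verbatim from \cite{budhiraja2008large} (Theorem 3 there), so there is no in-paper argument to compare against. Your sketch follows the standard Bou\'e--Dupuis route, which is indeed how the cited reference and its predecessors establish the result, and the first half is essentially complete: the chain ``truncate along $\tau_N$ so that Novikov applies, apply Girsanov, apply Jensen to $-\log$, use the zero mean of the stochastic integral, pass to the limit in $N$ for bounded continuous $\Uplambda$, then extend to Borel $\Uplambda$ by a monotone class argument'' is exactly right. A minor imprecision: the stopped control has a.s.\ bounded energy $\int_0^{\tau_N}\|u_s\|_\h^2\,ds\le N$ rather than being pointwise bounded, but that already suffices for Novikov, so the conclusion stands.

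The gap is in the reverse inequality, and you have named it yourself without closing it. The tilted measure and the martingale representation theorem produce a control $u^{*}$ and a $Q$-Wiener process $\beta^{*}$ living on the basis $(\Omega,\F,\{\F_t\},P^{*})$, with $u^{*}$ adapted to a filtration strictly larger than the one generated by $\beta^{*}$; the theorem, however, asserts an infimum over controls on the \emph{original} basis, progressively measurable with respect to the filtration that makes $W$ a Wiener process. Concluding that the value achieved by $u^{*}$ on the new basis dominates $\inf_u J(u)$ on the old one requires showing that admitting such weak, basis-dependent controls does not lower the infimum. In Bou\'e--Dupuis and in \cite{budhiraja2008large} this is the bulk of the work: one approximates $u^{*}$ by simple controls that are piecewise constant in time and depend measurably on finitely many increments (and finitely many coordinates) of the driving noise, so that the joint law of noise and control can be reproduced on the original basis, and then passes to the limit using the continuity of $\Uplambda$ together with the uniform energy bound supplied by $R(P^{*}\|P)\le 2\|\Uplambda\|_\infty$. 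Identifying this as ``the main obstacle'' is the correct diagnosis, but as written the proposal does not actually prove the lower bound; everything else in the outline is sound.
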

 \noindent  Since the processes $\tilde{w}_1, \tilde{w}_2$ are independent, it follows that $\widetilde{w}=(\tilde{w}_1,\tilde{w}_2 )$ is an $\h_1\oplus\h_1$-valued Wiener process  with covariance operator $(Q,Q)$. Hence, we can replace  $W$, $\h_1$ and $\h$ by $\widetilde{w}$, $\h_1\oplus\h_1$ and $\h\oplus\h$  respectively to obtain
 \begin{equation*}\label{varrepsum}
 -\log\;\ex[\exp(-\Uplambda(\widetilde{w}))]=\inf_{u\in\mathcal{P}^T(\h\oplus\h)}\ex\bigg[ \frac{1}{2} \int_{0}^{T} \big( \|u_1(s)\|^2_\h+ \|u_2(s)\|^2_\h\big)ds+ \Uplambda\bigg( \widetilde{w}+\int_{0}^{\cdot} u(s)\; ds   \bigg)  \bigg],
 \end{equation*}
 where $u=(u_1,u_2)$ and $\Uplambda: C([0,T];\h_1\oplus\h_1)\rightarrow \R$ is measurable and bounded. In order to obtain a representation in the moderate deviation scaling, we replace $u$ and $\Uplambda$ by $h(\epsilon)u$ and $h^2(\epsilon)\Uplambda$ respectively and then divide throughout by $h^2(\epsilon)$ to deduce that
 \begin{equation}\label{varrep3}
 -\frac{1}{h^2(\epsilon)}\log\;\ex\big[ e^{-h^2(\epsilon)\Uplambda(\tilde{w})}\big]=\inf_{u\in\mathcal{P}^T(\h\oplus\h)}\ex\bigg[ \frac{1}{2} \int_{0}^{T}  \big(\|u_1(s) \|^2_\h+\|u_2(s) \|^2_\h \big)\;ds+ \Uplambda\bigg( \tilde{w}+h(\epsilon)\int_{0}^{\cdot} u(s)\; ds  \bigg)  \bigg].
 \end{equation}

Now, the system \eqref{model} can be re-expressed in the mild formulation as
 \begin{equation*}\label{modelmild}
 \left \{\begin{aligned}
 &	X^{\epsilon}(t)= S_1(t)x_0+\int_{0}^{t} S_1(t-s)F( X^{\epsilon}(s), Y^{\epsilon}(s))ds\\&\quad\quad\quad+   \sqrt{\epsilon}\int_{0}^{t} S_1(t-s)\Sigma\big( X^{\epsilon}(s),Y^{\epsilon}(s)\big) dw_1(s)\\&
 Y^{\epsilon}(t)= S_2\bigg(\frac{t}{\delta}\bigg)y_0+\frac{1}{\delta}\int_{0}^{t} S_2\bigg(\frac{t-s}{\delta}\bigg)G( X^{\epsilon}(s), Y^{\epsilon}(s))ds\\& \quad\quad\quad+   \frac{1}{\sqrt{\delta}}\int_{0}^{t} S_2\bigg(\frac{t-s}{\delta}\bigg) dw_2(s),
 \end{aligned} \right.
 \end{equation*}
 where we recall that $A_1, A_2$ are the realizations of $\mathcal{A}_1, \mathcal{A}_2$ on $\h$ with the boundary conditions $\mathcal{N}_1, \mathcal{N}_2$, $\{S_1(t)\}_{t\geq0}$ is generated by $A_1$ and $\{S_2(t/\delta)\}_{t\geq0}$ is generated by $A_2/\delta$.

 For each fixed  $\epsilon, T$ and initial conditions $x_0,y_0\in\h$, the existence and uniqueness of a mild solution $(	X^{\epsilon,x_0,y_0}(t), 	Y^{\epsilon,x_0,y_0}(t)  )$ that takes values on $C([0,T];\h)^2$ implies the existence of a measurable solution map
 \begin{equation*}
 \mathcal{I}^{\epsilon,x_0,y_0}: C([0,T];\h_1\oplus\h_1)\longrightarrow C([0,T];\h)
 \end{equation*}
 such that
 \begin{equation*}
 \eta^{\epsilon}(t)\equiv\eta^{\epsilon,x_0,y_0}(t):=\frac{1}{\sqrt{\epsilon}h(\epsilon)}\big(	X^{\epsilon,x_0,y_0}(t)-\bar{X}^{x_0}(t)\big)=\mathcal{I}^{\epsilon,x_0,y_0}(\widetilde{w}).
 \end{equation*}
 Here, $\bar{X}^{x_0}$ is the solution of the averaged equation \eqref{x-aved}. Returning to \eqref{varrep3}, we replace $\Uplambda$ by $\Lambda\circ\mathcal{I}^{\epsilon,x_0,y_0}$, where $\Lambda:C([0,T];\h)\rightarrow \R$ is continuous and bounded, to obtain the representation
 \begin{equation}\label{varrep}
 -\frac{1}{h^2(\epsilon)}\log\;\ex\big[ e^{-h^2(\epsilon)\Lambda(\eta^\epsilon)}\big]=\inf_{u\in\mathcal{P}^T(\h\oplus\h)}\ex\bigg[ \frac{1}{2} \int_{0}^{T} \big( \|u_1(t) \|^2_\h+\|u_2(t) \|^2_\h\big)\;dt+ \Lambda\big(\eta^{\epsilon,u}  \big)  \bigg].
 \end{equation}
 The process $\eta^{\epsilon,u}$ on the right-hand side is defined by \begin{equation}\label{etau1}
 \eta^{\epsilon,u}(t)=\frac{X^{\epsilon,u}(t)-\bar{X}(t)}{\sqrt{\epsilon}h(\epsilon)}
 \end{equation}
 and $X^{\epsilon,u}$ corresponds to the controlled system of stochastic reaction-diffusion equations
 \begin{equation}\label{controlledsystem}
 \left \{\begin{aligned}
 &dX^{\epsilon,u}(t)= \big[A_1 X^{\epsilon,u}(t)+ F\big( X^{\epsilon,u}(t),Y^{\epsilon,u}(t)\big)+\sqrt{\epsilon}h(\epsilon)\Sigma\big(X^{\epsilon,u}(t),Y^{\epsilon,u}(t)\big)u_1(t)\big]dt\\&\quad\quad\quad\quad+\sqrt{\epsilon}\Sigma\big(X^{\epsilon,u}(t),Y^{\epsilon,u}(t)\big)dw_1(t)
 \\&
 dY^{\epsilon,u}(t)= \frac{1}{\delta}\big[A_2 Y^{\epsilon,u}(t)+ G\big( X^{\epsilon,u}(t),Y^{\epsilon,u}(t)\big)+\sqrt{\delta}h(\epsilon)u_2(t)\big]dt +\frac{1}{\sqrt{\delta}}\; dw_2(t)
 \\&
 X^{\epsilon,u} (0)= x_0\in\h\;, Y^{\epsilon,u}(0)=y_0\in\h.
 \end{aligned} \right.
 \end{equation}
 The mild solution of the latter is given by a pair of controlled stochastic processes  that satisfy
 \begin{equation}\label{controlledsystemild}
 \left \{\begin{aligned}
 &	X^{\epsilon,u}(t)= S_1(t)x_0+\int_{0}^{t} S_1(t-s)F( X^{\epsilon,u}(s), Y^{\epsilon,u}(s))ds\\&\quad\quad\quad+   \sqrt{\epsilon}h(\epsilon)\int_{0}^{t} S_1(t-s)\Sigma\big( X^{\epsilon,u}(s),Y^{\epsilon,u}(s)\big)u_1(s)ds\\&\quad\quad\quad+   \sqrt{\epsilon}\int_{0}^{t} S_1(t-s)\Sigma\big( X^{\epsilon,u}(s),Y^{\epsilon,u}(s)\big) dw_1(s)\\&
 Y^{\epsilon,u}(t)= S_2\bigg(\frac{t}{\delta}\bigg)y_0+\frac{1}{\delta}\int_{0}^{t} S_2\bigg(\frac{t-s}{\delta}\bigg)G( X^{\epsilon,u}(s), Y^{\epsilon,u}(s))ds\\& \quad\quad\quad+\frac{h(\epsilon)}{\sqrt{\delta}}\int_{0}^{t} S_2\bigg(\frac{t-s}{\delta}\bigg) u_2(s)ds+   \frac{1}{\sqrt{\delta}}\int_{0}^{t} S_2\bigg(\frac{t-s}{\delta}\bigg) dw_2(s).
 \end{aligned} \right.
 \end{equation}
 Next, let $N>0$ and define \begin{equation}
 \mathcal{P}^T_N=\bigg\{u=(u_1,u_2)\in\mathcal{P}^T(\h\oplus \h) :
 \int_{0}^{T}\big(\|u_1(s)\|^2_{\h}+\|u_2(s)\|^2_{\h}\big)ds\leq N,\;\pr-\text{a.s.}	\bigg\}.
 \end{equation}

 \noindent As in Theorem 10 of \cite{budhiraja2008large} and for each $u \in\mathcal{P}^T_N$ and $\epsilon>0$, there is a unique pair $(	X^{\epsilon,u}, 	Y^{\epsilon,u})$ in $L^p(\Omega ;C([0,T];\h)\times C([0,T];\h))$  that satisfies \eqref{controlledsystemild}.

 Now, proving a Laplace Principle for $\eta^{\epsilon}$ amounts to finding the limit as $\epsilon \to 0$ of the left hand side in \eqref{varrep}. This is equivalent to proving an LDP for the family $\{\eta^\epsilon,\epsilon>0\}$ with speed $h^2(\epsilon)$, which in turn is equivalent to an MDP for $\{X^{\epsilon}, \epsilon>0\}$. This is the path that we follow in this paper for proving  the MDP for the family $\{X^{\epsilon}, \epsilon>0\}$ in $C([0,T];\h)$.
 Also, as it is shown in \cite{boue1998variational}, the representation implies that we can consider, without loss of generality, $u = u^{\epsilon}\in\mathcal{P}^T_N$ for a sufficiently large but fixed $N>0$ (see also \cite{budhiraja2000variational}, p.22).

 	As discussed in the introduction, the analysis of the limiting behavior of $\eta^{\epsilon,u}$ is more complicated, compared to that of $X^{\epsilon,u}$, due to the singular coefficient $1/\sqrt{\epsilon} h(\epsilon)$. In view of \eqref{etau1} and \eqref{controlledsystemild} we can write
 \begin{equation}\label{etadec1}
 \hspace*{-0.2cm}
 \begin{aligned}
 \eta^{\epsilon,u}(t)= &\frac{1}{\sqrt{\epsilon}h(\epsilon)}\int_{0}^{t} S_1(t-s)\big[F\big(\bar{X}(s)+\sqrt{\epsilon}h(\epsilon)\eta^{\epsilon,u}(s), Y^{\epsilon,u}(s) \big)- F\big(\bar{X}(s), Y^{\epsilon,u}(s) \big) \big] ds
 \\&
 + \int_{0}^{t} S_1(t-s)\Sigma\big(X^{\epsilon,u}(s), Y^{\epsilon,u}(s) \big)u_1(s) ds\\&+\frac{1}{h(\epsilon)} \int_{0}^{t} S_1(t-s)\Sigma\big(X^{\epsilon,u}(s), Y^{\epsilon,u}(s) \big)dw_1(s)\\&
 +\frac{1}{\sqrt{\epsilon}h(\epsilon)}\int_{0}^{t} S_1(t-s)\big[F\big(\bar{X}(s), Y^{\epsilon,u}(s) \big)- \bar{F}\big(\bar{X}(s) \big)\big]ds,
 \end{aligned}
 \end{equation}
 where $h(\epsilon)\to\infty$, $\sqrt\epsilon h(\epsilon)\to0$ as $\epsilon\to 0$ and $\bar{F}$ denotes the averaged Nemytskii operator \eqref{Nemave}.

 The asymptotic analysis of the first term above, as $\epsilon\to0$, is straightforward. Indeed, its limiting behavior is captured by
 $$\int_{0}^{t} S_1(t-s)D_xF\big(\bar{X}(s), Y^{\epsilon,u}(s)\big)\eta^{\epsilon,u}(s)ds,$$
 (see \eqref{DF} and Proposition \ref{linlim}).
 Moreover, the second term is of order $1$ while the third is expected to vanish in the limit. In contrast,  the last term requires a more delicate approach. This is connected to the solution of the following elliptic \textit{Kolmogorov equation} on $\h$:
 \begin{equation}\label{Kolmeq}
 \begin{aligned}
 c(\epsilon)\Phi^\epsilon_\chi(x,y)-\mathcal{L}^x \Phi^\epsilon_\chi(x,y)=\blangle  F(x,y)-\bar{F}(x), \chi\brangle_\h\;,
 \end{aligned}
 \end{equation}
 where $\chi,x\in\h$, $y\in Dom(A_2)$ and $c(\epsilon)$ vanishes as $\epsilon\to0$. The exact dependence of $c$ on $\epsilon$ will be specified later (see Section \ref{4}). For $\psi:\h\times\h\rightarrow\R$ such that for each fixed $x,y\in\h,$ $\psi(x,\cdot)\in C^2(\h)$ and $D^2_y\psi(x,y)\in\mathscr{L}_2(\h)$,	the \textit{Kolmogorov operator} $\mathcal{L}^x$ is a  second-order differential operator defined by
 \begin{equation}\label{Kolmopintro}
 \mathcal{L}^x\psi(x,y)= \frac{1}{2}\text{tr} \big[ D^2_{y}\psi(x,y)  \big]+\blangle D_y\psi(x,y), A_2y+ G(x,y) \brangle_\h\;,\; y\in Dom(A_2).
 \end{equation}
 Formally,  $\mathcal{L}^x$ is called the infinitesimal generator of the (uncontrolled) fast process $Y^{x}$ with "frozen" slow component $x$. The latter satisfies the stochastic evolution equation
 \begin{equation}
\label{Yfrozen}
 \left\{ \begin{aligned}
 &dY^{x,y}(t)= A_2 Y^{x,y}(t)dt+ G\big( x,Y^{x,y}(t)\big)dt +dw_2(t)\\&
 Y^{x,y}(0)=y.
 \end{aligned}\right.
 \end{equation}
 \begin{rem}
 	If $A_2\in\mathscr{L}(\h)$, and hence $Dom(A_2)=\h$, then $\mathcal{L}^x$ coincides with the infinitesimal generator of the transition semigroup $P^x$ of the Markov process $Y^x$ defined by \begin{equation}\label{markovsemi}
 	P^x_t[\phi](y)=\ex[\phi(Y^{x,y}(t))]\;,\;t\geq 0, \phi\in Lip(\h).\end{equation}
 	The latter is not rigorous in the present setting. Indeed, since $A_2$ is a differential operator, the paths of $Y^{x,y}$ do not take values in $Dom(A_2)$ and It\^o's formula cannot be directly applied to smooth functionals of $Y^{x,y}$.
 \end{rem}
 As we have already mentioned in the introduction, our assumptions guarantee that for each $x\in\h$, the process $Y^{x}$ admits a unique, strongly mixing local invariant measure $\mu^x$ defined on $(\h,\mathscr{B}(\h))$ (see e.g. Chapters 8, 11 of \cite{da1996ergodicity} as well as \cite{cerrai2009averaging}). We state here an important result regarding the continuity properties of the averaged Nemytskii operator $\bar{F}$.
 \begin{lem}\label{Fbarlip} Assume that $F:\h \times\h\rightarrow \h$ is Lipschitz continuous. Then the map
 	$$\h\ni x\longmapsto \bar{F}(x)=\int_\h F(x,y)d\mu^x(y)\in\h$$
 	is Lipschitz continuous. In particular, under Hypothesis \ref{A2a}, the operator $\bar{F}$ in \eqref{etadec1} is Lipschitz.
 \end{lem}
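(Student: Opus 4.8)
The plan is to decompose
\begin{equation*}
\bar{F}(x_1)-\bar{F}(x_2)=\int_\h\big(F(x_1,y)-F(x_2,y)\big)\,d\mu^{x_1}(y)+\Big(\int_\h F(x_2,y)\,d\mu^{x_1}(y)-\int_\h F(x_2,y)\,d\mu^{x_2}(y)\Big)=:(\mathrm{I})+(\mathrm{II}),
\end{equation*}
so that $(\mathrm{I})$ isolates the Lipschitz dependence of the integrand on its slow variable and $(\mathrm{II})$ the stability of the family of local invariant measures $x\mapsto\mu^x$. Since $F$ is globally Lipschitz with some constant $L_F$, uniformly in $y$, we immediately obtain $\|(\mathrm{I})\|_\h\le L_F\|x_1-x_2\|_\h$ (and, since $F$ has at most linear growth while each $\mu^x$ has finite moments, $\bar F(x)\in\h$ is well defined as a Bochner integral). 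The whole content of the proof is the bound on $(\mathrm{II})$.

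For $(\mathrm{II})$ I would first establish a \emph{uniform-in-time} stability estimate for the frozen fast process \eqref{Yfrozen} with respect to its slow parameter. Fix $y\in\h$ and put $Z(t):=Y^{x_1,y}(t)-Y^{x_2,y}(t)$; because the noise in \eqref{Yfrozen} is additive it cancels in the difference, so in mild form
\begin{equation*}
Z(t)=\int_0^t S_2(t-s)\big(G(x_1,Y^{x_1,y}(s))-G(x_2,Y^{x_2,y}(s))\big)\,ds,\qquad Z(0)=0.
\end{equation*}
Invoking the decay \eqref{S2decay}, the boundedness of $\partial_{\mathrm{x}}g$ (which makes $G$ Lipschitz in its first argument with some constant $L_G$), and the inequality $\|G(x_2,a)-G(x_2,b)\|_\h\le L_g\|a-b\|_\h$ coming from \eqref{diss2}, one gets pathwise
\begin{equation*}
\|Z(t)\|_\h\le\int_0^t e^{-\lambda(t-s)}\big(L_G\|x_1-x_2\|_\h+L_g\|Z(s)\|_\h\big)\,ds\le\frac{L_G}{\lambda}\|x_1-x_2\|_\h+\frac{L_g}{\lambda}\sup_{s\le t}\|Z(s)\|_\h.
\end{equation*}
Taking the supremum over $t\in[0,T]$ and absorbing the last term on the left (legitimate precisely because $L_g<\lambda$, and with a $T$-independent bound) yields $\sup_{t\ge0}\|Y^{x_1,y}(t)-Y^{x_2,y}(t)\|_\h\le\frac{L_G}{\lambda-L_g}\|x_1-x_2\|_\h$, uniformly in $y$.

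Finally I would represent $(\mathrm{II})$ as a long-time limit of transition semigroups: by the strong mixing of $\mu^x$ (together with integrability of the at most linearly growing map $F(x_2,\cdot)$), $\int_\h F(x_2,y')\,d\mu^{x_i}(y')=\lim_{t\to\infty}\ex[F(x_2,Y^{x_i,y}(t))]$ in $\h$ for $i=1,2$, so that by the Lipschitz continuity of $F$ in its second argument and the previous bound
\begin{equation*}
\|(\mathrm{II})\|_\h=\lim_{t\to\infty}\big\|\ex[F(x_2,Y^{x_1,y}(t))]-\ex[F(x_2,Y^{x_2,y}(t))]\big\|_\h\le L_F\sup_{t\ge0}\ex\|Y^{x_1,y}(t)-Y^{x_2,y}(t)\|_\h\le\frac{L_FL_G}{\lambda-L_g}\|x_1-x_2\|_\h.
\end{equation*}
Combining with the estimate for $(\mathrm{I})$ gives $\|\bar F(x_1)-\bar F(x_2)\|_\h\le L_F\big(1+\tfrac{L_G}{\lambda-L_g}\big)\|x_1-x_2\|_\h$, and the last assertion of the lemma follows by applying the first part to the operator $F$ of \eqref{Nem}, which is globally Lipschitz under Hypothesis \ref{A2a}. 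I expect the main obstacle to be the rigorous justification of the ergodic limit for the observable $F(x_2,\cdot)$, which is only Lipschitz and not globally bounded: this requires the strong mixing (exponential ergodicity in, say, a Wasserstein distance) of the frozen fast process together with uniform-in-time moment bounds for $Y^{x,y}(t)$, both of which are consequences of the strict dissipativity of $A_2$ in \eqref{A_2dis} and the global boundedness of the derivatives of $g$.
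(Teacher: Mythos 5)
Your proof is correct and is essentially the argument the paper points to: the paper does not prove this lemma itself but refers to Lemma 3.1 of \cite{cerrai2009khasminskii}, whose proof rests on exactly the two ingredients you use, namely the synchronous-coupling stability bound $\sup_{t\ge0}\|Y^{x_1,y}(t)-Y^{x_2,y}(t)\|_\h\le\frac{L_G}{\lambda-L_g}\|x_1-x_2\|_\h$ (valid because the noise in \eqref{Yfrozen} is additive and $L_g<\lambda$) and the strong mixing of $\mu^x$ to identify $\int_\h F(x_2,y)\,d\mu^{x_i}(y)$ with $\lim_{t\to\infty}\ex[F(x_2,Y^{x_i,y}(t))]$. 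The one point you flag as delicate --- passing the ergodic limit for a merely Lipschitz, $\h$-valued observable --- is handled in the reference exactly as you anticipate, via exponential mixing for Lipschitz test functions together with uniform-in-time moment bounds for $Y^{x,y}$, both consequences of the strict dissipativity in \eqref{A_2dis} and \eqref{diss2}.
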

 \noindent The proof relies on the ergodicity of the invariant measure $\mu^x$ and can be found e.g. in Lemma 3.1 of \cite{cerrai2009khasminskii}.

 \noindent Now, as shown in \cite{cerrai2009averaging}, \eqref{Kolmeq} has a strict solution which is explicitly given by the probabilistic representation
 \begin{equation}\label{Feynman}
 \Phi^\epsilon_\chi(x,y)=\int_{0}^{\infty}e^{-c(\epsilon)t}P_t^x[\langle F(x,\cdot)-\bar{F}(x),\chi \rangle](y)dt\;, x\in\h, y\in Dom(A_2),
 \end{equation}
 with $\ell:=(\lambda-L_g)/2$ (see \eqref{A_2dis}, \eqref{diss2}) and for some  some $C>0$ independent of $\epsilon$, the following estimates hold:
 \begin{equation}\label{phiestimates}
 \begin{aligned}
 &|\Phi^\epsilon_\chi(x,y)|\leq\frac{C}{\ell}\big(1+\|x\|_\h+\|y\|_\h\big)\|\chi\|_\h\;,\\&
 \|D_y\Phi^\epsilon_\chi(x,y)\|_\h\leq\frac{C}{\ell}\|\chi\|_\h\;,\\&
 \|D_x\Phi^\epsilon_\chi(x,y)\|_\h\leq\frac{ C}{c(\epsilon)}\|\chi\|_\h\;,\\&
 \big|\text{tr}\big[D^2_2\Phi^\epsilon_\chi(x,y)\big]\big|\leq\frac{C}{c(\epsilon)}\big(1+\|x\|_\h+\|y\|_\h\big)\|\chi\|_\h
 \end{aligned}
 \end{equation}
 (see 5.12-5.15 in \cite{cerrai2009averaging}). In light of \eqref{Feynman} and these estimates, we see that the maps
 \begin{equation*}
 \begin{aligned}
 &\h\ni \chi_1\longmapsto
 \Phi_{\chi_1}^\epsilon(x,y)\in\R,\\&
 \h\times\h\ni (\chi_1,\chi_2)\longmapsto
 \blangle D_x\Phi_{\chi_1}^\epsilon\big(x,y),\chi_2\brangle_\h \in\R,\\&
 \h\times\h\ni (\chi_1,\chi_2)\longmapsto
 \blangle D_y\Phi_{\chi_1}^\epsilon\big(x,y),\chi_2\brangle_\h \in\R
 \end{aligned}
 \end{equation*}
 are in $\mathscr{L}(\h;\R),\mathscr{L}^2(\h;\R)$ and $\mathscr{L}^2(\h;\R)$ respectively. From the Riesz representation theorem, there exist $\Psi^\epsilon:\h\times\h\to\h$ and $\Psi_1^\epsilon, \Psi^\epsilon_2:\h\times\h\to\mathscr{L}(\h)$ such that for all $\chi_1,\chi_2,x\in\h, \epsilon>0$ and $y\in Dom(A_2)$
 \begin{equation}\label{Riesz}
 \begin{aligned}
 &\Phi_{\chi}^\epsilon(x,y) =  \blangle\Psi^\epsilon(x,y), \chi\brangle_\h\;,\\&
 \blangle	D_x\Phi_{\chi_1}^\epsilon\big(x,y),\chi_2\brangle_\h=  \blangle\Psi_1^\epsilon(x,y)\chi_2,\chi_1\brangle_\h\;,\\& \blangle D_y\Phi_{\chi_1}^\epsilon\big(x,y),\chi_2\brangle_\h=  \blangle\Psi_2^\epsilon(x,y)\chi_2,\chi_1\brangle_\h\;.
 \end{aligned}
 \end{equation}
 \noindent As a consequence of \eqref{phiestimates} we have

 \begin{equation}\label{psinorm}
 \begin{aligned}
 &\big\|\Psi^\epsilon(x,y)\big\|_\h\leq\frac{C}{\ell}\big(1+\|x\|_\h+\|y\|_\h\big),\\&
 \big\|\Psi_1^\epsilon(x,y)\big\|_{\mathscr{L}(\h)}\leq \frac{C}{c(\epsilon)} \;,\\&
 \big\|\Psi_2^\epsilon(x,y)\big\|_{\mathscr{L}(\h)}\leq \frac{C}{\ell}\;.
 \end{aligned}
 \end{equation}
 Additionally, as shown in Lemma \ref{epsilondeplem} below, there exists a map $\Psi_2^{0}:\h\times\h\rightarrow\mathscr{L}(\h)$ such that  \begin{equation}\label{Psi0} \sup_{x,y\in\h}\big\| \Psi_2^\epsilon(x,y)- \Psi^0_2\big(x,y\big)\big\|_{\mathscr{L}(\h)}\longrightarrow 0\;,\;\text{as}\;\epsilon\to 0.\end{equation}
 Next, let $Y^{\epsilon,u}_n$ denote a projection of the $Y^{\epsilon,u} $ to an $n$-dimensional eigenspace of $A_2$. For each $n$, the paths of  $Y^{\epsilon,u}_n$ take values in $Dom(A_2)$. This allows us to apply It\^o's formula to the real-valued process
 \begin{equation*}\label{XiIto}
 \big\{\blangle\Psi^\epsilon(\bar{X}(s),Y^{\epsilon,u}_n(s)), S_1(t-s)\chi\brangle_\h\big\}_{s\in[0,t]}\;,t\in[0,T]
 \end{equation*}
 to show that the asymptotic behavior of the last term in \eqref{etadec1}, as $\epsilon\to0$, is captured by
 \begin{equation*}
 \frac{\sqrt{\delta}}{\sqrt{\epsilon}}\int_{0}^{t}S_1(t-s)\Psi^0_2\big( \bar{X}(s), Y^{\epsilon,u}(s)\big)u_2(s)ds
 \end{equation*}
 (see  Lemma \ref{IVitodeclem}, Proposition \ref{controlim} and \eqref{IVlim}  below).

 We need to understand not just the limit of the process $\eta^{\epsilon,u}$ but also the measure with respect to which the averaging is being done. As in \cite{spiliopoulos2013large}, \cite{morse2017moderate}, \cite{WSS}, the dependence of the dynamics on the unknown control process $u = u^\epsilon$ complicates the situation. Following the recipe of these works we introduce the family of random occupation measures
 \begin{equation}\label{occupation1}
 P^{\epsilon,\Delta}(B_1\times B_2\times B_3\times B_4 )=\frac{1}{\Delta}\int_{B_4}\int_{t}^{t+\Delta}\mathds{1}_{B_1}\big(u_1(s)\big)\mathds{1}_{B_2}\big(u_2(s)\big) \mathds{1}_{B_3}\big(Y^{\epsilon,u}(s)\big)dsdt,
 \end{equation}
defined on
 $\mathscr{B}\big(\h\times\h\times\h\times[0,T]\big)$. Here, the first two copies of $\h$ are endowed with the weak topology, the third with the norm topology and $[0,T]$ with the standard topology. For the sake of shortness we will call the resulting product topology WWNS.  The parameter $\Delta=\Delta(\epsilon)$ is such that
 \begin{equation}\label{Delta1}
 \Delta(\epsilon)\longrightarrow 0\;\;,\;\frac{\sqrt{\delta}h(\epsilon)}{\sqrt\Delta}\longrightarrow 0\;\;,\;\text{as}\;\epsilon\to 0.
 \end{equation}

 These occupation measures encode the behavior of the control and the fast process. It is the correct way to study the problem because the fast motion's behavior will not converge pathwise to anything, but its occupation measure will converge to a limiting measure. We adopt the convention that the control $u(t) = u^\epsilon(t) = 0$ for $t > T$. Then, we consider the joint limit in distribution of the pair $(\eta^{\epsilon,u}, P^{\epsilon,\Delta} )$ as $\epsilon\to 0$.

 In order to state our main results, we introduce the following definition of a viable pair corresponding to \cite{dupuis2012large}, but appropriately modified for the moderate deviation setting.

 \begin{dfn}\label{viable} Let $T<\infty$, $\Xi:\h^5\rightarrow\h$ and $\bar{X}\in C\big([0,T];\h\big)$ solve \eqref{x-aved}. For each $x\in\h$, let $\mu^x$ denote the unique invariant measure of \eqref{Yfrozen}.  A pair $(\psi, P)\in C\big([0,T];\h\big)\times \mathscr{P}(\h\times\h\times\h\times[0,T])$, where  $\h\times\h\times\h\times[0,T]$ is endowed with the WWNS topology, will be called viable with respect to $(\Xi, \mu^{\bar{X}})$ if\\
 	\noindent (i) The measure $P$ has finite second moments in the sense that there exists $\theta>0$ such that \begin{equation}\label{viable1}
 	\int_{\h\times\h\times\h\times[0,T]}\big(\|u_1\|^2_\h+\|u_2\|^2_\h+\|y\|^2_{H^\theta}\big) dP(u_1,u_2,y,t)<\infty.
 	\end{equation}
 	\noindent (ii) For all $B_1\times B_2\times B_3\times B_4\in\mathscr{B}(\h\times\h\times\h\times[0,T])$,
 	\begin{equation}\label{viable2}
 	P(B_1\times B_2\times B_3\times B_4)=\int_{B_4}\int_{B_3}\nu(B_1\times B_2|y,t)d\mu^{\bar{X}(t)}(y)dt,
 	\end{equation}
 	where $\nu:\mathscr{B}(\h\times\h)\times\h\times[0,T]\rightarrow[0,1]$ is a stochastic kernel on $\h$ given $\h\times[0,T]$ (see Appendix A.5 in \cite{dupuis2011weak} for stochastic kernels). This implies that the last marginal of $P$ is Lebesgue measure on $[0,T]$ and in particular
 	\begin{equation}\label{viableleb}
 	P(\h\times\h\times\h\times[0,t])=t\;,\;\text{for all}\; t\in[0,T].
 	\end{equation}
 	\noindent (iii) For all $t\in[0,T]$,
 	\begin{equation}\label{viable3}
 	\psi(t)=\int_{\h\times\h\times\h\times[0,t]}S_1(t-s)\Xi\big( \psi(s),\bar{X}(s), y, u_1,u_2\big) dP(u_1,u_2,y,s).
 	\end{equation}
 	The family of viable pairs with respect to $(\Xi, \mu^{\bar{X}})$ will be denoted by $\mathcal{V}_{(\Xi, \mu^{\bar{X}})}$.
 \end{dfn}
 \noindent  In view of \eqref{Regimes}, we also define
 \begin{equation}\label{gammai}
 \gamma_i=\begin{cases} & 0,\; i=1\\&
 \gamma\in(0,\infty),\; i=2.
 \end{cases}
 \end{equation}
 Using the viable pair definition, we can then state the main results of our paper.

 \begin{thm}\label{viablim1} (Averaging for $\eta^{\epsilon,u}$) Let $i=1,2,$ $T<\infty$, $a>0$ and $u\in\mathcal{P}_N^T$. Moreover let $(X^{\epsilon,u}, Y^{\epsilon,u})$ be the mild solution of \eqref{controlledsystem} with initial conditions $x_0,y_0\in H^a(0,L)$ and $\eta^{\epsilon,u}$ as in \eqref{etadec1}. Let $\Xi_i:\h^5\rightarrow\h$ be defined by
 	\begin{equation}
 	\label{xidef1}
 	\Xi_i(\psi,x, y, u_1,u_2):= D_xF(x,y)\psi+ \Sigma(x,y)u_1+\gamma_i\Psi^0_2(x, y)u_2\;,\;i=1,2\;,
 	\end{equation}
 	with $\gamma_i$ and $\Psi_2^0$ as in \eqref{gammai} and \eqref{Psi0} respectively. Assuming Hypotheses \ref{A1a}-\ref{A1c}, \ref{A2a}-\ref{A2c},  \ref{A3a}, \ref{A3b} and Regime $i$, the family of processes $\{\eta^{\epsilon,u}:\epsilon\in(0, 1), u\in\mathcal{P}_N^T\}$ is tight in $C([0, T ]; \h)$ and the family of occupation measures $\{P^{\epsilon,\Delta} :\epsilon\in(0, 1), u\in\mathcal{P}_N^T\}$ is tight in $\mathscr{P}(\h\times\h\times\h\times[0,T])$, where $\h\times\h\times\h\times[0,T]$ is endowed with the WWNS topology.
 	
 	\noindent Then for any sequence in $\{(\eta^{\epsilon,u}, P^{\epsilon,\Delta} )\;,\epsilon, \Delta>0,u\in\mathcal{P}_N^T\}$ there exists a subsequence that converges in distribution with limit $(\eta_i,P_i)$. With probability $1$,  $$(\eta_i,P_i)\in\mathcal{V}_{(\Xi_i, \mu^{\bar{X}})}.$$
 	 \end{thm}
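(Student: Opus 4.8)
The plan is to establish tightness first and then identify the limit. For tightness of $\{\eta^{\epsilon,u}\}$ in $C([0,T];\h)$, I would use the decomposition of $\eta^{\epsilon,u}$ obtained by applying It\^o's formula to $\langle\Psi^\epsilon(\bar X(s),Y^{\epsilon,u}_n(s)),S_1(t-s)\chi\rangle_\h$ (as sketched before Lemma~\ref{IVitodeclem}), which rewrites the asymptotically singular last term of \eqref{etadec1} in terms of non-singular quantities; the resulting representation involves $D_xF(\bar X(s),Y^{\epsilon,u}(s))\eta^{\epsilon,u}(s)$, the control-driven term $\Sigma(X^{\epsilon,u},Y^{\epsilon,u})u_1$, a stochastic convolution of order $1/h(\epsilon)$, and the $\tfrac{\sqrt\delta}{\sqrt\epsilon}\Psi^0_2 u_2$ term, plus error terms that vanish by the a priori estimates of Sections~\ref{Sec2}--\ref{Sec3} and \eqref{phiestimates}, \eqref{Psi0}. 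I would first close a Gronwall-type estimate bounding $\sup_t\|\eta^{\epsilon,u}(t)\|_\h$ in an appropriate $L^p(\Omega)$ sense uniformly in $\epsilon,u$, using the Lipschitz bound on $D_xF$ from Hypothesis~\ref{A2a} and the $N$-bound on controls; then obtain equi-continuity in a fractional-Sobolev-in-space / H\"older-in-time norm from the smoothing estimates \eqref{Sobosmoothing}--\eqref{Lpsmoothing} and the stochastic-convolution regularity of Appendix~\ref{AppA}, so that the relevant embedding into $C([0,T];\h)$ is compact. Tightness of $\{P^{\epsilon,\Delta}\}$ in $\mathscr P(\h\times\h\times\h\times[0,T])$ with the WWNS topology follows because the first two marginals are controlled by the uniform $L^2([0,T];\h)$-bound $N$ on $(u_1,u_2)$ (balls in $\h$ are weakly compact), the third marginal $Y^{\epsilon,u}$ is tight in the norm topology of $\h$ by the $H^\theta$-moment bound on the fast process from Section~\ref{Sec2} together with the compact embedding $H^\theta\hookrightarrow\h$, and the $[0,T]$-marginal is deterministic (Lebesgue measure up to $O(\Delta)$). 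By Prokhorov, along any subsequence we extract a further subsequence with $(\eta^{\epsilon,u^\epsilon},P^{\epsilon,\Delta})\Rightarrow(\eta_i,P_i)$, and by Skorokhod representation we may assume this convergence holds almost surely on a common probability space.

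Next I would verify that the almost-sure limit $(\eta_i,P_i)$ lies in $\mathcal V_{(\Xi,\mu^{\bar X})}$, checking the three conditions of Definition~\ref{viable}. Condition~(i), the finite second-moment bound \eqref{viable1}, passes to the limit from the uniform bounds on $\int_0^T(\|u_1\|_\h^2+\|u_2\|_\h^2)\,dt\le N$ and on $\sup_{s}\ex\|Y^{\epsilon,u}(s)\|_{H^\theta}^2$ using lower semicontinuity of these (convex, WWNS-lower-semicontinuous) functionals under weak convergence of measures, together with uniform integrability coming from a slightly higher moment bound. Condition~(ii), the disintegration $P_i(du_1\,du_2\,dy\,dt)=\nu_t(du_1\,du_2\mid y)\mu^{\bar X(t)}(dy)\,dt$, is the ergodic-averaging statement: by construction $P^{\epsilon,\Delta}$ has $[0,T]$-marginal converging to Lebesgue measure, so the last marginal of $P_i$ is Lebesgue measure and a stochastic kernel $\nu$ exists by the disintegration theorem; the key point is that the conditional law of $Y^{\epsilon,u}(s)$ given the $\sigma$-field generated up to times shortly before $s$ relaxes to $\mu^{\bar X(s)}$ on the intermediate time scale $\Delta$ — this is where the scale-separation hypothesis \eqref{Delta1} ($\sqrt\delta h(\epsilon)/\sqrt\Delta\to0$) and the strong mixing of $Y^x$ (Hypotheses~\ref{A1c},~\ref{A2b}) enter, through an auxiliary comparison of $Y^{\epsilon,u}$ with the frozen-coefficient fast process $Y^{\bar X(t_k)}$ on blocks of length $\Delta$ and a perturbation estimate controlling the effect of the $\sqrt\delta h(\epsilon)u_2$ forcing. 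Condition~(iii), that $\psi=\eta_i$ solves the fixed-point equation \eqref{viable3} with $\Xi=\Xi_i$, follows by passing to the limit in the It\^o-derived representation of $\eta^{\epsilon,u}$: the term with $D_xF(\bar X(s),Y^{\epsilon,u}(s))\eta^{\epsilon,u}(s)$ becomes $\int S_1(t-s)D_xF(\bar X(s),y)\psi(s)\,dP_i$ by joint convergence (using continuity of $D_xF$ in $y$, which needs the $C_b^2$ regularity of $f$ and norm convergence of the $y$-marginal), the $\Sigma(X^{\epsilon,u},Y^{\epsilon,u})u_1$ term becomes the $\Sigma(\bar X(s),y)u_1$ term under $P_i$ (using $X^{\epsilon,u}\to\bar X$ and Lipschitz continuity of $\sigma$ from Hypothesis~\ref{A3b}, plus lower-semicontinuity/convergence arguments adapted to the weak topology on the $u_1$-coordinate), the $1/h(\epsilon)$ stochastic convolution vanishes, and the $\tfrac{\sqrt\delta}{\sqrt\epsilon}\Psi^0_2 u_2$ term converges to $\gamma_i\int S_1(t-s)\Psi^0_2(\bar X(s),y)u_2\,dP_i$ in Regime $i$ by \eqref{Regimes}, \eqref{gammai} and the uniform convergence \eqref{Psi0}.

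The main obstacle, I expect, is the rigorous treatment of the It\^o expansion that removes the singular coefficient: one must work with the finite-dimensional projections $Y^{\epsilon,u}_n$ of the fast motion (since $Y^{\epsilon,u}$ itself does not live in $Dom(A_2)$), apply It\^o's formula to $\Phi^\epsilon$ there, and then pass $n\to\infty$ controlling the trace term $\tfrac1{2\delta}\mathrm{tr}[D^2_y\Phi^\epsilon]$ — which a priori carries a dangerous $1/c(\epsilon)$ factor from \eqref{phiestimates} — against the $1/\delta$ prefactor and the choice of $c(\epsilon)$ (Section~\ref{4}); one also has to handle the cross term $\langle D_y\Phi^\epsilon, \sqrt\delta h(\epsilon)u_2\rangle$ which produces, after dividing by $\sqrt\epsilon h(\epsilon)$, precisely the surviving $\tfrac{\sqrt\delta}{\sqrt\epsilon}\Psi^\epsilon_2 u_2\to\gamma_i\Psi^0_2 u_2$ contribution, and argue that $\langle D_x\Phi^\epsilon,\cdot\rangle$-type remainders vanish despite the $1/c(\epsilon)$ bound by pairing them with the slow drift and noise increments which are themselves small. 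This interplay of the three small parameters $\epsilon$, $\delta$, $c(\epsilon)$ and of the block length $\Delta$, together with the fact that the controls are only square-integrable (so only $L^p([0,T];\h)$-bounds with $p\le 2$ are available for $Y^{\epsilon,u}$, per the remark after Hypothesis~\ref{A3a}), is the delicate heart of the argument; everything else is a combination of Gronwall, the smoothing estimates, and standard weak-convergence bookkeeping.
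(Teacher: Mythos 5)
Your proposal follows essentially the same route as the paper: tightness of $\eta^{\epsilon,u}$ from the Kolmogorov-equation/It\^o decomposition and the a priori bounds of Sections \ref{Sec2}--\ref{Sec3} (Proposition \ref{etatightness}, Lemma \ref{etarzela}), tightness of the occupation measures from weak compactness of balls plus the $H^\theta$-moment bound and the generalized Prokhorov theorem (Lemma \ref{Palaoglu}), identification of the limit by passing to the limit term by term in the decomposition \eqref{etaspacetimedec}, and verification of the disintegration \eqref{viable2} by comparison with the frozen-coefficient fast process on blocks of length $\Delta$ (Lemma \ref{fastergodiclem}). You also correctly single out the It\^o expansion on finite-dimensional projections and the survival of the $\tfrac{\sqrt\delta}{\sqrt\epsilon}\Psi^\epsilon_2 u_2$ term as the delicate points.

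The one step you leave under-specified is not mere bookkeeping: to average $\int_0^t S_1(t-s)\Psi^0_2(\bar X(s),Y^{\epsilon,u}_n(s))u_2(s)\,ds$ against the occupation measure you need, beyond the uniform convergence \eqref{Psi0}, that $(x,y)\mapsto\Psi^0_2(x,y)$ is Lipschitz in the operator norm and that $S_1(t-s)\Psi^0_2(x,y)$ is compact (so that the map is continuous from the weak topology on the $u_2$-coordinate). The paper obtains this Lipschitz continuity by differentiating the Feynman--Kac representation and analyzing the first variation equation of the frozen fast process (Lemma \ref{varicontlem}, Lemma \ref{semicont}, Corollary \ref{Psicont}); this is precisely where the strengthened dissipativity Hypothesis \ref{A2c} ($\omega=(\lambda-3L_g)/2>0$) is used, and it is a genuine additional ingredient rather than a routine continuity check. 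Your plan is otherwise sound, but as written it would stall at Lemma \ref{controlocculem} without this input.
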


\begin{thm}\label{MDP1}(Moderate Deviation Principle) Let $i=1,2$, $T<\infty$, $a>0$ arbitrarily small and $(X^{\epsilon,x_0,y_0}, Y^{\epsilon,x_0,y_0}), \bar{X}^x$ be the mild solutions to \eqref{model} and \eqref{x-aved} with initial conditions $x_0,y_0\in H^{a}$ .
 Define $\mathcal{S}_i:C([0, T ]; \h)\rightarrow [0,\infty]$, \begin{equation*}
\mathcal{S}_i(\phi):=\inf_{(\phi,P)\in\mathcal{V}_{(\Xi_i, \mu^{\bar{X}})} }\bigg[\frac{1}{2}\int_{\h\times\h\times\h\times[0,T]}\big( \|u_1 \|^2_\h+ \|u_2 \|^2_\h\big)\;dP(u_1,u_2,y,t)\bigg] \;\;,\phi\in C\big([0,T];\h\big)
\end{equation*}
 with the convention that $\inf\varnothing=\infty$.
Assuming Hypotheses \ref{A1a}-\ref{A1c}, \ref{A2a}-\ref{A2c},  \ref{A3a'},\ref{A3b} and Regime $i$ we have that for every bounded and continuous function $\Lambda : C([0, T ]; \h)\rightarrow \R$:
\begin{equation*}
\label{Laplace1}
\begin{aligned}
&\lim_{\epsilon\to 0}\frac{1}{h^2(\epsilon)}\log\ex\big[ e^{-h^2(\epsilon)\Lambda(\eta^\epsilon)}\big]=-\inf_{\phi\in C([0,T];
	\h)}\big[ \mathcal{S}_i(\phi)+\Lambda(\phi) \big],
\end{aligned}
\end{equation*}
where $$\eta^{\epsilon}=\frac{X^{\epsilon,x_0,y_0}-\bar{X}^{x_0}}{\sqrt{\epsilon}h(\epsilon)}\;.$$
\noindent In particular, $\{X^{\epsilon}\}$ satisfies a Moderate Deviation Principle in $C([0,T];\h)$ in Regime $i$  with rate function $\mathcal{S}_i$.
\end{thm}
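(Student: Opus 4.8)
The plan is to prove the Laplace principle by establishing the lower and upper bounds separately, exploiting the variational representation \eqref{varrep} together with the averaging result of Theorem \ref{viablim1}. Throughout, one reduces attention to controls $u=u^\epsilon\in\mathcal{P}^T_N$ for a fixed large $N$, as allowed by \cite{boue1998variational}, and uses that under Hypothesis \ref{A3a'} the multiplication operator $\Sigma$ is bounded on $\h$, which simplifies the a priori bounds from Sections \ref{Sec2}--\ref{Sec3}.

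\textbf{Upper bound.} First I would fix a bounded continuous $\Lambda$ and, for each $\epsilon$, choose a near-minimizing control $u^\epsilon\in\mathcal{P}^T_N$ in \eqref{varrep}, so that the right-hand side is within $\epsilon$ of the infimum. By Theorem \ref{viablim1} the pair $(\eta^{\epsilon,u^\epsilon},P^{\epsilon,\Delta})$ is tight; passing to a subsubsequence it converges in distribution to some $(\eta_i,P_i)$ which is almost surely a viable pair in $\mathcal{V}_{(\Xi,\mu^{\bar X})}$, i.e. $\eta_i$ solves \eqref{viable3} with kernel $P_i$. Using lower semicontinuity of $\|\cdot\|^2_\h$ under weak convergence together with Fatou's lemma (and the fact that the quadratic cost of $u^\epsilon$ dominates, via Jensen, the cost $\tfrac12\int\|u_1\|^2_\h+\|u_2\|^2_\h\,dP_i$ expressed through the limiting occupation measure), one obtains
\begin{equation*}
\liminf_{\epsilon\to0}\;\mathrm{RHS}\;\geq\;\ex\Big[\tfrac12\!\!\int\!\!\big(\|u_1\|^2_\h+\|u_2\|^2_\h\big)\,dP_i+\Lambda(\eta_i)\Big]\;\geq\;\ex\big[S_i(\eta_i)+\Lambda(\eta_i)\big]\;\geq\;\inf_{\phi}\big[S_i(\phi)+\Lambda(\phi)\big],
\end{equation*}
where the middle inequality is the definition of $S_i$ applied to the (random) viable pair $(\eta_i,P_i)$. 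Since the left-hand side of \eqref{varrep} equals $-\tfrac{1}{h^2}\log\ex[e^{-h^2\Lambda(\eta^\epsilon)}]$, this is exactly the Laplace principle upper bound $\limsup_{\epsilon\to0}\tfrac{1}{h^2}\log\ex[e^{-h^2\Lambda(\eta^\epsilon)}]\leq-\inf_\phi[S_i(\phi)+\Lambda(\phi)]$.

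\textbf{Lower bound.} This is the harder half. Fix $\phi\in C([0,T];\h)$ with $S_i(\phi)<\infty$ and a viable pair $(\phi,P)$ that is $\varepsilon_0$-nearly optimal for $S_i(\phi)$; by \eqref{viable2} we may disintegrate $P(du_1\,du_2\,dy\,dt)=\nu_t(du_1\,du_2\mid y)\,\mu^{\bar X(t)}(dy)\,dt$. The task is to exhibit, for each small $\epsilon$, an admissible control $u^\epsilon\in\mathcal{P}^T_N$ whose controlled moderate-deviation process $\eta^{\epsilon,u^\epsilon}$ converges to $\phi$ and whose cost converges to $\tfrac12\int(\|u_1\|^2+\|u_2\|^2)\,dP$. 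The construction is in feedback form: one builds $u^\epsilon(t)=\big(u_1^\epsilon(t,Y^{\epsilon,u^\epsilon}(t)),u_2^\epsilon(t,Y^{\epsilon,u^\epsilon}(t))\big)$ by first regularizing $\nu_t$ so that it has a smooth density in the control variables and a measurable, suitably bounded dependence on $(t,y)$ (a mollification/discretization-in-time argument as in \cite{dupuis2012large,spiliopoulos2013large,morse2017moderate}), then plugging this feedback into \eqref{controlledsystem}. One must check: (a) well-posedness of the feedback-controlled system and the uniform cost bound $u^\epsilon\in\mathcal{P}^T_N$; (b) that along $u^\epsilon$ the occupation measure $P^{\epsilon,\Delta}$ converges to the prescribed $P$ — this uses the ergodic/mixing properties of $Y^{x}$ and the fact that on the $\Delta$-scale the fast process equilibrates to $\mu^{\bar X(t)}$ while the control, being a function of $(t,y)$, integrates against it to reproduce $\nu_t$; (c) that consequently $\eta^{\epsilon,u^\epsilon}\to\phi$ in $C([0,T];\h)$, via the limiting identity \eqref{viable3} with $\Xi=\Xi_i$ and uniqueness of its solution (the limiting equation is affine in $\psi$, so Gr\"onwall gives uniqueness). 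Then
\begin{equation*}
\limsup_{\epsilon\to0}\;\mathrm{RHS}\;\leq\;\liminf_{\epsilon\to0}\ex\Big[\tfrac12\!\!\int_0^T\!\!\big(\|u_1^\epsilon\|^2_\h+\|u_2^\epsilon\|^2_\h\big)dt+\Lambda(\eta^{\epsilon,u^\epsilon})\Big]\leq\tfrac12\!\!\int\!\!\big(\|u_1\|^2+\|u_2\|^2\big)dP+\Lambda(\phi)\leq S_i(\phi)+\Lambda(\phi)+\varepsilon_0,
\end{equation*}
and letting $\varepsilon_0\to0$ and infimizing over $\phi$ yields the matching lower bound. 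The main obstacle is step (b)--(c) of the lower bound: controlling the error between the true occupation measure along the feedback control and the idealized measure $P$, since the fast process is infinite-dimensional, its paths do not live in $Dom(A_2)$, and one must quantify the rate of equilibration uniformly against the (only $L^2$-bounded, mollified) feedback — this is precisely where the regularity of the Kolmogorov-equation solution $\Phi^\epsilon$, the bound \eqref{Psi0} on $\Psi_2^\epsilon-\Psi_2^0$, and the scale separation \eqref{Delta1} between $\Delta$, $\delta$ and $h(\epsilon)$ are all needed.

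Finally, the Laplace principle just established is equivalent to the stated LDP for $\{\eta^\epsilon\}$ with speed $h(\epsilon)$ (goodness of the rate function $S_i$ follows from compactness of sublevel sets of viable-pair costs, obtained again from the tightness in Theorem \ref{viablim1} applied with deterministic controls), which by definition is the Moderate Deviation Principle for $\{X^\epsilon\}$ in $C([0,T];\h)$ with rate function $S_i$.
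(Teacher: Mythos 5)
Your upper bound is essentially the paper's argument (near-minimizing controls in \eqref{varrep}, Theorem \ref{viablim1}, Portmanteau/lower semicontinuity of the quadratic cost), and your remark on reduction to $\mathcal{P}^T_N$ and on the equivalence LP $\Leftrightarrow$ LDP via compact sublevel sets also matches Section \ref{levelsets}.

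The lower bound is where your proposal diverges, and where it has a genuine gap. You propose to start from an arbitrary nearly optimal viable pair $(\phi,P)$, disintegrate $P$ into a stochastic kernel $\nu_t(du\mid y)$, mollify/discretize that kernel, and feed the resulting relaxed control back into \eqref{controlledsystem}. You then correctly identify steps (b)--(c) — equilibration of the occupation measure along this feedback and convergence $\eta^{\epsilon,u^\epsilon}\to\phi$ — as ``the main obstacle,'' but you give no argument for them. This is not a technicality one can wave at: for the averaging along a feedback control to go through in this infinite-dimensional setting, one needs the feedback $(t,y)\mapsto u(t,y)$ to be Lipschitz in $y$ uniformly with an $L^2$-in-$t$ envelope (this is what makes the analogue of Propositions \ref{IIlim} and \ref{controlim} work with $Y^{\epsilon,u}$ replaced by the auxiliary frozen-slow process), and a mollification of a general measurable kernel $\nu_t$ on the infinite-dimensional space $\h$ does not obviously produce this. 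The paper avoids the issue entirely by a different route: it first reduces to the ordinary control formulation (Lemma \ref{ordformlem}), then exploits the affine structure of $\Xi_i$ to show that the infimum in $S_i(\psi)$ is \emph{attained} at the explicit controls
\begin{equation*}
v^i_1(t,y)=\Sigma^*\big(\bar{X}(t),y\big)Q^{-1}_i\big(\bar{X}(t)\big)\big[\partial_t\psi(t)-A_1\psi(t)-\overline{D_xF}\big(\bar{X}(t)\big)\psi(t)\big],
\end{equation*}
and similarly $v_2^i$ with $\gamma_i\Psi_2^{0*}$ in place of $\Sigma^*$ (Proposition \ref{optimalprop}), yielding the non-variational form \eqref{nonvar} of the rate function. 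These optimizers are automatically Lipschitz in $y$ with an $L^2[0,T]$ envelope $\kappa_i(t)$, by Hypotheses \ref{A3a'}, \ref{A3b} and the Lipschitz continuity of $\Psi_2^0$ in $y$ (Corollary \ref{Psicont}), so the feedback construction and the averaging for $\eta^{\epsilon,v^{i,\epsilon}}$ go through. Note also that this is the real role of Hypothesis \ref{A3a'}: the uniform lower bound $\sigma\geq c_1$ is what makes $Q_i(x)$ boundedly invertible, which is the linchpin of the explicit minimizer; it is not merely a simplification of the a priori bounds as you suggest. To repair your argument you would either have to carry out the paper's explicit-minimizer computation, or supply a genuinely new regularization scheme for relaxed controls on $\h$ together with a quantitative equilibration estimate — neither of which is sketched in your proposal.
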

 \noindent The proof of Theorem \ref{viablim1} can be found in Section \ref{aveproof} while Theorem \ref{MDP1} is proved in Section \ref{MDPsec}.
In fact, by letting $Q_i:\h\rightarrow \mathscr{L}(\h)$,
\begin{equation}
Q_i(x)=\int_{\h}\bigg(   \Sigma(x,y)\Sigma^*(x,y)+\gamma^2_i\Psi^0_2(x,y)\Psi^{0*}_2(x,y) \bigg)d\mu^x(y)
\end{equation}
with $\gamma_i$ and $\Psi_2^0$ as in \eqref{gammai} and \eqref{Psi0} respectively, we prove that our rate function $\mathcal{S}_i$ has an explicit non-variational form given by
\begin{equation}\label{nonvar}
\mathcal{S}_i(\psi)=\frac{1}{2}\int_{0}^{T}\bigg\|Q_i\big(\bar{X}(t)\big)^{-\frac{1}{2}}\big[ \partial_t\psi(t)- A_1\psi(t)-\overline{D_xF}\big(\bar{X}(t)\big)\psi(t)\big]\bigg\|^2_\h dt
\end{equation}
for $\psi\in  H_0^1([0,T];\h)\cap L^2([0,T];Dom(A_1))$ and $\mathcal{S}_i=\infty$ otherwise (see Proposition \ref{optimalprop}).

\section{A priori bounds for the solution of the controlled system}\label{Sec2}

\noindent As discussed in Section \ref{weakconv}, the variational representation \eqref{varrep} gives rise to a slow-fast pair of controlled stochastic reaction-diffusion equations. In this section we prove a priori estimates for the mild solution pair $(X^{\epsilon,u}, Y^{\epsilon,u})$  (see \eqref{controlledsystemild}) that are uniform over compact time intervals, $u\in\mathcal{P}_N^T$ and $\epsilon$ sufficiently small. These preliminary estimates hold in both Regimes $1$ and $2$ and we will use them to prove a priori bounds and tightness for the family $\{\eta^{\epsilon,u};\epsilon,u\}$ in Sections \ref{Sec3} and \ref{Sec4}.

We start with two auxiliary estimates for the moments of the space-time $L^2$ norm and  the $C([0,T];\h)$ norm of the controlled fast process $Y^{\epsilon,u}$. Due to the multiple scales, the latter is singular at $\delta=0$. The proofs rely on the dissipativity assumption \eqref{extradiss}. As is customary, we use the same notation for different but unimportant constants that may change from line to line.
\begin{lem}\label{Yprebnd} Let $T<\infty$,  $p\geq1$, $\epsilon\in(0,1)$ and $u\in\mathcal{P}^T_N$. In both Regimes 1 and 2, there exists a constant $C>0$, independent of $\epsilon$, such that
	\begin{equation}\label{Yint}
	\begin{aligned}
	\ex\|Y^{\epsilon,u}\|_{L^2([0,T];\h)}^{2p}& \leq C \bigg(1+ \|y_0\|^{2p}_\h+
	\int_{0}^{T}   \ex\|  X^{\epsilon,u}(t)\|^{2p}_\h dt\;\bigg).
	\end{aligned}
	\end{equation}
	Moreover, for any $\rho\in(1/2,1)$ and $\epsilon$ sufficiently small we have
	\begin{equation}\label{Ypresup}
	\begin{aligned}
	\ex\sup_{t\in[0,T]}\|Y^{\epsilon,u}(t)\|^2_{\h}& \leq C\bigg(   1  + \|y_0\|^2_\h+\ex\sup_{t\in[0,T]} \|  X^{\epsilon,u}(t)\|^2_\h+h^2(\epsilon)+\delta^{\rho-1}\bigg).
	\end{aligned}
	\end{equation}
\end{lem}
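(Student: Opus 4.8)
\emph{Strategy.}
The plan is to estimate $Y^{\epsilon,u}$ directly from its mild formulation \eqref{controlledsystemild}, exploiting the exponential decay \eqref{S2decay}, the strict dissipativity \eqref{diss2} of $g$ in its last variable, and plain Cauchy--Schwarz for the control term --- the only point at which the mere square integrability of $u_2$ is used. Write $\mathcal{W}^{\epsilon}(t)=\tfrac1{\sqrt\delta}\int_0^tS_2\big(\tfrac{t-s}{\delta}\big)\,dw_2(s)$ for the rescaled stochastic convolution, and split the reaction term as $G(x,y)=\big(G(x,y)-G(x,0)\big)+G(x,0)$: the mean value theorem together with \eqref{diss2} gives $\|G(X^{\epsilon,u}(s),Y^{\epsilon,u}(s))-G(X^{\epsilon,u}(s),0)\|_\h\le L_g\|Y^{\epsilon,u}(s)\|_\h$, while the global Lipschitz continuity of $G$ gives $\|G(X^{\epsilon,u}(s),0)\|_\h\le C(1+\|X^{\epsilon,u}(s)\|_\h)$. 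Taking $\h$-norms in \eqref{controlledsystemild} then yields, $\pr$-a.s., the linear integral inequality
\begin{equation*}
\|Y^{\epsilon,u}(t)\|_\h\ \le\ \alpha^{\epsilon,u}(t)+\frac{L_g}{\delta}\int_0^te^{-\lambda(t-s)/\delta}\|Y^{\epsilon,u}(s)\|_\h\,ds ,
\end{equation*}
where $\alpha^{\epsilon,u}(t)$ collects the initial--datum term $e^{-\lambda t/\delta}\|y_0\|_\h$, the drift term $\tfrac C\delta\int_0^te^{-\lambda(t-s)/\delta}\big(1+\|X^{\epsilon,u}(s)\|_\h\big)\,ds$, the control term $\tfrac{h(\epsilon)}{\sqrt\delta}\int_0^te^{-\lambda(t-s)/\delta}\|u_2(s)\|_\h\,ds$ and the stochastic convolution $\|\mathcal{W}^{\epsilon}(t)\|_\h$. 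Everything below is uniform in $\delta$ for two reasons: every exponential kernel appearing has $L^1(0,\infty)$-norm free of $\delta$ (e.g.\ $\|\tfrac1\delta e^{-\lambda\cdot/\delta}\|_{L^1}=\lambda^{-1}$), and the resolvent kernel of $\tfrac{L_g}{\delta}e^{-\lambda\cdot/\delta}$ equals $\tfrac{L_g}{\delta}e^{-(\lambda-L_g)\cdot/\delta}$, whose $L^1$-norm $L_g/(\lambda-L_g)$ is finite precisely because of the dissipativity gap $\lambda>L_g$ (cf.\ \eqref{diss2}).

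\emph{Proof of \eqref{Yint}.}
I would resolve the integral inequality and apply Young's convolution inequality in $L^2([0,T])$ to obtain $\|Y^{\epsilon,u}\|_{L^2([0,T];\h)}\le C(\lambda,L_g)\,\|\alpha^{\epsilon,u}\|_{L^2([0,T];\h)}$. A second use of Young's inequality bounds the $L^2([0,T])$-norm of the first two summands of $\alpha^{\epsilon,u}$ by $C\|y_0\|_\h$ and $C\big(1+\|X^{\epsilon,u}\|_{L^2([0,T];\h)}\big)$, and the control summand by $\tfrac{h(\epsilon)\sqrt\delta}{\lambda}\big(\int_0^T\|u_2\|_\h^2\big)^{1/2}\le C\,h(\epsilon)\sqrt\delta$, which is bounded (indeed $\to0$) in Regimes $1$ and $2$ since $h(\epsilon)\sqrt\delta=(h(\epsilon)\sqrt\epsilon)(\sqrt\delta/\sqrt\epsilon)$ by \eqref{h}, \eqref{Regimes} and $\int_0^T\|u_2\|_\h^2\le N$; the last summand contributes $\|\mathcal{W}^{\epsilon}\|_{L^2([0,T];\h)}$. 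Raising to the power $2p$, taking expectations, using $\ex\|X^{\epsilon,u}\|_{L^2([0,T];\h)}^{2p}\le T^{p-1}\int_0^T\ex\|X^{\epsilon,u}(t)\|_\h^{2p}\,dt$ (Jensen), and the bound $\ex\|\mathcal{W}^{\epsilon}\|_{L^2([0,T];\h)}^{2p}\le C$ uniform in $\epsilon$ --- which follows from the classical stochastic-convolution estimates in Appendix \ref{AppA}, the underlying cancellation being $\ex\|\mathcal{W}^{\epsilon}(t)\|_\h^2=\tfrac1\delta\int_0^t\|S_2(\tfrac{t-s}{\delta})\|_{\mathscr{L}_2(\h)}^2\,ds=\sum_n\tfrac{1-e^{-2a_{2,n}t/\delta}}{2a_{2,n}}\le\sum_n\tfrac1{2a_{2,n}}<\infty$ --- gives \eqref{Yint}. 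All even moments are available here because the control enters $\alpha^{\epsilon,u}$ only through the $\pr$-a.s.\ bounded quantity $\int_0^T\|u_2\|_\h^2$; a sup-in-time bound would instead require control of $\int_0^T\|u_2\|_\h^{2p}$, which is why \eqref{Ypresup} is stated only for the second moment.

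\emph{Proof of \eqref{Ypresup}.}
Here I would take $\sup_{t\le T}$ directly in the integral inequality. Since $Y^{\epsilon,u}\in C([0,T];\h)$ $\pr$-a.s., the quantity $\sup_{t\le T}\|Y^{\epsilon,u}(t)\|_\h$ is a.s.\ finite, and the implicit term $\sup_{t\le T}\tfrac{L_g}{\delta}\int_0^te^{-\lambda(t-s)/\delta}\|Y^{\epsilon,u}(s)\|_\h\,ds\le\tfrac{L_g}{\lambda}\sup_{t\le T}\|Y^{\epsilon,u}(t)\|_\h$ can be absorbed into the left-hand side because $L_g<\lambda$. The rest of $\sup_{t\le T}\alpha^{\epsilon,u}(t)$ is bounded a.s.\ by $\|y_0\|_\h$, by $\tfrac C\lambda\big(1+\sup_{t\le T}\|X^{\epsilon,u}(t)\|_\h\big)$, by $\tfrac{\sqrt N}{\sqrt{2\lambda}}\,h(\epsilon)$ (Cauchy--Schwarz in the control term, which now unavoidably costs a full power of $h(\epsilon)$, whence --- in contrast to \eqref{Yint} --- the extra $h^2(\epsilon)$ in the bound), and by $\sup_{t\le T}\|\mathcal{W}^{\epsilon}(t)\|_\h$. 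Squaring, taking expectations, and invoking the temporal-regularity estimate $\ex\sup_{t\le T}\|\mathcal{W}^{\epsilon}(t)\|_\h^2\le C\delta^{\rho-1}$ for every $\rho\in(1/2,1)$ from Appendix \ref{AppA} --- where, writing $\mathcal{W}^{\epsilon}(\cdot)\overset{d}{=}\mathcal{W}(\cdot/\delta)$ for the un-rescaled convolution, the loss reflects the $\delta^{-\beta}$-growth of the $\beta$-H\"older seminorm for $\beta<1/4$ --- together with $h^2(\epsilon)\le h^2(\epsilon)\delta^{\rho-1}$ for $\epsilon$ small (since $\delta(\epsilon)\to0$ forces $\delta^{\rho-1}\ge1$), gives \eqref{Ypresup}.

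\emph{Main obstacle.}
The delicate part throughout is the bookkeeping of the singular $1/\delta$-scaling: one must check that resolving the integral inequality and, above all, the stochastic-convolution bounds produce constants independent of $\delta$. The former hinges on the strict dissipativity $\lambda-L_g>0$ making the resolvent kernel integrable with $\delta$-free $L^1$-norm; the latter holds for the space-time $L^2$-norm because the $1/\sqrt\delta$ normalization of the fast noise exactly offsets the time rescaling of $S_2$, but no such exact compensation survives at the level of the supremum in time, and quantifying the resulting (and genuinely unavoidable) loss is precisely what produces the factor $\delta^{\rho-1}$ in \eqref{Ypresup}.
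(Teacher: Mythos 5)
Your proof is correct, but it takes a genuinely different route from the paper's. The paper argues by an energy estimate: it sets $\Gamma^{\epsilon,u}=Y^{\epsilon,u}-w^\delta_{A_2}$, differentiates $\|\Gamma^{\epsilon,u}(t)\|^2_\h$ in time, uses the quadratic-form dissipativity $\langle A_2x,x\rangle_\h\le-\lambda\|x\|^2_\h$ from Hypothesis \ref{A1c} together with Young's product inequality (with constants tuned so that the gap $\lambda-L_g>0$ survives), and integrates the resulting differential inequality; keeping the negative term $-\frac{\lambda-L_g}{2\delta}\int_0^t\|\Gamma^{\epsilon,u}(s)\|_\h^2\,ds$ and moving it to the left yields \eqref{Yint}, while dropping it yields \eqref{Ypresup}. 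You instead work with the mild formulation, derive a linear Volterra inequality with kernel $\frac{L_g}{\delta}e^{-\lambda\cdot/\delta}$, and resolve it with the explicit resolvent kernel $\frac{L_g}{\delta}e^{-(\lambda-L_g)\cdot/\delta}$, whose $L^1$-norm $L_g/(\lambda-L_g)$ is $\delta$-free for exactly the same reason. Both proofs then lean on the same Appendix \ref{AppA} bounds for the stochastic convolution (Lemma \ref{stoconvb}(i) for \eqref{Yint} and (ii) for \eqref{Ypresup}), and both isolate the control term by Cauchy--Schwarz, producing the same $h(\epsilon)\sqrt\delta$ (respectively $h(\epsilon)$) contributions. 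Your route has the advantage of using only the operator-norm decay \eqref{S2decay} rather than the spectral representation of $A_2$, and of sidestepping the justification that $\Gamma^{\epsilon,u}$ has weakly differentiable paths and that the chain rule for $\|\Gamma^{\epsilon,u}\|_\h^2$ applies; the paper's energy method, on the other hand, is the one that would survive a merely one-sided Lipschitz (e.g.\ polynomial) reaction term $g$, where taking norms directly in the mild formulation is no longer available.
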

\begin{proof}
	
	Let $Y^{\epsilon,u}$ be the mild solution of the controlled fast equation (see \eqref{controlledsystemild}), $$w^\delta_{A_2}(t)=\frac{1}{\sqrt{\delta}}\int_{0}^{t} S_2\bigg(\frac{t-z}{\delta}\bigg) dw_2(z)$$
	be the stochastic convolution term and
	\begin{equation}\label{Gammadef}
	\Gamma^{\epsilon,u}(t):=Y^{\epsilon,u}(t)-w^\delta_{A_2}(t)\;,\;\;t\in[0,T].
	\end{equation}
  With probability $1$, the process $\Gamma^{\epsilon,u}$ has weakly differentiable paths  and satisfies
	\begin{equation*}
	\partial_t\Gamma^{\epsilon,u}(t)= \frac{1}{\delta}\big[A_2 \Gamma^{\epsilon,u}(t)+ G\big(X^{\epsilon,u}(t),\Gamma^{\epsilon,u}(t)+ w^\delta_{A_2}(t)\big)\big]+\frac{h(\epsilon)}{\sqrt{\delta}}u_2(t)
	\end{equation*}
	\noindent in a weak sense.	Hence,
	\begin{equation}\label{enesty}
	\begin{aligned}
	\frac{1}{2}\partial_t\|\Gamma^{\epsilon,u}(t)\|^2_{\h}&= \blangle \partial_t\Gamma^{\epsilon,u}(t),\Gamma^{\epsilon,u}(t) \brangle_\h
	= \frac{1}{\delta}\blangle A_2 \Gamma^{\epsilon,u}(t),\Gamma^{\epsilon,u}(t) \brangle_\h\\&+  \frac{1}{\delta}\blangle  G\big(X^{\epsilon,u}(t),\Gamma^{\epsilon,u}(t)+ w^\delta_{A_2}(t)\big), \Gamma^{\epsilon,u}(t)\brangle_\h
	+ \frac{h(\epsilon)}{\sqrt{\delta}}\blangle u_2(t), \Gamma^{\epsilon,u}(t)\brangle_\h.
	\end{aligned}
	\end{equation}
	\noindent For the first term above we invoke Hypothesis \ref{A1c}  to obtain
	\begin{equation}\label{Gamma1}
	\langle A_2 \Gamma^{\epsilon,u}(t),\Gamma^{\epsilon,u}(t) \rangle_\h=\sum_{n=1}^{\infty}(-a_{2,n}) \langle \Gamma^{\epsilon,u}(t),e_{2,n}\rangle^2_\h\leq -\lambda\|\Gamma^{\epsilon,u}(t)\|^2_\h\;.
	\end{equation}
	\noindent For the second term in \eqref{enesty} we invoke Hypothesis \ref{A2b} which implies that $G:\h\times\h\rightarrow \h$ is $L_g$-Lipschitz and with $C_g=(\|G(0_\h,0_\h)\|_\h\vee L_g)$ we have
	\begin{equation}\label{Gamma2}
	\begin{aligned}
	\big|\blangle G\big(X^{\epsilon,u}(t)&,\Gamma^{\epsilon,u}(t)+w^\delta_{A_2}(t)\big),  \Gamma^{\epsilon,u}(t)   \brangle_\h\big|\leq  \big|\blangle G\big(X^{\epsilon,u}(t),w^\delta_{A_2}(t)\big), \Gamma^{\epsilon,u}(t)   \brangle_\h\big|\\&
	+  \big|\blangle G\big(X^{\epsilon,u}(t),\Gamma^{\epsilon,u}(t)+w^\delta_{A_2}(t)\big)-G\big(X^{\epsilon,u}(t),w^\delta_{A_2}(t)\big),  \Gamma^{\epsilon,u}(t)   \brangle_\h\big|
\\&
	\leq C_g\|\Gamma^{\epsilon,u}(t)\|_\h\bigg(   1+\| w^\delta_{A_2}(t)\|_\h  +  \|  X^{\epsilon,u}(t)\|_\h      \bigg)+ L_g\|\Gamma^{\epsilon,u}(t)\|^2_\h.
	\end{aligned}
	\end{equation}
 Combining \eqref{enesty}, \eqref{Gamma1} and \eqref{Gamma2} we obtain
	\begin{equation*}
	\begin{aligned}
	\frac{1}{2}\partial_t\|\Gamma^{\epsilon,u}(t)\|^2_{\h}&\leq  \frac{C_g}{\delta}\|\Gamma^{\epsilon,u}(t)\|_\h\bigg(   1+\| w^\delta_{A_2}(t)\|_\h  +  \|  X^{\epsilon,u}(t)\|_\h      \bigg)\\&+ \frac{L_g-\lambda}{\delta}\|\Gamma^{\epsilon,u}(t)\|^2_\h+\frac{h(\epsilon)}{\sqrt{\delta}}\|\Gamma^{\epsilon,u}(t)\|_\h\|u_2(t)\|_\h\;.
	\end{aligned}
	\end{equation*}
	\noindent Next, let $\beta_1,\beta_2>0$. From an application of Young's inequality for products on the first and third terms,
	\begin{equation*}
	\begin{aligned}
	\frac{1}{2}\partial_t\|\Gamma^{\epsilon,u}(t)\|^2_{\h}&\leq  \frac{C_g\beta_1^2}{4\delta}\|\Gamma^{\epsilon,u}(t)\|^2_{\h} +\frac{2C_g}{2\delta
		\beta_1^2}\bigg(   1+\| w^\delta_{A_2}(t)\|^2_\h  +  \|  X^{\epsilon,u}(t)\|^2_\h      \bigg)\\&+ \frac{L_g-\lambda}{\delta}\|\Gamma^{\epsilon,u}(t)\|^2_\h+\frac{h(\epsilon)}{4\sqrt{\delta}}\beta^2_2\|\Gamma^{\epsilon,u}(t)\|^2_\h+\frac{2h(\epsilon)}{2\sqrt{\delta}\beta^2_2}\|u_2(t)\|^2_\h\;.
	\end{aligned}
	\end{equation*}
	From Hypothesis \ref{A2b} we have $\lambda-L_g>0$ and thus we can choose $\beta_1^2=(\lambda-L_g)/C_g$ and $\beta_2^2=(\lambda-L_g)/(h(\epsilon)\sqrt{\delta})$ to obtain
	\begin{equation}\label{Gammabnd}
	\begin{aligned}
	\frac{1}{2}\partial_t\|\Gamma^{\epsilon,u}(t)\|^2_{\h}&\leq  \frac{\lambda-L_g}{4\delta} \|\Gamma^{\epsilon,u}(t)\|^2_\h+\frac{C^2_g}{(\lambda-L_g)\delta}\bigg(   1+\| w^\delta_{A_2}(t)\|^2_\h  +  \|  X^{\epsilon,u}(t)\|^2_\h      \bigg)\\&+ \frac{L_g-\lambda}{\delta}\|\Gamma^{\epsilon,u}(t)\|^2_\h+\frac{\lambda-L_g}{4\delta}\|\Gamma^{\epsilon,u}(t)\|^2_\h+\frac{h^2(\epsilon)}{\lambda-L_g}\|u_2(t)\|^2_\h\\&
	=-\frac{1}{\delta}\bigg(\frac{\lambda-L_g}{2}\bigg) \|\Gamma^{\epsilon,u}(t)\|^2_\h+\frac{C^2_g}{(\lambda-L_g)\delta}\bigg(   1+\| w^\delta_{A_2}(t)\|^2_\h  +  \|  X^{\epsilon,u}(t)\|^2_\h      \bigg)\\&+\frac{h^2(\epsilon)}{\lambda-L_g}\|u_2(t)\|^2_\h\;.
	\end{aligned}
	\end{equation}
	\noindent Integrating this inequality yields
	\begin{equation}\label{Ymain}
	\begin{aligned}
	\frac{1}{2}\|\Gamma^{\epsilon,u}(t)\|^2_{\h}-\frac{1}{2}\|y_0\|^2_\h&\leq 	-\frac{1}{\delta}\bigg(\frac{\lambda-L_g}{2}\bigg)\int_{0}^{t} \|\Gamma^{\epsilon,u}(s)\|^2_\h ds\\&+\frac{C^2_g}{(\lambda-L_g)\delta}\int_{0}^{t}\bigg(   1+\| w^\delta_{A_2}(s)\|^2_\h  +  \|  X^{\epsilon,u}(s)\|^2_\h      \bigg)ds+\frac{h^2(\epsilon)N^2}{\lambda-L_g}\;,
	\end{aligned}
	\end{equation}
	\noindent where the last term follows from the fact that $u_2\in\mathcal{P}_N^T$. Letting $\ell=(\lambda-L_g)/2$, multiplying throughout by $\delta/\ell$ and dropping the nonnegative term $(\delta/2\ell)\sup_{t\in[0,T]}\|\Gamma^{\epsilon,u}(t)\|^2_{\h}$ we see that
	\begin{equation*}
	\begin{aligned}
	\int_{0}^{T} \|\Gamma^{\epsilon,u}(s)\|^2_\h ds	&\leq \frac{\delta}{2\ell}\sup_{t\in[0,T]}\|\Gamma^{\epsilon,u}(t)\|^2_{\h}+\int_{0}^{T} \|\Gamma^{\epsilon,u}(s)\|^2_\h ds\\&\leq \frac{\delta}{2\ell}\|y_0\|^2_\h+
	\frac{C^2_g}{(\lambda-L_g)\ell}\int_{0}^{T}\bigg(   1+\| w^\delta_{A_2}(s)\|^2_\h  +  \|  X^{\epsilon,u}(s)\|^2_\h      \bigg)ds+\frac{N^2\delta h^2(\epsilon)}{(\lambda-L_g)\ell}\;.
	\end{aligned}
	\end{equation*}
	\noindent Regarding the last term on the right-hand side, note that, in both Regimes $1$ and $2$ (see \eqref{Regimes}, \eqref{h}),
	$$\delta h^2(\epsilon)=\bigg(\frac{\delta}{\epsilon}\bigg)\epsilon h^2(\epsilon)\longrightarrow 0\;,$$
	as $\epsilon\to 0$. Hence, for all  sufficiently small $\epsilon$,
	\begin{equation*}
	\begin{aligned}
	\int_{0}^{T} \|\Gamma^{\epsilon,u}(s)\|^2_\h ds	\leq 1+\|y_0\|^2_\h+
	C\int_{0}^{T}\bigg(   1+\| w^\delta_{A_2}(s)\|^2_\h  +  \|  X^{\epsilon,u}(s)\|^2_\h      \bigg)ds,
	\end{aligned}
	\end{equation*}
	\noindent and	in view of \eqref{Gammadef} we have
	\begin{equation*}
	\begin{aligned}
	\int_{0}^{T} \|Y^{\epsilon,u}(s)\|^2_\h ds&\leq	C_1\int_{0}^{T} \|\Gamma^{\epsilon,u}(s)\|^2_\h ds+ 	C_2\int_{0}^{T} \|w^{\delta}_{A_2}(s)\|^2_\h ds\\& \leq C_1(1+ \|y_0\|^2_\h)+
	C_2\int_{0}^{T}\bigg(   1+\| w^\delta_{A_2}(s)\|^2_\h  +  \|  X^{\epsilon,u}(s)\|^2_\h      \bigg)ds.
	\end{aligned}
	\end{equation*}
	\noindent After taking expectation we deduce that
	\begin{equation*}
	\begin{aligned}
	\ex\bigg(\int_{0}^{T} \|Y^{\epsilon,u}(s)\|^2_\h ds\bigg)^p& \leq C_p(1+ \|y_0\|^{2p}_\h)+C'_p
	\int_{0}^{T}\bigg(   1+\ex\| w^\delta_{A_2}(s)\|^{2p}_\h  +  \ex\|  X^{\epsilon,u}(s)\|^{2p}_\h      \bigg)ds
	\end{aligned}
	\end{equation*}
	and \eqref{Yint} follows upon invoking Lemma \ref{stoconvb}(i).

		It remains to prove \eqref{Ypresup}. Returning to \eqref{Gammabnd}, we multiply throughout by $e^{2\ell t/\delta}$ to obtain
	\begin{equation}
	\begin{aligned}
	\partial_t\big(e^{2\ell t/\delta}\|\Gamma^{\epsilon,u}(t)\|^2_{\h}\big)&=e^{2\ell t/\delta}\partial_t\|\Gamma^{\epsilon,u}(t)\|^2_{\h}
	+\frac{2\lambda}{\delta}e^{2\ell t/\delta} \|\Gamma^{\epsilon,u}(t)\|^2_\h\\&\leq \frac{2C^2_g}{(\lambda-L_g)\delta}e^{2\ell t/\delta}\bigg(   1+\| w^\delta_{A_2}(t)\|^2_\h  +  \|  X^{\epsilon,u}(t)\|^2_\h      \bigg)\\&+\frac{2h^2(\epsilon)}{\lambda-L_g}e^{2\ell t/\delta}\|u_2(t)\|^2_\h\;.
	\end{aligned}
	\end{equation}
	\noindent Integrating the latter on $[0,t]$ then yields
	\begin{equation*}
	\begin{aligned}
	\|\Gamma^{\epsilon,u}(t)\|^2_{\h}&\leq \|y_0\|^2_\h+ \frac{2C^2_g}{(\lambda-L_g)\delta}\int_{0}^{t}e^{-2\ell (t-s)/\delta}\bigg(   1+\| w^\delta_{A_2}(s)\|^2_\h  +  \|  X^{\epsilon,u}(s)\|^2_\h      \bigg)ds \\&+\frac{2h^2(\epsilon)}{\lambda-L_g}\int_{0}^{t}e^{-2\ell (t-s)/\delta}\|u_2(s)\|^2_\h ds\leq  \|y_0\|^2_\h+C\bigg(   1+\sup_{s\in[0,t]}\| w^\delta_{A_2}(s)\|^2_\h  +  \sup_{s\in[0,t]}\|  X^{\epsilon,u}(s)\|^2_\h      \bigg)\\&
	+Ch^2(\epsilon)\int_{0}^{t}\|u_2(s)\|^2_\h ds.
	\end{aligned}
	\end{equation*}
	\noindent Taking expectation and applying Lemma \ref{stoconvb}(i) we deduce that					
	\begin{equation*}
	\begin{aligned}
	\ex\sup_{t\in[0,T]}\|\Gamma^{\epsilon,u}(t)\|^2_{\h}&\leq 	\|y_0\|^2_\h +C\bigg(  1+\delta^{\rho-1}+ \ex\sup_{s\in[0,T]}\|  X^{\epsilon,u}(s)\|^2_\h\bigg)+C_Nh^2(\epsilon)
	\end{aligned}
	\end{equation*}
	Hence, we can use Lemma \ref{stoconvb} (ii) to show that
\begin{equation*}
\begin{aligned}
\ex\sup_{t\in[0,T]}\|Y^{\epsilon,u}(t)\|^2_{\h}&\leq C \ex\sup_{t\in[0,T]}\|\Gamma^{\epsilon,u}(t)\|^2_{\h}+ C'\ex\sup_{t\in[0,T]}\|w^\delta_{A_2}(t)\|^2_{\h}\\& \leq
 C\bigg(   1  + \|y_0\|^2_\h+\ex\sup_{t\in[0,T]} \|  X^{\epsilon,u}(t)\|^2_\h+h^2(\epsilon)+\delta^{\rho-1}\bigg)
\end{aligned}
\end{equation*}
and the proof is complete.\end{proof}

\begin{rem} Due to the presence of the stochastic controls $u$, we can only prove uniform estimates for the fast process $Y^{\epsilon,u}$ in $L^p([0,T];\h)$ for $p\leq 2$. This limitation is also reflected in the choice of the growth exponent $\nu<1/2$ in Hypothesis \ref{A3a}.
\end{rem}
\noindent Using Lemma \ref{Yprebnd}, we can prove the following a priori bounds for $(X^{\epsilon,u}, Y^{\epsilon,u})$ by means of the Gr\"onwall inequality.

\begin{prop} Let $T<\infty$ and $\nu\in(0,1/2)$ be as in Hypothesis \ref{A3a}. In both Regimes $1$ and $2$, there exists $\epsilon_0>0$ and a constant $C>0$, independent of $\epsilon$, such that

	\begin{equation} \label{xapriori}
	\begin{aligned}
	&\sup_{0<\epsilon<\epsilon_0, u\in\mathcal{P}^T_N}\ex\sup_{t\in[0,T]}\|X^{\epsilon,u}(t)\|^{\frac{2}{\nu}}_\h
	\leq C\bigg(1+\|x_0\|^{\frac{2}{\nu}}_\h+\|y_0\|^{\frac{2}{\nu}}_\h\bigg)
	\end{aligned}
	\end{equation}
	\noindent and
	\begin{equation}\label{yapriori}
	\begin{aligned}
	\sup_{0<\epsilon<\epsilon_0, u\in\mathcal{P}^T_N}\ex\|Y^{\epsilon,u}\|_{L^2([0,T];\h)}^{\frac{2}{\nu}} &
	\leq C\bigg( 1+\|x_0\|^{\frac{2}{\nu}}_\h+\|y_0\|^{\frac{2}{\nu}}_\h\bigg).
	\end{aligned}
	\end{equation}
	\noindent Moreover, for any $\rho\in(1/2,1)$ and $\epsilon$ sufficiently small, there exists a positive constant  $C$, independent of $\epsilon$, such that
	\begin{equation} \label{yunifapriori}
	\begin{aligned}
	\sup_{u\in\mathcal{P}_N^T}\ex  \sup_{t\in[0, T] }\|Y^{\epsilon,u}(t)\|^2_\h \leq C \bigg(1+\|x_0\|_\h^2+\|y_0\|_\h^2+h^2(\epsilon)+\delta^{\rho-1}\bigg).
	\end{aligned}
	\end{equation}
\end{prop}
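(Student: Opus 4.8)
The plan is to prove \eqref{xapriori} first, by a Gr\"onwall argument applied to
$\phi(t):=\ex\sup_{s\in[0,t]}\|X^{\epsilon,u}(s)\|^{2/\nu}_\h$, and then to read off \eqref{yapriori} and \eqref{yunifapriori} by inserting \eqref{xapriori} into estimates \eqref{Yint} and \eqref{Ypresup} of Lemma \ref{Yprebnd}. Observe that $\phi(t)<\infty$ for every $t\le T$ because of the $L^p\big(\Omega;C([0,T];\h)\big)$-regularity of the pair $(X^{\epsilon,u},Y^{\epsilon,u})$ recorded just after \eqref{controlledsystemild}; this is precisely what makes the Gr\"onwall step legitimate. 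Starting from the mild formulation \eqref{controlledsystemild}, I would split $X^{\epsilon,u}(t)$ into the four summands $S_1(t)x_0$, the $F$-drift $\int_0^tS_1(t-s)F(X^{\epsilon,u}(s),Y^{\epsilon,u}(s))\,ds$, the control term $\sqrt\epsilon h(\epsilon)\int_0^tS_1(t-s)\Sigma(X^{\epsilon,u}(s),Y^{\epsilon,u}(s))u_1(s)\,ds$, and the stochastic convolution $\sqrt\epsilon\int_0^tS_1(t-s)\Sigma(X^{\epsilon,u}(s),Y^{\epsilon,u}(s))\,dw_1(s)$, and estimate the $\ex\sup_{s\le t}\|\cdot\|^{2/\nu}_\h$ of each.

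For the first term, $\|S_1(t)x_0\|_\h\le\|x_0\|_\h$ by \eqref{semignorm}, contributing $C\|x_0\|^{2/\nu}_\h$. For the $F$-drift, the Lipschitz continuity of $F$ (Hypothesis \ref{A2a}) gives $\|F(x,y)\|_\h\le C(1+\|x\|_\h+\|y\|_\h)$, so, again using $\|S_1(t-s)\|_{\mathscr{L}(\h)}\le1$, this term is bounded in $\h$-norm by $C\big(1+\int_0^t\|X^{\epsilon,u}(r)\|_\h\,dr+T^{1/2}\|Y^{\epsilon,u}\|_{L^2([0,t];\h)}\big)$; raising to the power $2/\nu$, taking expectations, and invoking \eqref{Yint} with $p=1/\nu$ bounds its contribution by $C\big(1+\|y_0\|^{2/\nu}_\h+\int_0^t\phi(r)\,dr\big)$. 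For the control term I would use that $\Sigma(x,y)\in\mathscr{L}(\h;L^1(0,L))$ with $\|\Sigma(x,y)\chi\|_{L^1}\le\|\sigma(\cdot,x(\cdot),y(\cdot))\|_\h\|\chi\|_\h$, combined with the ultracontractive bound \eqref{Lpsmoothing} with $p=1$, $r=2$, giving $\|S_1(\tau)\Sigma(x,y)\chi\|_\h\le C(\tau\wedge1)^{-1/4}\|\sigma(\cdot,x,y)\|_\h\|\chi\|_\h$, while \eqref{sigma} and the embeddings $L^q(0,L)\hookrightarrow L^{q'}(0,L)$ for $q'\le q$ give $\|\sigma(\cdot,x,y)\|_\h\le C(1+\|x\|_\h+\|y\|^\nu_\h)$. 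The portion of the resulting integrand carrying $\|X^{\epsilon,u}(s)\|_\h$ is handled by pulling out $\sup_{s\le t}\|X^{\epsilon,u}(s)\|_\h$ and estimating $\int_0^t((t-s)\wedge1)^{-1/4}\|u_1(s)\|_\h\,ds\le C_TN^{1/2}$ by Cauchy--Schwarz (note $\tau\mapsto(\tau\wedge1)^{-1/4}\in L^2$); the portion carrying $\|Y^{\epsilon,u}(s)\|^\nu_\h$ is handled by H\"older with exponents $\tfrac{2}{1-\nu},\tfrac{2}{\nu},2$ --- legitimate since $\nu<1/2$ makes $\tau\mapsto(\tau\wedge1)^{-1/4}\in L^{2/(1-\nu)}$ and $\|Y^{\epsilon,u}(\cdot)\|^\nu_\h\in L^{2/\nu}([0,T])$ with norm $\|Y^{\epsilon,u}\|^\nu_{L^2([0,T];\h)}$ --- leaving a clean factor $C_T N^{1/2}\|Y^{\epsilon,u}\|^\nu_{L^2([0,T];\h)}$. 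Raising to $2/\nu$, taking expectations and using \eqref{Yint} once more, the whole control term is bounded by $C(\sqrt\epsilon h(\epsilon))^{2/\nu}\big(1+\|y_0\|^{2/\nu}_\h+\phi(t)+\int_0^t\phi(r)\,dr\big)$, and since $\sqrt\epsilon h(\epsilon)\to0$ by \eqref{h}, there is $\epsilon_0>0$ such that its $\phi(t)$-coefficient is at most $\tfrac14$ whenever $\epsilon<\epsilon_0$.

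The main work is the stochastic convolution. Here I would use the factorization method together with the (dimension-one) smoothing estimate $\|S_1(\tau)\Sigma(x,y)\|^2_{\mathscr{L}_2(\h)}\le C(\tau\wedge1)^{-1/2}\|\sigma(\cdot,x,y)\|^2_\h$, obtained by writing $S_1(\tau)\Sigma(x,y)=S_1(\tau/2)\big[S_1(\tau/2)\Sigma(x,y)\big]$ and combining $\|S_1(\tau/2)\|^2_{\mathscr{L}_2(\h)}=\sum_n e^{-a_{1,n}\tau}\le C(\tau\wedge1)^{-1/2}$ (quadratic growth of the eigenvalues of $A_1$ in one dimension) with the operator bound $\|S_1(\tau/2)\Sigma(x,y)\|_{\mathscr{L}(\h)}\le C(\tau\wedge1)^{-1/4}\|\sigma(\cdot,x,y)\|_\h$ from \eqref{Lpsmoothing}. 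Choosing $\alpha\in(\nu/2,1/4)$ --- a nonempty interval since $\nu<1/2$ --- the kernels entering the factorization are integrable on $[0,T]$, and the classical stochastic-convolution estimate (of the type collected in Appendix \ref{AppA}, cf.\ Lemma \ref{stoconvb}), applied with moment exponent $2/\nu$, yields
\begin{equation*}
\ex\sup_{t\in[0,T]}\Big\|\sqrt\epsilon\int_0^{t}S_1(t-s)\Sigma\big(X^{\epsilon,u}(s),Y^{\epsilon,u}(s)\big)\,dw_1(s)\Big\|^{2/\nu}_\h\le C\epsilon^{1/\nu}\,\ex\int_0^T\big\|\sigma\big(\cdot,X^{\epsilon,u}(s),Y^{\epsilon,u}(s)\big)\big\|^{2/\nu}_\h\,ds .
\end{equation*}
Using \eqref{sigma} in the form $\|\sigma(\cdot,x,y)\|^{2/\nu}_\h\le C(1+\|x\|^{2/\nu}_\h+\|y\|^2_\h)$ and then \eqref{Yint} with $p=1$, the right-hand side is at most $C\epsilon^{1/\nu}\big(1+\|y_0\|^2_\h+\int_0^t\phi(r)\,dr\big)$, with $\epsilon^{1/\nu}\to0$.

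Collecting the four bounds and using $\|y_0\|^2_\h\le1+\|y_0\|^{2/\nu}_\h$ (valid since $2/\nu\ge2$), one obtains, for every $t\le T$ and every $\epsilon<\epsilon_0$,
\begin{equation*}
\phi(t)\le C\big(1+\|x_0\|^{2/\nu}_\h+\|y_0\|^{2/\nu}_\h\big)+\tfrac14\,\phi(t)+C\int_0^t\phi(r)\,dr ,
\end{equation*}
with $C$ depending only on $T$, $N$ and the data (hence uniformly over $u\in\mathcal{P}^T_N$). Absorbing $\tfrac14\phi(t)$ into the left-hand side and applying Gr\"onwall's inequality --- legitimate because $\phi$ is finite --- gives \eqref{xapriori}. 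Finally, \eqref{yapriori} follows from \eqref{Yint} with $p=1/\nu$ together with \eqref{xapriori}, and \eqref{yunifapriori} follows from \eqref{Ypresup} together with \eqref{xapriori} and Jensen's inequality. I expect the stochastic-convolution step to be the main obstacle: the diffusion coefficient is multiplicative and grows like $|\mathrm{y}|^{\nu}$ in the fast variable, while the controls are only square integrable and $Y^{\epsilon,u}$ is controlled only in $L^2$ in time, so the estimate must be run through the factorization method with the sharp one-dimensional Hilbert--Schmidt smoothing bound and a careful matching of the singular kernels coming from ultracontractivity against this limited integrability.
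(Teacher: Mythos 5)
The paper does not actually prove \eqref{xapriori} and \eqref{yapriori}: it declares them standard, refers to \cite{WSS} and \cite{cerrai2009khasminskii}, and only notes that \eqref{yunifapriori} follows from combining \eqref{Ypresup} with \eqref{xapriori}. Your route --- mild formulation, four-term splitting, Gr\"onwall on $\ex\sup_{s\le t}\|X^{\epsilon,u}(s)\|_\h^{2/\nu}$, feeding the result back into \eqref{Yint} and \eqref{Ypresup} --- is exactly the intended standard argument, and your treatment of the drift and control terms (ultracontractivity \eqref{Lpsmoothing} with $p=1,r=2$, the H\"older split with exponents $\tfrac{2}{1-\nu},\tfrac2\nu,2$ to absorb $\|Y^{\epsilon,u}\|_\h^\nu$ against the square-integrable control, smallness of $\sqrt\epsilon h(\epsilon)$ to absorb the $\phi(t)$-term) is correct and consistent with how the paper runs the analogous estimates in Lemmas \ref{IIbnds} and \ref{IIIbnds}.

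There is, however, one concrete flaw in your stochastic-convolution step. The two ingredients you combine --- $\|S_1(\tau/2)\|_{\mathscr{L}_2(\h)}^2\le C(\tau\wedge1)^{-1/2}$ and $\|S_1(\tau/2)\Sigma(x,y)\|_{\mathscr{L}(\h)}\le C(\tau\wedge1)^{-1/4}\|\sigma(\cdot,x,y)\|_\h$ --- give, via $\|AB\|_{\mathscr{L}_2}\le\|A\|_{\mathscr{L}_2}\|B\|_{\mathscr{L}(\h)}$, only
\begin{equation*}
\|S_1(\tau)\Sigma(x,y)\|_{\mathscr{L}_2(\h)}^2\le C(\tau\wedge1)^{-1}\|\sigma(\cdot,x,y)\|_\h^2,
\end{equation*}
not the exponent $-1/2$ you state; the splitting wastes the smoothing twice. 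With exponent $-1$ the inner factorization integral $\int_0^z(z-\zeta)^{-2\alpha-1}d\zeta$ diverges for every $\alpha\ge0$, so the argument as written breaks. The bound you want is nevertheless true: it is precisely Lemma \ref{sigmacont}(ii) of the paper with $\theta=0$, $P_n=I$ and any $\rho>1/2$ (equivalently, expand $\|\Sigma^*(x,y)S_1(\tau)\|_{\mathscr{L}_2}^2=\sum_n e^{-2a_{1,n}\tau}\|\Sigma^*(x,y)e_{1,n}\|_\h^2$ and use Hypothesis \ref{A1b} together with $a_{1,n}\sim n^2$, so that the multiplication operator is tested directly against the eigenbasis rather than estimated in operator norm). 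Replacing your derivation by a citation of that lemma, and taking $\alpha<(1-\rho)/2$ with $\rho>1/2$ close to $1/2$ (still compatible with $\alpha>\nu/2$ since $\nu<1/2$), the rest of your factorization argument and the final Gr\"onwall step go through.
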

\noindent 	Estimates \eqref{xapriori} and \eqref{yapriori} are standard and their proofs will be omitted. Similar results can be found e.g. in \cite{cerrai2009khasminskii,WSS} among other places. The main difference here is in the moderate deviation scaling which does not change the proof in an essential way. Finally, \eqref{yunifapriori} follows from the combination of \eqref{Ypresup} and \eqref{xapriori}.

Next, we provide an estimate for the H\"older seminorm of the controlled fast process $Y^{\epsilon,u}$ which depends on the regularity of the initial conditions.
The estimate is singular at $\delta=0$. As seen in the proof below, there is a trade-off between the H\"older exponent and the rate of divergence of the right-hand side as $\epsilon\to 0$. 

	\begin{prop}\label{Schauderyprop} Let $T<\infty$, $a\in(0,2]$, 
			$x_0\in\h$ and $y_0\in H^{a}(0,L)$. For all $u\in\mathcal{P}^T_N$ and $\epsilon$ sufficiently small there exists  $\beta<\frac{1}{4}\land \frac{a} {2}$ and a constant $C>0$ independent of $\epsilon$ such that	
	\begin{equation}\label{Schaudery}
	\ex \big[Y^{\epsilon, u}\big]_{C^\beta([0,T];\h)}\leq Ch(\epsilon)\delta^{-\frac{1}{2}\vee \frac{a}{2}} \bigg(1+\|x_0\|_\h+\|y_0\|_{H^a}\bigg).
	\end{equation}
\end{prop}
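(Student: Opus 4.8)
The plan is to decompose $Y^{\epsilon,u}$ according to its mild formulation \eqref{controlledsystemild} into four summands: the deterministic term $I(t)=S_2(t/\delta)y_0$, the reaction term $II(t)=\frac{1}{\delta}\int_0^t S_2\big(\tfrac{t-r}{\delta}\big)G(X^{\epsilon,u}(r),Y^{\epsilon,u}(r))\,dr$, the control term $III(t)=\frac{h(\epsilon)}{\sqrt\delta}\int_0^t S_2\big(\tfrac{t-r}{\delta}\big)u_2(r)\,dr$, and the rescaled stochastic convolution $IV(t)=w^\delta_{A_2}(t)$, and to estimate the H\"older seminorm $[\,\cdot\,]_{C^\beta([0,T];\h)}$ of each summand separately, choosing $\beta$ just below $\tfrac14\wedge\tfrac{a}{2}$. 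For the three convolution-type terms the tool is the usual splitting, for $0\le s<t\le T$, of the increment of $\int_0^\cdot S_2\big(\tfrac{\cdot-r}{\delta}\big)\varphi(r)\,dr$ into an integral over $[s,t]$ plus $\big(S_2\big(\tfrac{t-s}{\delta}\big)-I\big)\int_0^s S_2\big(\tfrac{s-r}{\delta}\big)\varphi(r)\,dr$, combined with the dissipativity bound \eqref{S2decay}, the analytic-semigroup estimates $\|(-A_2)^\theta S_2(\sigma)\|_{\mathscr{L}(\h)}\le C\sigma^{-\theta}e^{-\lambda\sigma/2}$ and $\|(S_2(\sigma)-I)z\|_\h\le C\sigma^{\theta}\|(-A_2)^{\theta}z\|_\h$ for $\theta\in[0,1]$, and the convergence of integrals $\int_0^\infty\sigma^{-2\beta}e^{-c\sigma}\,d\sigma$, $c>0$, which holds since $\beta<\tfrac14<\tfrac12$.

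For the control term $III$, I would apply the Cauchy--Schwarz inequality in $r$ against $\|u_2(r)\|_\h$ and use $\int_0^T\|u_2(r)\|^2_\h\,dr\le N$ (valid as $u\in\mathcal{P}^T_N$); both resulting kernel integrals evaluate to $C\delta^{-\beta}(t-s)^\beta$, so $[III]_{C^\beta}\le C\sqrt N\,h(\epsilon)\delta^{-\beta}\le Ch(\epsilon)\delta^{-1/2}$. For the deterministic term $I$, I would write $I(t)-I(s)=\tfrac1\delta\int_s^t A_2S_2(r/\delta)y_0\,dr$, bound $\|A_2S_2(r/\delta)y_0\|_\h$ by the smoothing estimates of Section~\ref{notation} against $\|y_0\|_{H^a}$, and use the subadditivity $t^{a/2}-s^{a/2}\le(t-s)^{a/2}$ (which requires $a\le2$) together with $\sup_{t\le T}\|I(t)\|_\h\le\|y_0\|_\h$, obtaining $[I]_{C^\beta}\le C\delta^{-(1/2\vee a/2)}\|y_0\|_{H^a}$; term $I$ is the only place where the regularity index $a$ enters the final rate, and it is what forces the restriction $\beta<\tfrac{a}{2}$.

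For the reaction term $II$, the same splitting together with the linear growth $\|G(x,y)\|_\h\le C(1+\|x\|_\h+\|y\|_\h)$ implied by Hypothesis~\ref{A2b} gives, pathwise, $[II]_{C^\beta}\le C\delta^{-\beta}\sup_{r\in[0,T]}\big(1+\|X^{\epsilon,u}(r)\|_\h+\|Y^{\epsilon,u}(r)\|_\h\big)$; taking expectations and invoking the a priori bounds \eqref{xapriori}, \eqref{yunifapriori} bounds the right-hand side by $C\delta^{-\beta}\big(1+\|x_0\|_\h+\|y_0\|_\h+h(\epsilon)\delta^{(\rho-1)/2}\big)$, and picking $\rho\in(\tfrac12,1)$ close enough to $1$ that $\beta<\tfrac14\le\tfrac\rho2$ turns $\delta^{-\beta}h(\epsilon)\delta^{(\rho-1)/2}$ into $h(\epsilon)\delta^{-1/2}$, so $\ex[II]_{C^\beta}\le Ch(\epsilon)\delta^{-1/2}(1+\|x_0\|_\h+\|y_0\|_\h)$. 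For the stochastic convolution $IV=w^\delta_{A_2}$ I would invoke a H\"older-seminorm estimate of the type collected in Appendix~\ref{AppA}, of the form $\ex[w^\delta_{A_2}]_{C^\beta([0,T];\h)}\le C\delta^{(\rho-1)/2-\beta}$ for $\rho\in(\tfrac12,1)$ and $\beta<\tfrac14$ (an adaptation of the factorization method, using that in dimension one $S_2$ is Hilbert--Schmidt smoothing, $\int_0^1\sigma^{-2\alpha}\sum_n e^{-2a_{2,n}\sigma}\,d\sigma<\infty$ for $\alpha<\tfrac14$, and strictly dissipative), which for $\beta<\tfrac14<\tfrac\rho2$ is at most $C\delta^{-1/2}$. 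Adding the four estimates and using $h(\epsilon)\ge1$ for $\epsilon$ small, every contribution is dominated by $Ch(\epsilon)\delta^{-(1/2\vee a/2)}(1+\|x_0\|_\h+\|y_0\|_{H^a})$, which is \eqref{Schaudery}.

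The step I expect to be the main obstacle is the stochastic convolution term $IV$: obtaining the correct power of $\delta$ in its H\"older seminorm requires running the factorization argument for the genuinely multiscale object $\frac1{\sqrt\delta}\int_0^t S_2\big(\tfrac{t-r}{\delta}\big)\,dw_2(r)$, tracking carefully how the singular $\delta$-rescaling interacts with the $\sigma^{-2\alpha}$ weight produced by the factorization; this is precisely the reason the auxiliary regularity results of Appendix~\ref{AppA} are set up. A secondary, bookkeeping-type difficulty is the trade-off in term $I$ between the admissible H\"older exponent $\beta$ and the divergence rate $\delta^{-(1/2\vee a/2)}$, which forces $\beta<\tfrac14\wedge\tfrac a2$ and the possible $\delta^{-a/2}$ blow-up when $a>1$.
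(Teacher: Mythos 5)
Your proposal is correct and follows essentially the same route as the paper: the same mild-formulation decomposition (your four terms, after the increment splitting you describe, are exactly the paper's $J_1^{\epsilon,u},\dots,J_6^{\epsilon,u}$), the same tools for each piece (semigroup smoothing for the initial datum, linear growth of $G$ plus \eqref{xapriori}--\eqref{yunifapriori} for the reaction term, Cauchy--Schwarz against the $L^2$ control bound, and the factorization estimate \eqref{Holderconv} for $w^\delta_{A_2}$), and the same identification of the $y_0$-term as the sole source of the $\delta^{-a/2}$ rate and the restriction $\beta<\tfrac a2$. The only cosmetic differences are that you differentiate $S_2(t/\delta)y_0$ instead of using the semigroup identity with \eqref{sobcont}, and your interpolation $\min(1,(t-s)/\delta)\le ((t-s)/\delta)^\beta$ replaces the paper's H\"older-in-time exponent bookkeeping; both yield bounds dominated by $Ch(\epsilon)\delta^{-\frac12\vee\frac a2}$.
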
	

\begin{proof} Letting $0\leq s<t\leq T$ we can write
	\begin{equation*}\label{Schauderdec}
	\begin{aligned}
	Y^{\epsilon,u}(t)- Y^{\epsilon,u}(s)&= \bigg[S_2\bigg(\frac{t}{\delta}\bigg)-S_2\bigg(\frac{s}{\delta}\bigg)\bigg]y_0+\frac{1}{\delta}\int_{s}^{t} S_2\bigg(\frac{t-z}{\delta}\bigg)G\big( X^{\epsilon,u} (z), Y^{\epsilon,u} (z)   \big)dz \\&+ \frac{1}{\delta}
	\bigg[S_2\bigg(\frac{t-s}{\delta}\bigg)-I\bigg]\int_{0}^{s}S_2\bigg(\frac{s-z}{\delta}\bigg)G\big( X^{\epsilon,u} (z), Y^{\epsilon,u} (z)   \big)dz\\&+\frac{h(\epsilon)}{\sqrt\delta}\int_{s}^{t}S_2\bigg(\frac{t-z}{\delta}\bigg)u_2(z)dz \\&
	+\frac{h(\epsilon)}{\sqrt\delta}\bigg[S_2\bigg(\frac{t-s}{\delta}\bigg)-I\bigg]\int_{0}^{s}S_2\bigg(\frac{s-z}{\delta}\bigg)u_2(z)dz\\&+ w^\delta_{A_2}(t)-w^\delta_{A_2}(s)
	=:\sum_{k=1}^{6} J^{\epsilon, u}_{k}(s,t).
	\end{aligned}
	\end{equation*}
	\noindent We shall estimate each term of this decomposition separately. For $J^{\epsilon,u}_1$, we use the semigroup property and invoke \eqref{S2decay}, \eqref{sobcont} to obtain
	\begin{equation}\label{J_1}
	\begin{aligned}
	\big\|J^{\epsilon, u}_{1}(s,t)\big\|_\h& =\bigg\|S_2\bigg(\frac{s}{\delta}\bigg)\bigg[ S_2\bigg(\frac{t-s}{\delta}\bigg)-I   \bigg]y_0\bigg\|_\h\\&\leq \bigg\|S_2\bigg(\frac{s}{\delta}\bigg)\bigg\|_{\mathscr{L}(\h)}\bigg\|\bigg[ S_2\bigg(\frac{t-s}{\delta}\bigg)-I   \bigg]y_0\bigg\|_\h\\&
	\leq e^{-\lambda s/\delta}\bigg\|S_2\bigg(\frac{t-s}{\delta}\bigg)-I  \bigg\|_{\mathscr{L}(H^a;\h)}\|y_0\|_{H^a}
	\\&\leq C_T\delta^{-a/2}(t-s)^{a/2}\|y_0\|_{H^a}\;.
	\end{aligned}
	\end{equation}
	
		Next, we use the Lipschitz continuity of $G$ along with H\"older's inequality for $q\geq1$ to obtain
	\begin{equation*}
	\begin{aligned}
	\big\|J^{\epsilon, u}_{2}(s,t)\big\|_\h&\leq \frac{C_g}{\delta}\int_{s}^{t}e^{-\frac{\lambda(t-z)}{\delta}}\big(1+\big\|X^{\epsilon,u} (z)\big\|_\h+\big\|Y^{\epsilon,u} (z)\big\|_\h\big)dz \\&
	\leq  \bigg(1+\sup_{t\in[0,T]}\big\|X^{\epsilon,u} (t)\big\|_\h+\sup_{t\in[0,T]}\big\|Y^{\epsilon,u} (t)\big\|_\h\bigg)\frac{C_g}{\delta}\bigg(\int_{s}^{t}e^{-\frac{p\lambda(t-z)}{\delta}}dz\bigg)^{1/p} (t-s)^{1/q}\\&
	\leq C\delta^{-1/q}(t-s)^{1/q}\bigg(1+\sup_{t\in[0,T]}\big\|X^{\epsilon,u} (t)\big\|_\h+\sup_{t\in[0,T]}\big\|Y^{\epsilon,u} (t)\big\|_\h\bigg)\bigg(\int_{0}^{\infty}e^{-p\lambda\zeta}d\zeta\bigg)^{1/p}.
	\end{aligned}
	\end{equation*}
	Letting $\epsilon$ be sufficiently small, taking expectation and applying \eqref{xapriori} and \eqref{yunifapriori} we get
	\begin{equation*}\label{preJ_2}
	\ex\sup_{t,s\in[0,T], t\neq s }\frac{\big\|J^{\epsilon, u}_{2}(s,t)\big\|_\h}{|t-s|^{1/q}}\leq C_p\delta^{-\frac{1}{q}}\bigg(1+\|x_0\|_\h+\|y_0\|_\h+h(\epsilon)+\delta^{\frac{\rho-1}{2}}\bigg).	\end{equation*}
	Choosing $\rho=3/4\in(1/2,1)$ and $q=9$ yields
	$$ \frac{1}{q}+\frac{1-\rho}{2}=\frac{1}{9}+\frac{1}{8}<\frac{1}{4}\;.$$
	Hence, for $\beta\leq1/9$
	\begin{equation}\label{J_2}
	\ex\sup_{t,s\in[0,T], t\neq s }\frac{\big\|J^{\epsilon, u}_{2}(s,t)\big\|_\h}{|t-s|^{\beta}}\leq Ch(\epsilon)\delta^{-1/4}\bigg(1+\|x_0\|_\h+\|y_0\|_\h\bigg).	\end{equation}

		\noindent Next, for $J^{\epsilon,u}_3$, we shall invoke \eqref{sobcont} and then apply Lemma \ref{sigmacont}(i)  to obtain
	\begin{equation*}
	\begin{aligned}
	\big\|J^{\epsilon,u}_{3}(s,t)\big\|_\h&\leq  \frac{1}{\delta}
	\bigg\|S_2\bigg(\frac{t-s}{\delta}\bigg)-I\bigg\|_{\mathscr{L}(H^\theta;\h)}\int_{0}^{s}\bigg\|S_2\bigg(\frac{s-z}{\delta}\bigg)G\big( X^{\epsilon,u} (z), Y^{\epsilon,u} (z)   \big)\bigg\|_{H^\theta}dz \\&
	\leq \bigg(\frac{C}{\delta}\bigg)\delta^{-\theta/2}(t-s)^{\theta/2}\int_{0}^{s}\bigg\|(-A_2)^{\theta/2}S_2\bigg(\frac{s-z}{\delta}\bigg)G\big( X^{\epsilon,u} (z), Y^{\epsilon,u} (z)   \big)\bigg\|_{\h}dz \\&
	\leq C_g\delta^{-1-\theta/2}(t-s)^{\theta/2}\int_{0}^{s}\bigg(\frac{s-z}{\delta}\bigg)^{-(\rho+\theta)/2} e^{-\frac{\lambda(s-z)}{4\delta}}\bigg(1+\big\|X^{\epsilon,u} (z)\big\|_\h+\big\|Y^{\epsilon,u} (z)\big\|_\h\bigg)dz,
	\end{aligned}
	\end{equation*}
	which holds for $\theta\in(0,1/2)$, $\rho\in(1/2,1)$ and we used the Lipschitz continuity of $G$ to obtain the last line. Performing the substitution $\zeta=(s-z)/\delta$ then yields
	\begin{equation*}
	\begin{aligned}
	\big\|J^{\epsilon,u}_{3}&(s,t)\big\|_\h
	\leq C\delta^{-\theta/2}(t-s)^{\theta/2}\bigg(1+\sup_{t\in[0, T] }\big\|X^{\epsilon,u} (t)\big\|_\h+\sup_{t\in[0, T] }\big\|Y^{\epsilon,u} (t)\big\|_\h\bigg)\int_{0}^{s/\delta}\zeta^{-(\rho+\theta)/2} e^{-\frac{\lambda \zeta}{4}}d\zeta\\&
	\leq C_{\lambda, \theta}\delta^{-\theta/2}(t-s)^{\theta/2}\bigg(1+\sup_{t\in[0, T] }\big\|X^{\epsilon,u} (t)\big\|_\h+\sup_{t\in[0, T] }\big\|Y^{\epsilon,u} (t)\big\|_\h\bigg)\int_{0}^{\infty}(\lambda\zeta/4)^{-(\rho+\theta)/2}e^{-\lambda\zeta/4}d\zeta
	\end{aligned}
	\end{equation*}
	where $\rho+\theta<3/2$. The integral on the right-hand side is finite and, in fact, can be explicitly computed in terms of $\Gamma(1-\frac{\rho+\theta}{2})$ , where $\Gamma$ denotes the Gamma function. Letting $\epsilon$ be sufficiently small, taking expectation and using \eqref{xapriori} and  \eqref{yunifapriori} we deduce that
	\begin{equation*}\label{preJ_3}
	\ex\sup_{t,s\in[0,T], t\neq s }\frac{\big\|J^{\epsilon, u}_{3}(s,t)\big\|_\h}{|t-s|^{\theta/2}}\leq Ch(\epsilon)\delta^{\frac{\rho-1}{2}-\frac{\theta}{2}} \bigg(1+\|x_0\|_\h+\|y_0\|_\h\bigg).	\end{equation*}
	Choosing $\theta=2/9$ and $\rho=3/4$ we obtain, as we did for $J_2^{\epsilon,u}$, that for all $\beta<1/9$
	\begin{equation}\label{J_3}
	\ex\sup_{t,s\in[0,T], t\neq s }\frac{\big\|J^{\epsilon, u}_{3}(s,t)\big\|_\h}{|t-s|^{\beta}}\leq Ch(\epsilon)\delta^{-1/4}\bigg(1+\|x_0\|_\h+\|y_0\|_\h\bigg).	\end{equation}

	\noindent As for $J_4$,
		\begin{equation*}
	\begin{aligned}
	\big\|J^{\epsilon,u}_{4}(s,t)\big\|_\h&\leq \frac{h(\epsilon)}{\sqrt\delta}\bigg(\int_{s}^{t}\bigg\|S_2\bigg(\frac{t-z}{\delta}\bigg)\bigg\|^2_\h dz\bigg)^{\frac{1}{2}}    \|u\|_{L^2([0,T] ;\h )}\\&
	\leq N \frac{h(\epsilon)}{\sqrt\delta} \bigg(\int_{s}^{t} e^{-\frac{2\lambda(t-z)}{\delta}} dz\bigg)^{\frac{1}{2}}\\&=  N h(\epsilon) \bigg(\int_{0}^{\frac{t-s}{\delta}} e^{-2\lambda z} dz\bigg)^{\frac{1}{2}}
	\leq C_{N,\lambda} h(\epsilon) \delta^{-\frac{1}{2}}(t-s)^{\frac{1}{2}}
	\end{aligned}
	\end{equation*}
	\noindent with probability $1$. Thus, for $\beta\leq 1/2$,
	\begin{equation}\label{J_4}
	\ex\sup_{t,s\in[0,T], t\neq s }\frac{\big\|J^{\epsilon, u}_{4}(s,t)\big\|_\h}{|t-s|^{\beta}}\leq Ch(\epsilon)\delta^{-1/2}.\end{equation}
	
	\noindent The analysis for $J^{\epsilon, u}_{5}$ is similar to $J^{\epsilon, u}_{3}$. In particular,
	\begin{equation*}
	\begin{aligned}
	\big\|J^{\epsilon,u}_{5}(s,t)\big\|_\h&
	\leq \bigg(\frac{Ch(\epsilon)}{\sqrt{\delta}}\bigg)\delta^{-\theta/2}(t-s)^{\theta/2}\int_{0}^{s}\bigg\|(-A_2)^{\theta/2}S_2\bigg(\frac{s-z}{\delta}\bigg)u_2(z)\bigg\|_{\h}dz
	\\&
	\leq Ch(\epsilon)\delta^{-\theta/2}(t-s)^{\theta/2}\bigg(\frac{1}{\sqrt\delta}\bigg)\bigg(\int_{0}^{s}\bigg(\frac{s-z}{\delta}\bigg)^{-(\rho+\theta)} e^{-\frac{\lambda(s-z)}{2\delta}}dz\bigg)^{\frac{1}{2}}\|u_2\|_{L^2([0,T];\h  )}\\& \leq C_N h(\epsilon)\delta^{-\theta/2}(t-s)^{\theta/2}\bigg(\int_{0}^{\infty} \zeta^{-\rho+\theta} e^{-\lambda \zeta/2} d\zeta\bigg)^{\frac{1}{2}}\\&
	\leq C_{\lambda}h(\epsilon) \delta^{-\theta/2}(t-s)^{\theta/2} (\Gamma( 1-\rho-\theta))^{\frac{1}{2}},
	\end{aligned}
	\end{equation*}
	where we have chosen $\rho\in(1/2,1)$ and $\theta\in(0, 1/2)$ to satisfy $\rho+\theta<1$.
	Thus, for $\beta<\theta/2<1/4$
	
	\begin{equation}\label{J_5}
	\ex\sup_{t,s\in[0,T], t\neq s }\frac{\big\|J^{\epsilon, u}_{5}(s,t)\big\|_\h}{|t-s|^{\beta}}\leq Ch(\epsilon)\delta^{-1/2}.	\end{equation}

	\noindent Finally,  from \eqref{Holderconv} (see Appendix \ref{AppA}), there exists $\beta<1/4$ such that
	
	\begin{equation}\label{J_6}
	\begin{aligned}
	\ex\sup_{t,s\in[0,T], t\neq s }\frac{\big\|J^{\epsilon, u}_{6}(s,t)\big\|_\h}{|t-s|^{\beta}}&=\ex\big[ w_{A_2}^{\delta}\big]_{C^{\beta}([0,T];\h)}&\leq  C\delta^{\frac{\rho-1}{2}}\leq C\delta^{-1/4}
	\end{aligned}
	\end{equation}
	and the latter holds since $\rho\in(1/2, 1/2+2\beta)$. The argument is complete upon combining \eqref{J_1}-\eqref{J_6}.
\end{proof}
Before we conclude this section, let us gather some auxiliary estimates regarding the spatio-temporal regularity of the solution $\bar{X}$ of the averaged slow equation \eqref{x-aved}. These will be needed in the subsequent analysis of the controlled moderate deviations process $\eta^{\epsilon,u}$.

\begin{lem}\label{L:LimAverEq} (i) For $T<\infty$, there exists a constant $C>0$ such that
	\begin{equation}\label{xbarapriori}
	\begin{aligned}
	\sup_{t\in[0, T] }\|\bar{X}(t)\|^2_\h&\leq C(1+\|x_0\|^2_\h).
	\end{aligned}
	\end{equation}
	\noindent (ii) Let $T<\infty$, $a>0$ and $x_0\in H^{a}(0,L)$. For all $\theta<\frac{1}{4}\land \frac{a} {2}  $, there exists a constant $C>0$ such that
	\begin{equation}\label{Schauderbar}
	\|\bar{X}\|_{C^{\theta}([0,T];\h)}\leq C\big( 1+ \|x_0\|_{H^a}\big).
	\end{equation}\\
	\noindent (iii) Let $T<\infty, a\in(0,2]$ and $x_0\in H^{a}(0,L)$. Then, for all $t>0$ we have $\bar{X}(t)\in Dom(A_1)$. Moreover, there exists $C>0$ independent of $t$ such that for all $t\in(0,T]$
	\begin{equation}\label{deribar}
	\begin{aligned}
	\big\|A_1\bar{X}(t)\big\|_\h&\leq  C\big( t^{\frac{a}{2}-1}\|x_0\|_{H^a}+1+\big\|x_0\|_{H^a}\big).
	\end{aligned}
	\end{equation}
\end{lem}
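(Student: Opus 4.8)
The plan is to work entirely from the mild formulation $\bar{X}(t)=S_1(t)x_0+\int_0^t S_1(t-r)\bar{F}(\bar{X}(r))\,dr$, exploiting that $\bar{F}:\h\to\h$ is Lipschitz (Lemma \ref{Fbarlip}), hence of linear growth $\|\bar{F}(x)\|_\h\le C(1+\|x\|_\h)$, together with $\|S_1(t)\|_{\mathscr{L}(\h)}\le 1$ and the analytic-semigroup smoothing estimates from Section \ref{notation}. For \eqref{xbarapriori} I would take $\h$-norms in the mild formula, use linear growth to obtain $\|\bar{X}(t)\|_\h\le\|x_0\|_\h+C\int_0^t(1+\|\bar{X}(r)\|_\h)\,dr$, apply Gr\"onwall's inequality, and square. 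This also records $\sup_{r\in[0,T]}\|\bar{F}(\bar{X}(r))\|_\h\le C(1+\|x_0\|_\h)$, which is used repeatedly below.

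\textbf{Part (ii).} For $0\le s<t\le T$ I would decompose $\bar{X}(t)-\bar{X}(s)$ into the three pieces $S_1(s)[S_1(t-s)-I]x_0$, the short-time integral $\int_s^t S_1(t-r)\bar{F}(\bar{X}(r))\,dr$, and $[S_1(t-s)-I]\int_0^s S_1(s-r)\bar{F}(\bar{X}(r))\,dr$. The first is bounded by $C(t-s)^{\theta}\|x_0\|_{H^a}$ using $\|S_1(s)\|_{\mathscr{L}(\h)}\le 1$ and \eqref{sobcont} with Sobolev index $2\theta$ (plus the embedding $H^a\hookrightarrow H^{2\theta}$), which is admissible precisely when $2\theta<\tfrac{1}{2}\wedge a$. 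The second is bounded by $\int_s^t\|\bar{F}(\bar{X}(r))\|_\h\,dr\le C_T(1+\|x_0\|_\h)(t-s)^{\theta}$ by part (i). For the third I would first estimate $z:=\int_0^s S_1(s-r)\bar{F}(\bar{X}(r))\,dr$ in $\h_1^{2\theta}$ via \eqref{Sobosmoothing} (equivalently $\|(-A_1)^{\theta}S_1(\tau)\|_{\mathscr{L}(\h)}\le C\tau^{-\theta}$), giving $\|z\|_{\h_1^{2\theta}}\le C\int_0^s(s-r)^{-\theta}\|\bar{F}(\bar{X}(r))\|_\h\,dr\le C_T(1+\|x_0\|_\h)$ since $\theta<1$, and then apply \eqref{sobcont} to get $\|[S_1(t-s)-I]z\|_\h\le C(t-s)^{\theta}\|z\|_{\h_1^{2\theta}}$. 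Summing the three pieces and combining with part (i) yields \eqref{Schauderbar} for every $\theta<\tfrac{1}{4}\wedge\tfrac{a}{2}$.

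\textbf{Part (iii).} Part (ii) and the Lipschitz continuity of $\bar{F}$ show that $r\mapsto\bar{F}(\bar{X}(r))$ is $\theta$-H\"older continuous from $[0,T]$ to $\h$, with seminorm $\le C(1+\|x_0\|_{H^a})$. I would then split $A_1\bar{X}(t)=A_1S_1(t)x_0+A_1\int_0^t S_1(t-r)\bar{F}(\bar{X}(r))\,dr$. For the first term, writing $A_1S_1(t)x_0=(-A_1)^{1-\frac{a}{2}}S_1(t)\,(-A_1)^{\frac{a}{2}}x_0$ and using $\|(-A_1)^{1-\frac{a}{2}}S_1(t)\|_{\mathscr{L}(\h)}\le Ct^{\frac{a}{2}-1}$ (valid for $a\le 2$) together with $\|(-A_1)^{\frac{a}{2}}x_0\|_\h\le C\|x_0\|_{H^a}$ gives $\|A_1S_1(t)x_0\|_\h\le Ct^{\frac{a}{2}-1}\|x_0\|_{H^a}$. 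For the convolution term, H\"older continuity of the integrand places us in the scope of the classical analytic-semigroup regularity theory for mild solutions: $\int_0^tS_1(t-r)\bar{F}(\bar{X}(r))\,dr\in Dom(A_1)$ for $t>0$ and
\[
A_1\!\int_0^tS_1(t-r)\bar{F}(\bar{X}(r))\,dr=\int_0^tA_1S_1(t-r)\big[\bar{F}(\bar{X}(r))-\bar{F}(\bar{X}(t))\big]\,dr+\big[S_1(t)-I\big]\bar{F}(\bar{X}(t)).
\]
Using $\|A_1S_1(\tau)\|_{\mathscr{L}(\h)}\le C\tau^{-1}$ and the H\"older bound, the integral is dominated by $C(1+\|x_0\|_{H^a})\int_0^t(t-r)^{\theta-1}\,dr\le C_T(1+\|x_0\|_{H^a})$, and $\|[S_1(t)-I]\bar{F}(\bar{X}(t))\|_\h\le 2\|\bar{F}(\bar{X}(t))\|_\h\le C(1+\|x_0\|_\h)$; adding the two contributions yields \eqref{deribar}.

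\textbf{Expected difficulty.} None of the three statements is conceptually hard — they are standard regularity facts for semilinear parabolic mild solutions — so the only genuine care needed is bookkeeping: in part (ii) one must verify that the single constraint $\theta<\tfrac{1}{4}\wedge\tfrac{a}{2}$ makes every invocation of \eqref{sobcont}/\eqref{Sobosmoothing} legitimate (the $\tfrac{1}{4}$ coming from the Sobolev index $2\theta<\tfrac{1}{2}$, the $\tfrac{a}{2}$ from the regularity of the initial datum), and in part (iii) the differentiated representation of the convolution term must be justified, which is exactly where the $\theta$-H\"older continuity of $r\mapsto\bar{F}(\bar{X}(r))$ on all of $[0,T]$ (including up to $r=0$), inherited from part (ii), is essential.
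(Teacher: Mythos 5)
Your proposal is correct and follows essentially the same route as the paper: the paper only proves part (iii) (in Appendix \ref{AppA}), and your argument there is the same decomposition $A_1S_1(t)x_0+(S_1(t)-I)\bar F(\bar X(t))+\int_0^tA_1S_1(t-r)[\bar F(\bar X(r))-\bar F(\bar X(t))]\,dr$ controlled via the H\"older estimate of part (ii) and the Lipschitz continuity of $\bar F$. Parts (i) and (ii) are declared standard and omitted in the paper, and your Gr\"onwall argument and three-term decomposition for them are the expected ones, with the exponent bookkeeping $\theta<\tfrac14\wedge\tfrac a2$ handled correctly.
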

To prove these estimates, one has to use the Lipschitz continuity of $\bar{F}$ (see Lemma \ref{Fbarlip}) along with the smoothing property \eqref{Sobosmoothing} of the analytic semigroup $S_1$. 	These results are well-known and we will only present the proof of \eqref{deribar} in Appendix \ref{AppA}.

	\section{A priori bounds for $\eta^{\epsilon,u}$ and the Kolmogorov equation}\label{Sec3}
In this section we aim to prove regularity estimates for the controlled moderate deviation process $\eta^{\epsilon, u}$, in Regimes $1$ and $2$, that are uniform over controls $u\in\mathcal{P}_N^T$ and small values of $\epsilon$. These will be used to show that the family $\{\eta^{\epsilon,u}, \epsilon\in(0,1), u\in\mathcal{P}^T_N\}$ is tight in $C([0,T];\h)$ (see Lemma \ref{etarzela} in Section \ref{Sec4}). To be precise, we are interested in studying the spatial Sobolev and temporal H\"older regularity of the process $\eta^{\epsilon,u}$. The main result of this section is given below:
\begin{prop}\label{etatightness}  Let $T<\infty$, $a>0$ and $x_0,y_0\in H^a(0,L)$. With $\nu$ as in Hypotheses \ref{A3a} and in both Regimes $1$ and $2$, there exist $\theta<(\frac{1}{2}-\nu)\wedge a  $, $\beta<(\frac{1}{4}-\frac{\nu}{2})\wedge\frac{a}{2}$, $\epsilon_0>0$ and $C>0$ independent of $\epsilon$ such that  \\
	\noindent (i) \begin{equation}\label{etaSob}
	\sup_{0<\epsilon<\epsilon_0, u\in\mathcal{P}_N^T}	\ex\sup_{t\in[0, T] } \|\eta^{\epsilon,u}(t)\|^2_{H^\theta}\leq C\big( 1+\|x_0\|^2_{H^a}+\|y_0\|^2_{H^a}\big)
	\end{equation}
	\noindent (ii) 	\begin{equation}\label{etaHolder}
	\begin{aligned}
	\sup_{0<\epsilon<\epsilon_0, u\in\mathcal{P}_N^T}\ex\big[\eta^{\epsilon,u}\big]_{C^\beta([0,T];\h)}\leq C\big( 1+\|x_0\|_{H^a}+  \|y_0\|_{H^a}\big).
	\end{aligned}
	\end{equation}
\end{prop}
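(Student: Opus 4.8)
The proof starts from the decomposition \eqref{etadec1}, which I write as $\eta^{\epsilon,u} = I^\epsilon + II^\epsilon + III^\epsilon + IV^\epsilon$, in the order of its four lines. The plan is two-stage: (a) first establish the $\h$-valued bound $\sup_{u\in\mathcal{P}^T_N}\ex\sup_{t\le T}\|\eta^{\epsilon,u}(t)\|^2_\h \le C(1+\|x_0\|^2_\h+\|y_0\|^2_\h)$ uniformly in small $\epsilon$, via a Gr\"onwall loop; (b) bootstrap to the $H^\theta$- and $C^\beta([0,T];\h)$-estimates by redoing the term-by-term analysis at the level of the $H^\theta$-norm and of increments, using the smoothing properties \eqref{Sobosmoothing}, \eqref{Lpsmoothing}, \eqref{sobcont} of $S_1$. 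Both stages hinge on first disposing of the singular term $IV^\epsilon$, which carries the divergent prefactor $1/\sqrt\epsilon h(\epsilon)$.

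The term $IV^\epsilon$ is treated by the infinite-dimensional Papanicolaou--Stroock--Varadhan device: project $Y^{\epsilon,u}$ onto the span of the first $n$ eigenvectors of $A_2$ so that its paths lie in $Dom(A_2)$ and It\^o's formula is licit, apply It\^o to $s\mapsto\langle\Psi^\epsilon(\bar X(s),Y^{\epsilon,u}_n(s)),S_1(t-s)\chi\rangle_\h$, use \eqref{Kolmeq} and \eqref{Riesz} to trade the $\tfrac1\delta$-singular generator contribution for $\langle F(\bar X(s),Y^{\epsilon,u}(s))-\bar F(\bar X(s)),S_1(t-s)\chi\rangle_\h$, and let $n\to\infty$; this is the content of Lemma \ref{IVitodeclem}, proved in Appendix \ref{AppB}. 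It produces an identity
\begin{equation*}
IV^\epsilon(t) = \frac{\sqrt\delta}{\sqrt\epsilon}\int_0^t S_1(t-s)\Psi^\epsilon_2\big(\bar X(s),Y^{\epsilon,u}(s)\big)u_2(s)\,ds + \mathcal{R}^\epsilon(t),
\end{equation*}
where $\mathcal{R}^\epsilon$ gathers a boundary term of size $\tfrac{\delta}{\sqrt\epsilon h(\epsilon)}$ times $\Psi^\epsilon$ evaluated at the endpoints, a term $\tfrac{\delta}{\sqrt\epsilon h(\epsilon)}\int_0^t S_1(t-s)\Psi^\epsilon_1(\cdots)A_1\bar X(s)\,ds$, a term $\tfrac{1}{\sqrt\epsilon h(\epsilon)}\int_0^t S_1(t-s)\Psi^\epsilon_2(\cdots)[G(X^{\epsilon,u}(s),\cdot)-G(\bar X(s),\cdot)]\,ds$ (whose $G$-difference is $\lesssim\sqrt\epsilon h(\epsilon)\|\eta^{\epsilon,u}(s)\|_\h$ by the Lipschitz bound for $G$), a $c(\epsilon)$-weighted term, and a stochastic convolution $\tfrac{\sqrt\delta}{\sqrt\epsilon h(\epsilon)}\int_0^t S_1(t-s)\Psi^\epsilon_2(\cdots)\,dw_2(s)$. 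Every summand of $\mathcal{R}^\epsilon$ is tamed using the bounds \eqref{psinorm} on $\Psi^\epsilon,\Psi^\epsilon_1,\Psi^\epsilon_2$, the a priori estimates \eqref{xapriori}--\eqref{yunifapriori}, the regularity \eqref{deribar} of $A_1\bar X$, and the Regime-$1$/$2$ scalings \eqref{Regimes}, \eqref{h}, together with a choice of $c(\epsilon)\to0$ for which both $c(\epsilon)/(\sqrt\epsilon h(\epsilon))$ and $\delta/(\sqrt\epsilon h(\epsilon)c(\epsilon))$ stay bounded (possible since $\delta/(\epsilon h^2(\epsilon))\to0$ in these regimes). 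The surviving leading term is of the same type as $II^\epsilon$ but with a genuinely $\h$-valued integrand, since $\Psi^\epsilon_2(x,y)\in\mathscr{L}(\h)$ with $\|\Psi^\epsilon_2(x,y)\|_{\mathscr{L}(\h)}\le C/\ell$, and $\sqrt\delta/\sqrt\epsilon$ is bounded.

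For the $\h$-estimate (a), one now estimates $\|I^\epsilon(t)\|_\h,\dots,\|IV^\epsilon(t)\|_\h$ and closes Gr\"onwall. By the mean value theorem $I^\epsilon(t)=\int_0^t S_1(t-s)\big[\int_0^1 D_xF(\bar X(s)+r\sqrt\epsilon h(\epsilon)\eta^{\epsilon,u}(s),Y^{\epsilon,u}(s))\,dr\big]\eta^{\epsilon,u}(s)\,ds$ with kernel uniformly bounded in $\mathscr{L}(\h)$ (Hypothesis \ref{A2a}), which Gr\"onwall absorbs; $II^\epsilon$ and the leading part of $IV^\epsilon$ are bounded via $\|\Sigma(X^{\epsilon,u}(s),Y^{\epsilon,u}(s))u_1(s)\|_{L^1}\le\|\sigma(\cdot,X^{\epsilon,u}(s),Y^{\epsilon,u}(s))\|_\h\|u_1(s)\|_\h$, $\|\sigma(\cdot,x,y)\|_\h\le c(1+\|x\|_\h+\|y\|^\nu_\h)$, the smoothing $S_1\colon L^1\to\h$ of order $(t-s)^{-1/4}$, Cauchy--Schwarz in $s$ against $\int_0^T\|u_1\|^2_\h\le N$, and a H\"older inequality in $s$ with conjugate exponent $1/(1-\nu)$ that brings in $\ex\|Y^{\epsilon,u}\|^{2/\nu}_{L^2([0,T];\h)}\le C$ from \eqref{yapriori} --- the step that requires $\nu<1/2$ (it forces $(1/2)/(1-\nu)<1$); $III^\epsilon$ is treated by the factorization method and the Burkholder--Davis--Gundy inequality, the prefactor $1/h(\epsilon)$, the corresponding Hilbert--Schmidt/$L^1$ bounds, and the stochastic-convolution estimates of Appendix \ref{AppA}. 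Grouping and applying Gr\"onwall gives (a). The Sobolev and H\"older statements then follow by running the same estimates at the level of the $H^\theta$-norm and of increments: for (i), $\|I^\epsilon(t)\|_{H^\theta}\lesssim\int_0^t(t-s)^{-\theta/2}\|\eta^{\epsilon,u}(s)\|_\h\,ds$ is absorbed by (a) (any $\theta<2$), the control/noise terms smoothed through $S_1\colon L^1\to H^\theta$ of order $(t-s)^{-1/4-\theta/2}$ reproduce the previous chain with $1/(1-\nu)$ now applied to the exponent $1/2+\theta$, forcing $\theta<\tfrac12-\nu$, and the initial-layer terms in $\mathcal{R}^\epsilon$ --- involving $\bar X(s)\in Dom(A_1)$ and the $H^a$-regularity of $Y^{\epsilon,u}$ --- force $\theta<a$; for (ii), each convolution increment is split as $\int_s^t S_1(t-r)(\cdots)\,dr+[S_1(t-s)-I]\int_0^s S_1(s-r)(\cdots)\,dr$, the second piece bounded by $C(t-s)^{\theta/2}$ times the $H^\theta$-bound from (i) via \eqref{sobcont}, and the ``new mass'' first piece by the same inequalities, where the power count lands at $\beta<\tfrac14-\tfrac\nu2$, while $\beta<a/2$ again comes from the initial data through $\bar X$ (\eqref{Schauderbar}) and $Y^{\epsilon,u}$ (Proposition \ref{Schauderyprop}); the stochastic-convolution increments are handled by the H\"older regularity of the factorization process.

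I expect the principal obstacle to be Lemma \ref{IVitodeclem}: making It\^o's formula rigorous despite $Y^{\epsilon,u}\notin Dom(A_2)$, checking that the limit $n\to\infty$ commutes with all the $\epsilon$-singular prefactors, correctly singling out the term $\tfrac{\sqrt\delta}{\sqrt\epsilon}\int S_1(t-\cdot)\Psi^\epsilon_2 u_2$ that survives in the limit, and verifying that every remainder genuinely carries a prefactor tamed by the choice of $c(\epsilon)$ and the Regime-$1$/$2$ scalings. A secondary, pervasive difficulty is that $Y^{\epsilon,u}$ is only bounded in $L^2([0,T];\h)$ uniformly in $\epsilon$, not pathwise in $\h$, so each occurrence of $\|Y^{\epsilon,u}(s)\|^\nu_\h$ must be absorbed through a time-integral H\"older step --- precisely what couples the admissible exponents $\theta,\beta$ to $\nu$.
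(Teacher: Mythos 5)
Your proposal follows the paper's route in all essentials: the same four-term decomposition of $\eta^{\epsilon,u}$, the same Papanicolaou--Stroock--Varadhan treatment of the singular averaging term via the Kolmogorov equation \eqref{Kolmeq}, the finite-dimensional projection $Y_n^{\epsilon,u}$ to legitimize It\^o's formula with a diagonal choice $n=n(\epsilon)$ for the approximation error, and the same exponent bookkeeping (the H\"older step with conjugate exponent $1/(1-\nu)$ producing $\theta<\tfrac12-\nu$, $\beta<\tfrac14-\tfrac\nu2$, and the initial data entering through $\bar X$ and $Y^{\epsilon,u}$ to force $\theta<a$, $\beta<a/2$). The only structural difference is cosmetic: you run Gr\"onwall first in $\h$ and then bootstrap to $H^\theta$, whereas the paper closes Gr\"onwall directly at the $H^\theta$ level using that \eqref{ISob} is controlled by $\sup_r\|\eta^{\epsilon,u}(r)\|^2_{H^\theta}$; both work.

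There is, however, one concrete gap in your accounting of the remainder $\mathcal{R}^\epsilon$. Because the test function in the It\^o computation is $S_1(t-z)\chi$, its $z$-derivative contributes the term
\begin{equation*}
\frac{\delta}{\sqrt{\epsilon}\,h(\epsilon)}\int_{s}^{t}\blangle \Psi^\epsilon\big(\bar{X}(z),Y_n^{\epsilon,u}(z)\big),\, A_1 S_1(t-z)\chi\brangle_\h\,dz,
\end{equation*}
which does not appear anywhere in your list (your ``boundary term at the endpoints'' accounts only for the paper's $IV_1^{\epsilon,u}$, not for this compensating integral, the paper's $IV_2^{\epsilon,u}$). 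This is the most delicate term of the whole decomposition: after passing to general $\chi\in B_\h$ the integrand carries the non-integrable singularity $(t-z)^{-1-\theta/2}$, and a crude bound via \eqref{psinorm} fails. The paper controls it by subtracting $\Psi^\epsilon(\bar{X}(t),Y_n^{\epsilon,u}(t))$ inside the integral and exploiting the temporal H\"older continuity of $z\mapsto\Psi^\epsilon(\bar{X}(z),Y_n^{\epsilon,u}(z))$, obtained from the mean-value bound \eqref{meanval} combined with the Schauder estimates \eqref{Schauderbar} and \eqref{Schaudery}; this requires balancing the $1/c(\epsilon)$ blow-up of the $x$-Lipschitz constant of $\Psi^\epsilon$ against $\delta/(\sqrt{\epsilon}h(\epsilon))$ and the $\delta$-singular H\"older seminorm of $Y^{\epsilon,u}$ against $\delta/(\sqrt{\epsilon}h(\epsilon))$, which is exactly where the choice $c(\epsilon)=\sqrt{\epsilon}$, the restriction $a<1$, and the Regime-$1$/$2$ conditions are consumed. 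Without this term and this mechanism your claimed identity for $IV^\epsilon$ is incomplete and the estimate does not close; you should add it to $\mathcal{R}^\epsilon$ and supply the endpoint-subtraction argument.
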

\noindent To prove these estimates, we use a generalized version of decomposition \eqref{etadec1}. In particular, we fix
$\theta\in[0,1/2), 0\leq s< t\leq T$, $\chi\in Dom((-A_1)^{1+\frac{\theta}{2}})$ and write
\begin{equation}\label{etaspacetimedec}
\hspace*{-0.2cm}
\begin{aligned}
\blangle \eta^{\epsilon,u}(t)&-\eta^{\epsilon,u}(s)- \big(S_1(t-s)-I\big)\eta^{\epsilon,u}(s), (-A_1)^{\frac{\theta}{2}}\chi\brangle_\h\\&= \frac{1}{\sqrt{\epsilon}h(\epsilon)}\int_{s}^{t}\blangle F\big(X^{\epsilon,u}(z), Y^{\epsilon,u}(z) \big)- F\big(\bar{X}(z), Y^{\epsilon,u}(z) \big)     , S_1(t-z)(-A_1)^{\frac{\theta}{2}}\chi \rangle_\h dz
\\&
+ \int_{s}^{t}\blangle S_1(t-z)\Sigma\big(X^{\epsilon,u}(z), Y^{\epsilon,u}(z) \big)u_1(z),(-A_1)^{\frac{\theta}{2}}\chi \rangle_\h dz\\&+\frac{1}{h(\epsilon)} \int_{s}^{t} \langle S_1(t-z)\Sigma\big(X^{\epsilon,u}(z), Y^{\epsilon,u}(z) \big)dw_1(z), (-A_1)^{\frac{\theta}{2}}\chi\rangle_\h
\\&
+\frac{1}{\sqrt{\epsilon}h(\epsilon)}\int_{s}^{t}\blangle F\big(\bar{X}(z), Y^{\epsilon,u}(z) \big)- \bar{F}\big(\bar{X}(z) \big)    , S_1(t-z)(-A_1)^{\frac{\theta}{2}}\chi \rangle_\h dz\\&
=: I^{\epsilon,u}(s,t,\theta,\chi)+ II^{\epsilon,u}(s,t,\theta,\chi)+III^{\epsilon,u}(s,t,\theta,\chi)+IV^{\epsilon,u}(s,t,\theta,\chi).
\end{aligned}
\end{equation}

\noindent This decomposition allows us to study spatio-temporal regularity in a unified manner.  In Section \ref{1,2,3} we provide the necessary estimates for the terms $I^{\epsilon,u}$, $II^{\epsilon,u}$, $III^{\epsilon,u}$. As we mentioned in Section \ref{weakconv}, the term $IV^{\epsilon,u}$ requires a more careful analysis, which is done with the aid of the Kolmogorov equation \eqref{Kolmeq}. This is the  subject of Section \ref{4}. Finally, we prove Proposition \ref{etatightness} in Section \ref{etaproof}.
\begin{rem} The reason for choosing our test functions $\chi\in Dom((-A_1)^{1+\frac{\theta}{2}})$ is related to the treatment of term $IV^{\epsilon,u}$ and will become clear in Section \ref{4} (see Lemma \ref{IVitodeclem}).
\end{rem}
	
\subsection{Estimates for $I^{\epsilon,u}$, $II^{\epsilon,u}$, $III^{\epsilon,u}$  }\label{1,2,3}
The proofs of the three lemmas in this section  have the following structure: First, we prove a preliminary space-time estimate which depends linearly and continuously on the test function $\chi$ in the topology of $\h$. Since $\chi$ is smooth, we can extend the latter by density to arbitrary test functions in $\h$. Finally, we set $s=0$ to prove a spatial Sobolev-type estimate, or $\theta=0$ to prove a temporal equicontinuity-type estimate,  uniformly over $\chi\in B_\h$. These estimates hold in both Regimes $1$ and $2$ (see \eqref{Regimes}).

\begin{lem}\label{Ibnds} Let $T<\infty$, $t\in[0,T]$, $\theta\in[0,1/2)$ and $I^{\epsilon,u}$ as in \eqref{etaspacetimedec}. For all $\epsilon>0, u\in\mathcal{P}^T_N$, there exists a constant $C>0$, independent of $\epsilon$, such that
	\begin{equation}\label{ISob}
	\begin{aligned}
	&	\sup_{\chi\in B_\h}\big|I^{\epsilon,u}(0,t,\theta,\chi)\big|^2\leq C \int_{0}^{t}(t-z)^{-\theta}\sup_{r\in[0,z]}\big\|\eta^{\epsilon,u}(r)\big\|^2_{\h} dz,\;\;\pr-\text{a.s.}	       	\end{aligned}
	\end{equation}	
	and
	\begin{equation}\label{Iholder}
	\begin{aligned}
	&	\ex\bigg(\sup_{\overset{s,t\in[0, T]}{ t\neq s}}\sup_{\chi\in B_\h}\frac{\big|I^{\epsilon,u}(s,t,0,\chi)\big|}{  |t-s|  }\bigg)\leq C \ex\sup_{t\in[0,T]}\big\|\eta^{\epsilon,u}(t)\big\|_{\h}\;.
	\end{aligned}
	\end{equation}

\end{lem}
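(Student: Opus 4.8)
The plan is to observe that the superficially singular prefactor $1/(\sqrt{\epsilon}h(\epsilon))$ in the definition \eqref{etaspacetimedec} of $I^{\epsilon,u}$ is exactly absorbed by the Lipschitz continuity of $F$. Since $F:\h\times\h\to\h$ is globally Lipschitz (cf.\ Hypothesis \ref{A2a}) and, by \eqref{etau1}, $X^{\epsilon,u}(z)-\bar{X}(z)=\sqrt{\epsilon}h(\epsilon)\,\eta^{\epsilon,u}(z)$, one has
\begin{equation*}
\big\|F\big(X^{\epsilon,u}(z),Y^{\epsilon,u}(z)\big)-F\big(\bar{X}(z),Y^{\epsilon,u}(z)\big)\big\|_\h\le L_F\sqrt{\epsilon}h(\epsilon)\,\|\eta^{\epsilon,u}(z)\|_\h,
\end{equation*}
so that the moderate deviation scaling drops out and, unlike for $IV^{\epsilon,u}$, no cancellation or Kolmogorov-equation argument is needed. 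Note also that the fast motion $Y^{\epsilon,u}$ disappears entirely from the estimate because we compare $F$ at two points with the same second argument.

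First I would establish the preliminary space-time estimate. Applying the Cauchy--Schwarz inequality in $\h$ pointwise in $z$ to the pairing under the $z$-integral, then using the analytic smoothing bound $\|(-A_1)^{\theta/2}S_1(\tau)\|_{\mathscr{L}(\h)}\le C_T\tau^{-\theta/2}$ for $\tau\in(0,T]$ (valid for $\theta\in(0,1/2)$; cf.\ \eqref{Sobosmoothing} together with the norm equivalence $\|\cdot\|_{\h^\theta}\asymp\|\cdot\|_{H^\theta}$, while for $\theta=0$ it is trivial by \eqref{semignorm}), along with the Lipschitz estimate above, gives, for every smooth $\chi$,
\begin{equation*}
\big|I^{\epsilon,u}(s,t,\theta,\chi)\big|\le C\,\|\chi\|_\h\int_s^t (t-z)^{-\theta/2}\|\eta^{\epsilon,u}(z)\|_\h\,dz .
\end{equation*}
This bound is linear and continuous in $\chi$ with respect to $\|\cdot\|_\h$, so the functional $\chi\mapsto I^{\epsilon,u}(s,t,\theta,\chi)$ extends to all of $\h$ and the inequality persists for $\chi\in\h$.

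For \eqref{ISob} I would set $s=0$, square, and apply Cauchy--Schwarz once more in $L^2([0,t])$ by splitting $(t-z)^{-\theta/2}\|\eta^{\epsilon,u}(z)\|_\h=\big[(t-z)^{-\theta/2}\|\eta^{\epsilon,u}(z)\|_\h\big]\cdot 1$; this produces the harmless factor $t\le T$ and the kernel $(t-z)^{-\theta}$, which is integrable since $\theta<\tfrac12<1$, and after bounding $\|\eta^{\epsilon,u}(z)\|^2_\h\le\sup_{r\in[0,z]}\|\eta^{\epsilon,u}(r)\|^2_\h$ and taking $\sup_{\chi\in B_\h}$ one obtains \eqref{ISob}. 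For \eqref{Iholder} I would set $\theta=0$, use the contraction bound $\|S_1(t-z)\chi\|_\h\le\|\chi\|_\h$ to get $|I^{\epsilon,u}(s,t,0,\chi)|\le L_F\|\chi\|_\h\,|t-s|\sup_{z\in[0,T]}\|\eta^{\epsilon,u}(z)\|_\h$, then divide by $|t-s|$, take suprema over $\chi\in B_\h$ and $s\neq t$, and take expectations.

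There is no genuine obstacle in this lemma; the only point demanding a little care is to extract the kernel $(t-z)^{-\theta}$ (rather than the milder $(t-z)^{-\theta/2}$) in \eqref{ISob}, so that the right-hand side has exactly the form fed into the Gr\"onwall-type argument in the proof of Proposition \ref{etatightness}; this is precisely what the two successive Cauchy--Schwarz applications achieve. All constants are manifestly independent of $\epsilon$ and of $u\in\mathcal{P}^T_N$, since these parameters enter the right-hand sides only through $\eta^{\epsilon,u}$.
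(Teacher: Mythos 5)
Your proposal is correct and follows essentially the same route as the paper: the Lipschitz continuity of $F$ cancels the $1/(\sqrt{\epsilon}h(\epsilon))$ prefactor, the analytic smoothing of $S_1$ yields the kernel $(t-z)^{-\theta/2}$, a density argument extends the bound from smooth $\chi$ to all of $B_\h$, and then \eqref{ISob} follows by squaring with a second Cauchy--Schwarz while \eqref{Iholder} follows by setting $\theta=0$. The only difference is cosmetic: you make explicit the Cauchy--Schwarz step producing the kernel $(t-z)^{-\theta}$, which the paper leaves implicit.
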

\begin{proof}
	Let $\chi\in Dom((-A_1)^{1+\theta/2})$. Using the analyticity of the semigroup $S_1$ and the Lipschitz continuity of $F$,
	\begin{equation*}\label{IVprebnd}
	\begin{aligned}
	\big|I^{\epsilon,u}(s,t,\theta,\chi)\big|&\leq \frac{1}{\sqrt{\epsilon}h(\epsilon)} \int_{s}^{t} \big\|(-A_1)^{\frac{\theta}{2}} S_1(t-z)\big[F\big(X^{\epsilon,u}(z), Y^{\epsilon,u}(z) \big)-F\big(\bar{X}(z) , Y^{\epsilon,u}(z) \big)\big]\big\|_\h\| \chi\|_\h\;dz
	\\&
	\leq   \frac{C_f}{\sqrt{\epsilon}h(\epsilon)} \| \chi\|_\h\int_{s}^{t} (t-z)^{-\theta/2}\big\|X^{\epsilon,u}(z)-\bar{X}(z)\big\|_{\h}\;dz
	\\&\leq  C\| \chi\|_\h\int_{s}^{t} (t-z)^{-\theta/2}\sup_{r\in[s,z]}\big\|\eta^{\epsilon,u}(r)\big\|_{\h}\;dz.
	\end{aligned}
	\end{equation*}
	Since $Dom((-A_1)^{1+\frac{\theta}{2}})$ is dense as a subspace of $\h$,
	we can approximate any element of $\h$ by a sequence $\{\chi_m\}_{m\in\N}\subset Dom((-A_1)^{1+\frac{\theta}{2}})$ in the topology of $\h$. Hence the last estimate holds, with probability $1$, for each $\chi\in \h$. Choosing $\chi\in B_\h$, we set $s=0$ and take expectation to obtain \eqref{ISob}. Setting $\theta=0$ yields
	\begin{equation*}
	\begin{aligned}
	&	\big|I^{\epsilon,u}(s,t,0,\chi)\big|
	\leq C(t-s)\sup_{t\in[0,T]} \big\|\eta^{\epsilon,u}(t)\big\|_{\h}
	\end{aligned}
	\end{equation*}
	and \eqref{Iholder} follows by taking expectation. The proof is complete.
\end{proof}

\begin{lem}\label{IIbnds} Let $T<\infty,$ $x_0,y_0\in\h$, $\nu<1/2$ as in Hypothesis \ref{A3a} and $II^{\epsilon,u}$ as in \eqref{etaspacetimedec}. There exist $\theta<1/2-\nu$, $\beta<1/4-\nu/2$ and a constant $C>0$, independent of $\epsilon$, such that
	
	\begin{equation}\label{IISob}
	\begin{aligned}
	&	\sup_{\epsilon>0, u\in\mathcal{P}^T_N}\ex\bigg(\sup_{t\in[0, T] }\sup_{\chi\in B_\h}\big|II^{\epsilon,u}(0,t,\theta,\chi)\big|^\frac{2}{\nu}\bigg)\leq C\big( 1+\|x_0\|_\h^{\frac{2}{\nu}}+\|y_0\|^\frac{2}{\nu}_\h\big)
	\end{aligned}
	\end{equation}	
	and
	\begin{equation}\label{IIholder}
	\begin{aligned}
	&		\sup_{\epsilon>0, u\in\mathcal{P}^T_N}\ex\bigg(\sup_{\overset{s,t\in[0, T]}{ t\neq s}}\sup_{\chi\in B_\h}\frac{\big|II^{\epsilon,u}(s,t,0,\chi)\big|}{  |t-s|^{\beta}  }\bigg)\leq C\big( 1+\|x_0\|_\h+\|y_0\|_\h\big).
	\end{aligned}
	\end{equation}
\end{lem}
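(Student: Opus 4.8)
The plan is to follow the three–step recipe announced at the beginning of Section \ref{1,2,3}: derive a pathwise space–time bound on $II^{\epsilon,u}(s,t,\theta,\chi)$ that is \emph{linear and continuous in $\|\chi\|_\h$}, extend it by density to all $\chi\in\h$, and then specialize by setting $s=0$ (Sobolev estimate) or $\theta=0$ (H\"older estimate). For the preliminary estimate, since $-A_1$ is self-adjoint and $\chi\in Dom((-A_1)^{1+\theta/2})$, I would move the fractional power onto the semigroup, and use that $\Sigma(x,y)$ is a multiplication operator (so formally self-adjoint) with $\Sigma(x,y)\in\mathscr{L}(L^\infty(0,L);\h)$ under Hypothesis \ref{A3a}, to rewrite
$$II^{\epsilon,u}(s,t,\theta,\chi)=\int_s^t\blangle u_1(z),\,\Sigma\big(X^{\epsilon,u}(z),Y^{\epsilon,u}(z)\big)(-A_1)^{\theta/2}S_1(t-z)\chi\brangle_\h\,dz.$$
Bounding $\|\Sigma(x,y)\psi\|_\h\leq\|\sigma(\cdot,x,y)\|_\h\|\psi\|_{L^\infty}\leq C(1+\|x\|_\h+\|y\|_\h^{\nu})\|\psi\|_{L^\infty}$ via \eqref{sigma} and finiteness of $|(0,L)|$, and combining ultracontractivity \eqref{Lpsmoothing} (with $p=2,r=\infty$) with the analytic smoothing bound $\|(-A_1)^{\theta/2}S_1(\tau)\|_{\mathscr{L}(\h)}\leq C(\tau\wedge1)^{-\theta/2}$ (a consequence of \eqref{Sobosmoothing} through $H^\theta=\h_1^\theta$), I get $\|(-A_1)^{\theta/2}S_1(\tau)\chi\|_{L^\infty}\leq C(\tau\wedge1)^{-1/4-\theta/2}\|\chi\|_\h$, hence, up to a constant depending on $T$,
$$|II^{\epsilon,u}(s,t,\theta,\chi)|\leq C\|\chi\|_\h\int_s^t\big((t-z)\wedge1\big)^{-\frac14-\frac\theta2}\big(1+\|X^{\epsilon,u}(z)\|_\h+\|Y^{\epsilon,u}(z)\|_\h^{\nu}\big)\|u_1(z)\|_\h\,dz.$$
Since $Dom((-A_1)^{1+\theta/2})$ is dense in $\h$ and the right-hand side is continuous in $\chi\in\h$, this inequality extends to all $\chi\in\h$, $\pr$-a.s.

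For the Sobolev bound \eqref{IISob} I set $s=0$ and split the integrand using $1+\|X^{\epsilon,u}(z)\|_\h\leq 1+\sup_{r\le T}\|X^{\epsilon,u}(r)\|_\h$. The part carrying $1+\sup\|X^{\epsilon,u}\|$ is handled by Cauchy–Schwarz in $z$ against $\|u_1\|_\h$: the kernel $((t-z)\wedge1)^{-1/2-\theta}$ is integrable on $[0,T]$ precisely when $\theta<1/2$, and $\int_0^T\|u_1\|_\h^2\le N$. The part carrying $\|Y^{\epsilon,u}(z)\|_\h^{\nu}$ I estimate by H\"older with three exponents $(p_1,p_2,p_3)=\big(\tfrac{2}{1-\nu},\tfrac2\nu,2\big)$: the $u_1$–factor gives $N^{1/2}$, the $Y$–factor gives $\|Y^{\epsilon,u}\|_{L^2([0,T];\h)}^{\nu}$, and the kernel factor is finite exactly when $(1/4+\theta/2)p_1<1$, i.e. when $\theta<1/2-\nu$. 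This yields, uniformly in $t$,
$$\sup_{\chi\in B_\h}|II^{\epsilon,u}(0,t,\theta,\chi)|\leq C\Big(1+\sup_{r\le T}\|X^{\epsilon,u}(r)\|_\h+\|Y^{\epsilon,u}\|_{L^2([0,T];\h)}^{\nu}\Big);$$
raising to the power $2/\nu$, taking expectations, and invoking \eqref{xapriori} together with \eqref{yapriori} (and Jensen's inequality to pass from $\ex\|Y^{\epsilon,u}\|_{L^2([0,T];\h)}^{2}$ to $\ex\|Y^{\epsilon,u}\|_{L^2([0,T];\h)}^{2/\nu}$) gives \eqref{IISob}.

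For the H\"older bound \eqref{IIholder} I set $\theta=0$, so the kernel becomes $((t-z)\wedge1)^{-1/4}$ on the interval $[s,t]$, and the goal is to extract a power of $|t-s|$ from the length of the integration interval. Applying the same splitting and the same H\"older exponents as above over $[s,t]$: for the $1+\sup\|X^{\epsilon,u}\|$ part, Cauchy–Schwarz produces a kernel factor $\lesssim|t-s|^{1/4}$; for the $\|Y^{\epsilon,u}(z)\|_\h^{\nu}$ part, the kernel factor $\big(\int_s^t((t-z)\wedge1)^{-1/(2(1-\nu))}dz\big)^{1-\nu}\lesssim|t-s|^{1/4-\nu/2}$ is finite exactly because $\nu<1/2$. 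Hence $\sup_{\chi\in B_\h}|II^{\epsilon,u}(s,t,0,\chi)|\lesssim|t-s|^{1/4-\nu/2}\big(1+\sup_{r}\|X^{\epsilon,u}(r)\|_\h+\|Y^{\epsilon,u}\|_{L^2([0,T];\h)}^{\nu}\big)$, so for any $\beta<1/4-\nu/2$ the ratio in \eqref{IIholder} is bounded $\pr$-a.s. by $C\big(1+\sup_r\|X^{\epsilon,u}(r)\|_\h+\|Y^{\epsilon,u}\|_{L^2([0,T];\h)}^{\nu}\big)$; taking expectations and using \eqref{xapriori}, \eqref{yapriori} and Jensen finishes the proof.

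The only genuinely delicate point — and the step I expect to be the main obstacle — is the bookkeeping in the last two steps: because the controls and the fast motion are only $L^2$ in time and $\sigma$ grows of sublinear order $\nu$ in $y$, one must balance the singularity $-1/4-\theta/2$ of the convolution kernel against the exponents needed to absorb $\|u_1\|_\h$ in $L^2([0,T];\h)$ and $\|Y^{\epsilon,u}\|_\h$ in $L^2([0,T];\h)$, and it is exactly this balance that forces the ranges $\theta<1/2-\nu$ and $\beta<1/4-\nu/2$. Everything else (the density extension, the Riesz-type manipulation with the multiplication operator $\Sigma$, and the application of the a priori bounds) is routine.
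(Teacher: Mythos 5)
Your proposal is correct and follows essentially the same route as the paper: a pathwise bound linear in $\|\chi\|_\h$ with kernel singularity $\sim(t-z)^{-1/4-\theta/2}$ (the paper gets $(t-z)^{-(\rho+\theta)/2}$, $\rho\in(1/2,1)$, from Lemma \ref{sigmacont}(i) via the eigenbasis and Hypothesis \ref{A1b}, where you use ultracontractivity \eqref{Lpsmoothing} directly), extension by density, and then a H\"older balancing of that singularity against the $L^2$-in-time integrability of $u_1$ and $Y^{\epsilon,u}$ — your three-exponent H\"older is just the paper's Cauchy--Schwarz-then-H\"older done in one step, and it produces the same constraints $\theta<1/2-\nu$, $\beta<1/4-\nu/2$. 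The only blemish is a typo in the outer exponent of your kernel factor in the H\"older step, which should be $1/p_1=(1-\nu)/2$ rather than $1-\nu$; the final exponent $|t-s|^{1/4-\nu/2}$ you state is the one that follows from the correct value, so nothing is affected.
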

\begin{proof} Let $\chi\in Dom((-A_1)^{1+\frac{\theta}{2}})$. An application of Lemma \ref{sigmacont}(i) yields
	\begin{equation*}
	\begin{aligned}
	|II^{\epsilon,u}(s,t,\theta,\chi)&\big|\leq	\int_{s}^{t} \|(-A_1)^{\frac{\theta}{2}} S_1(t-z)\Sigma\big(X^{\epsilon,u}(z), Y^{\epsilon,u}(z) \big)u_1(z)\|_\h \|\chi\|_\h  dz\\&\leq C \|\chi\|_\h\int_{s}^{t} (t-z)^{-(\rho+\theta)/2}\big\|\Sigma^{*}\big(X^{\epsilon,u}(z), Y^{\epsilon,u}(z) \big)\big\|_{\mathscr{L}(L^\infty(0,L);\h)}\|u_1(z)\|_\h  dz\\&
	\leq C   \|\chi\|_\h\int_{s}^{t} (t-z)^{-(\rho+\theta)/2}\bigg(1+\big\|X^{\epsilon,u}(z)\big\|_\h+\big\|Y^{\epsilon,u}(z)\big\|^{\nu}_\h\bigg)\|u_1(z)\|_\h  dz,
	\end{aligned}
	\end{equation*}
	where $\rho\in(1/2,1)$ and we used Hypothesis \ref{A3a} to obtain the third line. Using a density argument as in the proof of Lemma \ref{Ibnds} it follows that the estimate holds for each $\chi\in \h$. Choosing $\chi\in B_\h$, we apply the Cauchy-Schwarz inequality to deduce that
	\begin{equation*}
	\begin{aligned}
	|II^{\epsilon,u}(s,t,\theta,\chi)\big|&\leq C  \bigg(\int_{0}^{T} \|u_1(z)\|^2_\h dz\bigg)^{1/2}\bigg[\int_{s}^{t}(t-z)^{-\rho-\theta}\bigg(1+\big\|X^{\epsilon,u}(z)\big\|^2_\h+\big\|Y^{\epsilon,u}(z)\big\|^{2\nu}_\h\bigg) dz\bigg]^{\frac{1}{2}},\end{aligned}
	\end{equation*}
	 with probability $1$. Applying H\"older's inequality with $p=1/\nu$, $q=1/(1-\nu)$
	\begin{equation}\label{IIprebnd}
	\begin{aligned}
	|II^{\epsilon,u}(s,t,\theta,\chi)\big|&\leq C N\bigg[\int_{0}^{t-s}z^{-q(\rho+\theta)}dz\bigg]^{\frac{1}{2q}}\bigg[\int_{0}^{T}\bigg(1+\big\|X^{\epsilon,u}(z)\big\|^{2/\nu}_\h+\big\|Y^{\epsilon,u}(z)\big\|^{2}_\h\bigg) dz\bigg]^{\frac{\nu}{2}}. \end{aligned}
	\end{equation}
	\noindent Since $\nu<1/2$ we can choose $\rho\in(1/2, 1-\nu)$ and $\theta<1-\nu-\rho=-\rho+1/q$ so that $\int_{0}^{t-s}z^{-q(\rho+\theta)}dz\leq CT^{1-q(\rho+\theta)}$. Setting $s=0$ in \eqref{IIprebnd} we obtain
	\begin{equation*}
	\hspace*{-0.5cm}
	\begin{aligned}
	|II^{\epsilon,u}(0,t,\theta,\chi)\big|&\leq C_N T^{(1-\nu-\rho-\theta)/2}\bigg[1+\sup_{t\in[0,T]}\big\|X^{\epsilon,u}(z)\big\|^{2/\nu}_\h+\int_{0}^{T}\big\|Y^{\epsilon,u}(z)\big\|^{2}_\h dz\bigg]^{\frac{\nu}{2}} \end{aligned}
	\end{equation*}
	and \eqref{IISob} follows by taking expectation and applying \eqref{xapriori} and \eqref{yapriori}. As for \eqref{IIholder}, we set $\theta=0$ in \eqref{IIprebnd} to deduce that
	\begin{equation*}
	\hspace*{-0.5cm}
	\begin{aligned}
	\frac{|II^{\epsilon,u}(s,t,\theta,\chi)\big|}{(t-s)^\beta}&\leq C \bigg[1+\sup_{t\in[0,T]}\big\|X^{\epsilon,u}(z)\big\|^{2/\nu}_\h+\int_{0}^{T}\big\|Y^{\epsilon,u}(z)\big\|^{2}_\h dz\bigg]^{\frac{\nu}{2}}, \end{aligned}
	\end{equation*}
	for $\beta\leq (1-\nu-\rho)/2<(1-\nu)/2$. In view of the a priori bounds \eqref{xapriori} and \eqref{yapriori}, the proof is complete.\end{proof}

\begin{lem}\label{IIIbnds}  Let $T<\infty$, $\nu<1/2$ as in Hypothesis \ref{A3a} and $III^{\epsilon,u}$ as in \eqref{etaspacetimedec} . There exist $\epsilon_0>0$, $\theta<\frac{1}{2}-\nu$, $\beta<\frac{1}{4}-\frac{\nu}{2}$ and a constant $C>0$, independent of $\epsilon$, such that
	
	\begin{equation}\label{IIISob}
	\begin{aligned}
	&	\sup_{\epsilon<\epsilon_0, u\in\mathcal{P}^T_N}\ex\bigg(\sup_{t\in[0, T] }\sup_{\chi\in B_\h}\big|III^{\epsilon,u}(0,t,\theta,\chi)\big|^\frac{2}{\nu}\bigg)\leq C\big( 1 +\|x_0\|^{\frac{2}{\nu}}_\h+\|y_0\|_\h^\frac{2}{\nu}\big)
	\end{aligned}
	\end{equation}	
	and
	\begin{equation}\label{IIIholder}
	\begin{aligned}
	&		\sup_{\epsilon<\epsilon_0, u\in\mathcal{P}^T_N}\ex\bigg(\sup_{\overset{s,t\in[0, T]}{ t\neq s}}\sup_{\chi\in B_\h}\frac{\big|III^{\epsilon,u}(s,t,0,\chi)\big|}{  |t-s|^\beta  }\bigg)\leq  C\big(1+\|x_0\|_\h+\|y_0\|_\h\big).
	\end{aligned}
	\end{equation}
\end{lem}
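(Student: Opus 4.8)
The plan is to treat the term $III^{\epsilon,u}$ as (a regularized version of) the slow stochastic convolution $\Theta^{\epsilon,u}(t):=\int_0^t S_1(t-z)\Sigma(X^{\epsilon,u}(z),Y^{\epsilon,u}(z))\,dw_1(z)$, so that $III^{\epsilon,u}(s,t,\theta,\chi)=h(\epsilon)^{-1}\langle \Theta^{\epsilon,u}(t)-S_1(t-s)\Theta^{\epsilon,u}(s),(-A_1)^{\theta/2}\chi\rangle_\h$. Since $h(\epsilon)\to\infty$, I would fix $\epsilon_0$ so that $1/h(\epsilon)\leq 1$ for $\epsilon<\epsilon_0$, making that prefactor harmless. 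As in Lemmas \ref{Ibnds}--\ref{IIbnds}, using that $A_1$ is self-adjoint I would move $(-A_1)^{\theta/2}$ onto $\Theta^{\epsilon,u}$ and, by density of $Dom((-A_1)^{1+\theta/2})$ in $\h$, reduce both claims to uniform bounds (over $u\in\mathcal{P}^T_N$ and small $\epsilon$) on $\ex\sup_{t\in[0,T]}\|(-A_1)^{\theta/2}\Theta^{\epsilon,u}(t)\|_\h^{2/\nu}$ for \eqref{IIISob} and on $\ex[\Theta^{\epsilon,u}]_{C^\beta([0,T];\h)}^{2/\nu}$ for \eqref{IIIholder}.

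First I would apply the factorization identity: fixing $\alpha\in(0,1/2)$, write $\Theta^{\epsilon,u}(t)=\tfrac{\sin(\pi\alpha)}{\pi}\int_0^t(t-r)^{\alpha-1}S_1(t-r)Z^{\epsilon,u}_\alpha(r)\,dr$, where $Z^{\epsilon,u}_\alpha(r):=\int_0^r(r-z)^{-\alpha}S_1(r-z)\Sigma(X^{\epsilon,u}(z),Y^{\epsilon,u}(z))\,dw_1(z)$. The key intermediate estimate is $\sup_{r\in[0,T]}\ex\|Z^{\epsilon,u}_\alpha(r)\|_\h^{2/\nu}\leq C(1+\|x_0\|_\h^{2/\nu}+\|y_0\|_\h^{2/\nu})$. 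I would obtain this from the Burkholder--Davis--Gundy inequality in $\h$, which bounds the left side by a constant times $\ex\big(\int_0^r(r-z)^{-2\alpha}\|S_1(r-z)\Sigma(X^{\epsilon,u}(z),Y^{\epsilon,u}(z))\|_{\mathscr{L}_2(\h)}^2\,dz\big)^{1/\nu}$, then insert the Hilbert--Schmidt smoothing bound $\|S_1(\tau)\Sigma(x,y)\|_{\mathscr{L}_2(\h)}\leq C\tau^{-\rho/2}(1+\|x\|_\h+\|y\|_\h^\nu)$ for any $\rho>1/2$ (a consequence of the Gaussian heat-kernel bound for $S_1$, equivalently of ultracontractivity \eqref{Lpsmoothing}, in the spirit of Lemma \ref{sigmacont}; note that under \ref{A3a} alone $\Sigma(x,y)$ only maps into $L^1(0,L)$, so the smoothing of $S_1$ is exactly what makes the composition Hilbert--Schmidt), Hypothesis \ref{A3a}, and $\|y\|_\h^{2\nu}\leq 1+\|y\|_\h^2$. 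The nonlinear factor $\|Y^{\epsilon,u}(z)\|_\h^{2\nu}$ I would absorb with H\"older's inequality in $z$ with exponents $1/(1-\nu)$ and $1/\nu$, which — provided $2\alpha+\rho<1-\nu$ — turns the $z$-integral into a fixed multiple of $\|Y^{\epsilon,u}\|_{L^2([0,T];\h)}^2$, while the $1+\|X^{\epsilon,u}\|^2$ part is controlled by extracting $\sup_t\|X^{\epsilon,u}(t)\|^2$. The a priori bounds \eqref{xapriori} and \eqref{yapriori} then close the estimate for $Z^{\epsilon,u}_\alpha$.

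Given this bound on $Z^{\epsilon,u}_\alpha$, for \eqref{IIISob} I would apply $(-A_1)^{\theta/2}$ to the factorization formula, push it onto $S_1(t-r)$, use the analytic smoothing $\|(-A_1)^{\theta/2}S_1(\tau)\|_{\mathscr{L}(\h)}\leq C(\tau\wedge 1)^{-\theta/2}e^{-c\tau}$, and then H\"older in $r$ with exponents $2/\nu$ and $2/(2-\nu)$; this is legitimate provided $\alpha-\theta/2>\nu/2$, and yields $\sup_{t\in[0,T]}\|(-A_1)^{\theta/2}\Theta^{\epsilon,u}(t)\|_\h\leq C\big(\int_0^T\|Z^{\epsilon,u}_\alpha(r)\|_\h^{2/\nu}\,dr\big)^{\nu/2}$, hence \eqref{IIISob} after raising to the $2/\nu$ power and taking expectations. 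For \eqref{IIIholder} (where $\theta=0$) I would instead invoke the classical mapping property that $g\mapsto\int_0^\cdot(\cdot-r)^{\alpha-1}S_1(\cdot-r)g(r)\,dr$ sends $L^{2/\nu}([0,T];\h)$ continuously into $C^\beta([0,T];\h)$ whenever $\beta<\alpha-\nu/2$ — the type of result collected in Appendix \ref{AppA} — applied to $g=Z^{\epsilon,u}_\alpha$, together with the moment bound above. Taking $\rho$ slightly above $1/2$ and choosing $\alpha$, $\theta$, $\beta$ inside the resulting constraints then places us in the ranges $\theta<\tfrac12-\nu$, $\beta<\tfrac14-\tfrac\nu2$ asserted in the statement.

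The hard part is the exponent bookkeeping, which is genuinely more delicate than for $II^{\epsilon,u}$: the factorization forces one to balance the constraint $2\alpha+\rho+\theta<1-\nu$ — coming from joint integrability of $(r-z)^{-2\alpha}\|S_1(r-z)\Sigma\|_{\mathscr{L}_2(\h)}^2$ together with the H\"older split that re-expresses the $\|Y^{\epsilon,u}\|^\nu$-growth through the \emph{space-time} norm $\|Y^{\epsilon,u}\|_{L^2([0,T];\h)}$ (the only norm of the fast process available with enough moments, since the pointwise-in-time bound \eqref{yunifapriori} is not uniform in $\epsilon$) — against the constraint $\alpha-\theta/2>\nu/2$ from integrability of $(t-r)^{\alpha-1-\theta/2}$ tested against $Z^{\epsilon,u}_\alpha\in L^{2/\nu}$. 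Choosing $\alpha$ and splitting $\theta$ between the inner and outer semigroup factors optimally is what dictates the admissible $\theta,\beta$, and this is precisely where the smoothing and Schauder-type estimates of Section \ref{Sec2} and Appendix \ref{AppA} enter alongside the moment bounds \eqref{xapriori}--\eqref{yapriori}. A secondary technical point is justifying the Hilbert--Schmidt estimate for $S_1(\cdot)\Sigma(\cdot)$ under the weak growth hypothesis \ref{A3a}, where $\Sigma(x,y)$ is not even bounded on $\h$.
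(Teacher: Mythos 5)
Your proposal follows essentially the same route as the paper's proof: stochastic factorization of the convolution, Burkholder--Davis--Gundy combined with the Hilbert--Schmidt smoothing bound of Lemma \ref{sigmacont}(ii) under Hypothesis \ref{A3a}, the H\"older split with exponents $1/\nu$ and $1/(1-\nu)$ to convert the $\|Y^{\epsilon,u}\|^{2\nu}_\h$ growth into the space-time norm controlled by \eqref{yapriori}, and the a priori bound \eqref{xapriori} for the slow component, with the $1/h(\epsilon)$ prefactor rendered harmless for small $\epsilon$. The only (immaterial) differences are that you place $(-A_1)^{\theta/2}$ on the outer semigroup factor where the paper keeps it inside the stochastic integral via \eqref{HSbnd}, and that you cite the abstract $L^{2/\nu}\to C^\beta$ mapping property of the factorization operator where the paper carries out the corresponding H\"older-in-time computation explicitly.
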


\begin{proof}
	\noindent Let $\theta\in[0,1/2)$, $\chi\in Dom((-A_1)^{1+\frac{\theta}{2}})$ and $a\in(0,1/2)$. From the  stochastic factorization formula \eqref{stofact} we can write
	\begin{equation*}\label{V5fin}
	III^{\epsilon,u}(s,t,\theta,\chi)	=  \frac{\sin(a\pi)}{h(\epsilon) \pi}\bigg\langle  \int_{s}^{t}(t-z)^{a-1}(-A_1)^{\frac{\theta}{2}}S_1(t-z) M^{\epsilon,u}_a(s,z,z;1)dz\;,\chi\bigg\rangle_\h\;,
	\end{equation*}
	where, for $t_1\leq t_2\leq t_3$,
	\begin{equation*}\label{stofactoreg}
	M^{\epsilon,u}_a(t_1,t_2,t_3;1):=\int_{t_1}^{t_2}(t_3-\zeta)^{-a}S_1(t_3-\zeta)\Sigma\big( X^{\epsilon,u}(\zeta), Y^{\epsilon,u}(\zeta)    \big)dw_1(\zeta).
	\end{equation*}
	Thus,
	\begin{equation}\label{IIIprebnd}
	\begin{aligned}
	&\big|III^{\epsilon,u}(s,t,\theta,\chi)\big|\leq\frac{C_a}{h(\epsilon)}\|\chi\|_\h \int_{s}^{t}(t-z)^{a-1}\big\|(-A_1)^{\frac{\theta}{2}} M^{\epsilon,u}_a(s,z,z;1)\big\|_\h dz.
	\end{aligned}
	\end{equation}
	From a density argument (see proof of Lemma \ref{Ibnds}), the last estimate holds with probability $1$ for all $\chi\in B_\h$.

\noindent 	We start by proving \eqref{IIIholder}. To this end, set $\theta=0$ in \eqref{IIIprebnd} and apply H\"older's inequality for $q>1/a>2$ to deduce that
	\begin{equation*}
	\begin{aligned}
	\big|III^{\epsilon,u}(s,t,0,\chi)\big|&\leq\frac{C_a}{h(\epsilon)} \|\chi\|_\h \int_{s}^{t}(t-z)^{a-1}\big\|M^{\epsilon,u}_a(s,z,z;1)\big\|_\h dz\\&
	\leq\frac{C}{h(\epsilon)}\|\chi\|_\h \bigg(\int_{s}^{t}(t-z)^{p(a-1)}dz\bigg)^{\frac{1}{p}}\bigg(\int_{s}^{t}\big\|M^{\epsilon,u}_a(s,z,z;1)\big\|^q_\h dz\bigg)^{\frac{1}{q}}.
	\end{aligned}
	\end{equation*}
	\noindent Since $M^{\epsilon,u}_a(s,z,z)=M^{\epsilon,u}_a(0,z,z;1)-M^{\epsilon,u}_a(0,s,z;1),$
	\begin{equation*}
	\begin{aligned}
	h(\epsilon)\sup_{\chi\in B_\h}\frac{\big|III^{\epsilon,u}(s,t,0,\chi)\big|}{(t-s)^{a-1/q}}&
\leq C_q\bigg(\int_{0}^{T}\sup_{s\in[0,z]}\big\|M^{\epsilon,u}_a(0,s,z;1)\big\|^q_\h dz\bigg)^{\frac{1}{q}}.
	\end{aligned}
	\end{equation*}
	Taking expectation, we apply Jensen's inequality followed by the Burkholder-Davis-Gundy inequality to obtain
	\begin{equation*}
	\begin{aligned}
	\ex \sup_{\overset{s,t\in[0, T]}{ t\neq s}}&\sup_{\chi\in B_\h}\frac{\big|III^{\epsilon,u}(s,t,0,\chi)\big|}{|t-s|^{a-1/q}}
	\leq \frac{C}{h(\epsilon)} \bigg(\int_{0}^{T}\ex\sup_{s\in[0,z]}\big\|M^{\epsilon,u}_a(0,s,z;1)\big\|^q_\h dz\bigg)^{\frac{1}{q}}   \\&\leq \frac{C}{h(\epsilon)} \bigg( \int_{0}^{T}\bigg(  \int_{0}^{z}(t-\zeta)^{-2a}\ex\|S_1(t-\zeta)\Sigma\big( X^{\epsilon,u}(\zeta), Y^{\epsilon,u}(\zeta)    \big)\|^2_{\mathscr{L}_2(\h)}dw_1(\zeta)\bigg)^{\frac{q}{2}} dz\bigg)^{\frac{1}{q}}.
	\end{aligned}
	\end{equation*}
	\noindent 	From Lemma \ref{sigmacont}(ii) (with $B=\Sigma( X^{\epsilon,u}(\cdot), Y^{\epsilon,u}(\cdot)   ), P_n=I$) and Hypothesis \ref{A3a}
	\begin{equation}\label{IIIpebnd}
	\small
	\begin{aligned}
	\ex &\sup_{\overset{s,t\in[0, T]}{ t\neq s}}\sup_{\chi\in B_\h}\frac{\big|III^{\epsilon,u}(s,t,0,\chi)\big|}{|t-s|^{a-1/q}}
	\leq \frac{C}{h(\epsilon)}\bigg(\int_{0}^{T}\bigg(   \int_{0}^{z}(z-\zeta)^{-2a-\rho}\bigg(1+\ex\big\| X^{\epsilon,u}(\zeta)\big\|^2_\h+ \ex\big\|Y^{\epsilon,u}(\zeta)  \big\|^{2\nu}_\h \bigg) d\zeta\bigg)^{\frac{q}{2}} dz\bigg)^{\frac{1}{q}}.
	\end{aligned}
	\end{equation}
	Next, choose $a<\frac{1}{4}-\frac{\nu}{2}\in(0, 1/4) $  and $\rho<1-\nu-2a\in(1/2,1)$.  Applying H\"older's inequality with exponents $1/\nu$ and $1/(1-\nu)$, followed by Jensen's inequality, we obtain
	\begin{equation*}
	\small
	\begin{aligned}
	\ex\int_{0}^{z}(z-\zeta)^{-2a-\rho} &\bigg(1+\big\|X^{\epsilon,u}(\zeta)\big\|_\h^2+\big\|Y^{\epsilon,u}(\zeta)\big\|_\h^{2\nu} \bigg)d\zeta
	\leq CT^{1-\nu-2a-\rho}\bigg[\int_{0}^{T} \bigg(1+\ex\big\|X^{\epsilon,u}(\zeta)\big\|_\h^{\frac{2}{\nu}}+\ex\big\|Y^{\epsilon,u}(\zeta)\big\|_\h^{2} \bigg)d\zeta\bigg]^{\nu}.
	\end{aligned}
	\end{equation*}
	Letting $q=2/\nu>2$, it follows that
	\begin{equation*}
	\begin{aligned}
	\bigg[\int_{0}^{T}\bigg(\ex&\int_{0}^{z}(z-\zeta)^{-2a-\rho} \bigg(1+\big\|X^{\epsilon,u}(\zeta)\big\|_\h^2+\big\|Y^{\epsilon,u}(\zeta)\big\|_\h^{2\nu} \bigg)d\zeta\bigg)^{\frac{q}{2}} dz\bigg]^{\frac{1}{ q}}\\&\leq C T^{\nu/2} \bigg(\int_{0}^{T} \bigg(1+\ex\big\|X^{\epsilon,u}(\zeta)\big\|_\h^{2/\nu}+\ex\big\|Y^{\epsilon,u}(\zeta)\big\|_\h^{2} \bigg)d\zeta\bigg)^\frac{\nu}{2}.
	\end{aligned}
	\end{equation*}
	\noindent Combining the latter with \eqref{IIIpebnd} yields
	\begin{equation*}
	\begin{aligned}
	\ex &\sup_{\overset{s,t\in[0, T]}{ t\neq s}}\sup_{\chi\in B_\h}\frac{\big|III^{\epsilon,u}(s,t,0,\chi)\big|}{|t-s|^{a-1/q}}\leq\frac{C_{T,\nu}}{h(\epsilon)} \bigg(\int_{0}^{T} \bigg(1+\ex\big\|X^{\epsilon,u}(\zeta)\big\|_\h^{2/\nu}+\ex\big\|Y^{\epsilon,u}(\zeta)\big\|_\h^{2} \bigg)d\zeta\bigg)^\frac{\nu}{2}.
	\end{aligned}
	\end{equation*}
	\noindent Using estimates \eqref{xapriori} and \eqref{yapriori} and noting that $h(\epsilon)\to \infty$ as $\epsilon\to 0$, \eqref{IIIholder} follows.
	\noindent Similarly, \eqref{IIISob} can be proved by setting $s=0$ in \eqref{IIIprebnd}. This yields
	\begin{equation*}
	\begin{aligned}
	&\ex\bigg(\sup_{t\in[0, T] }\sup_{\chi\in B_\h}\big|III^{\epsilon,u}(0,t,\theta,\chi)\big|^\frac{2}{\nu}\bigg)\leq\frac{C_a}{h(\epsilon)}\ex\bigg\| \int_{0}^{T}(t-z)^{a-1}S_1(t-z)M^{\epsilon,u }_a(0,z,z;1)dz \bigg\|^\frac{2}{\nu}_{H^\theta}\\ &
	\leq   \frac{C_aT^{a-1/q}}{h(\epsilon)} \bigg(\int_{0}^{T}\ex\big\|(-A_1)^{\frac{\theta}{2}}M^{\epsilon,u}_a(0,z,z;1)\big\|^q_\h dz\bigg)^{\frac{2}{\nu q}} \\&\leq \frac{C_{T,a}}{h(\epsilon)}\bigg(\int_{0}^{T}\bigg(   \int_{0}^{z}(z-\zeta)^{-2a}\ex \big\|(-A_1)^{\frac{\theta}{2}}S_1(z-\zeta)\Sigma\big( X^{\epsilon,u}(\zeta), Y^{\epsilon,u}(\zeta)    \big)\big\|^2_{\mathscr{L}_2(\h)}d\zeta\bigg)^{\frac{q}{2}} dz\bigg)^{\frac{2}{q\nu}},
	\end{aligned}
	\end{equation*}
	for $\theta\in(0,1/2)$. 	In view of \eqref{HSbnd}, we can choose $\theta<\frac{1}{2}-\nu\in(0,\frac{1}{2})$, $a<\frac{1}{4}-\frac{\nu}{2}-\frac{\theta}{2}\in(0, 1/4) $  and $\rho<1-\nu-2a\in(\theta+1/2,1)$ and then apply H\"older's inequality with exponents $1/\nu$ and $1/(1-\nu)$ to obtain
	\begin{equation*}
	\begin{aligned}
	\ex\bigg(\sup_{t\in[0, T] }&\sup_{\chi\in B_\h}\big|III^{\epsilon,u}(0,t,\theta,\chi)\big|^\frac{2}{\nu}\bigg)\\&\leq \frac{C}{h(\epsilon)}\bigg(\int_{0}^{T}\bigg(   \int_{0}^{z}(z-\zeta)^{-2a-\rho}\ex\big\|\Sigma\big( X^{\epsilon,u}(\zeta), Y^{\epsilon,u}(\zeta)    \big)\big\|^2_{\mathscr{L}(L^{\infty}(0,L);\h)}d\zeta\bigg)^{\frac{q}{2}} dz\bigg)^{\frac{2}{q\nu}} \\&\leq C
	\int_{0}^{T} \bigg(1+\ex\big\|X^{\epsilon,u}(\zeta)\big\|_\h^{2/\nu}+\ex\big\|Y^{\epsilon,u}(\zeta)\big\|_\h^{2} \bigg)d\zeta.
	\end{aligned}
	\end{equation*}
	\noindent Noting that $a<1/2$ can be arbitrarily small, we apply \eqref{xapriori} and \eqref{yapriori} and the result follows.
\end{proof}
\begin{rem} The estimates derived in this section do not require any regularity for the initial conditions of the controlled system \eqref{controlledsystem}. Such considerations have to be taken into account in the next section.
\end{rem}

\subsection{The term $IV^{\epsilon,u}$}\label{4}

\noindent This section is devoted to the analysis of the last term in the decomposition \eqref{etaspacetimedec}. As we mentioned above, this term requires additional work due to the singular coefficient $1/\sqrt{\epsilon}h(\epsilon)$. Throughout the rest of this paper we choose the small parameter $c(\epsilon)$ in the Kolmogorov equation \eqref{Kolmeq} to be
\begin{equation}\label{cchoice}
c(\epsilon):=\sqrt{\epsilon}.
\end{equation}

\noindent	Now, let $P_n:\h\rightarrow\text{span}\{e_{2,1},\dots,e_{2,n}\}$ be an orthogonal projection onto the $n$-dimensional subspace spanned by the eigenvectors $e_{2,1},\dots,e_{2,n}$ of $A_2$ (see Hypothesis \ref{A1a}), $u_{2,n}:=P_nu_2$ be the projection of the control $u_2$ and
$$w_{2,n}(t)
=\sum_{k=1}^{n} e_{2,k}w_{2}(t, e_{2,k})      $$
be the projection of the cylindrical Wiener process $w_2$.
Consider the family of $n$-dimensional processes
$$Y_n^{\epsilon,u}:=P_nY^{\epsilon,u}\;,\;\;n\in\N.$$
These processes satisfy the controlled stochastic evolution equations
\begin{equation}\label{projy}
\left\{
\begin{aligned}
&dY_n^{\epsilon,u}(t)= \frac{1}{\delta}\big[A_2 Y_n^{\epsilon,u}(t)+ P_nG\big( X^{\epsilon,u}(t),Y^{\epsilon,u}(t)\big)\big]+\frac{h(\epsilon)}{\sqrt{\delta}}u_{2,n}(t)dt +\frac{1}{\sqrt{\delta}}\; dw_{2,n}(t)\\&
t>0,
Y_n^{\epsilon,u}(0)=P_ny_0\in\h.
\end{aligned}\right.
\end{equation}
Next, recall that
\begin{equation*}\label{T}
IV^{\epsilon,u}(s,t,\theta,\chi)=\frac{1}{\sqrt{\epsilon}h(\epsilon)}\int_{s}^{t}\blangle F\big(\bar{X}(z), Y^{\epsilon,u}(z) \big)- \bar{F}\big(\bar{X}(z) \big)    , S_1(t-z)(-A_1)^{\frac{\theta}{2}}\chi \rangle_\h dz.
\end{equation*}
\noindent For $\chi\in Dom((-A_1)^{1+\frac{\theta}{2}})$  we can further decompose  this into
\begin{equation}\label{Tdec}
\begin{aligned}
& \frac{1}{\sqrt{\epsilon}h(\epsilon)} \int_{s}^{t}\blangle F\big(\bar{X}(z), Y_n^{\epsilon,u}(z) \big)-\bar{F}\big(\bar{X}(z) \big),  S_1(t-z)(-A_1)^{\frac{\theta}{2}}\chi\brangle_\h dz\\&+  \frac{1}{\sqrt{\epsilon}h(\epsilon)} \int_{s}^{t}\blangle F\big(\bar{X}(z), Y^{\epsilon,u}(z) \big)-F\big(\bar{X}(z), Y_n^{\epsilon,u}(z) \big), S_1(t-z)(-A_1)^{\frac{\theta}{2}}\chi\brangle_\h dz  \\&
=: 	T_1^{\epsilon,u}(s,t,n,\theta,\chi)+T_2^{\epsilon,u}(s,t,n,\theta,\chi)
\end{aligned}
\end{equation}
\noindent and then rewrite $T_1^{\epsilon,u}$, with the aid of It\^o's formula, in order to deal with the asymptotically singular scaling.
In particular, consider the real-valued map
\begin{equation*}\label{Xi}
[s,t]\times\h\times Dom(A_2)\ni (z,x,y)\longmapsto \Theta(z,x,y):=\Phi^\epsilon_{ S_1(t-z)(-A_1)^{\frac{\theta}{2}}\chi}(x,y)\in\R,
\end{equation*}
where $\Phi^\epsilon_{\cdot}$ denotes the strict solution of the Kolmogorov equation given by \eqref{Feynman}. In view of \eqref{Riesz},
\begin{equation}
\label{Xiriesz}\Theta(z,x,y)=\langle \Psi^\epsilon(x,y), S_1(t-z)(-A_1)^{\frac{\theta}{2}}\chi\rangle_\h \end{equation} and
\begin{equation}\label{Xider}
\begin{aligned}
&\partial_z\Theta(z,x,y)=\langle \Psi^{\epsilon}(x,y), (-A_1)^{1+\frac{\theta}{2}}S_1(t-z)\chi\rangle_\h\;\;,\\&
D^v_x\Theta(z,x,y)= D^v_x\Phi^\epsilon_{ S_1(t-z)(-A_1)^{\frac{\theta}{2}}\chi}(x,y)=   \langle   \Psi^\epsilon_1(x,y)v   , S_1(t-z)(-A_1)^{\frac{\theta}{2}}\chi \rangle_\h\;\;,\\&
D^v_y\Theta(z,x,y)= D^v_y\Phi^\epsilon_{ S_1(t-z)(-A_1)^{\frac{\theta}{2}}\chi}(x,y)=   \langle   \Psi^\epsilon_2(x,y)v  , S_1(t-z)(-A_1)^{\frac{\theta}{2}} \chi \rangle_\h\;,
\end{aligned}
\end{equation}
where $D^v_{\cdot}$ denotes partial Fr\'{e}chet differentiation in the direction of $v\in\h$.
\noindent Moreover, from the last estimate in \eqref{phiestimates} and the Riesz representation theorem, there exists $\Psi^{\epsilon,n}_3(x,y)\in\h$ such that
\begin{equation}\label{phitr}
\begin{aligned}
&\text{tr}\big[(P_n-I)D^2_y \Phi^\epsilon_{ \chi}(x,y)\big]=\blangle \Psi^{\epsilon,n}_3(x,y), \chi\brangle_\h\;\text{and}\\&
\|\Psi^{\epsilon,n}_3(x,y)\|_\h\leq \frac{c}{c(\epsilon)}\big(1+\|x\|_\h+\|y\|_\h\big).
\end{aligned}
\end{equation}
The latter implies that
\begin{equation}\label{Xitr}
\begin{aligned}
\text{tr}\big[(P_n-I)D^2_y \Theta(z,x,y)\big]= \langle   \Psi^{\epsilon,n}_3(x,y) , S_1(t-z)(-A_1)^{\frac{\theta}{2}} \chi \rangle_\h.
\end{aligned}
\end{equation}
\noindent Noting that, for each $t\geq0$, $Y_n^{\epsilon,u}(t)\in Dom(A_2)$ almost surely,
we can apply It\^o's formula to $\Theta(t, \bar{X}(t), Y_n^{\epsilon,u}(t) )$ to obtain the following:
\begin{lem}\label{IVitodeclem} Let $n\in\N, T<\infty,\epsilon>0,\theta\geq 0$, $0\leq s\leq t\leq T$, $\chi\in Dom((-A_1)^{1+\theta/2})$ and define
		\begin{equation}\label{T3}
	\begin{aligned}
	&T_3^{\epsilon,u}( s,t,n,\theta,\chi):=  \frac{1}{2\sqrt{\epsilon}h(\epsilon)}  \int_{s}^{t}  \blangle \Psi^{\epsilon,n}_3\big(\bar{X}(z),Y_n^{\epsilon,u}(z)\big), S_1(t-z)(-A_1)^{\frac{\theta}{2}}\chi\brangle_\h dz\\&
	+\frac{1}{\sqrt{\epsilon}h(\epsilon)}\int_{s}^{t}   \blangle \Psi^\epsilon_2\big(\bar{X}(z),Y_n^{\epsilon,u}(z)\big)\big[ P_nG\big( \bar{X}(z), Y^{\epsilon,u}(z) \big)- G\big( \bar{X}(z), Y_n^{\epsilon,u}(z) \big)\big], S_1(t-z)(-A_1)^{\frac{\theta}{2}}\chi\brangle_\h dz.
\end{aligned}
	\end{equation}
	With $\Psi^\epsilon,\Psi^\epsilon_1,\Psi^\epsilon_2,\Psi^{\epsilon,n}_3, T_1^{\epsilon,u}, T_2^{\epsilon,u}$ as in \eqref{Riesz}, \eqref{phitr} and \eqref{Tdec}, we have
	\begin{equation}\label{Itoeta3}
	\hspace*{-0.2cm}
	\begin{aligned}
	IV^{\epsilon,u}&(s,t,\theta,\chi)=\\&
	-\frac{\delta}{  \sqrt{\epsilon}h(\epsilon)    }      \blangle \Psi^\epsilon\big(\bar{X}(t),Y_n^{\epsilon,u}(t)\big)-\Psi^\epsilon\big(\bar{X}(s),Y_n^{\epsilon,u}(s)\big), S_1(t-s)(-A_1)^{\frac{\theta}{2}}\chi\brangle_\h\\& +\frac{\delta}{  \sqrt{\epsilon}h(\epsilon)    }\int_{s}^{t}\blangle \Psi^\epsilon\big(\bar{X}(z),Y_n^{\epsilon,u}(z)\big)-\Psi^\epsilon\big(\bar{X}(t),Y_n^{\epsilon,u}(t)\big), S_1(t-z)(-A_1)^{1+\frac{\theta}{2}}\chi\brangle_\h dz\\&
	+\frac{\delta}{  \sqrt{\epsilon}h(\epsilon)    }\int_{s}^{t}    \blangle \Psi^\epsilon_1\big(\bar{X}(z),Y_n^{\epsilon,u}(z)\big)\big[ A_1 \bar{X}(z)+\bar{F}\big(\bar{X}(z)\big)\big], S_1(t-z)(-A_1)^{\frac{\theta}{2}}\chi \brangle_\h dz
	\\&+\frac{c(\epsilon)}{  \sqrt{\epsilon}h(\epsilon)    }\int_{s}^{t}\blangle \Psi^\epsilon\big(\bar{X}(z), Y_n^{\epsilon,u}(z)\big), S_1(t-z)(-A_1)^{\frac{\theta}{2}}\chi\brangle_\h dz \\&
	+\frac{\sqrt{\delta}} {\sqrt{\epsilon}  }  \int_{s}^{t}  \blangle \Psi^\epsilon_2\big(\bar{X}(z),Y_n^{\epsilon,u}(z)\big)u_{2,n}(z), S_1(t-z)(-A_1)^{\frac{\theta}{2}}\chi \brangle_\h dz\\&+  \frac{\sqrt{\delta}}{\sqrt{\epsilon}h(\epsilon)}    \int_{s}^{t}  \blangle  (-A_1)^{\frac{\theta}{2}}S_1(t-z)\Psi^\epsilon_2\big(\bar{X}(z),Y_n^{\epsilon,u}(z)\big) d w_{2,n}(z) ,\chi \brangle_\h + R^{\epsilon,u}( s,t,n,\theta,\chi)\\&=:\sum_{k=1}^{6} IV_k^{\epsilon,u}(s,t,n,\theta,\chi)+R^{\epsilon,u}( s,t,n,\theta,\chi),
	\end{aligned}
	\end{equation}
	where
	\begin{equation}
	\label{Rem}
	\begin{aligned}
	&R^{\epsilon,u}( s,t,n,\theta,\chi):=T_2^{\epsilon,u}( s,t,n,\theta,\chi)+T_3^{\epsilon,u}( s,t,n,\theta,\chi).
	\end{aligned}
	\end{equation}
\end{lem}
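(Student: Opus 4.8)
\emph{The plan.} Since $IV^{\epsilon,u}=T_1^{\epsilon,u}+T_2^{\epsilon,u}$ by \eqref{Tdec} and $R^{\epsilon,u}=T_2^{\epsilon,u}+T_3^{\epsilon,u}$ by \eqref{Rem}, it suffices to establish
\begin{equation*}
T_1^{\epsilon,u}(s,t,n,\theta,\chi)=\sum_{k=1}^{6} IV_k^{\epsilon,u}(s,t,n,\theta,\chi)+T_3^{\epsilon,u}(s,t,n,\theta,\chi),
\end{equation*}
after which $T_2^{\epsilon,u}$ is simply carried along. First I would observe that for each $z\in[s,t]$ the integrand of $T_1^{\epsilon,u}$ is exactly the right-hand side of the Kolmogorov equation \eqref{Kolmeq} with test function $\chi'(z):=S_1(t-z)(-A_1)^{\theta/2}\chi$ and arguments $x=\bar{X}(z)$, $y=Y_n^{\epsilon,u}(z)$. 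Since the paths of $Y_n^{\epsilon,u}$ take values in $Dom(A_2)$ and $c(\epsilon)=\sqrt{\epsilon}$ by \eqref{cchoice}, equation \eqref{Kolmeq} together with \eqref{Kolmopintro} gives $\blangle F(\bar{X}(z),Y_n^{\epsilon,u}(z))-\bar{F}(\bar{X}(z)),\chi'(z)\brangle_\h = c(\epsilon)\Theta(z,\bar{X}(z),Y_n^{\epsilon,u}(z))-\mathcal{L}^{\bar{X}(z)}\Theta(z,\bar{X}(z),Y_n^{\epsilon,u}(z))$, with $\Theta$ as in \eqref{Xiriesz}. Hence $T_1^{\epsilon,u}=\frac{c(\epsilon)}{\sqrt{\epsilon}h(\epsilon)}\int_s^t\Theta\,dz-\frac{\delta}{\sqrt{\epsilon}h(\epsilon)}\cdot\frac{1}{\delta}\int_s^t\mathcal{L}^{\bar{X}(z)}\Theta\,dz$, and everything reduces to expressing $\frac{1}{\delta}\int_s^t\mathcal{L}^{\bar{X}(z)}\Theta\,dz$ through It\^o's formula.

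Next I would apply It\^o's formula to the real-valued semimartingale $z\mapsto\Theta(z,\bar{X}(z),Y_n^{\epsilon,u}(z))$ on $[s,t]$. This is legitimate because: $\bar{X}$ is a deterministic path, differentiable on $(0,T]$ with $\dot{\bar{X}}(z)=A_1\bar{X}(z)+\bar{F}(\bar{X}(z))$ and $z\mapsto\|A_1\bar{X}(z)\|_\h$ integrable on $[0,T]$ (Lemma \ref{L:LimAverEq}(iii)); $Y_n^{\epsilon,u}$ solves the finite-dimensional controlled equation \eqref{projy} with diffusion operator $\delta^{-1}P_n$; and $\Theta$ is $C^1$ in $z$, with $\partial_z\Theta(z,x,y)=\blangle\Psi^\epsilon(x,y),(-A_1)^{1+\theta/2}S_1(t-z)\chi\brangle_\h$ (which is precisely why $\chi\in Dom((-A_1)^{1+\theta/2})$ is imposed), Fr\'echet-$C^1$ in $x$ and Fr\'echet-$C^2$ in $y$, all with the bounds of \eqref{phiestimates} (in particular $D^2_y\Theta$ trace-class), which makes the martingale term well defined. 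It\^o's formula then yields, besides the boundary increment $\Theta(t,\bar{X}(t),Y_n^{\epsilon,u}(t))-\Theta(s,\bar{X}(s),Y_n^{\epsilon,u}(s))$ and a stochastic integral $\delta^{-1/2}\int_s^t\blangle D_y\Theta,dw_{2,n}(z)\brangle$, a Lebesgue integral of $\partial_z\Theta+\blangle D_x\Theta,A_1\bar{X}(z)+\bar{F}(\bar{X}(z))\brangle+\frac{h(\epsilon)}{\sqrt{\delta}}\blangle D_y\Theta,u_{2,n}(z)\brangle+\frac{1}{\delta}\blangle D_y\Theta,A_2Y_n^{\epsilon,u}(z)+P_nG(X^{\epsilon,u}(z),Y^{\epsilon,u}(z))\brangle+\frac{1}{2\delta}\text{tr}[P_nD^2_y\Theta P_n]$.

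Now I would force the full generator $\mathcal{L}^{\bar{X}(z)}\Theta=\frac12\text{tr}[D^2_y\Theta]+\blangle D_y\Theta,A_2Y_n^{\epsilon,u}(z)+G(\bar{X}(z),Y_n^{\epsilon,u}(z))\brangle$ to appear inside the It\^o drift, at the price of two corrections: (a) the projected trace differs from the full trace by $\text{tr}[(P_n-I)D^2_y\Theta]=\blangle\Psi^{\epsilon,n}_3(\bar{X}(z),Y_n^{\epsilon,u}(z)),\chi'(z)\brangle_\h$ by \eqref{phitr}--\eqref{Xitr}, which supplies the first term of $T_3^{\epsilon,u}$; (b) the It\^o drift $P_nG(X^{\epsilon,u}(z),Y^{\epsilon,u}(z))$ differs from the generator drift $G(\bar{X}(z),Y_n^{\epsilon,u}(z))$, and contracting $D_y\Theta$ with $\Psi^\epsilon_2$ via \eqref{Xider} supplies the second term of $T_3^{\epsilon,u}$. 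Solving for $\frac{1}{\delta}\int_s^t\mathcal{L}^{\bar{X}(z)}\Theta\,dz$, substituting into the formula for $T_1^{\epsilon,u}$ above, and rewriting every $\Phi^\epsilon$-derivative through the Riesz identities \eqref{Xiriesz}--\eqref{Xider}, each surviving summand matches $IV_3^{\epsilon,u}$ (the $D_x\Theta$ integral), $IV_4^{\epsilon,u}$ (the $c(\epsilon)\Theta$ integral), $IV_5^{\epsilon,u}$ (the $u_{2,n}$ integral), $IV_6^{\epsilon,u}$ (the stochastic integral, after moving $(-A_1)^{\theta/2}S_1(t-z)$ onto $\chi$ by self-adjointness) and $T_3^{\epsilon,u}$. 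The one genuinely non-obvious identification is $IV_1^{\epsilon,u}+IV_2^{\epsilon,u}$: the boundary increment carries $S_1(0)=I$ at $z=t$ while $IV_1^{\epsilon,u}$ carries $S_1(t-s)$, and the defect $\blangle\Psi^\epsilon(\bar{X}(t),Y_n^{\epsilon,u}(t)),(I-S_1(t-s))(-A_1)^{\theta/2}\chi\brangle_\h$ is cancelled exactly by isolating $\int_s^t\blangle\Psi^\epsilon(\bar{X}(t),Y_n^{\epsilon,u}(t)),(-A_1)^{1+\theta/2}S_1(t-z)\chi\brangle_\h\,dz=\blangle\Psi^\epsilon(\bar{X}(t),Y_n^{\epsilon,u}(t)),(I-S_1(t-s))(-A_1)^{\theta/2}\chi\brangle_\h$ out of the $\partial_z\Theta$ integral, whose remainder is precisely $IV_2^{\epsilon,u}$.

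\textbf{Main obstacle.} The delicate step is the rigorous justification of It\^o's formula: $\Phi^\epsilon$ (hence $\Theta$) is only Fr\'echet differentiable once in $x$ and twice in $y$, with $D^2_y\Phi^\epsilon$ merely trace-class, and no smoother. This is exactly why $Y^{\epsilon,u}$ is first projected to the $n$-dimensional eigenspace of $A_2$: It\^o's formula is then applied to a finite-dimensional semimartingale, only finitely many $y$-derivatives of $\Phi^\epsilon$ enter, and all of them are controlled by \eqref{phiestimates}. One must also absorb the integrable time-singularity of $z\mapsto A_1\bar{X}(z)$ at $z=0$ (so $\bar{X}$ genuinely has to be a classical solution, i.e.\ $x_0\in H^a$ with $a>0$) and verify the joint continuity and measurability of $\Theta$ in its three arguments so that the It\^o correction and martingale terms are meaningful; the self-adjointness and eigenbasis structure of $A_2$ (Hypothesis \ref{A1a}) makes the finite-dimensional reduction and the identity $\sum_{k\le n}(-a_{2,k})\langle Y^{\epsilon,u}(z),e_{2,k}\rangle_\h e_{2,k}=A_2Y_n^{\epsilon,u}(z)$ transparent.
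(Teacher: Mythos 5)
Your proposal is correct and follows essentially the same route as the paper's proof in Appendix \ref{AppB}: It\^o's formula applied to $\Theta(z,\bar X(z),Y_n^{\epsilon,u}(z))$ for the finite-dimensional projection, combined with the Kolmogorov equation \eqref{Kolmeq}, the identity $\int_s^t S_1(t-z)(-A_1)^{1+\theta/2}\chi\,dz=[I-S_1(t-s)](-A_1)^{\theta/2}\chi$ to produce $IV_1^{\epsilon,u}+IV_2^{\epsilon,u}$, and the projection errors in the trace and drift supplying $T_3^{\epsilon,u}$. The only difference is that you invoke the Kolmogorov equation pointwise first and then use It\^o's formula to evaluate $\frac{1}{\delta}\int_s^t\mathcal{L}^{\bar X(z)}\Theta\,dz$, whereas the paper applies It\^o's formula first and substitutes the Kolmogorov equation afterwards — an algebraically equivalent reordering.
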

\noindent The proof of Lemma \ref{IVitodeclem} is deferred to Appendix \ref{AppB}.
\begin{rem}
	Note that the terms $IV_k^{\epsilon,u}$, $k=1,\dots,6$ are free from asymptotically singular coefficients. This comes at the cost of introducing the unbounded operator $(-A_1)$ in the term $IV_2^{\epsilon,u}$.
\end{rem}
\noindent We can now proceed to estimate each term in \eqref{Itoeta3} in both Regimes $1$ and $2$. The terms $IV_1^{\epsilon,u}, IV_2^{\epsilon,u}$ are the most challenging and will be handled similarly. In particular, we apply the mean value inequality for Fr\'echet differentials along with the Schauder estimates \eqref{Schaudery} and \eqref{Schauderbar} to obtain  temporal equicontinuity and spatial Sobolev regularity estimates. This is done in the following two lemmas. Note that extra care is required in the choice of H\"older exponents, due to the fact that \eqref{Schaudery} introduces singular coefficients in $\epsilon$  (see the comment preceding the proof of Proposition \ref{Schauderyprop}).

\begin{lem}\label{IV1bnds} Let $T<\infty$ $a>0,$ $x_0,y_0\in H^{a}(0,L)$ and $IV^{\epsilon,u}_1$ as in \eqref{Itoeta3}. There exist $\epsilon_0>0$, $\theta<\frac{1}{2}\wedge a$, $\beta<\frac{1}{4}\wedge\frac{a}{2}$ and a constant $C>0$, independent of $\epsilon$, such that
	\begin{equation}\label{IV1Sob}
	\begin{aligned}
	&	\sup_{\epsilon<\epsilon_0, u\in\mathcal{P}^T_N }\sup_{n\in\N}\ex\bigg(\sup_{t\in[0, T] }\sup_{\chi\in B_\h}\big|IV_1^{\epsilon,u}(0,t,n,\theta,\chi)\big|^2\bigg)\leq C\big( 1+\|x_0\|^2_{H^a}+\|y_0\|^2_{H^a}\big)
	\end{aligned}
	\end{equation}	
	and
	\begin{equation}\label{IV1holder}
	\begin{aligned}
	&	\sup_{\epsilon<\epsilon_0, u\in\mathcal{P}^T_N}\sup_{n\in\N}\ex\bigg(\sup_{\overset{s,t\in[0, T]}{ t\neq s}}\sup_{\chi\in B_\h}\frac{\big|IV_1^{\epsilon,u}(s,t,n,0,\chi)\big|}{  |t-s|^\beta  }\bigg)\leq C\big( 1+\|x_0\|_{H^a}+\|y_0\|_{H^a}\big).
	\end{aligned}
	\end{equation}
\end{lem}
\begin{proof}
	Let $\chi\in Dom((-A_1)^{1+\frac{\theta}{2}})$, $x_1, x_2,\psi\in\h$ and  $y_1, y_2\in Dom(A_2)$. Recall from  \eqref{Riesz} that
	\begin{equation*}
	\langle\Psi^\epsilon(x_1, y_1)-\Psi^\epsilon(x_2, y_2),\psi\rangle_\h=\Phi^\epsilon_\psi(x_1,y_1)-\Phi^\epsilon_\psi(x_2, y_2).
	\end{equation*}
	An application of the mean value inequality for Fr\'{e}chet derivatives then yields
	\begin{equation*}
	\begin{aligned}
	\big|	\langle\Psi^\epsilon(x_1,y_1)-\Psi^\epsilon(x_2,y_2),\psi\rangle_\h\big|&\leq \sup_{x,y\in\h} \|D_x\Phi^\epsilon_\psi(x,y)\|_\h\|x_1-x_2\|_\h\|\psi\|_\h \\&
	+\sup_{x,y\in\h} \|D_y\Phi^\epsilon_\psi(x,y)\|_\h\|y_1-y_2\|_\h\|\psi\|_\h\;.
	\end{aligned}
	\end{equation*}
	\noindent In view of estimates \eqref{phiestimates},
	\begin{equation}\label{meanval}
	\begin{aligned}
	\big|	\langle\Psi^\epsilon(x_1,y_1)-\Psi^\epsilon(x_2,y_2),\psi\rangle_\h\big|&\leq C \bigg(\frac{1}{c(\epsilon)}\|x_1-x_2\|_\h + \|y_1-y_2\|_\h\bigg)\|\psi\|_\h\;.
	\end{aligned}
	\end{equation}
	Using the latter, along with the self-adjointness of $A_1$ and the analyticity of $S_1$
	\begin{equation*}
	\begin{aligned}
	\frac{\delta}{  \sqrt{\epsilon}h(\epsilon)    }\big|&       \langle \Psi^\epsilon\big(\bar{X}(t),Y_n^{\epsilon,u}(t)\big)-\Psi^\epsilon\big(\bar{X}(s),Y_n^{\epsilon,u}(s)\big), S_1(t-s)(-A_1)^{\frac{\theta}{2}}\chi\rangle_\h\big|\\&\leq
	\frac{C\delta}{  \sqrt{\epsilon}h(\epsilon)    }\big\|    (-A_1)^{\frac{\theta}{2}} S_1(t-s)\big[\Psi^\epsilon\big(\bar{X}(t),Y_n^{\epsilon,u}(t)\big)-\Psi^\epsilon\big(\bar{X}(s),Y_n^{\epsilon,u}(s)\big)\big]\big\|_\h \|\chi\|_\h\\&
	\leq	\frac{C\delta}{  \sqrt{\epsilon}h(\epsilon)    }\|\chi\|_\h(t-s)^{-\theta/2}\big\|\Psi^\epsilon\big(\bar{X}(t),Y_n^{\epsilon,u}(t)\big)-\Psi^\epsilon\big(\bar{X}(s),Y_n^{\epsilon,u}(s)\big)\big\|_\h
	\\&\leq C\|\chi\|_\h(t-s)^{-\theta/2}\bigg(\frac{\delta}{  c(\epsilon)\sqrt{\epsilon}h(\epsilon) }\big\|
	\bar{X}(t)-\bar{X}(s)\big\|_\h+\frac{\delta}{  \sqrt{\epsilon}h(\epsilon)    }\big\|Y_n^{\epsilon,u}(t)-Y_n^{\epsilon,u}(s)\big\|_\h\bigg).
	\end{aligned}
	\end{equation*}
 In view of the Schauder estimates \eqref{Schauderbar} and \eqref{Schaudery}, $\bar{X}$ and $Y^{\epsilon,u}$ have finite H\"older seminorms with probability $1$ and
	\begin{equation*}
	\begin{aligned}
	\frac{\delta}{  \sqrt{\epsilon}h(\epsilon)    }&\big|       \langle \Psi^\epsilon\big(\bar{X}(t),Y_n^{\epsilon,u}(t)\big)-\Psi^\epsilon\big(\bar{X}(s),Y_n^{\epsilon,u}(s)\big), S_1(t-s)(-A_1)^{\frac{\theta}{2}}\chi\rangle_\h\big|
	\\&\leq C \|\chi\|_\h(t-s)^{-\theta/2}\bigg(\frac{\delta}{  c(\epsilon)\sqrt{\epsilon}h(\epsilon) }\big[ \bar{X}  \big]_{C^{\theta_1}([0,T];\h)}(t-s)^{\theta_1} +\frac{\delta}{  \sqrt{\epsilon}h(\epsilon) }\big[ Y^{\epsilon,u}  \big]_{C^{\theta_2}([0,T];\h)}(t-s)^{\theta_2}\bigg),
	\end{aligned}
	\end{equation*}
	\noindent where $\theta_1,\theta_2<\frac{1}{4}\wedge\frac{a}{2}$. By the density argument used in the proof of Lemma \ref{Ibnds}, this estimate holds for any $\chi\in\h$. Letting $\theta'=\theta_1\wedge\theta_2$ and $\chi\in B_\h$
	\begin{equation}\label{IV1prebnd}
	\begin{aligned}
	\frac{\delta}{  \sqrt{\epsilon}h(\epsilon)    }&\big|       \langle \Psi^\epsilon\big(\bar{X}(t),Y_n^{\epsilon,u}(t)\big)-\Psi^\epsilon\big(\bar{X}(s),Y_n^{\epsilon,u}(s)\big), S_1(t-s)A_1^\theta\chi\rangle_\h\big|
	\\&\leq C_T(t-s)^{\theta'-\theta/2}\bigg(\frac{\delta}{  c(\epsilon)\sqrt{\epsilon}h(\epsilon) }\big[ \bar{X}  \big]_{C^{\theta_1}([0,T];\h)} +\frac{\delta}{  \sqrt{\epsilon}h(\epsilon) }\big[ Y^{\epsilon,u}  \big]_{C^{\theta_2}([0,T];\h)}\bigg).
	\end{aligned}
	\end{equation}
	\noindent Setting $s=0$ and taking $\theta<2\theta'<(1/2)\wedge a$ we get
	\begin{equation*}
	\begin{aligned}
	&	\ex\bigg(\sup_{t\in[0, T] }\sup_{\chi\in B_\h}\big|IV_1^{\epsilon,u}(0,t,n,\theta,\chi)\big|^2\bigg)
	\leq C_T\bigg(\frac{\delta^2}{  c^2(\epsilon)\epsilon h^2(\epsilon) }\big[ \bar{X}  \big]^2_{C^{\theta_1}([0,T];\h)} +\frac{\delta^2}{  \epsilon h^2(\epsilon) }\ex\big[ Y^{\epsilon,u}  \big]^2_{C^{\theta_2}([0,T];\h)}\bigg).
	\end{aligned}
	\end{equation*}
	Next, note that the Schauder estimates \eqref{Schauderbar} and \eqref{Schaudery} can be easily seen to hold in $L^2(\Omega)$. In view of this we obtain
	\begin{equation*}
	\begin{aligned}
	\ex\bigg(\sup_{t\in[0, T] }\sup_{\chi\in B_\h}\big|IV_1^{\epsilon,u}(0,t,n,\theta,\chi)\big|^2\bigg)
	&\leq  \frac{C\delta^2}{  c^2(\epsilon)\epsilon h^2(\epsilon) }(1+\|x_0\|^2_{H^a}) \\&+\frac{C\delta^2}{  \epsilon h^2(\epsilon) }h^2(\epsilon)\delta^{-1\vee a} \big(1+\|x_0\|^2_\h+\|y_0\|^2_{H^a}\big).
	\end{aligned}
	\end{equation*}
	Since $c(\epsilon)=\sqrt{\epsilon}$ and the inclusion $H^a(0,L)\subset \h$ is continuous, we can choose $a<1$ to obtain
	\begin{equation*}
	\begin{aligned}
	&	\ex\bigg(\sup_{t\in[0, T] }\sup_{\chi\in B_\h}\big|IV_1^{\epsilon,u}(0,t,n,\theta,\chi)\big|^2\bigg)\leq \frac{C\delta^2}{  \epsilon^2 h^2(\epsilon) }\big(1+\|x_0\|^2_{H^a}\big) +\frac{C\delta}{\epsilon} \big(1+\|x_0\|^2_{H^a}+\|y_0\|^2_{H^a}\big).
	\end{aligned}
	\end{equation*}
	In view of \eqref{Regimes}, the coefficients
	\begin{equation*}
	\frac{\delta^2}{  \epsilon^2 h^2(\epsilon) }, \frac{\delta}{\epsilon}
	\end{equation*}
	are bounded in both Regimes $1$ and $2$, for $\epsilon$ sufficiently small  and \eqref{IV1Sob} follows.
	
\noindent	It remains to prove \eqref{IV1holder}. Setting $\theta=0$ in \eqref{IV1prebnd} we deduce that for any $\beta\leq\theta'$
	\begin{equation*}
	\begin{aligned}
	&	\ex\bigg(\sup_{\overset{s,t\in[0, T]}{ t\neq s}}\sup_{\chi\in B_\h}\frac{\big|IV_1^{\epsilon,u}(s,t,n,0,\chi)\big|}{  |t-s|^{\beta}  }\bigg)\leq \frac{C\delta}{  c(\epsilon)\sqrt{\epsilon}h(\epsilon) }\big[ \bar{X}  \big]_{C^{\theta_1}([0,T];\h)} +\frac{C\delta}{  \sqrt{\epsilon}h(\epsilon) }\ex\big[ Y^{\epsilon,u}  \big]_{C^{\theta_2}([0,T];\h)}
	\end{aligned}
	\end{equation*}
	and the estimate follows from the same argument.	\end{proof}

\begin{lem}\label{IV2bnds} Let $T<\infty$, $a>0$, $x_0,y_0\in H^{a}(0,L)$ and $IV^{\epsilon,u}_2$ as in \eqref{Itoeta3}. There exist $\epsilon_0>0$, $\theta<\frac{1}{2}\wedge a$, $\beta<\frac{1}{4}\wedge\frac{a}{2}$ and a constant $C>0$, independent of $\epsilon$, such that
	\begin{equation}\label{IV2Sob}
	\begin{aligned}
	&	\sup_{\epsilon<\epsilon_0, u\in\mathcal{P}^T_N}\sup_{n\in\N}\ex\bigg(\sup_{t\in[0, T] }\sup_{\chi\in B_\h}\big|IV_2^{\epsilon,u}(0,t,n,\theta,\chi)\big|^2\bigg)\leq C\big( 1+\|x_0\|^2_{H^a}+\|y_0\|^2_{H^a}\big)
	\end{aligned}
	\end{equation}	
	and
	\begin{equation}\label{IV2holder}
	\begin{aligned}
	&		\sup_{\epsilon<\epsilon_0, u\in\mathcal{P}^T_N}\sup_{n\in\N}\ex\bigg(\sup_{\overset{s,t\in[0, T]}{ t\neq s}}\sup_{\chi\in B_\h}\frac{\big|IV_2^{\epsilon,u}(s,t,n,0,\chi)\big|}{  |t-s|^\beta  }\bigg)\leq C\big( 1+\|x_0\|_{H^a}+\|y_0\|_{H^a}\big).
	\end{aligned}
	\end{equation}
\end{lem}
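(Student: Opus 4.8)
The plan is to mirror the proof of Lemma \ref{IV1bnds}; the one genuinely new point is that the unbounded operator $(-A_1)$ appearing inside $IV_2^{\epsilon,u}$ produces a time-singularity $(t-z)^{-1-\theta/2}$ that is not integrable by itself, so we must borrow H\"older regularity of $\bar X$ and $Y^{\epsilon,u}$ to make the $z$-integral converge.

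First I would fix $\chi\in Dom((-A_1)^{1+\theta/2})$ and reuse the estimate behind \eqref{meanval}, namely the mean value inequality for Fr\'echet derivatives combined with \eqref{phiestimates} and the Riesz identity \eqref{Riesz}, in its $\h$-norm form
\[
\big\|\Psi^\epsilon(x_1,y_1)-\Psi^\epsilon(x_2,y_2)\big\|_\h\leq C\Big(\tfrac{1}{c(\epsilon)}\|x_1-x_2\|_\h+\|y_1-y_2\|_\h\Big).
\]
Taking $(x_1,y_1)=(\bar X(z),Y_n^{\epsilon,u}(z))$ and $(x_2,y_2)=(\bar X(t),Y_n^{\epsilon,u}(t))$, using self-adjointness of $A_1$ to transfer $(-A_1)^{1+\theta/2}$ onto the $\Psi^\epsilon$-difference, the analyticity bound $\|(-A_1)^{1+\theta/2}S_1(t-z)v\|_\h\leq C(t-z)^{-1-\theta/2}\|v\|_\h$, and the fact that $P_n$ is a contraction, I obtain
\[
\big|IV_2^{\epsilon,u}(s,t,n,\theta,\chi)\big|\leq \frac{C\delta}{\sqrt{\epsilon}h(\epsilon)}\|\chi\|_\h\int_{s}^{t}(t-z)^{-1-\theta/2}\Big(\tfrac{1}{c(\epsilon)}\|\bar X(z)-\bar X(t)\|_\h+\|Y^{\epsilon,u}(z)-Y^{\epsilon,u}(t)\|_\h\Big)\,dz.
\]
Next I insert the Schauder estimates \eqref{Schauderbar}, \eqref{Schaudery}: $\|\bar X(z)-\bar X(t)\|_\h\leq[\bar X]_{C^{\theta_1}([0,T];\h)}|t-z|^{\theta_1}$ and $\|Y^{\epsilon,u}(z)-Y^{\epsilon,u}(t)\|_\h\leq[Y^{\epsilon,u}]_{C^{\theta_2}([0,T];\h)}|t-z|^{\theta_2}$ with $\theta_1,\theta_2<\tfrac14\wedge\tfrac a2$. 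The integrals $\int_s^t(t-z)^{-1-\theta/2+\theta_i}\,dz$ are finite precisely when $\theta<2\theta_i$, so choosing $\theta<\tfrac12\wedge a$ yields
\[
\big|IV_2^{\epsilon,u}(s,t,n,\theta,\chi)\big|\leq \frac{C\delta}{\sqrt{\epsilon}h(\epsilon)}\|\chi\|_\h\Big(\tfrac{1}{c(\epsilon)}[\bar X]_{C^{\theta_1}}(t-s)^{\theta_1-\theta/2}+[Y^{\epsilon,u}]_{C^{\theta_2}}(t-s)^{\theta_2-\theta/2}\Big),
\]
and by the density argument from the proof of Lemma \ref{Ibnds} this extends to all $\chi\in\h$, so we take $\chi\in B_\h$.

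For \eqref{IV2Sob} I would set $s=0$, square, take expectations, and use that \eqref{Schauderbar}, \eqref{Schaudery} also hold in $L^2(\Omega)$; recalling $c(\epsilon)=\sqrt\epsilon$ this produces a bound of the form $\tfrac{C\delta^2}{\epsilon^2h^2(\epsilon)}\big(1+\|x_0\|^2_{H^a}\big)+\tfrac{C\delta^{2-(1\vee a)}}{\epsilon}\big(1+\|x_0\|_\h+\|y_0\|_{H^a}\big)^2$. Choosing $a<1$ (allowed since $H^a\hookrightarrow\h$ continuously, exactly as for $IV_1^{\epsilon,u}$) turns the coefficients into $\tfrac{(\delta/\epsilon)^2}{h^2(\epsilon)}$ and $\tfrac{\delta}{\epsilon}$, both bounded (indeed vanishing) in Regimes $1$ and $2$ for small $\epsilon$ by \eqref{Regimes}--\eqref{h}, which gives \eqref{IV2Sob}. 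For \eqref{IV2holder} I would instead set $\theta=0$, take $\beta\leq\theta_1\wedge\theta_2<\tfrac14\wedge\tfrac a2$, take expectations, and reach $\tfrac{C\delta}{\epsilon h(\epsilon)}\big(1+\|x_0\|_{H^a}\big)+\tfrac{C\delta^{1-(1\vee a)/2}}{\sqrt\epsilon}\big(1+\|x_0\|_\h+\|y_0\|_{H^a}\big)$, which for $a<1$ equals $\tfrac{\delta/\epsilon}{h(\epsilon)}(\cdots)+\tfrac{\sqrt\delta}{\sqrt\epsilon}(\cdots)$, again bounded in both regimes.

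I expect the main obstacle to be the exponent bookkeeping: one must simultaneously arrange $\theta<2\theta_i$ (integrability of the $(t-z)^{-1-\theta/2}$ singularity), $\theta_i<\tfrac14\wedge\tfrac a2$ (validity of the Schauder bounds), and then absorb $c(\epsilon)^{-1}=\epsilon^{-1/2}$ together with the singular prefactor $h(\epsilon)\delta^{-(1\vee a)/2}$ coming from \eqref{Schaudery} against the prefactor $\delta/(\sqrt\epsilon h(\epsilon))$, using only the relations defining Regimes $1$ and $2$; the reduction to $a<1$ is precisely what makes all the $\delta$-powers line up, just as in the proof of Lemma \ref{IV1bnds}.
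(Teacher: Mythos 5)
Your proof is correct and follows essentially the same route as the paper: the mean value bound \eqref{meanval} on the $\Psi^\epsilon$-increment, the analyticity estimate producing $(t-z)^{-1-\theta/2}$, the Schauder bounds \eqref{Schauderbar}--\eqref{Schaudery} to restore integrability under $\theta<2(\theta_1\wedge\theta_2)$, the density argument, and the same reduction to $a<1$ so that the prefactors become $\delta^2/(\epsilon^2h^2(\epsilon))$, $\delta/\epsilon$ and $\sqrt{\delta}/\sqrt{\epsilon}$, all bounded in Regimes $1$ and $2$. The exponent bookkeeping you carry out matches the paper's (which simply refers back to the argument of Lemma \ref{IV1bnds} at that stage).
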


\begin{proof} 	Let $\chi\in Dom((-A_1)^{1+\frac{\theta}{2}})$. From the analyticity of $S_1$ along with \eqref{meanval}
	\begin{equation*}
	\begin{aligned}
	\big|&IV_2^{\epsilon,u}(s,t,n,\theta,\chi)\big|\\&\leq
	\frac{\delta}{  \sqrt{\epsilon}h(\epsilon)    }\|\chi\|_\h \int_{s}^{t}\big\| (-A_1)^{1+\frac{\theta}{2}}S_1(t-z)\big[ \Psi^\epsilon\big(\bar{X}(z),Y_n^{\epsilon,u}(z)\big)-\Psi^\epsilon\big(\bar{X}(t),Y_n^{\epsilon,u}(t)\big)\big]\big\|_\h dz\\&
	\leq  \frac{C\delta}{  \sqrt{\epsilon}h(\epsilon)    }\|\chi\|_\h \int_{s}^{t}(t-z)^{-1-\theta/2}\big\| \Psi^\epsilon\big(\bar{X}(z),Y_n^{\epsilon,u}(z)\big)-\Psi^\epsilon\big(\bar{X}(t),Y_n^{\epsilon,u}(t)\big)\big\|_\h dz\\&
	\leq C\|\chi\|_\h \int_{s}^{t}(t-z)^{-1-\theta/2}\bigg(\frac{\delta}{  c(\epsilon)\sqrt{\epsilon}h(\epsilon) }\big[ \bar{X}  \big]_{C^{\theta_1}([0,T];\h)}(t-z)^{\theta_1} +\frac{\delta}{  \sqrt{\epsilon}h(\epsilon) }\big[ Y^{\epsilon,u}  \big]_{C^{\theta_2}([0,T];\h)}(t-z)^{\theta_2}\bigg) dz.
	\end{aligned}
	\end{equation*}
	\noindent As in the proof of Lemma \ref{IV1bnds}, this estimate can be shown to hold for all $\chi\in B_\h$ and, letting $\theta'=\theta_1\wedge\theta_2$,
	\begin{equation}\label{IV2prebnd}
	\hspace*{-0.2cm}
	\small
	\begin{aligned}
	\big|IV_2^{\epsilon,u}&(s,t,n,\theta,\chi)\big|
	\leq C \bigg(\frac{\delta}{  c(\epsilon)\sqrt{\epsilon}h(\epsilon) }\big[ \bar{X}  \big]_{C^{\theta_1}([0,T];\h)} +\frac{\delta}{  \sqrt{\epsilon}h(\epsilon) }\big[ Y^{\epsilon,u}  \big]_{C^{\theta_2}([0,T];\h)}\bigg) \int_{s}^{t}(t-z)^{-1+\theta'-\theta/2}dz.
	\end{aligned}
	\end{equation}
	\noindent Thus, for $s=0$ and $\theta<2\theta'$
	\begin{equation*}
	\begin{aligned}
	\big|IV_2^{\epsilon,u}&(0,t,n,\theta,\chi)\big|
	\leq CT^{\theta'-\theta/2} \bigg(\frac{\delta}{  c(\epsilon)\sqrt{\epsilon}h(\epsilon) }\big[ \bar{X}  \big]_{C^{\theta_1}([0,T];\h)} +\frac{\delta}{  \sqrt{\epsilon}h(\epsilon) }\big[ Y^{\epsilon,u}  \big]_{C^{\theta_2}([0,T];\h)}\bigg)
	\end{aligned}
	\end{equation*}
	and \eqref{IV2Sob} follows using the same argument as in the proof of \eqref{IV1Sob}. Finally, letting $\theta=0$ in \eqref{IV2prebnd} and taking $\beta<\theta'$, we obtain \eqref{IV2holder}.
\end{proof}
\noindent Next, we estimate the term $IV_3^{\epsilon,u}$ in \eqref{Itoeta3}. The main ingredients of the proof are the spatial regularity estimate \eqref{deribar} along with the continuity of the averaged operator $\bar{F}$ (see Lemma \ref{Fbarlip}).

\begin{lem}\label{IV3bnds} Let $T<\infty$,  $a>0$, $x_0\in H^{a}(0,L)$ 	and $IV^{\epsilon,u}_3$ as in \eqref{Itoeta3}. There exist $\epsilon_0>0$, $\theta< a$, $\beta\leq\frac{a}{2}$ and a constant $C>0$, independent of $\epsilon$, such that
	
	\begin{equation}\label{IV3Sob}
	\begin{aligned}
	&	\sup_{\epsilon<\epsilon_0, u\in\mathcal{P}^T_N}\sup_{n\in\N}\ex\bigg(\sup_{t\in[0, T] }\sup_{\chi\in B_\h}\big|IV_3^{\epsilon,u}(0,t,n,\theta,\chi)\big|^2\bigg)\leq C\big( 1+\|x_0\|^2_{H^a}\big)
	\end{aligned}
	\end{equation}	
	and
	\begin{equation}\label{IV3holder}
	\begin{aligned}
	&		\sup_{\epsilon<\epsilon_0, u\in\mathcal{P}^T_N}\sup_{n\in\N}\ex\bigg(\sup_{\overset{s,t\in[0, T]}{ t\neq s}}\sup_{\chi\in B_\h}\frac{\big|IV_3^{\epsilon,u}(s,t,n,0,\chi)\big|}{  |t-s|^\beta  }\bigg)\leq C\big( 1+\|x_0\|_{H^a}\big).
	\end{aligned}
	\end{equation}
\end{lem}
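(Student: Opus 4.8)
The plan is to bound $IV_3^{\epsilon,u}$ directly, exploiting the fact that --- unlike $IV_1^{\epsilon,u}$ and $IV_2^{\epsilon,u}$ --- the only random object entering it besides the deterministic limit $\bar X$ is $Y_n^{\epsilon,u}$, and the latter appears solely inside $\Psi_1^\epsilon$, for which \eqref{psinorm} furnishes the \emph{deterministic} operator-norm bound $\|\Psi_1^\epsilon(x,y)\|_{\mathscr{L}(\h)}\le C/c(\epsilon)=C/\sqrt\epsilon$ (recall $c(\epsilon)=\sqrt\epsilon$ by \eqref{cchoice}). Concretely, fix $\chi\in Dom((-A_1)^{1+\theta/2})$, move $(-A_1)^{\theta/2}$ onto $S_1(t-z)$ by self-adjointness, and combine the analyticity bound $\|(-A_1)^{\theta/2}S_1(t-z)\|_{\mathscr{L}(\h)}\le C(t-z)^{-\theta/2}$ with the $\Psi_1^\epsilon$ estimate to get
\begin{equation*}
\big|IV_3^{\epsilon,u}(s,t,n,\theta,\chi)\big|\le \frac{C\delta}{\epsilon h(\epsilon)}\,\|\chi\|_\h\int_s^t (t-z)^{-\theta/2}\big\|A_1\bar X(z)+\bar F(\bar X(z))\big\|_\h\,dz.
\end{equation*}
The right-hand side is non-random. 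By the regularity estimate \eqref{deribar} for $\bar X$, the Lipschitz continuity of $\bar F$ (Lemma \ref{Fbarlip}) and the a priori bound \eqref{xbarapriori},
\begin{equation*}
\big\|A_1\bar X(z)+\bar F(\bar X(z))\big\|_\h\le C\big(1+\|x_0\|_{H^a}\big)\big(z^{a/2-1}+1\big),\qquad z\in(0,T].
\end{equation*}

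For the Sobolev estimate \eqref{IV3Sob} I would set $s=0$. The resulting integral $\int_0^t (t-z)^{-\theta/2}(z^{a/2-1}+1)\,dz$ is a Beta-type integral, finite because $\theta<2$ and $a>0$, and equal to $C\,t^{a/2-\theta/2}+C\,t^{1-\theta/2}$; this is bounded on $[0,T]$ precisely when $\theta<a$, which is the source of the restriction on $\theta$ in the statement. Next one observes that $\delta/(\epsilon h(\epsilon))=(\delta/\epsilon)\, h(\epsilon)^{-1}$ stays bounded for $\epsilon$ small in both Regimes $1$ and $2$, since $\delta/\epsilon$ is bounded by \eqref{Regimes} and $h(\epsilon)\to\infty$. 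Hence $\sup_{\chi\in B_\h}|IV_3^{\epsilon,u}(0,t,n,\theta,\chi)|\le C_T(1+\|x_0\|_{H^a})$ uniformly in $n$, $u$ and small $\epsilon$, after extending from smooth $\chi$ to all $\chi\in\h$ by the density argument used in the proof of Lemma \ref{Ibnds}; taking $\sup_{t\in[0,T]}$ and $\ex$ (the left-hand side is dominated by a deterministic constant) gives \eqref{IV3Sob}.

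For the temporal estimate \eqref{IV3holder} I would set $\theta=0$, so the integral becomes $\int_s^t(z^{a/2-1}+1)\,dz=\frac{2}{a}\big(t^{a/2}-s^{a/2}\big)+(t-s)$. Using the subadditivity of $r\mapsto r^{a/2}$ when $a\le 2$ (and the mean value theorem for larger $a$) together with $(t-s)\le T^{1-\beta}(t-s)^\beta$, this is bounded by $C_T(t-s)^\beta$ for any $\beta\le a/2$ (we may take $\beta\le 1$ as well). Dividing by $(t-s)^\beta$, taking $\sup_{\chi\in B_\h}$, $\sup_{s\ne t}$ and $\ex$ yields \eqref{IV3holder}. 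The whole argument is essentially routine; the only two points requiring attention are the integrable singularity of $\|A_1\bar X(z)\|_\h$ at $z=0$ --- which forces $\theta<a$ --- and the check that the prefactor $\delta/(\epsilon h(\epsilon))$ does not blow up. The latter is, in contrast to $IV_1^{\epsilon,u}$ and $IV_2^{\epsilon,u}$ (where one additionally needs the $\epsilon$-singular Schauder bound \eqref{Schaudery} for $Y^{\epsilon,u}$), immediate here because $\bar X$ is deterministic with the sharp regularity \eqref{deribar}.
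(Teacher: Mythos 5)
Your proof is correct and follows essentially the same route as the paper's: bound $\Psi_1^\epsilon$ uniformly by $C/c(\epsilon)=C/\sqrt\epsilon$ via \eqref{psinorm}, use the analyticity of $S_1$, control $\|A_1\bar X(z)\|_\h$ by \eqref{deribar} and $\|\bar F(\bar X(z))\|_\h$ by Lipschitz continuity, extend to $\chi\in B_\h$ by density, and observe that $\delta/(\epsilon h(\epsilon))$ stays bounded in Regimes 1 and 2. The only (immaterial) difference is that you evaluate the Beta-type integral at $s=0$ directly, whereas the paper handles it with H\"older's inequality for exponents satisfying $\theta<2/p<a$; both yield the same restriction $\theta<a$.
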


\begin{proof} 	Let $\chi\in Dom((-A_1)^{1+\frac{\theta}{2}})$. Using the analyticity of $S_1$ along with the first estimate in \eqref{psinorm}
	\begin{equation*}
	\begin{aligned}
	\small
	\big|IV_3^{\epsilon,u}(s,t,n,\theta,\chi)\big|&\leq \frac{\delta}{  \sqrt{\epsilon}h(\epsilon)    }\|\chi\|_\h\int_{s}^{t}    \big\| S_1(t-z)(-A_1)^{\frac{\theta}{2}}\Psi^\epsilon_1\big(\bar{X}(z),Y_n^{\epsilon,u}(z)\big)\big[ A_1 \bar{X}(z)+\bar{F}\big(\bar{X}(z)\big)\big]\big\|_\h dz
	\\&\leq \frac{C\delta}{  c(\epsilon)\sqrt{\epsilon}h(\epsilon)    }\|\chi\|_\h\int_{s}^{t} (t-z)^{-\theta/2}\bigg( \big\| A_1 \bar{X}(z)\big\|_\h+\big\|\bar{F}\big(\bar{X}(z)\big)\big\|_\h\bigg)dz,
	\end{aligned}
	\end{equation*}
 with probability $1$.
	As in the proof of Lemma \ref{IV1bnds}, a density argument allows us to choose $\chi\in B_\h$ and apply \eqref{deribar} to deduce that
	\begin{equation}\label{IV3prebnd}
	\begin{aligned}
	\big|IV_3^{\epsilon,u}&(s,t,n,\theta,\chi)\big|
	\leq \frac{C\delta}{  c(\epsilon)\sqrt{\epsilon}h(\epsilon)    }\int_{s}^{t} (t-z)^{-\theta/2}\big[(z^{-1+a/2}+1)\|x_0 \|_{H^a}+\big\|\bar{F}\big(\bar{X}(z)\big)\big\|_\h\big]dz.
	\end{aligned}
	\end{equation}
	\noindent Setting $s=0$ and choosing $p$ large enough to satisfy $\theta<2/p<a$, we apply H\"older's inequality to obtain
	\begin{equation*}
	\begin{aligned}
	\big|IV_3^{\epsilon,u}&(0,t,n,\theta,\chi)\big|
	\leq  \frac{C T^{1/p-\theta/2}   \delta}{  c(\epsilon)\sqrt{\epsilon}h(\epsilon)    }    \bigg\{\|x_0\|_{H^a}\bigg[\int_{0}^{T}  (z^{\frac{a}{2}-1}+1)^q  dz \bigg]^{\frac{1}{q}} +T^\frac{1}{q}\sup_{z\in[0,T]}\big\|\bar{F}\big(\bar{X}(z)\big)\big\|_\h\bigg\}.
	\end{aligned}
	\end{equation*}
	\noindent From the Lipschitz continuity of $\bar{F}$ and the fact that $c(\epsilon)=\sqrt\epsilon$ (see \eqref{cchoice}) we have
	\begin{equation*}
	\begin{aligned}
	\big|IV_3^{\epsilon,u}&(0,t,n,\theta,\chi)\big|
	\leq \frac{C_{T,\theta,p}\delta}{ \epsilon h(\epsilon)    }  \big(1+\|x_0\|_{H^a}\big).
	\end{aligned}
	\end{equation*}
	This proves \eqref{IV3Sob} since
	$ \delta/ (\epsilon h(\epsilon) ) $ is bounded for $\epsilon$ small enough.
	As for \eqref{IV3holder}, let $\theta=0$ and $c(\epsilon)=\sqrt{\epsilon}$ in \eqref{IV3prebnd} to obtain
	\begin{equation*}
	\begin{aligned}
	\big|IV_3^{\epsilon,u}&(s,t,n,0,\chi)\big|
	\leq\frac{C\delta}{  \epsilon h(\epsilon)    }\int_{s}^{t} \big[(z^{-1+a/2}+1)\|x_0 \|_{H^a}+\big\|\bar{F}\big(\bar{X}(z)\big)\big\|_\h\big]dz.
	\end{aligned}
	\end{equation*}
	In view of the Lipschitz continuity of $\bar{F}$, the proof is complete.	
\end{proof}
The following two lemmas provide estimates for the terms $IV_k^{\epsilon,u}$, $k=4,5$ in \eqref{Itoeta3}.  These estimates do not require regularity of initial conditions and in fact are straightforward consequences of the analyticity of $S_1$ and the a priori bounds \eqref{xbarapriori} and \eqref{yapriori} from Section \ref{Sec2}.

\begin{lem}\label{IV4bnds} Let $T<\infty,$ $x_0,y_0\in \h$ and $IV^{\epsilon,u}_4$ as in \eqref{Itoeta3}. There exist $\epsilon_0>0$ and a constant $C>0$, independent of $\epsilon$, such that for all $\theta<1/2$ and $\beta\leq1/2$
		\begin{equation}\label{IV4Sob}
	\begin{aligned}
	&	\sup_{\epsilon<\epsilon_0, u\in\mathcal{P}^T_N}\sup_{n\in\N}\ex\bigg(\sup_{t\in[0, T] }\sup_{\chi\in B_\h}\big|IV_4^{\epsilon,u}(0,t,n,\theta,\chi)\big|^2\bigg)\leq C\big(1+\|x_0\|^2_{\h}+\|y_0\|^2_{\h} \big)
	\end{aligned}
	\end{equation}	
	and
	\begin{equation}\label{IV4holder}
	\begin{aligned}
	&		\sup_{\epsilon<\epsilon_0, u\in\mathcal{P}^T_N}\sup_{n\in\N}\ex\bigg(\sup_{\overset{s,t\in[0, T]}{ t\neq s}}\sup_{\chi\in B_\h}\frac{\big|IV_4^{\epsilon,u}(s,t,n,0,\chi)\big|}{  |t-s|^\beta  }\bigg)\leq C\big( 1+\|x_0\|_{\h}+\|y_0\|_{\h}\big).
	\end{aligned}
	\end{equation}
\end{lem}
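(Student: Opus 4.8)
The plan is to exploit the calibration $c(\epsilon)=\sqrt{\epsilon}$ fixed in \eqref{cchoice}: the singular prefactor $c(\epsilon)/(\sqrt{\epsilon}h(\epsilon))$ multiplying $IV_4^{\epsilon,u}$ then collapses to $1/h(\epsilon)$, which is bounded (indeed vanishes) as $\epsilon\to0$. After this reduction the proof follows the same three-step pattern used for $I^{\epsilon,u}$, $II^{\epsilon,u}$, $III^{\epsilon,u}$ in Lemmas \ref{Ibnds}--\ref{IIIbnds}: a pointwise estimate that is linear and continuous in the test function, a density extension to all $\chi\in\h$, and an appeal to the a priori bounds of Section \ref{Sec2}.

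First I would fix $\chi\in Dom((-A_1)^{1+\theta/2})$. Since $S_1$ commutes with its generator, the fractional power acting on the test function can be moved next to the semigroup, and the analyticity bound $\|(-A_1)^{\theta}S_1(t-z)\|_{\mathscr{L}(\h)}\le C(t-z)^{-\theta}$ applies. Combining it with the linear growth estimate $\|\Psi^\epsilon(x,y)\|_\h\le (C/\ell)(1+\|x\|_\h+\|y\|_\h)$ from \eqref{psinorm} and with $\|Y_n^{\epsilon,u}(z)\|_\h=\|P_nY^{\epsilon,u}(z)\|_\h\le\|Y^{\epsilon,u}(z)\|_\h$ gives, with probability one,
\begin{equation*}
\big|IV_4^{\epsilon,u}(s,t,n,\theta,\chi)\big|\le \frac{C}{h(\epsilon)}\,\|\chi\|_\h\int_s^t (t-z)^{-\theta}\big(1+\|\bar X(z)\|_\h+\|Y^{\epsilon,u}(z)\|_\h\big)\,dz,
\end{equation*}
a bound that is uniform in $n$. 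As the right-hand side is continuous in $\|\chi\|_\h$, the density of $Dom((-A_1)^{1+\theta/2})$ in $\h$ extends it to all $\chi\in\h$, and taking $\chi\in B_\h$ removes the $\chi$-dependence.

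To obtain \eqref{IV4Sob} I would set $s=0$, apply the Cauchy--Schwarz inequality in time (legitimate since $\theta<1/2$ makes $\int_0^t(t-z)^{-2\theta}\,dz\le C_T$), square, take the supremum over $t\in[0,T]$ and then the expectation, reaching a bound of the form $C_T h^{-2}(\epsilon)\int_0^T(1+\ex\|\bar X(z)\|_\h^2+\ex\|Y^{\epsilon,u}(z)\|_\h^2)\,dz$; the claim follows from the deterministic bound \eqref{xbarapriori}, from \eqref{yapriori} together with Jensen's inequality (valid because $2/\nu>2$), and from $1/h^2(\epsilon)$ being bounded for small $\epsilon$. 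For \eqref{IV4holder} I would take $\theta=0$ instead and use Cauchy--Schwarz in time to factor out $(t-s)^{1/2}$, then pass to expectations, pulling $\ex$ through the square root by Jensen and invoking \eqref{xbarapriori} and \eqref{yapriori} once more; this yields the estimate for every $\beta\le1/2$. There is no genuine difficulty in this lemma --- $IV_4^{\epsilon,u}$ is one of the mild terms in \eqref{Itoeta3} --- the only point worth stressing is that it is precisely the choice $c(\epsilon)=\sqrt{\epsilon}$ that neutralizes the $1/\sqrt{\epsilon}$ blow-up here, so that, in contrast with $IV_1^{\epsilon,u}$ and $IV_2^{\epsilon,u}$, no delicate trade-off between H\"older exponents and powers of $\delta$ is needed.
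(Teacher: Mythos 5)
Your proposal is correct and follows essentially the same route as the paper: analyticity of $S_1$ combined with the linear-growth bound on $\Psi^\epsilon$ from \eqref{psinorm}, a density extension in $\chi$, Cauchy--Schwarz in time (which is exactly how the paper extracts the factor $(t-s)^{1/2-\theta/2}$ for both the Sobolev and H\"older estimates), and then \eqref{xbarapriori}, \eqref{yapriori} and the observation that $c(\epsilon)/(\sqrt{\epsilon}h(\epsilon))=1/h(\epsilon)$ is bounded. The only cosmetic difference is your exponent $(t-z)^{-\theta}$ versus the paper's $(t-z)^{-\theta/2}$ (coming from $(-A_1)^{\theta/2}$), which is immaterial since $\theta<1/2$ keeps the integral convergent either way.
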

\begin{proof} Let $\chi\in Dom((-A_1)^{1+\frac{\theta}{2}})$ . Using the analyticity of $S_1$ along with \eqref{psinorm} we obtain
	\begin{equation*}
	\begin{aligned}
	\big|IV_4^{\epsilon,u}(s,t,n,\theta,\chi)\big|&\leq\frac{c(\epsilon)}{  \sqrt{\epsilon}h(\epsilon)    }\int_{s}^{t}\big\|S_1(t-z)(-A_1)^{\frac{\theta}{2}} \Psi^\epsilon\big(\bar{X}(z), Y_n^{\epsilon,u}(z)\big)\big\|_\h \|\chi\|_\h dz	\\&
	\leq \frac{Cc(\epsilon)}{  \sqrt{\epsilon}h(\epsilon)    }\|\chi\|_\h \int_{s}^{t}(t-z)^{-\theta/2}\bigg(1+\big\|\bar{X}(z)\big\|_\h+\big\| Y_n^{\epsilon,u}(z)\big\|_\h \bigg)dz.
	\end{aligned}
	\end{equation*}
	\noindent Since $\theta<1/2$, the Cauchy-Schwarz inequality yields
	\begin{equation}\label{IV4prebnd}
	\begin{aligned}
	\big|IV_4^{\epsilon,u}(s,t,n,\theta,\chi)\big|&
	\leq  \frac{Cc(\epsilon)}{  \sqrt{\epsilon}h(\epsilon)    }\|\chi\|_\h (t-s)^{1/2-\theta/2}\bigg(\int_{0}^{T}\big[1+\big\|\bar{X}(z)\big\|^2_\h+\big\| Y_n^{\epsilon,u}(z)\big\|^2_\h \big]dz\bigg)^{1/2}.
	\end{aligned}
	\end{equation}
	
	\noindent As in the proof of Lemma \ref{IV1bnds} we can use a density argument to show that the last estimate holds for all $\chi\in\h$. Setting $s=0$ and taking expectation, we apply Jensen's inequality along with \eqref{xbarapriori} and \eqref{yapriori}   to obtain
	\begin{equation*}
	\begin{aligned}
	\ex\sup_{t\in[0,T]}\sup_{\chi\in B_\h}\big|IV_4^{\epsilon,u}(0,t,n,\theta,\chi)\big|^2&
	\leq \frac{Cc^2(\epsilon)}{  \epsilon h^2(\epsilon)    } \int_{0}^{T}\big[1+\big\|\bar{X}(z)\big\|^2_\h+\ex\big\| Y_n^{\epsilon,u}(z)\big\|^2_\h \big]dz\\&
	\leq \frac{Cc^2(\epsilon)}{  \epsilon h^2(\epsilon)    }(1+\|x_0\|^2_\h+\|y_0\|^2_\h).
	\end{aligned}
	\end{equation*}
	This completes the proof of \eqref{IV4Sob} since
	\begin{equation*}
	\frac{c^2(\epsilon)}{  \epsilon h^2(\epsilon)    } =\frac{1}{h^2(\epsilon)}\longrightarrow 0\;\;\text{as}\;\;\epsilon\to 0.
	\end{equation*}
	As for \eqref{IV4holder}, we set $\theta=0$ in \eqref{IV4prebnd} to conclude that
	\begin{equation*}
	\begin{aligned}
	\ex\sup_{s\neq t\in[0,T]}\sup_{\chi\in B_\h}\frac{\big|IV_4^{\epsilon,u}(s,t,n,0,\chi)\big|}{|t-s|^{\frac{1}{2}}}&
	\leq \frac{C}{  h(\epsilon)    } \bigg(\int_{0}^{T}\big[1+\big\|\bar{X}(z)\big\|^2_\h+\ex\big\| Y_n^{\epsilon,u}(z)\big\|^2_\h \big]dz\bigg)^{1/2}\\&
	\leq  \frac{C}{  h(\epsilon)    }(1+\|x_0\|^2_\h+\|y_0\|^2_\h)^{1/2},
	\end{aligned}
	\end{equation*}
	\noindent for $\epsilon$ sufficiently small.
\end{proof}

\begin{lem}\label{IV5bnds} Let $T<\infty$, $x_0,y_0\in\h$ and $IV^{\epsilon,u}_5$ as in \eqref{Itoeta3}. There exist $\epsilon_0>0$ and a constant $C>0$, independent of $\epsilon$, such that for all $\theta<1/2$ and $\beta\leq1/2$
	
	\begin{equation}\label{IV5Sob}
	\begin{aligned}
	&	\sup_{\epsilon<\epsilon_0, u\in\mathcal{P}^T_N}\sup_{n\in\N}\ex\bigg(\sup_{t\in[0, T] }\sup_{\chi\in B_\h}\big|IV_5^{\epsilon,u}(0,t,n,\theta,\chi)\big|^2\bigg)\leq C
	\end{aligned}
	\end{equation}	
	and
	\begin{equation}\label{IV5holder}
	\begin{aligned}
	&		\sup_{\epsilon<\epsilon_0, u\in\mathcal{P}^T_N}\sup_{n\in\N}\ex\bigg(\sup_{\overset{s,t\in[0, T]}{ t\neq s}}\sup_{\chi\in B_\h}\frac{\big|IV_5^{\epsilon,u}(s,t,n,0,\chi)\big|}{  |t-s|^\beta  }\bigg)\leq C.
	\end{aligned}
	\end{equation}
\end{lem}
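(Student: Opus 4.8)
The plan is to mimic the argument used for $IV_4^{\epsilon,u}$ in Lemma \ref{IV4bnds}, exploiting two facts: the prefactor $\sqrt{\delta}/\sqrt{\epsilon}$ of $IV_5^{\epsilon,u}$ stays bounded for $\epsilon$ small in both Regimes $1$ and $2$ (it tends to $0$ in Regime $1$ and to $\gamma\in(0,\infty)$ in Regime $2$, by \eqref{Regimes}), and $\Psi_2^\epsilon$ is bounded in $\mathscr{L}(\h)$ uniformly in $\epsilon$ by the third estimate in \eqref{psinorm}. First I would fix $\chi\in Dom((-A_1)^{1+\theta/2})$ and, using the self-adjointness of $A_1$, the commutation of $(-A_1)^{\theta/2}$ with $S_1(t-z)$, the analytic smoothing estimate $\|(-A_1)^{\theta/2}S_1(r)\|_{\mathscr{L}(\h)}\le C_T r^{-\theta/2}$ (valid for $\theta<1/2$), the bound $\|\Psi_2^\epsilon(x,y)\|_{\mathscr{L}(\h)}\le C/\ell$, and $\|u_{2,n}(z)\|_\h\le\|u_2(z)\|_\h$, obtain
\begin{equation*}
\big|IV_5^{\epsilon,u}(s,t,n,\theta,\chi)\big|\le C\frac{\sqrt{\delta}}{\sqrt{\epsilon}}\,\|\chi\|_\h\int_s^t(t-z)^{-\theta/2}\,\|u_2(z)\|_\h\,dz.
\end{equation*}

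Next I would apply the Cauchy--Schwarz inequality together with the constraint $\int_0^T\|u_2(z)\|_\h^2\,dz\le N$ for $u\in\mathcal{P}_N^T$ to get
\begin{equation*}
\big|IV_5^{\epsilon,u}(s,t,n,\theta,\chi)\big|\le C\frac{\sqrt{\delta}}{\sqrt{\epsilon}}\,\|\chi\|_\h\Big(\int_s^t(t-z)^{-\theta}\,dz\Big)^{1/2}N^{1/2}\le C\frac{\sqrt{\delta}}{\sqrt{\epsilon}}\,\|\chi\|_\h\,(t-s)^{(1-\theta)/2},
\end{equation*}
the last $z$-integral being finite since $\theta<1/2<1$. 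As in the proof of Lemma \ref{Ibnds}, density of $Dom((-A_1)^{1+\theta/2})$ in $\h$ extends this estimate to all $\chi\in\h$. Choosing $\chi\in B_\h$ and recalling $\sqrt{\delta}/\sqrt{\epsilon}\le C$ for $\epsilon<\epsilon_0$, I would set $s=0$ and take the supremum over $t\in[0,T]$ to get $\sup_{t\in[0,T]}\sup_{\chi\in B_\h}|IV_5^{\epsilon,u}(0,t,n,\theta,\chi)|\le C$, which is deterministic and hence yields \eqref{IV5Sob} after taking expectation and suprema over $\epsilon,u,n$. For \eqref{IV5holder} I would put $\theta=0$ in the last display, obtaining $|IV_5^{\epsilon,u}(s,t,n,0,\chi)|\le C(t-s)^{1/2}$ uniformly over $\chi\in B_\h$ and $n$, and then absorb the extra factor $(t-s)^{1/2-\beta}\le T^{1/2-\beta}$ for any $\beta\le 1/2$.

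I do not anticipate a genuine obstacle in this lemma. In contrast with $IV_4^{\epsilon,u}$, where $\Psi^\epsilon$ grows linearly in its arguments and the a priori bounds \eqref{xbarapriori}, \eqref{yapriori} must be invoked, here $\Psi_2^\epsilon$ is uniformly bounded, so neither those bounds nor any regularity of $\bar X$, $Y^{\epsilon,u}$ or of the initial data enters — which is exactly why the right-hand sides in \eqref{IV5Sob}--\eqref{IV5holder} are plain constants. The only two points that call for a line of care are the self-adjointness manipulation transferring the unbounded factor $(-A_1)^{\theta/2}$ onto the smooth test function $\chi$, and the density argument upgrading the bound from $Dom((-A_1)^{1+\theta/2})$ to all of $\h$; both are routine within the framework of Section \ref{notation}.
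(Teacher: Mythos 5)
Your proposal is correct and follows essentially the same route as the paper: analytic smoothing of $S_1$ to absorb $(-A_1)^{\theta/2}$, the uniform bound $\|\Psi_2^\epsilon\|_{\mathscr{L}(\h)}\le C/\ell$ from \eqref{psinorm}, Cauchy--Schwarz with the $L^2$ control constraint, a density argument in $\chi$, and boundedness of $\sqrt{\delta}/\sqrt{\epsilon}$ in Regimes $1$ and $2$. The only cosmetic difference is that the paper cites the "second estimate in \eqref{psinorm}" where the bound on $\Psi_2^\epsilon$ (the third) is clearly meant, which you have implicitly corrected.
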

\begin{proof} Let $\chi\in Dom((-A_1)^{1+\frac{\theta}{2}})$ . Using the analyticity of $S_1$ along with the second estimate in \eqref{psinorm} we have that, with probability $1$,
	\begin{equation}\label{IV5prebnd}
	\begin{aligned}
	\big|IV_5^{\epsilon,u}(s,t,n,\theta,\chi)\big|&\leq
	\frac{ C\sqrt{\delta}} {\sqrt{\epsilon}  } \|\chi \|_\h  \int_{s}^{t} (t-z)^{-\theta/2}\big\| \Psi^\epsilon_2\big(\bar{X}(z),Y_n^{\epsilon,u}(z)\big)\big\|_{\mathscr{L}(\h)}\|u_{2,n}(z)\|_\h dz
	\\&
	\leq \frac{C\sqrt{\delta}} {\sqrt{\epsilon}  }\|\chi \|_\h (t-s)^{1/2-\theta/2}\|u_2\|_{L^2([0,T];\h)}\leq \frac{CN\sqrt{\delta}} {\sqrt{\epsilon}  }\|\chi \|_\h (t-s)^{1/2-\theta/2},
	\end{aligned}
	\end{equation}
	\noindent where we applied the Cauchy-Schwarz inequality and the fact that $u_2\in\mathcal{P}^T_N$ to obtain the last line. From a density argument (see proof of Lemma \ref{IV1bnds}), the last estimate holds for all $\chi\in\h$. In view of \eqref{Regimes}, $\sqrt{\delta}/\sqrt{\epsilon}$ is bounded in both Regimes $1,2$, for $\epsilon$ sufficiently small. Thus we set $s=0$ in \eqref{IV5prebnd} to obtain \eqref{IV5Sob} and $\theta=0$ to obtain \eqref{IV5holder}. 	\end{proof}
\noindent Next, we bound the stochastic convolution term $IV_6^{\epsilon,u}$. The estimates rely on the stochastic factorization formula and, to avoid repetition, many of the arguments will be omitted.

\begin{lem}\label{IV6bnds}  Let $T<\infty$ and $IV_6^{\epsilon,u}$ as in \eqref{Itoeta3}. There exist $\epsilon_0>0$ and a constant $C>0$, independent of $\epsilon$, such that for all $\theta<\frac{1}{2}$ and $\beta<\frac{1}{4}$
	
	\begin{equation}\label{IV6Sob}
	\begin{aligned}
	&	\sup_{\epsilon<\epsilon_0, u\in\mathcal{P}^T_N}\sup_{n\in\N}\ex\bigg(\sup_{t\in[0, T] }\sup_{\chi\in B_\h}\big|IV_6^{\epsilon,u}(0,t,n,\theta,\chi)\big|^2\bigg)\leq C
	\end{aligned}
	\end{equation}	
	and
	\begin{equation}\label{IV6holder}
	\begin{aligned}
	&		\sup_{\epsilon<\epsilon_0, u\in\mathcal{P}^T_N}\sup_{n\in\N}\ex\bigg(\sup_{\overset{s,t\in[0, T]}{ t\neq s}}\sup_{\chi\in B_\h}\frac{\big|IV_6^{\epsilon,u}(s,t,n,0,\chi)\big|}{  |t-s|^\beta  }\bigg)\leq C.
	\end{aligned}
	\end{equation}
\end{lem}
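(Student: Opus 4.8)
The plan is to handle $IV_6^{\epsilon,u}$ in exactly the same way as the stochastic convolution term $III^{\epsilon,u}$ in Lemma \ref{IIIbnds}, the only difference being that $\Sigma$ is now replaced by the bounded map $\Psi^\epsilon_2$ and $w_1$ by the projected noise $w_{2,n}$. First I would apply the stochastic factorization formula \eqref{stofact} to the $\h$-valued convolution defining $IV_6^{\epsilon,u}$ and obtain, for $a\in(0,1/4)$, the identity
\[
IV_6^{\epsilon,u}(s,t,n,\theta,\chi)=\frac{\sqrt{\delta}\sin(a\pi)}{\sqrt{\epsilon}h(\epsilon)\pi}\blangle\int_{s}^{t}(t-z)^{a-1}(-A_1)^{\frac{\theta}{2}}S_1(t-z)N^{\epsilon,u}_a(s,z,z)\,dz\,,\,\chi\brangle_\h,
\]
where, for $t_1\leq t_2\leq t_3$,
\[
N^{\epsilon,u}_a(t_1,t_2,t_3):=\int_{t_1}^{t_2}(t_3-\zeta)^{-a}S_1(t_3-\zeta)\Psi^\epsilon_2\big(\bar{X}(\zeta),Y^{\epsilon,u}_n(\zeta)\big)\,dw_{2,n}(\zeta).
\]
As in Lemma \ref{IIIbnds}, the Cauchy--Schwarz inequality together with the density of $Dom((-A_1)^{1+\theta/2})$ in $\h$ then reduces both statements to $\h$- (respectively $H^\theta$-) norm bounds on the deterministic integral of $N^{\epsilon,u}_a$, with the prefactor $\sqrt{\delta}/(\sqrt{\epsilon}h(\epsilon))$ carried along.

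For \eqref{IV6holder} I would set $\theta=0$, apply H\"older's inequality with exponent $q>1/a$ to the outer $z$-integral, split $N^{\epsilon,u}_a(s,z,z)=N^{\epsilon,u}_a(0,z,z)-N^{\epsilon,u}_a(0,s,z)$ to extract a factor $|t-s|^{a-1/q}$, and then take expectation, applying Jensen's and the Burkholder--Davis--Gundy inequalities. Since $P_n$ is a contraction, the resulting quadratic variation is dominated by $\int_{0}^{z}(z-\zeta)^{-2a}\|S_1(z-\zeta)\|^2_{\mathscr{L}_2(\h)}\sup_{x,y}\|\Psi^\epsilon_2(x,y)\|^2_{\mathscr{L}(\h)}\,d\zeta$. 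The key point — and the one feature that distinguishes this term from $IV_1^{\epsilon,u}$--$IV_5^{\epsilon,u}$ — is that $\Psi^\epsilon_2$ is merely bounded on $\h$, not Hilbert--Schmidt, so the Hilbert--Schmidt norm controlling the stochastic integral must come entirely from the analytic smoothing of $S_1$; using the second bound of \eqref{psinorm} (which gives the constant $C/\ell$ with no dependence on the data) together with the Hilbert--Schmidt smoothing estimate (Lemma \ref{sigmacont}(ii)), and choosing $\rho\in(1/2,1)$ with $2a+\rho<1$, the $\zeta$-integral $\int_0^z(z-\zeta)^{-2a-\rho}\,d\zeta$ is finite and uniform in $\epsilon$ and $n$. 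Because $\sqrt{\delta}/\sqrt{\epsilon}$ stays bounded and $h(\epsilon)\to\infty$ as $\epsilon\to0$ in Regimes $1$ and $2$ (see \eqref{Regimes}, \eqref{h}), the whole prefactor is bounded (it in fact vanishes), and \eqref{IV6holder} follows for every $\beta<1/4$.

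Estimate \eqref{IV6Sob} I would obtain by the identical chain of inequalities with $s=0$ and $\theta\in(0,1/2)$, keeping the factor $(-A_1)^{\theta/2}$ inside $N^{\epsilon,u}_a$ and replacing the Hilbert--Schmidt bound above by its weighted version \eqref{HSbnd}, which costs an extra $(z-\zeta)^{-\theta/2}$ in the integrand; one then takes $q$ large and $\rho\in(1/2,1)$ with $2a+\theta+\rho<1$, which is possible precisely because $\theta<1/2$. Since neither \eqref{psinorm} nor \eqref{HSbnd} involves $x_0,y_0$ and the prefactor $\sqrt{\delta}/(\sqrt{\epsilon}h(\epsilon))$ is bounded in Regimes $1$ and $2$, the right-hand side of \eqref{IV6Sob} is a pure constant independent of $\epsilon$, $n$ and the initial data. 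The only genuine obstacle, just as for $III^{\epsilon,u}$, is this competition between the non-Hilbert--Schmidt coefficient $\Psi^\epsilon_2$ and the smoothing power of $S_1$, and it is exactly what forces the admissible ranges $a<1/4$ and $\theta<1/2$; the remaining computations are routine repetitions of those in Lemma \ref{IIIbnds}.
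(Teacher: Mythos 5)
Your proposal is correct and follows essentially the same route as the paper, which itself dispatches $IV_6^{\epsilon,u}$ by repeating the factorization/BDG/Hilbert--Schmidt-smoothing argument of Lemma \ref{IIIbnds} with $\Sigma$ replaced by $\Psi_2^\epsilon$ and $w_1$ by $P_nw_2$, using the uniform operator-norm bound on $\Psi_2^\epsilon$ and the boundedness of $\sqrt{\delta}/(\sqrt{\epsilon}h(\epsilon))$ in Regimes 1 and 2. The only slip is cosmetic: the bound $\|\Psi_2^\epsilon(x,y)\|_{\mathscr{L}(\h)}\leq C/\ell$ is the third estimate in \eqref{psinorm}, not the second, though you describe its content correctly.
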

\begin{proof} 	\noindent Let $\chi\in Dom((-A_1)^{1+\frac{\theta}{2}})$ and apply the  stochastic factorization formula (see \eqref{stofact}) to obtain
	\begin{equation}\label{IV6fin}
	IV_6^{\epsilon,u}(s,t,n,\theta,\chi)	=  \frac{\sqrt{\delta}\sin(a\pi)}{\sqrt{\epsilon}h(\epsilon)\pi}\bigg\langle  \int_{s}^{t}(t-z)^{a-1}(-A_1)^{\frac{\theta}{2}}S_1(t-z)M^{n,\epsilon,u}_a(s,z,z)dz\;,\chi\bigg\rangle_\h\;,
	\end{equation}
	where,
	\begin{equation}\label{stofactoreg2}
	M^{n,\epsilon,u}_a(t_1,t_2,t_3;1)=\int_{t_1}^{t_2}(t_3-\zeta)^{-a}S_1(t_3-\zeta)\\\Psi^\epsilon_2 \big(\bar{X}(\zeta), Y_n^{\epsilon,u}(\zeta)\big)P_ndw_2(\zeta)
	\end{equation}
	and $P_n$ is an orthogonal projection on an $n$-dimensional eigenspace of $A_2$. It follows that
	\begin{equation}\label{IV6prebnd}
	\begin{aligned}
	&\big|	IV_6^{\epsilon,u}(s,t,n,\theta,\chi)\big|\leq\frac{C\sqrt{\delta}}{\sqrt{\epsilon}h(\epsilon)}\|\chi\|_\h \int_{s}^{t}(t-z)^{a-1}\big\|(-A_1)^{\frac{\theta}{2}}	M^{n,\epsilon,u}_a(s,z,z;1)\big\|_\h dz.
	\end{aligned}
	\end{equation}
	From a density argument (see proof of Lemma \ref{Ibnds}), the last estimate holds with probability $1$ for all $\chi\in B_\h$.

	Due to the similarity of the estimates with those in Lemma \ref{IIIbnds}, we will only prove \eqref{IV6holder}. To this end, set $\theta=0$ in \eqref{IV6prebnd} and let $q>1/a>2$. Repeating the arguments of Lemma \ref{IIIbnds} we see that
		\begin{equation*}
	\begin{aligned}
	\ex \sup_{\overset{s,t\in[0, T]}{ t\neq s}}&\sup_{\chi\in B_\h}\frac{\big|IV_6^{\epsilon,u}(s,t,n,0,\chi)\big|}{|t-s|^{a-1/q}}\\&\leq
	\frac{C\sqrt{\delta}}{\sqrt{\epsilon}h(\epsilon)} \bigg( \int_{0}^{T}\bigg(   \int_{0}^{z}(z-\zeta)^{-2a}\ex \big\|S_1(z-\zeta)\Psi^\epsilon_2 \big(\bar{X}(\zeta), Y_n^{\epsilon,u}(\zeta)\big)P_n\big\|^2_{\mathscr{L}_2(\h)}d\zeta\bigg)^{\frac{q}{2}} dz\bigg)^{\frac{1}{q}}.
	\end{aligned}
	\end{equation*}
	\noindent Invoking Lemma \ref{sigmacont}(ii) (with  $B(\zeta)=\Psi^\epsilon_2 (\bar{X}(\zeta), Y_n^{\epsilon,u}(\zeta))$ ) along with the first estimate in \eqref{psinorm}, we can choose $a<\frac{1}{4}$ and $\frac{1}{2}<\rho<1-2a$  so that
	\begin{equation*}
	\begin{aligned}
	\ex \sup_{\overset{s,t\in[0, T]}{ t\neq s}}\sup_{\chi\in B_\h}\frac{\big|IV_6^{\epsilon,u}(s,t,n,0,\chi)\big|}{|t-s|^{a-1/q}}&\leq \frac{C\sqrt{\delta}}{\sqrt{\epsilon}h(\epsilon)}\bigg(\int_{0}^{T}\bigg(   \int_{0}^{z}(z-\zeta)^{-2a-\rho}d\zeta\bigg)^{\frac{q}{2}} dz\bigg)^{\frac{1}{q}}\\&\leq\frac{C\sqrt{\delta}}{\sqrt{\epsilon}h(\epsilon)}\bigg(\int_{0}^{T}z^{\frac{q}{2}(1-2a-\rho)}dz\bigg)^{\frac{1}{q}}<\infty.
	\end{aligned}
	\end{equation*}
	\noindent Since  $\sqrt{\delta}/\sqrt{\epsilon}$ is bounded for $\epsilon$ sufficiently small and $h(\epsilon)\to \infty$ as $\epsilon\to 0$, \eqref{IV6holder} follows.
	
	Taking \eqref{IV6fin}, \eqref{stofactoreg2} and \eqref{psinorm} into account, we see that the proof of \eqref{IV6Sob} is nearly identical to that of estimate \eqref{IIISob} and thus will be omitted.
\end{proof}
\noindent	The last remaining step before estimating $IV^{\epsilon,u}$ involves bounding the finite-dimensional approximation error $R^{\epsilon,u}$ in \eqref{Itoeta3}, given by \eqref{Rem}. This term has singular prefactors of order $1/\sqrt{\epsilon}h(\epsilon)$. However, if we fix $\epsilon$ and let $n\to\infty$,  $R^{\epsilon,u}$ vanishes. Thus, for each $\epsilon>0$, we can choose an integer $n(\epsilon)$ that makes $R^{\epsilon,u}$  small. This is done in the following lemma.

\begin{lem}\label{Rbnds} Let $T<\infty$, $\theta<1/2$ and $R^{\epsilon,u}$ as in \eqref{Rem}. For all $\epsilon>0$ there exists $n(\epsilon)\in\N$ such that
\begin{equation}\label{RSob}
\sup_{u\in\mathcal{P}^T_N}\ex\sup_{t\in[0, T]}\sup_{\chi\in B_\h}\big|R^{\epsilon,u}( 0,t,n(\epsilon),\theta,\chi)\big|^2\leq \epsilon
\end{equation}
\noindent and
\begin{equation}\label{Rholder}
\begin{aligned}
\sup_{u\in\mathcal{P}^T_N}\ex\sup_{\overset{s,t\in[0, T]}{ t\neq s}}\sup_{\chi\in B_\h}\frac{\big|R^{\epsilon,u}( s,t,n(\epsilon),0,\chi)\big|}{|t-s|^{1/2}}\leq \epsilon.
\end{aligned}
\end{equation}

\end{lem}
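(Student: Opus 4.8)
The plan is to bound the two pieces $T_2^{\epsilon,u}$ and $T_3^{\epsilon,u}$ making up $R^{\epsilon,u}$ (see \eqref{Tdec}, \eqref{T3}, \eqref{Rem}) and to show that, although each carries the singular prefactor $1/(\sqrt{\epsilon}h(\epsilon))$, the remaining factors decay to $0$ as $n\to\infty$ for every fixed $\epsilon$, so that a suitable $n(\epsilon)$ kills the singularity. First I would treat $T_2^{\epsilon,u}$. Using the analyticity of $S_1$ to pull out $(t-z)^{-\theta/2}$ and the Lipschitz continuity of $F$ (Hypothesis \ref{A2a}), one gets
\begin{equation*}
\big|T_2^{\epsilon,u}(s,t,n,\theta,\chi)\big|\leq \frac{C_f}{\sqrt{\epsilon}h(\epsilon)}\|\chi\|_\h\int_{s}^{t}(t-z)^{-\theta/2}\big\|(I-P_n)Y^{\epsilon,u}(z)\big\|_\h\,dz,
\end{equation*}
after the usual density argument extending the bound to $\chi\in B_\h$. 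Then I would apply the Cauchy--Schwarz inequality in $z$, take expectation, and use that $\ex\int_0^T\|(I-P_n)Y^{\epsilon,u}(z)\|_\h^2\,dz\to 0$ as $n\to\infty$: this follows from \eqref{Yint} (which gives a uniform-in-$n$ dominating bound via \eqref{xapriori}) together with dominated convergence applied to the spectral expansion $\|(I-P_n)Y^{\epsilon,u}(z)\|_\h^2=\sum_{k>n}\langle Y^{\epsilon,u}(z),e_{2,k}\rangle_\h^2\downarrow 0$ pointwise. Hence for fixed $\epsilon$ one can choose $n$ so large that the prefactor $1/(\epsilon h^2(\epsilon))$ times this vanishing quantity is $\leq \epsilon$; setting $s=0$ gives \eqref{RSob} for $T_2^{\epsilon,u}$ and $\theta=0$ gives \eqref{Rholder} (for the latter I use Cauchy--Schwarz to extract $(t-s)^{1/2}$, as in Lemma \ref{IV4bnds}).

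Next I would treat $T_3^{\epsilon,u}$, which has two summands. For the first, I use \eqref{phitr}: $\|\Psi_3^{\epsilon,n}(x,y)\|_\h\leq (c/c(\epsilon))(1+\|x\|_\h+\|y\|_\h)$, but crucially this is a bound on the $\h$-norm of an object that is itself $\text{tr}[(P_n-I)D_y^2\Phi^\epsilon_\chi]$, a tail of a trace-class expansion; so for fixed $\epsilon$, $\sup_{x,y}\|\Psi_3^{\epsilon,n}(x,y)\|_\h$ need not vanish, but $\|\Psi_3^{\epsilon,n}(\bar X(z),Y_n^{\epsilon,u}(z))\|_\h\to 0$ in an integrated sense because the trace-class property of $D_y^2\Phi^\epsilon_\chi$ (inherited from \eqref{phiestimates}) forces the tail of the trace to converge to zero, uniformly over the relevant bounded sets of arguments if one is careful. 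Concretely I would argue that $\Psi_3^{\epsilon,n}\to 0$ pointwise and is dominated, then combine with the a priori bounds \eqref{xbarapriori}, \eqref{yapriori} and dominated convergence. For the second summand of $T_3^{\epsilon,u}$, I use $\|\Psi_2^\epsilon\|_{\mathscr{L}(\h)}\leq C/\ell$ from \eqref{psinorm}, the Lipschitz continuity of $G$ (Hypothesis \ref{A2b}), and the bound $\|(P_n-I)G(\bar X(z),Y_n^{\epsilon,u}(z))\|_\h\to 0$ in integrated expectation, again via the spectral tail estimate and \eqref{yapriori}. In both cases the singular prefactor $1/(\sqrt{\epsilon}h(\epsilon))$ is beaten by the $n$-dependent vanishing factor once $n=n(\epsilon)$ is chosen large enough.

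The main obstacle I anticipate is making the "$(I-P_n)$-tails vanish" step \emph{uniform over $u\in\mathcal{P}_N^T$}: the process $Y^{\epsilon,u}$ depends on the control, so I cannot simply fix a path and invoke dominated convergence per $u$. The clean way around this is to use the \emph{uniform} a priori moment bounds \eqref{Yint}, \eqref{yapriori} (which are stated with the supremum over $u\in\mathcal{P}_N^T$ and are independent of $n$ since $P_n$ is a contraction), giving a uniform $L^{2/\nu}$-in-time-and-$\Omega$ bound; then for the tail $\ex\int_0^T\|(I-P_n)Y^{\epsilon,u}(z)\|_\h^2\,dz$ I would bound it by Hölder against this uniform moment and against $\sup_u\ex\int_0^T\|(I-P_n)Y^{\epsilon,u}(z)\|_\h^{2r}\,dz$ for $r$ slightly bigger than $1$ — but since moments above $L^2$ in time are not available for $Y^{\epsilon,u}$, the correct route is instead to write the convolution defining $Y^{\epsilon,u}$ explicitly and observe that $(I-P_n)$ commutes with $S_2$ and annihilates $P_ny_0$, $w_{2,n}$, so $(I-P_n)Y^{\epsilon,u}=(I-P_n)[\text{convolution of }G\text{ and }u_2]$; applying $(I-P_n)$ to the convolution kernel yields a factor $e^{-a_{2,n+1}(t-z)/\delta}$ in front, and one controls the resulting integral using the Lipschitz bound on $G$, the $L^2$-in-time bound on $u_2$ ($\leq N$), and \eqref{xapriori}, \eqref{yapriori}, getting an explicit $O(e^{-a_{2,n+1}T/\delta})$-type (or $O(a_{2,n+1}^{-1})$-type, after integrating) decay that is uniform in $u$. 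Once this uniform decay rate in $n$ is in hand, choosing $n(\epsilon)$ to absorb the $1/(\epsilon h^2(\epsilon))$ prefactor is routine, and \eqref{RSob}, \eqref{Rholder} follow by the now-familiar $s=0$ / $\theta=0$ specializations together with Cauchy--Schwarz to produce the $|t-s|^{1/2}$ modulus.
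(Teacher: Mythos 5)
Your core argument is the paper's: you split $R^{\epsilon,u}$ into $T_2^{\epsilon,u}$ and the two summands of $T_3^{\epsilon,u}$ (see \eqref{Tdec}, \eqref{T3}, \eqref{Rem}), bound each by the singular prefactor $1/(\sqrt{\epsilon}h(\epsilon))$ times an $n$-dependent quantity, dominate that quantity using the a priori bounds \eqref{xbarapriori}, \eqref{yapriori} and the $n$-uniform estimate in \eqref{phitr}, send $n\to\infty$ at fixed $\epsilon$ by dominated convergence, and only then choose $n(\epsilon)$; the $s=0$ and $\theta=0$ specializations with Cauchy--Schwarz to extract $|t-s|^{1/2}$ are also exactly as in the paper.

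Two remarks on your uniformity discussion, which is where you depart from the paper. First, the paper does not seek an explicit rate in $n$: it simply applies dominated convergence to each of the three pieces (via $Y_n^{\epsilon,u}(z)\to Y^{\epsilon,u}(z)$ a.s., the continuity of $G$ in $y$, and the uniform-over-$\chi\in B_\h$ vanishing of the trace tail), so the uniformity over $u\in\mathcal{P}_N^T$ that worries you is glossed over there rather than resolved; your instinct to extract a $u$-uniform decay rate is therefore a reasonable strengthening, not a detour. Second, your proposed mechanism contains a slip: $(I-P_n)Y^{\epsilon,u}$ is \emph{not} just $(I-P_n)$ applied to the $G$- and $u_2$-convolutions, since the tails $(I-P_n)S_2(t/\delta)y_0$ and $(I-P_n)w^\delta_{A_2}(t)$ survive. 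They do, however, decay as $n\to\infty$ uniformly in $u$, $t$ and $\delta$ (e.g.\ $\ex\|(I-P_n)w^\delta_{A_2}(t)\|_\h^2\le\sum_{k>n}(2a_{2,k})^{-1}$, the tail of a convergent series since $a_{2,k}\sim k^2$), so your spectral route goes through once these terms are reinstated. Note finally that this route handles $T_2^{\epsilon,u}$ and the $G$-difference term of $T_3^{\epsilon,u}$, but not the $\Psi_3^{\epsilon,n}$ trace term, for which you (like the paper) still fall back on pointwise convergence plus domination via \eqref{phitr}.
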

\begin{proof} Let $\chi\in Dom((-A_1)^{\frac{\theta}{2}}), n\in\N$ and recall that
\begin{equation}\label{Remlem}
\begin{aligned}
&R^{\epsilon,u}( s,t,n,\theta,\chi)=\frac{1}{\sqrt{\epsilon}h(\epsilon)} \int_{s}^{t}\blangle F\big(\bar{X}(z), Y^{\epsilon,u}(z) \big)-F\big(\bar{X}(z), Y_n^{\epsilon,u}(z) \big), S_1(t-z)(-A_1)^{\frac{\theta}{2}}\chi\brangle_\h\; dz\\&
+\frac{1}{\sqrt{\epsilon}h(\epsilon)}\int_{s}^{t}   \blangle \Psi^\epsilon_2\big(\bar{X}(z),Y_n^{\epsilon,u}(z)\big)\big[ P_nG\big( \bar{X}(z), Y^{\epsilon,u}(z) \big)- G\big( \bar{X}(z), Y_n^{\epsilon,u}(z) \big)\big], S_1(t-z)(-A_1)^{\frac{\theta}{2}}\chi\brangle_\h dz\\&
+\frac{1}{2\sqrt{\epsilon}h(\epsilon)}  \int_{s}^{t}  \blangle \Psi^{\epsilon,n}_3\big(\bar{X}(z),Y_n^{\epsilon,u}(z)\big), S_1(t-z)(-A_1)^{\frac{\theta}{2}}\chi\brangle_\h dz.
\end{aligned}
\end{equation}
We start by estimating the first term in the last display. Using the analyticity of $S_1$ along with the Lipschitz continuity of $ F$
\begin{equation*}\label{R1prebnd}
\begin{aligned}
\bigg|\frac{1}{\sqrt{\epsilon}h(\epsilon)}& \int_{s}^{t}\blangle F\big(\bar{X}(z), Y^{\epsilon,u}(z) \big)-F\big(\bar{X}(z), Y_n^{\epsilon,u}(z) \big), S_1(t-z)(-A_1)^{\frac{\theta}{2}}\chi\brangle_\h\; dz\bigg|
\\&
\leq\frac{C}{\sqrt{\epsilon}h(\epsilon)}  \|\chi\|_\h
\int_{s}^{t}(t-z)^{-\theta/2} \big\| F\big(\bar{X}(z), Y^{\epsilon,u}(z) \big)-F\big(\bar{X}(z) , Y_n^{\epsilon,u}(z) \big)\big\|_{\h}\;dz
\\&
\leq \frac{C_f}{\sqrt{\epsilon}h(\epsilon)} \|\chi\|_\h
\bigg(\int_{0}^{T}\big\| Y^{\epsilon,u}(z)-Y_n^{\epsilon,u}(z)\big\|^2_{\h}\;dz\bigg)^{1/2}(t-s)^{\frac{1-\theta}{2}},
\end{aligned}
\end{equation*}
where we also applied the Cauchy-Schwarz inequality to obtain the last line. As in the proof of Lemma \ref{Ibnds}, we can use a density argument to deduce that the last estimate holds for all $\chi\in B_\h$. Setting $s=0$
\begin{equation}\label{R1sobprebnd}
\begin{aligned}
\ex\sup_{\chi\in B_\h}&\bigg|\frac{1}{\sqrt{\epsilon}h(\epsilon)} \int_{0}^{t}\blangle F\big(\bar{X}(z), Y^{\epsilon,u}(z) \big)-F\big(\bar{X}(z), Y_n^{\epsilon,u}(z) \big), S_1(t-z)(-A_1)^{\frac{\theta}{2}}\chi\brangle_\h\; dz\bigg|^2\\&\leq \frac{C}{\epsilon h^2(\epsilon)}
\ex\int_{0}^{T}\big\| Y^{\epsilon,u}(z)-Y_n^{\epsilon,u}(z)\big\|^2_{\h}\;dz,
\end{aligned}
\end{equation}
\noindent while for $\theta=0$ we obtain
\begin{equation}\label{R1holderprebnd}
\begin{aligned}
\ex\sup_{\overset{s,t\in[0, T]}{ t\neq s}}\sup_{\chi\in B_\h}\frac{1}{|t-s|^{1/2}}&\bigg|\frac{1}{\sqrt{\epsilon}h(\epsilon)} \int_{s}^{t}\blangle F\big(\bar{X}(z), Y^{\epsilon,u}(z) \big)-F\big(\bar{X}(z), Y_n^{\epsilon,u}(z) \big), S_1(t-z)\chi\brangle_\h\; dz\bigg|\\&\leq \frac{C}{\sqrt{\epsilon}h(\epsilon)}
\bigg(\ex\int_{0}^{T}\big\| Y^{\epsilon,u}(z)-Y_n^{\epsilon,u}(z)\big\|^2_{\h}\;dz\bigg)^{1/2}.
\end{aligned}
\end{equation}

\noindent Next, recall that $Y_n$ solves \eqref{projy} and note that for fixed $\epsilon$ and all $z\in [0,T]$
\begin{equation*}
Y_n^{\epsilon,u}(z) \longrightarrow Y^{\epsilon,u}(z) \;,\text{as}\;\;n\to\infty\;\;\pr-a.s.
\end{equation*}
\noindent Moreover, $$\sup_{n\in\N}\ex\int_{0}^{T}\|  Y_n^{\epsilon,u}(z)- Y^{\epsilon,u}(z) \|^2_\h\leq 2 \ex\|Y^{\epsilon,u} \|^2_{L^2([0,T];\h)}$$
\noindent and the last expression is finite due to \eqref{yapriori}. An application of the Dominated Convergence theorem yields that for each fixed $\epsilon>0$
\begin{equation*}
\begin{aligned}
&  \frac{1}{\sqrt{\epsilon}h(\epsilon)} \lim_{n\to\infty}\bigg(\ex\int_{0}^{T}\big\| Y^{\epsilon,u}(z)-Y_n^{\epsilon,u}(z)\big\|^2_{\h}\;dz\bigg)^{1/2}
=0.
\end{aligned}
\end{equation*}
Combining the latter with \eqref{R1sobprebnd} and \eqref{R1holderprebnd} yields
\begin{equation*}
\begin{aligned}
&\lim_{n\to\infty}\ex\sup_{\overset{s,t\in[0, T]}{ t\neq s}}\sup_{\chi\in B_\h}\frac{1}{|t-s|^{1/2}}\bigg|\frac{1}{\sqrt{\epsilon}h(\epsilon)} \int_{s}^{t}\blangle F\big(\bar{X}(z), Y^{\epsilon,u}(z) \big)-F\big(\bar{X}(z), Y_n^{\epsilon,u}(z) \big), S_1(t-z)\chi\brangle_\h dz\bigg|\\&
=	\lim_{n\to\infty}\ex\sup_{\chi\in B_\h}\bigg|\frac{1}{\sqrt{\epsilon}h(\epsilon)} \int_{0}^{t}\blangle F\big(\bar{X}(z), Y^{\epsilon,u}(z) \big)-F\big(\bar{X}(z), Y_n^{\epsilon,u}(z) \big), S_1(t-z)(-A_1)^{\frac{\theta}{2}}\chi\brangle_\h dz\bigg|=0.
\end{aligned}
\end{equation*}
\noindent Thus, for all $\epsilon>0$ we can find  $n(\epsilon)\in\N$ large enough to satisfy
\begin{equation}\label{R1bnds}
\small
\begin{aligned}
&\ex\sup_{\overset{s,t\in[0, T]}{ t\neq s}}\sup_{\chi\in B_\h}\frac{1}{|t-s|^{1/2}}\bigg|\frac{1}{\sqrt{\epsilon}h(\epsilon)} \int_{s}^{t}\blangle F\big(\bar{X}(z), Y^{\epsilon,u}(z) \big)-F\big(\bar{X}(z), Y_{n(\epsilon)}^{\epsilon,u}(z) \big), S_1(t-z)(-A_1)^{\frac{\theta}{2}}\chi\brangle_\h dz\bigg|
\\&+\ex\sup_{\chi\in B_\h}\bigg|\frac{1}{\sqrt{\epsilon}h(\epsilon)} \int_{0}^{t}\blangle F\big(\bar{X}(z), Y^{\epsilon,u}(z) \big)-F\big(\bar{X}(z), Y_{n(\epsilon)}^{\epsilon,u}(z) \big), S_1(t-z)(-A_1)^{\frac{\theta}{2}}\chi\brangle_\h dz\bigg|\leq \frac{\epsilon}{3}\;.
\end{aligned}
\end{equation}
For the second term in \eqref{Remlem} we can use the first estimate in \eqref{psinorm} along with similar arguments to show that for each $\chi\in B_\h$
\begin{equation*}\label{R2prebnd}
\begin{aligned}
\bigg|\frac{1}{\sqrt{\epsilon}h(\epsilon)}&\int_{s}^{t}   \blangle \Psi^\epsilon_2\big(\bar{X}(z),Y_n^{\epsilon,u}(z)\big)\big[ P_nG\big( \bar{X}(z), Y^{\epsilon,u}(z) \big)- G\big( \bar{X}(z), Y_n^{\epsilon,u}(z) \big)\big], S_1(t-z)(-A_1)^{\frac{\theta}{2}}\chi\brangle_\h dz\bigg|\\&
\leq \frac{C}{\sqrt{\epsilon}h(\epsilon)}
\bigg(\int_{0}^{T} \big\| P_nG\big( \bar{X}(z), Y^{\epsilon,u}(z) \big)- G\big( \bar{X}(z), Y_n^{\epsilon,u}(z) \big)\big\|^2_{\h}\;dz\bigg)^{1/2}(t-s)^{\frac{1-\theta}{2}}.
\end{aligned}
\end{equation*}
Since $G$ is continuous in $y$, for each fixed $\epsilon$ and $z\in[0,T]$,  $$\big\| P_nG\big( \bar{X}(z), Y^{\epsilon,u}(z) \big)- G\big( \bar{X}(z), Y_n^{\epsilon,u}(z) \big)\big\|^2_{\h}\longrightarrow0\;\;,\;\text{as}\;\; n\to\infty\;\;\pr-a.s.$$
\noindent  From the linear growth of $G$ in both variables along with estimates and \eqref{xbarapriori} and \eqref{yapriori} we have
\begin{equation*}
\small
\begin{aligned}
\sup_{n\in\N}\ex\int_{0}^{T}&\big\| P_nG\big( \bar{X}(z), Y^{\epsilon,u}(z) \big)- G\big( \bar{X}(z), Y_n^{\epsilon,u}(z) \big)\big\|^2_{\h}dz\leq C_g\bigg( 1+\sup_{t\in[0,T]}\|\bar{X}(t)\|^2_\h+ \int_{0}^{T}\ex\|Y^{\epsilon,u}(z) \|^2_\h dz\bigg)<\infty.
\end{aligned}
\end{equation*}
Applying a dominated convergence argument as before we can show that, for all $\epsilon>0$, there exists  $n(\epsilon)\in\N$ large enough to satisfy
\begin{equation}\label{R2bnds}
\small
\begin{aligned}
&\ex\sup_{\overset{s,t\in[0, T]}{ t\neq s}}\sup_{\chi\in B_\h}\frac{1}{|t-s|^{1/2}}\bigg|\frac{1}{\sqrt{\epsilon}h(\epsilon)}\int_{s}^{t}   \blangle \Psi^\epsilon_2\big(\bar{X}(z),Y_{n(\epsilon)}^{\epsilon,u}(z)\big)\big[ P_{n(\epsilon)}G\big( \bar{X}(z), Y^{\epsilon,u}(z) \big)\\&- G\big( \bar{X}(z), Y_{n(\epsilon)}^{\epsilon,u}(z) \big)\big], S_1(t-z)\chi\brangle_\h dz\bigg|^2
+\ex\sup_{\chi\in B_\h}\bigg|\frac{1}{\sqrt{\epsilon}h(\epsilon)}\int_{0}^{t}   \blangle \Psi^\epsilon_2\big(\bar{X}(z),Y_{n(\epsilon)}^{\epsilon,u}(z)\big)\big[ P_{n(\epsilon)}G\big( \bar{X}(z), Y^{\epsilon,u}(z) \big)\\&- G\big( \bar{X}(z), Y_{n(\epsilon)}^{\epsilon,u}(z) \big)\big], S_1(t-z)(-A_1)^{\frac{\theta}{2}}\chi\brangle_\h dz\bigg|^2\leq\frac{\epsilon}{3}\;.
\end{aligned}
\end{equation}
\noindent It remains to estimate the last term in \eqref{Remlem}. Since the arguments are very similar to the ones above we will only sketch the proof.
\noindent In view of \eqref{Xitr} and the continuity of $D^2_y\Phi^\epsilon_\chi(x,y)$ in $y$
$$\blangle\Psi^{\epsilon,n}_3\big(\bar{X}(z),Y_n^{\epsilon,u}(z)\big),\chi \brangle_\h=\text{tr}\big[(P_n-I)D^2_y\Phi^\epsilon_\chi\big(\bar{X}(z),Y_n^{\epsilon,u}(z)\big)\big]\longrightarrow 0\;\;\text{as}\;\;n\to\infty$$
and this convergence is uniform over $\chi\in B_\h$.
In view of the estimate in \eqref{phitr}, which is uniform in $n$,
$$\sup_{n\in\N}\ex\int_{0}^{T}\big\|\Psi^{\epsilon,n}_3\big(\epsilon,\bar{X}(z),Y_n^{\epsilon,u}(z)\big)\big\|^2_\h dz\leq \frac{c}{c(\epsilon)}\bigg( 1+\sup_{z\in[0, T] }\|\bar{X}(z)\|^2_\h +\ex\|Y^{\epsilon,u} \|^2_{L^2([0,T];\h)}\bigg)$$
\noindent and for each fixed $\epsilon$ the right-hand is finite due to estimates \eqref{xbarapriori} and \eqref{yapriori}. Using the analyticity of $S_1$ along with the Dominated Convergence theorem as before we deduce that for each $\theta<1/2$ and $\epsilon>0$, there exists $n(\epsilon)\in\N$ large enough to satisfy
\begin{equation}\label{R3bnds}
\begin{aligned}
&\ex\sup_{\overset{s,t\in[0, T]}{ t\neq s}}\sup_{\chi\in B_\h}\frac{1}{|t-s|^{1/2}}\bigg|\frac{1}{2\sqrt{\epsilon}h(\epsilon)}  \int_{s}^{t}  \blangle \Psi^{\epsilon, n(\epsilon)}_3\big(\epsilon,\bar{X}(z),Y_{n(\epsilon)}^{\epsilon,u}(z)\big), S_1(t-z)\chi\brangle_\h dz\bigg|\\&
+\ex\sup_{\chi\in B_\h}\bigg|\frac{1}{2\sqrt{\epsilon}h(\epsilon)}  \int_{0}^{t}  \blangle \Psi^{\epsilon, n(\epsilon)}_3\big(\epsilon,\bar{X}(z),Y_{n(\epsilon)}^{\epsilon,u}(z)\big), S_1(t-z)(-A_1)^{\frac{\theta}{2}}\chi\brangle_\h dz\bigg|^2\leq \frac{\epsilon}{3}\;.
\end{aligned}
\end{equation}
The proof is complete upon combining \eqref{R1bnds}, \eqref{R2bnds}, \eqref{R3bnds}.
\end{proof}

Collecting the estimates we proved for $IV^{\epsilon,u}_{k}$, $k=1,\dots,6$ and $R^{\epsilon,u}$ we can finally prove the following:
\begin{lem}\label{IVbnds} Let $T<\infty$, $a>0$, $x_0,y_0\in H^{a}(0,L)$ and $IV^{\epsilon,u}$ as in \eqref{Itoeta3}. There exist $\epsilon_0>0$, $\theta<\frac{1}{2}\wedge a$, $\beta<\frac{1}{4}\wedge\frac{a}{2}$ and a constant $C>0$ independent of $\epsilon$ such that

\begin{equation}\label{IVSob}
\begin{aligned}
&	\sup_{\epsilon<\epsilon_0, u\in\mathcal{P}^T_N}\ex\bigg(\sup_{t\in[0, T] }\sup_{\chi\in B_\h}\big|IV^{\epsilon,u}(0,t,\theta,\chi)\big|^2\bigg)\leq C\big(1+\|x_0\|^2_{H^a}+\|y_0\|^2_{H^a}\big)
\end{aligned}
\end{equation}	
and
\begin{equation}\label{IVholder}
\begin{aligned}
&		\sup_{\epsilon<\epsilon_0, u\in\mathcal{P}^T_N}\ex\bigg(\sup_{\overset{s,t\in[0, T]}{ t\neq s}}\sup_{\chi\in B_\h}\frac{\big|IV^{\epsilon,u}(s,t,0,\chi)\big|}{  |t-s|^\beta  }\bigg)\leq C\big( 1+\|x_0\|_{H^a}+\|y_0\|_{H^a}\big).
\end{aligned}
\end{equation}
\end{lem}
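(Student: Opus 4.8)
The plan is to assemble the estimate for $IV^{\epsilon,u}$ from the decomposition \eqref{Itoeta3}, namely $IV^{\epsilon,u}(s,t,\theta,\chi)=\sum_{k=1}^{6}IV^{\epsilon,u}_k(s,t,n,\theta,\chi)+R^{\epsilon,u}(s,t,n,\theta,\chi)$, which holds for every $n\in\N$. The point is that the left-hand side is independent of $n$, so we are free to choose $n=n(\epsilon)$ as in Lemma \ref{Rbnds} in order to make the remainder term $R^{\epsilon,u}$ arbitrarily small. First I would fix such an $n(\epsilon)$, guaranteeing via \eqref{RSob} and \eqref{Rholder} that the contribution of $R^{\epsilon,u}$ to both the Sobolev-type and H\"older-type bounds is at most $\epsilon\le\epsilon_0$. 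Then I would invoke Lemmas \ref{IV1bnds}, \ref{IV2bnds}, \ref{IV3bnds}, \ref{IV4bnds}, \ref{IV5bnds} and \ref{IV6bnds}, each of which provides the desired control of $IV^{\epsilon,u}_k$, uniformly over $n\in\N$, $u\in\mathcal{P}^T_N$ and $\epsilon<\epsilon_0$, in terms of $1+\|x_0\|_{H^a}^2+\|y_0\|_{H^a}^2$ (or weaker right-hand sides for $k=4,5,6$, which are dominated by the stated one since the inclusion $H^a(0,L)\subset\h$ is continuous).

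The key technical bookkeeping is the reconciliation of the H\"older exponents $\beta$ and Sobolev exponents $\theta$ across the seven estimates. Lemmas \ref{IV1bnds} and \ref{IV2bnds} yield $\theta<\tfrac12\wedge a$, $\beta<\tfrac14\wedge\tfrac a2$; Lemma \ref{IV3bnds} yields $\theta<a$, $\beta\le\tfrac a2$; Lemmas \ref{IV4bnds}, \ref{IV5bnds} give $\theta<\tfrac12$, $\beta\le\tfrac12$; and Lemma \ref{IV6bnds} gives $\theta<\tfrac12$, $\beta<\tfrac14$. Taking the intersection, I would choose any $\theta<\tfrac12\wedge a$ and $\beta<\tfrac14\wedge\tfrac a2$, which is admissible for all the constituent lemmas simultaneously. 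With these exponents fixed, the bound \eqref{IVSob} follows by writing
\begin{equation*}
\sup_{t\in[0,T]}\sup_{\chi\in B_\h}\big|IV^{\epsilon,u}(0,t,\theta,\chi)\big|^2\le 7\sum_{k=1}^{6}\sup_{t\in[0,T]}\sup_{\chi\in B_\h}\big|IV^{\epsilon,u}_k(0,t,n(\epsilon),\theta,\chi)\big|^2+7\sup_{t\in[0,T]}\sup_{\chi\in B_\h}\big|R^{\epsilon,u}(0,t,n(\epsilon),\theta,\chi)\big|^2,
\end{equation*}
taking expectations, and applying the six lemmas together with \eqref{RSob}; the analogous triangle-inequality argument for the H\"older seminorm (using the subadditivity of $(s,t)\mapsto\sup_{\chi}|\,\cdot\,|/|t-s|^\beta$ under the sum and \eqref{Rholder}) gives \eqref{IVholder}. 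Shrinking $\epsilon_0$ if necessary so that all constituent lemmas apply, and absorbing the $\epsilon\le\epsilon_0$ coming from $R^{\epsilon,u}$ into the constant $C$, completes the argument.

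I do not expect a genuine obstacle here: all the hard analysis — the It\^o-formula decomposition, the use of the Kolmogorov-equation regularity estimates \eqref{phiestimates}, the Schauder estimates \eqref{Schaudery}, \eqref{Schauderbar}, \eqref{deribar}, and the control of the singular prefactors $\delta/(\sqrt\epsilon h(\epsilon))$, $c(\epsilon)/(\sqrt\epsilon h(\epsilon))$, $\sqrt\delta/\sqrt\epsilon$ in Regimes $1$ and $2$ — has already been carried out in Lemmas \ref{IV1bnds}--\ref{Rbnds}. The only point requiring a modicum of care is the order of quantifiers: the integer $n(\epsilon)$ depends on $\epsilon$, so one must use the versions of Lemmas \ref{IV1bnds}--\ref{IV6bnds} that are \emph{uniform in $n$} (which is exactly how they are stated, via $\sup_{n\in\N}$) before specializing to $n=n(\epsilon)$, and then invoke Lemma \ref{Rbnds} at that same $n(\epsilon)$. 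Once this is observed, the proof is a routine triangle-inequality assembly.
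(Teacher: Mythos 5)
Your proposal is correct and follows essentially the same route as the paper's proof: a triangle-inequality assembly of the uniform-in-$n$ estimates from Lemmas \ref{IV1bnds}--\ref{IV6bnds}, specialized to the $n(\epsilon)$ furnished by Lemma \ref{Rbnds}, with the exponents reconciled to $\theta<\tfrac12\wedge a$ and $\beta<\tfrac14\wedge\tfrac a2$. Your remark about the order of quantifiers (uniformity in $n$ before choosing $n=n(\epsilon)$) is exactly the point the paper's argument relies on.
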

\begin{proof} In view of \eqref{IV1Sob}, \eqref{IV2Sob}, \eqref{IV3Sob}, \eqref{IV4Sob}, \eqref{IV5Sob}, \eqref{IV6Sob} and \eqref{RSob} there exist $\epsilon_0>0$, $\theta<\frac{1}{2}\wedge a$ and, for each $\epsilon>0$, a $n(\epsilon)\in\N$  such that
\begin{equation}
\begin{aligned}
\sup_{\epsilon<\epsilon_0}\ex\bigg(\sup_{t\in[0, T] }\sup_{\chi\in B_\h}\big|IV^{\epsilon,u}(0,t,\theta,\chi)\big|^2\bigg)&\leq C \sum_{k=1}^{6} 	\sup_{\epsilon<\epsilon_0}\ex\bigg(\sup_{t\in[0, T] }\sup_{\chi\in B_\h}\big|IV_k^{\epsilon,u}(0,t,n(\epsilon),\theta,\chi)\big|^2\bigg)\\&+ C	\sup_{\epsilon<\epsilon_0}\ex\bigg(\sup_{t\in[0, T] }\sup_{\chi\in B_\h}\big|R^{\epsilon,u}(0,t,n(\epsilon),\theta,\chi)\big|^2\bigg)\\&
\leq C\big( 1+\|x_0\|^2_{H^a}+\|y_0\|^2_{H^a}\big),
\end{aligned}
\end{equation}	
\noindent which proves \eqref{IVSob}. Finally, in view of \eqref{IV1holder}, \eqref{IV2holder}, \eqref{IV3holder}, \eqref{IV4holder}, \eqref{IV5holder}, \eqref{IV6holder} and \eqref{Rholder} there exist $\epsilon_0>0$, $\beta<\frac{1}{4}\wedge\frac{a}{2}$ and, for each $\epsilon>0$, a $n(\epsilon)\in\N$  such that
\begin{equation*}
\begin{aligned}
\sup_{\epsilon<\epsilon_0}\ex\bigg(\sup_{\overset{s,t\in[0, T]}{ t\neq s}}\sup_{\chi\in B_\h}\frac{\big|IV^{\epsilon,u}(s,t,0,\chi)\big|}{  |t-s|^\beta  }\bigg)&\leq \sum_{k=1}^{6}\sup_{\epsilon<\epsilon_0}\ex\bigg(\sup_{\overset{s,t\in[0, T]}{ t\neq s}}\sup_{\chi\in B_\h}\frac{\big|IV_k^{\epsilon,u}(s,t,n(\epsilon),0,\chi)\big|}{  |t-s|^\beta  }\bigg)\\&
+\sup_{\epsilon<\epsilon_0}\ex\bigg(\sup_{\overset{s,t\in[0, T]}{ t\neq s}}\sup_{\chi\in B_\h}\frac{\big|R^{\epsilon,u}(s,t,n(\epsilon),0,\chi)\big|}{  |t-s|^\beta  }\bigg)\\&
\leq C\big( 1+\|x_0\|_{H^a}+\|y_0\|_{H^a}\big),
\end{aligned}
\end{equation*}
which proves \eqref{IVholder} and completes the argument.
\end{proof}

\subsection{Proof of Proposition \ref{etatightness}}\label{etaproof}
We can now combine the estimates of this section and prove the desired a priori estimates for $\eta^{\epsilon,u}$.

(i) Setting $s=0$ in the decomposition \eqref{etaspacetimedec}  (recall that $\eta^{\epsilon,u}(0)=0_\h$)
\begin{equation*}
\begin{aligned}
\|\eta^{\epsilon,u}(t)\|^2_{H^\theta}=\sup_{\chi\in B_\h}\big|\blangle \eta^{\epsilon,u}(t),(-A_1)^{\frac{\theta}{2}}\chi\brangle_\h\big|^2&\leq
\sup_{\chi\in B_\h}\big|I^{\epsilon,u}(0,t,\theta,\chi)\big|^2
+\sup_{\chi\in B_\h}\big|II^{\epsilon,u}(0,t,\theta,\chi)\big|^2\\&+\sup_{\chi\in B_\h}\big|III^{\epsilon,u}(0,t,\theta,\chi)\big|^2+\sup_{\chi\in B_\h}\big|IV^{\epsilon,u}(0,t,\theta,\chi)\big|^2.
\end{aligned}
\end{equation*}
\noindent
In view of  $\eqref{ISob}$,
\begin{equation*}
\begin{aligned}
\|\eta^{\epsilon,u}(t)\|^2_{H^\theta}&\leq C
\int_{0}^{t}(t-z)^{-\theta}\big\|\eta^{\epsilon,u}(z)\big\|^2_{H^\theta} dz
+\sup_{t\in[0, T] }\sup_{\chi\in B_\h}\big|II^{\epsilon,u}(0,t,\theta,\chi)\big|^2\\&+\sup_{t\in[0, T] }\sup_{\chi\in B_\h}\big|III^{\epsilon,u}(0,t,\theta,\chi)\big|^2+\sup_{t\in[0, T] }\sup_{\chi\in B_\h}\big|IV^{\epsilon,u}(                                                    0,t,\theta,\chi)\big|^2.
\end{aligned}
\end{equation*}
\noindent An application of Gr\"onwall's inequality then yields
\begin{equation*}
\begin{aligned}
\|\eta^{\epsilon,u}(t)\|^2_{H^\theta}\leq C_{T,\theta}\bigg(&
\sup_{t\in[0, T] }\sup_{\chi\in B_\h}\big|II^{\epsilon,u}(0,t,\theta,\chi)\big|^2+\sup_{t\in[0, T] }\sup_{\chi\in B_\h}\big|III^{\epsilon,u}(0,t,\theta,\chi)\big|^2\\&+\sup_{t\in[0, T] }\sup_{\chi\in B_\h}\big|IV^{\epsilon,u}(0,t,\theta,\chi)\big|^2\bigg).
\end{aligned}
\end{equation*}
Taking expectation and invoking \eqref{ISob},  \eqref{IISob},  \eqref{IIISob} and \eqref{IVSob} we obtain
\begin{equation*}
\begin{aligned}
&\ex\sup_{t\in[0, T] } \|\eta^{\epsilon,u}(t)\|^2_{H^\theta}\leq C\big( 1+\|x_0\|^2_{H^a}+\|y_0\|^2_{H^a}\big),
\end{aligned}
\end{equation*}
which holds for $\epsilon$ sufficiently small,  $\theta<(\frac{1}{2}-\nu)\wedge a$ and proves \eqref{etaSob}. \\

\noindent (ii) Setting $\theta=0$ in the decomposition \eqref{etaspacetimedec} we apply a reverse triangle inequality to obtain
\begin{equation*}
\begin{aligned}
\|\eta^{\epsilon,u}(t)-\eta^{\epsilon,u}(s) \|_\h &\leq \|(S_1(t-s)-I\big)\eta^{\epsilon,u}(s)\|_\h+ \sup_{\chi\in B_\h}\big|I^{\epsilon,u}(s,t,0,\chi)\big|+ \sup_{\chi\in B_\h}\big|II^{\epsilon,u}(s,t,0,\chi)\big|\\&
+ \sup_{\chi\in B_\h}\big|III^{\epsilon,u}(s,t,0,\chi)\big|+ \sup_{\chi\in B_\h}\big|IV^{\epsilon,u}(s,t,0,\chi)\big|\\&
\leq C (t-s)^{\theta/2}\|\eta^{\epsilon,u}(s)\|_{H^\theta}+ \sup_{\chi\in B_\h}\big|I^{\epsilon,u}(s,t,0,\chi)\big|+ \sup_{\chi\in B_\h}\big|II^{\epsilon,u}(s,t,0,\chi)\big|\\&
+ \sup_{\chi\in B_\h}\big|III^{\epsilon,u}(s,t,0,\chi)\big|+ \sup_{\chi\in B_\h}\big|IV^{\epsilon,u}(s,t,0,\chi)\big|,
\end{aligned}
\end{equation*}
where we used \eqref{sobcont}
to obtain the last inequality. Hence for any  $\beta<\theta/2<(\frac{1}{4}-\frac{\nu}{2})\wedge\frac{a}{2}$
we take expectation and apply  \eqref{Iholder}, \eqref{IIholder}, \eqref{IIIholder} and \eqref{IVholder} along with \eqref{etaSob} to deduce that
\begin{equation*}
\begin{aligned}
\ex\sup_{\overset{s,t\in[0, T]}{ t\neq s}}&\frac{\big\|\eta^{\epsilon,u}(t)-\eta^{\epsilon,u}(s)\big\|_\h}{|t-s|^\beta}\leq C \ex\sup_{t\in[0,T]}\big\|\eta^{\epsilon,u}(t)\big\|_{H^{\theta}}+\ex\sup_{\overset{s,t\in[0, T]}{ t\neq s}}\sup_{\chi\in B_\h}\frac{\big|I^{\epsilon,u}(s,t,0,\chi)\big|}{  |t-s|^\beta  }\\&+\ex\sup_{\overset{s,t\in[0, T]}{ t\neq s}}\sup_{\chi\in B_\h}\frac{\big|II^{\epsilon,u}(s,t,0,\chi)\big|}{  |t-s|^\beta  }+\ex\sup_{\overset{s,t\in[0, T]}{ t\neq s}}\sup_{\chi\in B_\h}\frac{\big|III^{\epsilon,u}(s,t,0,\chi)\big|}{  |t-s|^\beta  }\\&+\ex\sup_{\overset{s,t\in[0, T]}{ t\neq s}}\sup_{\chi\in B_\h}\frac{\big|IV^{\epsilon,u}(s,t,0,\chi)\big|}{  |t-s|^\beta  } \leq C\big( 1+\|x_0\|_{H^a}+\|y_0\|_{H^a}\big).
\end{aligned}
\end{equation*}
\noindent The proof is complete.

	\section{Tightness of the pairs $(\eta^{\epsilon,u}, P^{\epsilon, \Delta})$ and analysis of the limit}\label{Sec4}

Let $\eta^{\epsilon,u}$ denote the controlled moderate deviation processes defined in \eqref{etau1} and $P^{\epsilon, \Delta}$ the random occupation measures defined in \eqref{occupation1}.
In this section, we prove the first main result of this paper, Theorem \ref{viablim1}. To do so, we first show that the family $\{ (\eta^{\epsilon,u}, P^{\epsilon, \Delta}) , \epsilon>0, u\in\mathcal{P}_N^T\}$ is tight in Section \ref{tightsub} and then identify the limiting dynamics in Section \ref{limpnts}. We complete the proof of Theorem \ref{viablim1} in Section \ref{aveproof}.

Before we proceed to the main body of this section,  let us recall the notion of tightness for a family of probability measures and then state an extension of the classical theorem of Prokhorov which will be used in the sequel.

\begin{dfn}\label{D:TightnessDef}
	Let $\mathcal{E}$ be a Hausdorff topological space and $\Pi\subset\mathscr{P}(\mathcal{E})$ be a set of Borel probability measures on $\mathcal{E}$.
	\noindent (i) We say that a sequence $\{P_n\}\subset\Pi$ converges weakly to a measure $P\in\mathscr{P}(\mathcal{E})$ if for every $f\in C_b(\mathcal{E})$
	\begin{equation*}
	\lim_{n\to\infty} \int_{\mathcal{E}} f dP_n= \int_{\mathcal{E}} f dP.
	\end{equation*}
	(ii) We say that $\Pi$ is \textit{tight} if for each $\epsilon>0$ there exists a compact set $K_\epsilon\subset \mathcal{E}$ such that for all $P\in\Pi$,
	\begin{equation}\label{tight}
	P(\mathcal{E}\setminus K_\epsilon)<\epsilon.
	\end{equation}
\end{dfn}
\noindent The classical version of Prokhorov's theorem asserts that  the notions of tightness and relative weak sequential compactness on $\mathscr{P}(\mathcal{E})$ are equivalent, provided that $\mathcal{E}$ is a Polish space. The following generalization can be found e.g. in \cite{bogachev2007measure} (see Theorem 8.6.7).
\begin{thm}(Prokhorov)\label{prokhorov} Let $\mathcal{E}$ be a completely regular Hausdorff topological space and $\Pi\subset\mathscr{P}(\mathcal{E})$ be a tight family of Borel probability measures. Then  $\Pi$ has compact closure in the topology of weak convergence of measures. In addition, if for each $\epsilon>0$ the set $K_\epsilon$ in \eqref{tight} is metrizable, then every sequence in $\Pi$ contains a weakly convergent subsequence.
\end{thm}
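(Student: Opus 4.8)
The plan is to reduce the assertion to the familiar compact (and then Polish) situation by compactifying $\mathcal{E}$ and using tightness to transport the resulting limit back to $\mathcal{E}$. Since $\mathcal{E}$ is completely regular and Hausdorff, the evaluation map associated to $C(\mathcal{E};[0,1])$ embeds $\mathcal{E}$ homeomorphically onto a subset of the compact Hausdorff space $\hat{\mathcal{E}}\subset[0,1]^{C(\mathcal{E};[0,1])}$; write $\iota\colon\mathcal{E}\hookrightarrow\hat{\mathcal{E}}$ for the inclusion and, for $P\in\Pi$, set $\hat P:=\iota_*P\in\mathscr{P}(\hat{\mathcal{E}})$. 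By the Riesz representation theorem together with Banach--Alaoglu, $\mathscr{P}(\hat{\mathcal{E}})$ is compact in the weak topology, so any net in $\{\hat P:P\in\Pi\}$ has a weakly convergent subnet. Fixing an arbitrary net $(P_\alpha)\subset\Pi$ and passing to a subnet, not relabelled, we may assume $\hat P_\alpha\to\hat P$. Using the sets $K_n\subset\mathcal{E}$ from \eqref{tight} with $\sup_{P\in\Pi}P(\mathcal{E}\setminus K_n)\le 1/n$ (taken increasing without loss), each $\hat K_n:=\iota(K_n)$ is compact, hence closed, in $\hat{\mathcal{E}}$, and $\hat P_\alpha(\hat K_n)=P_\alpha(K_n)\ge 1-1/n$; the closed-set half of the portmanteau theorem on the compact Hausdorff space $\hat{\mathcal{E}}$ (proved directly from outer regularity of Radon measures and a Urysohn function) then gives $\hat P(\hat K_n)\ge 1-1/n$ for every $n$, so $\hat P$ is carried by $\bigcup_n\hat K_n\subset\iota(\mathcal{E})$ and therefore transports to a Radon probability measure $P$ on $\mathcal{E}$, inner regular with respect to the $K_n$.

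It remains to check $P_\alpha\to P$ weakly in $\mathscr{P}(\mathcal{E})$, which is where tightness does the real work, since a generic $f\in C_b(\mathcal{E})$ need not extend continuously to $\hat{\mathcal{E}}$. Given $f$ and $n$, restrict $f\circ\iota^{-1}$ to the compact set $\hat K_n$ and extend it, via the Tietze theorem on the normal space $\hat{\mathcal{E}}$, to $\hat f_n\in C(\hat{\mathcal{E}})$ with $\|\hat f_n\|_\infty\le\|f\|_\infty$ and $\hat f_n=f\circ\iota^{-1}$ on $\hat K_n$. Then $\big|\int_{\mathcal{E}}f\,dP_\alpha-\int_{\hat{\mathcal{E}}}\hat f_n\,d\hat P_\alpha\big|\le 2\|f\|_\infty/n$, the same bound holds with $P$ in place of $P_\alpha$, and $\int\hat f_n\,d\hat P_\alpha\to\int\hat f_n\,d\hat P$; letting first $\alpha$ and then $n$ run yields $\int f\,dP_\alpha\to\int f\,dP$. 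Thus every net in $\Pi$ has a subnet converging weakly to a Radon probability measure, and since $C_b(\mathcal{E})$ separates Radon measures (so the weak topology is Hausdorff), the closure of $\Pi$ is compact.

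For the sequential statement, assume in addition that each $K_n$ is metrizable (again increasing). Each $C(K_n)$ is then separable, so there is a countable family $\{g_{n,k}\}_k\subset C(K_n;[0,1])$ separating the points of $K_n$; extend each $g_{n,k}\circ\iota^{-1}$ from $\hat K_n$ to $\hat{\mathcal{E}}$ by Tietze and list all such functions as $(h_\ell)_{\ell\in\N}\subset C(\hat{\mathcal{E}};[0,1])$. The map $T:=(h_\ell)_\ell\colon\hat{\mathcal{E}}\to Q:=[0,1]^{\N}$ is continuous into a compact metric space and restricts to a homeomorphism of each $\hat K_n$ onto a compact set $\tilde K_n$, hence is a Borel isomorphism of $S:=\bigcup_n\hat K_n$ onto $\tilde S:=\bigcup_n\tilde K_n$. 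Given a sequence $(P_m)\subset\Pi$, each $\iota_*P_m$ is carried by $S$, so $\rho_m:=(T\circ\iota)_*P_m$ is a sequence in the compact \emph{metrizable} space $\mathscr{P}(Q)$ and has a convergent subsequence $\rho_{m_j}\to\rho$. As before, $\rho(\tilde K_n)\ge 1-1/n$, so $\rho$ is carried by $\tilde S$ and pulls back through $T^{-1}$ and $\iota^{-1}$ to a Radon probability measure $P$ on $\mathcal{E}$; and the Tietze-approximation argument of the previous paragraph (now with $Q,\tilde K_n,\rho$ in place of $\hat{\mathcal{E}},\hat K_n,\hat P$) shows $P_{m_j}\to P$ weakly in $\mathscr{P}(\mathcal{E})$.

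The step I expect to be most delicate is the transfer of weak convergence from the auxiliary compact space back to $\mathcal{E}$ (the second paragraph and its analogue in the last): the test functions in $C_b(\mathcal{E})$ do not in general extend continuously to the compactification, and tightness is precisely the ingredient that lets us ignore, up to error $1/n$, the mass outside a compact set on which $f$ — or, in the metrizable case, a countable point-separating family — does extend. A minor but necessary additional care is to verify that each limit object is genuinely a Radon measure on $\mathcal{E}$ (inner regularity inherited from the $K_n$), so that $\mathscr{P}(\mathcal{E})$ with the weak topology is Hausdorff and the phrase ``compact closure'' is meaningful.
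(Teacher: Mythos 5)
The paper does not actually prove this statement: it is quoted from \cite{bogachev2007measure} (Theorem 8.6.7), so there is no in-paper argument to measure yours against. Your proof is a correct, self-contained rendering of the standard argument for this generalization of Prokhorov's theorem, and it follows essentially the textbook route: embed $\mathcal{E}$ into a compactification via the Tychonoff evaluation map, extract a weak-$*$ limit on the compactification by Riesz--Alaoglu, use the closed-set half of the portmanteau lemma together with \eqref{tight} to confine the limit to the $\sigma$-compact Borel image $\bigcup_n \iota(K_n)\subset\iota(\mathcal{E})$, and transfer weak convergence back to $\mathcal{E}$ by Tietze-extending each test function off $K_n$ at cost $2\|f\|_\infty/n$. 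The reduction of the sequential statement to the Hilbert cube via a countable point-separating family on each metrizable $K_n$ is likewise the standard device, and your bookkeeping (nested $K_n$, Borel isomorphism onto $\bigcup_n\tilde K_n$) is sound.

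Two points deserve one more line each, though neither is a gap in substance. First, the inference from ``every net in $\Pi$ has a convergent subnet'' to ``$\overline{\Pi}$ is compact'' is not automatic in a general Hausdorff space; the clean fix, which your argument already delivers, is to observe that the set $\Pi'$ of all Radon probability measures with $\mu(K_n)\ge 1-1/n$ for every $n$ is closed (by upper semicontinuity of $\mu\mapsto\mu(K_n)$ under weak convergence) and that your net argument, which uses only these bounds, shows $\Pi'$ itself is compact, whence $\overline{\Pi}\subset\Pi'$ is a closed subset of a compact set. Second, as you note, the limits produced are Radon rather than arbitrary Borel measures, and the weak topology is only guaranteed Hausdorff on the Radon (tight) measures; since every measure in sight is carried by $\bigcup_n K_n$, this is harmless here and matches how the theorem is actually used in Lemma \ref{Palaoglu}, where the relevant compacts are metrizable balls in the weak topology of $\h$.
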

\subsection{Tightness of $\{ (\eta^{\epsilon,u}, P^{\epsilon, \Delta}) , \epsilon\in(0,1), u\in\mathcal{P}_N^T\}$}\label{tightsub}
\begin{lem}\label{etarzela} Let $T<\infty, N>0$, $a>0$ and $(X^{\epsilon,u}, Y^{\epsilon,u})$ denote the mild solution of \eqref{controlledsystem} with initial conditions $x_0,y_0\in H^a(0,L)$. Then the family $\{ \eta^{\epsilon,u}, \epsilon\in(0,1), u\in\mathcal{P}_N^T\}$ is tight in $C\big([0,T];\h\big)$.
\end{lem}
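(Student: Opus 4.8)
The plan is to apply the Arzelà–Ascoli-type tightness criterion for $C([0,T];\h)$-valued random variables, which requires two ingredients: a uniform bound (over $\epsilon\in(0,1)$ and $u\in\mathcal{P}_N^T$) on the expected value of $\sup_{t\in[0,T]}\|\eta^{\epsilon,u}(t)\|$ in a space that embeds compactly into $\h$, together with a uniform bound on an expected H\"older seminorm of $\eta^{\epsilon,u}$ in $C([0,T];\h)$. Both of these have already been established in Proposition~\ref{etatightness}: part (i) gives $\sup_{\epsilon<\epsilon_0,u}\ex\sup_{t\in[0,T]}\|\eta^{\epsilon,u}(t)\|_{H^\theta}^2\leq C(1+\|x_0\|_{H^a}^2+\|y_0\|_{H^a}^2)$ for some $\theta\in(0,\tfrac12)$, and part (ii) gives $\sup_{\epsilon<\epsilon_0,u}\ex[\eta^{\epsilon,u}]_{C^\beta([0,T];\h)}\leq C(1+\|x_0\|_{H^a}+\|y_0\|_{H^a})$ for some $\beta\in(0,\tfrac14)$.

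First I would recall the compactness facts that make the criterion work. Since $\theta>0$, the embedding $H^\theta(0,L)\hookrightarrow\h=L^2(0,L)$ is compact (Rellich–Kondrachov on the bounded interval $(0,L)$), so for each $M>0$ the ball $\{x\in H^\theta:\|x\|_{H^\theta}\leq M\}$ is a compact subset of $\h$; and since $\beta>0$, for each $M>0$ and each compact $K\subset\h$ the set $\{w\in C([0,T];\h):w(t)\in K\ \forall t,\ [w]_{C^\beta([0,T];\h)}\leq M\}$ is compact in $C([0,T];\h)$ (this is the standard Arzelà–Ascoli statement, equicontinuity being supplied by the uniform H\"older bound). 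Combining these, the set
\begin{equation*}
\mathcal{K}_M:=\Big\{w\in C([0,T];\h): \sup_{t\in[0,T]}\|w(t)\|_{H^\theta}\leq M,\ [w]_{C^\beta([0,T];\h)}\leq M\Big\}
\end{equation*}
is compact (indeed metrizable) in $C([0,T];\h)$ for every $M>0$.

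Next I would run the Chebyshev/Markov argument. Fix $\epsilon'>0$; using Proposition~\ref{etatightness}(i),(ii) and Markov's inequality, choose $M$ large enough that, uniformly over $\epsilon<\epsilon_0$ and $u\in\mathcal{P}_N^T$,
\begin{equation*}
\pr\Big(\sup_{t\in[0,T]}\|\eta^{\epsilon,u}(t)\|_{H^\theta}>M\Big)<\frac{\epsilon'}{2},\qquad
\pr\big([\eta^{\epsilon,u}]_{C^\beta([0,T];\h)}>M\big)<\frac{\epsilon'}{2},
\end{equation*}
so that $\pr(\eta^{\epsilon,u}\notin\mathcal{K}_M)<\epsilon'$ for all such $\epsilon,u$. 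Since $\mathcal{K}_M$ is compact and metrizable, this is exactly the tightness condition in Definition~\ref{D:TightnessDef}(ii) for the family of laws of $\{\eta^{\epsilon,u}:\epsilon<\epsilon_0,\ u\in\mathcal{P}_N^T\}$; the finitely many remaining values $\epsilon\in[\epsilon_0,1)$ (if any) contribute only finitely many additional laws, each of which is tight on the Polish space $C([0,T];\h)$ individually, so enlarging $M$ absorbs them as well. By Prokhorov's theorem (Theorem~\ref{prokhorov}) the family is relatively weakly compact, which is the assertion of the lemma.

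The only genuinely nontrivial content is Proposition~\ref{etatightness} itself, already proved; the present lemma is a soft consequence. The main point to get right in writing it up is the bookkeeping: verifying that the exponents $\theta,\beta$ produced by Proposition~\ref{etatightness} are strictly positive (so the embeddings are compact) and uniform in $(\epsilon,u)$ on $(0,\epsilon_0)\times\mathcal{P}_N^T$, and handling the at-most-finitely-many "large $\epsilon$" laws separately so the statement holds over the full range $\epsilon\in(0,1)$ rather than just $\epsilon<\epsilon_0$. Neither of these presents a real obstacle.
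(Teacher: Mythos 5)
Your proposal is correct and follows essentially the same route as the paper: compactness of sets with uniformly bounded $H^\theta$-norm and $C^\beta$-seminorm via Arzelà--Ascoli, then Markov's inequality applied to the bounds of Proposition~\ref{etatightness}. Your extra remark about absorbing the finitely many laws with $\epsilon\in[\epsilon_0,1)$ is a careful touch the paper leaves implicit, but it does not change the argument.
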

\begin{proof}
	Let $M,\beta,\theta>0$. From an infinite-dimensional version of the Arzel\`a-Ascoli theorem, sets of the form
	\begin{equation*}
	\label{Arzela}
	\mathcal{K}_{M,\beta,\theta}=\bigg\{ X\in C([0,T];\h)   : \|X\|_{C^\beta([0,T];\h)} \leq M\;,\;    \sup_{t\in[0,T]}\|X(t)\|_{H^\theta}   \leq M        \bigg\}
	\end{equation*}
	are compact in $C([0,T];\h)$. Indeed, since the inclusion $H^\theta(0,L)\subset \h$ is compact, we see that $\mathcal{K}_{M,\beta,\theta}$ contain uniformly equicontinuous paths with values on compact subsets of $\h$. In view of Proposition \ref{etatightness} in Section \ref{Sec2}, there exist
	$\theta_0<\frac{1}{2}-\nu$ and $\beta_0<\frac{1}{4}-\frac{\nu}{2}$ such that
	\begin{equation*}
	\lim_{M\to\infty}\sup_{\epsilon\in(0,1), u\in\mathcal{P}_N^T}\pr\big[  \eta^{\epsilon,u}\notin  \mathcal{K}_{M,\beta_0,\theta_0}      \big]=0.
	\end{equation*}
	Equivalently, the probability laws of the processes $\eta^{\epsilon,u}$ are concentrated in compact subsets of $C([0,T];\h)$, uniformly in $\epsilon,u$. The proof is complete.
\end{proof}
\noindent In order to show that the laws of the random occupation measures $P^{\epsilon, \Delta}$ form a tight subset of $\mathscr{P}(\mathscr{P}(\h\times\h\times\h\times[0,T]))$ we need the following auxiliary lemma regarding the spatial regularity of the fast process $Y^{\epsilon,u}$.
\begin{lem} Let $T<\infty$. There exists $\theta>0$ and a constant $C>0$, independent of $\epsilon$, such that
	\begin{equation}\label{fastSob}
	\sup_{\epsilon>0, u\in\mathcal{P}^T_N}\ex\int_{0}^{T}\big\| Y^{\epsilon,u}(t)\big\|^2_{H^\theta}dt\leq C\big(1+\|x_0\|_\h^2+\|y_0\|_\h^2\big).
	\end{equation}
	
\end{lem}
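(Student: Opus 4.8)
The plan is to work from the mild formulation \eqref{controlledsystemild} of the controlled fast equation and to estimate, in the $L^2([0,T];H^\theta)$-norm and in expectation, the four terms on its right-hand side separately: the free evolution $S_2(t/\delta)y_0$, the drift convolution, the control convolution, and the stochastic convolution $w^\delta_{A_2}$. I will fix any $\theta\in(0,1/2)$; for such $\theta$ one has $H^\theta(0,L)=\h^\theta_2$ with equivalent norms, so it suffices to bound $\ex\int_0^T\|(-A_2)^{\theta/2}Y^{\epsilon,u}(t)\|^2_\h\,dt$. The basic tool is the smoothing estimate for the analytic, strictly dissipative semigroup $S_2$: combining \eqref{Sobosmoothing} (with $r=\theta$, $s=0$) with \eqref{S2decay} via the identity $S_2(\tau)=S_2(\tau/2)S_2(\tau/2)$ gives a constant $C>0$ such that
\begin{equation*}
\big\|(-A_2)^{\theta/2}S_2(\tau)\big\|_{\mathscr{L}(\h)}\leq C(\tau\wedge1)^{-\theta/2}e^{-\lambda\tau/2}\,,\qquad\tau>0,
\end{equation*}
together with the Hilbert--Schmidt identity $\|(-A_2)^{\theta/2}S_2(\tau)\|^2_{\mathscr{L}_2(\h)}=\sum_{n}a_{2,n}^{\theta}e^{-2a_{2,n}\tau}$; since the eigenvalues of the one-dimensional second-order elliptic operator $A_2$ grow like $a_{2,n}\sim cn^2$, the series $\sum_n a_{2,n}^{\theta-1}$ converges precisely when $\theta<1/2$ (this is the Hilbert--Schmidt-type bound that also underlies the estimates in Appendix \ref{AppA}).

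For the free evolution, the displayed smoothing bound together with the substitution $\zeta=t/\delta$ gives $\int_0^T\|S_2(t/\delta)y_0\|^2_{H^\theta}\,dt\leq C\delta\|y_0\|^2_\h\int_0^\infty(\zeta\wedge1)^{-\theta}e^{-\lambda\zeta}\,d\zeta\leq C\|y_0\|^2_\h$, the integral being finite since $\theta<1$. The drift and control convolutions I would treat as time convolutions $k_\delta*g$ and apply Young's inequality $\|k_\delta*g\|_{L^2([0,T])}\leq\|k_\delta\|_{L^1([0,\infty))}\|g\|_{L^2([0,T])}$: here $k_\delta(\tau)=\tfrac1\delta\|(-A_2)^{\theta/2}S_2(\tau/\delta)\|_{\mathscr{L}(\h)}$, respectively $k_\delta(\tau)=\tfrac{h(\epsilon)}{\sqrt\delta}\|(-A_2)^{\theta/2}S_2(\tau/\delta)\|_{\mathscr{L}(\h)}$, and the substitution $\zeta=\tau/\delta$ shows $\|k_\delta\|_{L^1([0,\infty))}\leq C$, respectively $\|k_\delta\|_{L^1([0,\infty))}\leq Ch(\epsilon)\sqrt\delta$. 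Using the linear growth of $G$ and then taking expectations, the drift term is controlled by $C\,\ex\int_0^T(1+\|X^{\epsilon,u}(s)\|^2_\h+\|Y^{\epsilon,u}(s)\|^2_\h)\,ds\leq C(1+\|x_0\|^2_\h+\|y_0\|^2_\h)$, where the last inequality uses \eqref{xapriori} and \eqref{yapriori} together with Jensen's inequality (to descend from the $2/\nu$-moment bounds there to second moments). The control term satisfies the pathwise bound $Ch^2(\epsilon)\delta\,\|u_2\|^2_{L^2([0,T];\h)}\leq CNh^2(\epsilon)\delta$, and $h^2(\epsilon)\delta=(\epsilon h^2(\epsilon))(\delta/\epsilon)\to0$ in Regimes $1$ and $2$ by \eqref{h} and \eqref{Regimes}, so it is bounded for small $\epsilon$. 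Finally, the It\^o isometry gives $\ex\|w^\delta_{A_2}(t)\|^2_{H^\theta}=\tfrac1\delta\int_0^t\|(-A_2)^{\theta/2}S_2((t-s)/\delta)\|^2_{\mathscr{L}_2(\h)}\,ds\leq\tfrac12\sum_n a_{2,n}^{\theta-1}$, uniformly in $t$ and $\delta$, so that $\ex\int_0^T\|w^\delta_{A_2}(t)\|^2_{H^\theta}\,dt\leq CT$ (this can also be read off from Lemma \ref{stoconvb}). Summing the four contributions gives \eqref{fastSob}, uniformly over $u\in\mathcal{P}_N^T$ and $\epsilon$ small.

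I do not expect any serious obstacle here; the two places that require a little care are the following. First, the control convolution must be estimated directly in the $L^2$-in-time norm (via Young's inequality), not through a pointwise Cauchy--Schwarz bound: it is exactly the factor $\sqrt\delta$ produced by integrating the kernel in time that compensates the otherwise divergent prefactor $h(\epsilon)$, which is why --- in contrast with the sup-norm estimate \eqref{Ypresup} --- no $\epsilon$-singular term survives in the $L^2$-in-time bound. Second, the restriction $\theta<1/2$ is genuinely needed: it is forced by the eigenvalue growth $a_{2,n}\sim cn^2$, and it simultaneously ensures that the Hilbert--Schmidt norm appearing in the stochastic-convolution estimate is finite and that all the semigroup kernels above are integrable at the origin.
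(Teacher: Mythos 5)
Your proof is correct and follows essentially the same route as the paper: the same four-term decomposition of the mild solution, Young's convolution inequality for the drift and control convolutions (producing the crucial $\delta h^2(\epsilon)\to 0$ factor), and the a priori bounds \eqref{xapriori}, \eqref{yapriori}, with the only difference being that the paper disposes of the stochastic convolution by citing an estimate from \cite{WSS} whereas you compute the It\^o isometry directly. One small caveat: your parenthetical appeal to Lemma \ref{stoconvb} is not quite apt, since that lemma bounds the $\h$-norm (and the $H^\theta$ bound \eqref{stcosob} is singular in $\delta$), but your isometry computation renders it unnecessary.
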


\begin{proof} Recall that the mild solution of the controlled fast equation (see \eqref{controlledsystem}) is given by
	\begin{equation*}
	\begin{aligned}
	Y^{\epsilon,u}(t)=&S_2\bigg(\frac{t}{\delta}\bigg)y_0+\frac{1}{\delta}\int_{0}^{t}S_2\bigg(\frac{t-s}{\delta}\bigg)G\big(X^{\epsilon,u}(s),    Y^{\epsilon,u}(s)    \big) ds+\frac{h(\epsilon)}{\delta}\int_{0}^{t}S_2\bigg(\frac{t-s}{\delta}\bigg)u_2(s)ds\\&
	+\frac{1}{\sqrt{\delta}}\int_{0}^{t}S_2\bigg(\frac{t-s}{\delta}\bigg)dw_2(s).
	\end{aligned}
	\end{equation*}
	Using the analytic properties of the semigroup and the linear growth of $G$, we can estimate the first two terms by
	\begin{equation}\label{fastSob1}
	\int_{0}^{T}\bigg\|S_2\bigg(\frac{t}{\delta}\bigg)y_0\bigg\|^2_{H^{\theta}}dt\leq C \int_{0}^{T}\bigg(\frac{t}{\delta}\bigg)^{-\theta}e^{-\frac{\lambda t}{\delta}}\|y_0\|_\h^2dt\leq C_{\lambda,\theta}\delta \|y_0\|_\h^2
	\end{equation}
	and
	\begin{equation*}
	\begin{aligned}
	&\bigg\|\int_{0}^{t}S_2\bigg(\frac{t-s}{\delta}\bigg)G\big(X^{\epsilon,u}(s),    Y^{\epsilon,u}(s)    \big) ds\bigg\|_{H^{\theta}}\leq C\int_{0}^{t}\bigg(\frac{t-s}{\delta}\bigg)^{-\theta/2}e^{-\frac{\lambda (t-s)}{2\delta}}\big\| G\big(X^{\epsilon,u}(s),    Y^{\epsilon,u}(s)    \big)    \big\|_\h ds.
	\end{aligned}
	\end{equation*}
	\noindent Applying Young's inequality for convolutions in the form $\|f\star g\|_{2}\leq \|f\|_{1}\|g\|_2$ we obtain
	\begin{equation}\label{fastSob2}
	\begin{aligned}
	\ex\int_{0}^{T}\bigg\|\frac{1}{\delta}\int_{0}^{t}&S_2\bigg(\frac{t-s}{\delta}\bigg)G\big(X^{\epsilon,u}(s),    Y^{\epsilon,u}(s)    \big) ds\bigg\|^2_{H^{\theta}}dt\\&\leq C
	\bigg(\int_{0}^{\infty}t^{-\theta/2}e^{-\frac{\lambda t}{2}}dt\bigg)^2\ex\int_{0}^{T}\big(1+ \big\| X^{\epsilon,u} (t)\big\|^2_\h+ \big\| Y^{\epsilon,u} (t)\big\|^2_\h \big)dt
	\\&
	\leq C\big(1+\|x_0\|_\h^2+\|y_0\|_\h^2\big),
	\end{aligned}
	\end{equation}
	where the last inequality follows from the a priori bounds \eqref{xapriori}, \eqref{yapriori} in Section \ref{Sec2}.
	It remains to estimate the control and stochastic convolution terms. The first can be bounded by Young's inequality for convolutions and the $L^2$ bound on the controls as follows:
	\begin{equation}\label{fastSob3}
	\begin{aligned}
	\int_{0}^{T}\bigg\|\frac{h(\epsilon)}{\sqrt{\delta}}\int_{0}^{t}S_2\bigg(\frac{t-s}{\delta}\bigg)u(s)ds\bigg\|^2_{H^{\theta}}dt&\leq \frac{h^2(\epsilon)}{\delta}\bigg(\int_{0}^{T}(t/\delta)^{-\theta/2}e^{-\frac{\lambda t}{2\delta}}dt\bigg)^2 \int_{0}^{T}\|u(t)\|^2_\h dt\\&\leq N\frac{h^2(\epsilon)}{\delta}\delta^2\bigg(\int_{0}^{\infty}s^{-\theta/2}e^{-\frac{\lambda s}{2}}ds\bigg)^2 \\&\leq C \delta h^2(\epsilon)\longrightarrow 0\;,\;\text{as}\;\epsilon\to 0.
	\end{aligned}
	\end{equation}
 The last line above follows from the change of variables $s=t/\delta$ and the integral is finite provided that $\theta<2$. Finally, for the stochastic convolution term, we can proceed as in \cite{WSS} (see Lemma 4.6, (33) and set $\Sigma=I$) to show that
	\begin{equation}\label{fastSob4}
	\ex\int_{0}^{T}\bigg\|\frac{1}{\sqrt{\delta}}\int_{0}^{t}S_2\bigg(\frac{t-s}{\delta}\bigg)dw_2(s)\bigg\|^2_{H^{\theta}}    dt\leq C.
	\end{equation}
	\noindent The proof is complete upon combining \eqref{fastSob1}-\eqref{fastSob4}. \end{proof}

\noindent We can now argue that the family of occupation measures $P^{\epsilon, \Delta}$ is tight. The difference with the finite-dimensional case (see Proposition 3.1 in \cite{dupuis2011weak}) is that the controls take values on the infinite-dimensional space $\h$. Since the occupation measures are defined on $\h\times\h\times\h\times[0,T]$ with the WWNS topology and the weak topology is not globally metrizable, it follows that $\h\times\h\times\h\times[0,T]$ is not a Polish space (and consequently neither is $\mathscr{P}(\h\times\h\times\h\times[0,T])$ with the topology of weak convergence of measures). This is why we need Theorem \ref{prokhorov}.

\begin{lem}\label{Palaoglu} The family $\{  P^{\epsilon, \Delta}, \epsilon>0\}$ is tight in $ \mathscr{P}(\h\times\h\times\h\times[0,T])$ where $\h\times\h\times\h\times[0,T]$ is endowed with the WWNS topology.
\end{lem}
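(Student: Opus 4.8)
The plan is to produce, for each $\eta>0$, a compact and metrizable subset of $E:=\h\times\h\times\h\times[0,T]$ (with the WWNS topology) that carries all but $\eta$ of the mass of $P^{\epsilon,\Delta}$ uniformly in $\epsilon\in(0,1)$ and $u\in\mathcal{P}_N^T$, and then to invoke the generalized Prokhorov theorem (Theorem \ref{prokhorov}). The starting point is the elementary Fubini identity: for any nonnegative measurable $g$ on $[0,\infty)$,
\[
\int_{E} g(s)\,dP^{\epsilon,\Delta}(u_1,u_2,y,s)=\frac{1}{\Delta}\int_0^T\!\!\int_t^{t+\Delta} g(s)\,ds\,dt\le \int_0^{T+\Delta}g(s)\,ds,
\]
and likewise when $g$ is evaluated along $(u_1(s),u_2(s),Y^{\epsilon,u}(s))$. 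Taking $g(s)=\|u_1(s)\|_\h^2+\|u_2(s)\|_\h^2$ and using $u\in\mathcal{P}_N^T$ together with the convention $u\equiv0$ on $(T,\infty)$ gives
\[
\sup_{\epsilon,u}\ex\int_E(\|u_1\|_\h^2+\|u_2\|_\h^2)\,dP^{\epsilon,\Delta}\le N,
\]
while taking $g(s)=\|Y^{\epsilon,u}(s)\|_{H^\theta}^2$ for the $\theta>0$ of \eqref{fastSob} (applied on the horizon $[0,T+1]\supset[0,T+\Delta]$, valid once $\epsilon$ is small) gives
\[
\sup_{\epsilon,u}\ex\int_E\|y\|_{H^\theta}^2\,dP^{\epsilon,\Delta}\le C\big(1+\|x_0\|_\h^2+\|y_0\|_\h^2\big).
\]

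Next I would assemble the compact sets. Closed balls $\{u:\|u\|_\h\le R\}$ are weakly compact by reflexivity of $\h$, and, $\h$ being separable, they are metrizable in the weak topology; closed balls $\{y:\|y\|_{H^\theta}\le R\}$ are norm-closed in $\h$ (lower semicontinuity of the $H^\theta$ norm along $\h$-convergence) and compact and metrizable in the norm topology by the compact embedding $H^\theta(0,L)\hookrightarrow\h$; and $[0,T]$ is compact metric. Hence
\[
K_R:=\{\|u_1\|_\h\le R\}\times\{\|u_2\|_\h\le R\}\times\{\|y\|_{H^\theta}\le R\}\times[0,T]
\]
is compact, metrizable and closed in the WWNS topology, so that $E\setminus K_R$ is open; and since $E\setminus K_R\subset\{\|u_1\|_\h>R\}\cup\{\|u_2\|_\h>R\}\cup\{\|y\|_{H^\theta}>R\}$, Chebyshev's inequality applied to the finite measure $P^{\epsilon,\Delta}$ together with the two moment bounds yields
\[
\sup_{\epsilon\in(0,1),\,u\in\mathcal{P}_N^T}\ex\big[P^{\epsilon,\Delta}(E\setminus K_R)\big]\le \frac{C_1}{R^2},\qquad C_1=C_1(N,\|x_0\|_\h,\|y_0\|_\h).
\]

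It remains to upgrade this to tightness of the laws of the random measures $P^{\epsilon,\Delta}$ on $\mathscr{P}(E)$. Fix $\eta>0$, choose $R_j\uparrow\infty$ with $\sup_{\epsilon,u}\ex[P^{\epsilon,\Delta}(E_j)]\le 2^{-2j}\eta^2$ where $E_j:=E\setminus K_{R_j}$, and put
\[
\mathcal{K}_\eta:=\bigcap_{j\ge1}\big\{\mu\in\mathscr{P}(E):\mu(E_j)\le 2^{-j}\eta\big\}.
\]
Each $E_j$ being open, $\mu\mapsto\mu(E_j)$ is lower semicontinuous, so $\mathcal{K}_\eta$ is closed; it is tight, since for $\eta'>0$ one takes $j_0$ with $2^{-j_0}\eta<\eta'$ and every $\mu\in\mathcal{K}_\eta$ satisfies $\mu(E\setminus K_{R_{j_0}})<\eta'$ with $K_{R_{j_0}}$ compact and metrizable. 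By Theorem \ref{prokhorov}, $\mathcal{K}_\eta$ therefore has compact closure (indeed is compact, being closed) and every sequence in it has a weakly convergent subsequence. Finally Markov's inequality gives
\[
\pr\big[P^{\epsilon,\Delta}\notin\mathcal{K}_\eta\big]\le\sum_{j\ge1}\pr\big[P^{\epsilon,\Delta}(E_j)>2^{-j}\eta\big]\le\sum_{j\ge1}\frac{\ex[P^{\epsilon,\Delta}(E_j)]}{2^{-j}\eta}\le\sum_{j\ge1}2^{-j}\eta=\eta
\]
uniformly in $\epsilon,u$, so the laws $\{\mathcal{L}(P^{\epsilon,\Delta})\}$ form a tight family. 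The only genuinely delicate point is that $E$ with the WWNS topology is not Polish, so classical Prokhorov is unavailable; this is why one must check that $E$ is completely regular Hausdorff (a product of such spaces) and, crucially, keep every compact set used above metrizable so that the sequential-compactness clause of Theorem \ref{prokhorov} applies. The moment estimates themselves are routine once \eqref{fastSob} and the constraint $u\in\mathcal{P}_N^T$ are in hand.
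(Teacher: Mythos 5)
Your proof is correct and follows essentially the same route as the paper's: the same moment bounds (the $L^2$ control constraint plus estimate \eqref{fastSob} via the Fubini identity for $P^{\epsilon,\Delta}$), the same compact metrizable sets built from weakly compact balls and the compact embedding $H^\theta(0,L)\subset\h$, Chebyshev, and the generalized Prokhorov theorem. The only difference is bookkeeping in the final step — your dyadic family $\mathcal{K}_\eta$ versus the paper's sets $\Pi_{i,j}$ — and your version has the minor virtue of explicitly verifying closedness of $\mathcal{K}_\eta$ via lower semicontinuity of $\mu\mapsto\mu(U)$ on open sets.
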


\begin{proof} Let $M>0$ and define
	\begin{equation*}
	\mathcal{K}_{M}=\big\{ (u_1,u_2,y)\in\h\times\h\times\h :\|u_1\|^2_\h+\|u_2\|^2_\h+\|y\|^2_{H^\theta} \leq M\big\}\times[0,T].
	\end{equation*}
	Since
	\begin{equation*}\label{Alaoglu}
	\begin{aligned}
	\mathcal{K}_{M}\subset
	\big\{ (u_1,u_2)\in\h\times\h :\|u_1\|^2_\h+\|u_2\|^2_\h \leq M\big\}\times\{ y\in\h : \|y\|^2_{H^\theta}  \leq M \big \}\times [0,T],
	\end{aligned}
	\end{equation*}
	we invoke the Banach-Alaoglu theorem along with the compact inclusion $H^{\theta}(0,L)\subset\h$ to deduce that 	$\mathcal{K}_{M}$ is compact in the WWNS topology. Next define
	\begin{equation*}\label{proho}
	\begin{aligned}
	\Pi_{i,j}=\bigcap_{L\geq i}\bigcup_{M\geq j}\bigg\{ P\in\mathscr{P}(\h\times\h\times\h\times[0,T])  :  P(	\mathcal{K}^c_{M}) <\frac{1}{L}       \bigg\}\;, i,j\in\N.
	\end{aligned}
	\end{equation*}
	By Definition \ref{D:TightnessDef} it follows that, for each $i,j,$  $\Pi_{i,j}$ is a tight family of measures. Since $\h$ is a separable Hilbert space and the weak topology on $B_\h$ is metrizable, the sets $\mathcal{K}_M$ are compact, metrizable. Thus, in light of Theorem \ref{prokhorov}, the sets $ \Pi_{i,j}$ are relatively compact and, in fact,  relatively sequentially compact in the topology of $ \mathscr{P}(\h\times\h\times\h\times[0,T])$. Now, an application of Chebyshev's inequality along with estimate \eqref{fastSob} yields
	\begin{equation*}
	\begin{aligned}
	\ex\big[ P^{\epsilon,\Delta}(\mathcal{K}_{M}^c) \big]&=\frac{1}{\Delta}\int_{0}^{T}\int_{t}^{t+\Delta}\pr[ (u_1(s),u_2(s),Y^{\epsilon,u}(s)) \in  \mathcal{K}_{M}^c]\;ds dt\\&\leq \frac{1}{M\Delta}\int_{0}^{T}\ex\int_{t}^{t+\Delta}\big(\|u_1(s)\|_\h^2+\|u_2(s)\|_\h^2+\|Y^{\epsilon,u}(s)\|^2_{H^\theta}\big) ds dt\\&
	\leq \frac{1}{M}\int_{0}^{T+\Delta}\big(\ex\|u_1(s)\|_\h^2+\ex\|u_2(s)\|_\h^2+\ex\|Y^{\epsilon,u}(s)\|^2_{H^\theta}\big)ds\leq \frac{C_N}{M}(1+\|x_0\|_\h^2+\|y_0\|_\h^2).
	\end{aligned}
	\end{equation*}
	Yet another application of Chebyshev's inequality implies that
	\begin{equation*}
	\begin{aligned}
	\pr\bigg[ P^{\epsilon,\Delta}(\mathcal{K}_{M}^c)\geq\frac{1}{L}\bigg] &\leq \frac{C_NL}{M}(1+\|x_0\|_\h^2+\|y_0\|_\h^2).
	\end{aligned}
	\end{equation*}
	Next, let $i\in\N,\rho>0$ and take $L\geq i$ and
	$$M\geq C_NL(1+\|x_0\|_\h^2+\|y_0\|_\h^2)/\rho\geq [C_Ni(1+\|x_0\|_\h^2+\|y_0\|_\h^2)/\rho]=:j(i,\rho), $$ where $[\cdot]$ indicates the floor function. It follows that
	\begin{equation*}
	\begin{aligned}
	\pr\big[ P^{\epsilon,\Delta}\notin \Pi_{i, j(i,\rho) }   \big]= \lim_{M\to\infty}\lim_{L\to\infty}\pr\bigg[ P^{\epsilon,\Delta}(\mathcal{K}_{M}^c)\geq\frac{1}{L}\bigg] \leq\rho,
	\end{aligned}
	\end{equation*}
	uniformly in $\epsilon,u$. Since $\rho$ is arbitrary the proof is complete.  \end{proof}
Finally, we state here, without proof, a result regarding the tail behavior of the random measures $P^{\epsilon,\Delta}$. The proof follows the same strategy as that of Proposition 3.1 in \cite{dupuis2012large} (see also Lemma 4.14 in \cite{WSS}).

\begin{lem}\label{UILem}
	Let $M,\theta>0, T<\infty$ and \begin{equation*}
	U_{M,\theta,T}:=\big\{(u_1,u_2,y,t):\|u_1\|_\h\geq M,\; \|u_2\|_\h\geq M,\; \|y\|_{H^\theta}\geq M, t\in[0,T]\big\}.
	\end{equation*}For all $T$ there exists $\theta$ such that the occupation measures $P^{\epsilon,\Delta}$  are uniformly integrable, in the sense that
	\begin{equation*}\label{UI}
	\hspace*{-0.3cm}
	\lim_{M\to\infty}\sup_{\epsilon>0}\ex\int_{U_{M,\theta,T}}\big( \|u_1\|_\h+\|u_2\|_\h+\|y\|_{H^\theta}\big)dP^{\epsilon, \Delta}(u_1,u_2, y,t)=0.
	\end{equation*}
\end{lem}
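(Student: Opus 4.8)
The plan is to unfold the time-averaged structure of $P^{\epsilon,\Delta}$ given in \eqref{occupation1} and reduce the whole statement to two uniform second-moment bounds that are already at hand: the almost sure constraint $\int_0^T(\|u_1(s)\|_\h^2+\|u_2(s)\|_\h^2)\,ds\leq N$ encoded in the definition of $\mathcal{P}_N^T$, and the Sobolev estimate \eqref{fastSob} for the fast process. Accordingly, I would fix $\theta>0$ to be exactly the exponent furnished by \eqref{fastSob} for the given $T$. Throughout, uniformity is understood over $\epsilon>0$, over $\Delta=\Delta(\epsilon)$, and over $u\in\mathcal{P}_N^T$.

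First I would note that on the set $U_{M,\theta,T}$ all three of $\|u_1\|_\h$, $\|u_2\|_\h$, $\|y\|_{H^\theta}$ exceed $M$, so
\begin{equation*}
\begin{aligned}
\big(\|u_1\|_\h+\|u_2\|_\h+\|y\|_{H^\theta}\big)\mathds{1}_{U_{M,\theta,T}}&\leq \|u_1\|_\h\mathds{1}_{\{\|u_1\|_\h\geq M\}}\\&\quad+\|u_2\|_\h\mathds{1}_{\{\|u_2\|_\h\geq M\}}+\|y\|_{H^\theta}\mathds{1}_{\{\|y\|_{H^\theta}\geq M\}},
\end{aligned}
\end{equation*}
which decouples the three contributions. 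Substituting \eqref{occupation1}, taking expectations, and using Tonelli's theorem to interchange the expectation with the $s$- and $t$-integrations and then the two Lebesgue integrations, one sees that the prefactor $1/\Delta$ is cancelled by the length of the time-slab $\{t\in[0,T]:t\leq s\leq t+\Delta\}$, which is at most $\Delta$. With the convention $u^\epsilon(s)=0$ for $s>T$ (so that the indicators vanish there), this leaves, uniformly in $\Delta$, the bound $\sum_{i=1}^2\ex\int_0^T\|u_i(s)\|_\h\mathds{1}_{\{\|u_i(s)\|_\h\geq M\}}\,ds+\ex\int_0^T\|Y^{\epsilon,u}(s)\|_{H^\theta}\mathds{1}_{\{\|Y^{\epsilon,u}(s)\|_{H^\theta}\geq M\}}\,ds$.

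For the control terms I would apply Chebyshev's inequality, $\int_0^T\mathds{1}_{\{\|u_i(s)\|_\h\geq M\}}\,ds\leq M^{-2}\int_0^T\|u_i(s)\|_\h^2\,ds\leq N/M^2$, followed by the Cauchy--Schwarz inequality in $s$, obtaining $\int_0^T\|u_i(s)\|_\h\mathds{1}_{\{\|u_i(s)\|_\h\geq M\}}\,ds\leq\big(\int_0^T\|u_i\|_\h^2\,ds\big)^{1/2}(N/M^2)^{1/2}\leq N/M$ almost surely. For the fast-process term I would use the elementary inequality $\|Y^{\epsilon,u}(s)\|_{H^\theta}\mathds{1}_{\{\|Y^{\epsilon,u}(s)\|_{H^\theta}\geq M\}}\leq M^{-1}\|Y^{\epsilon,u}(s)\|_{H^\theta}^2$ together with \eqref{fastSob}, which yields a bound of order $C(1+\|x_0\|_\h^2+\|y_0\|_\h^2)/M$. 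Summing the three pieces gives $\ex\int_{U_{M,\theta,T}}(\|u_1\|_\h+\|u_2\|_\h+\|y\|_{H^\theta})\,dP^{\epsilon,\Delta}\leq C_N(1+\|x_0\|_\h^2+\|y_0\|_\h^2)/M$, independent of $\epsilon$, $\Delta$ and $u\in\mathcal{P}_N^T$, and letting $M\to\infty$ completes the proof.

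There is no genuine obstacle here: the argument is just the standard mechanism by which a uniform bound on a second moment upgrades to uniform integrability of the corresponding first moment. The only points requiring a little care are (i) synchronising the Sobolev exponent $\theta$ with the one produced by \eqref{fastSob}; (ii) verifying that the $1/\Delta$ normalisation in \eqref{occupation1} is harmless, which it is because the slab $\{t\in[0,T]:t\leq s\leq t+\Delta\}$ has Lebesgue measure at most $\Delta$; and (iii) respecting the convention $u^\epsilon(s)=0$ for $s>T$, so that all the integrals genuinely live on $[0,T]$. One should also note that assigning $\|y\|_{H^\theta}=+\infty$ when $y\notin H^\theta$ causes no difficulty, since by \eqref{fastSob} this occurs only on a $ds\otimes d\pr$-null set.
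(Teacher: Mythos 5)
Your proof is correct and follows the standard second-moment/Chebyshev argument that the paper invokes by reference (Proposition 3.1 of \cite{dupuis2012large}, Lemma 4.14 of \cite{WSS}); the same computation already appears in the paper's proof of Lemma \ref{Palaoglu}, where the $1/\Delta$ normalisation is absorbed exactly as you describe. The only cosmetic slip is that the $Y^{\epsilon,u}$-term does not vanish on $(T,T+\Delta]$ (only the controls do, by the convention $u(s)=0$ for $s>T$), so that integral really lives on $[0,T+\Delta]$ rather than $[0,T]$; this is harmless since \eqref{fastSob} applies with $T$ replaced by $T+1$.
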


  \subsection{Identification of the limit points}\label{limpnts}

  Let $i=1,2$. In view of Lemmas \ref{etarzela} and \ref{Palaoglu} along with Prokhorov's theorem, each sequence of $\epsilon>0, u\in\mathcal{P}_N^T$ contains a subsequence $\epsilon_n, u_n$ such that $( \eta^{\epsilon_n,u_n}, P^{\epsilon_n,\Delta_n})$ converges in distribution to a random element $(\eta_i, P_i)$ in Regime $i$.
Returning to the decomposition \eqref{etaspacetimedec}, we can use very similar arguments to the ones found in Sections \ref{1,2,3}, \ref{4} and Lemma \ref{etarzela} to show that each one of the terms $I^{\epsilon,u}(0,t,0,\chi), II^{\epsilon,u}(0,t,0,\chi), III^{\epsilon,u}(0,t,0,\chi)$, $IV^{\epsilon,u}(0,t,0,\chi)$ are tight. Invoking Prokhorov's theorem once again, each of these terms have subsequential limits in distribution on $C([0,T];\h)$. The goal of this section is to identify these limits.

At this point we will use the Skorokhod representation theorem which allows us to assume that  the aforementioned sequences of random elements converge almost surely. The Skorokhod representation theorem involves the introduction of another probability space but this distinction is ignored in the notation.

In view of Lemma \ref{IIIbnds} we immediately see that the third term in \eqref{etaspacetimedec} converges to $0$ in distribution. Hence, it suffices to study the limits of $I^{\epsilon,u}$, $II^{\epsilon,u}$ and $IV^{\epsilon,u}$. This is done in Propositions \ref{linlim},  \ref{IIlim} and \ref{controlim} below. The proofs of these Propositions are based on a few preliminary lemmas which follow the general strategy of Lemmas 4.16, 4.17 in \cite{WSS}. Thus,  to avoid repetition, some intermediate steps in the proof of  Proposition \ref{linlim} as well as the proof of Proposition \ref{IIlim} will be omitted. Let us remark at this point that the averaging of $IV^{\epsilon,u}$ presents challenges that are absent from both the finite-dimensional MDP and the infinite-dimensional LDP. These are related to continuity properties of the operator-valued map $\Psi_2^0$ in \eqref{psi0}, which are here investigated with the aid of the first variation equation corresponding to the Markov process $Y^{x,y}$ \eqref{Yfrozen} (see Lemma \ref{varicontlem}). For this reason, we will present the proof of Proposition \ref{controlim} in full detail.

We start with $I^{\epsilon,u}$. Using Taylor approximation we can show that the limit of this term is linear in $\eta_i$.

\begin{lem}\label{Taylorlem} Let $T<\infty$. Under Hypothesis \ref{A2a} we have
	\begin{equation*}\label{Taylorform}
	\begin{aligned}
	\ex\sup_{t\in[0,T]} \bigg\|\frac{1}{\sqrt{\epsilon}h(\epsilon)}   &\int_{0}^{t}S_1(t-s)\big[ F\big(\bar{X}(s) +\sqrt{\epsilon}h(\epsilon)\eta^{\epsilon,u^\epsilon}(s), Y^{\epsilon,u^\epsilon}(s) \big)-F\big(\bar{X}(s), Y^{\epsilon,u^\epsilon}(s) \big)\big]ds\\&
	-\int_{0}^{t}S_1(t-s)D_xF\big(\bar{X}(s), Y^{\epsilon,u^\epsilon}(s) \big)\big( \eta^{\epsilon,u^\epsilon}(s)\big)ds\bigg\|_\h\longrightarrow 0 \;,\;\text{as}\;\epsilon\to 0.
	\end{aligned}
	\end{equation*}
\end{lem}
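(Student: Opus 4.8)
\textbf{Proof proposal for Lemma \ref{Taylorlem}.}

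The plan is to exploit the $C_b^2$-regularity of $f$ in its spatial arguments (Hypothesis \ref{A2a}) together with a first-order Taylor expansion of the Nemytskii operator $F$, carried out pointwise in $\xi$, and then to control the remainder using the a priori bounds for $\eta^{\epsilon,u}$ from Section \ref{Sec3}. First I would write, for almost every $\xi \in (0,L)$ and fixed $s$,
\begin{equation*}
f\big(\xi, \bar{X}(s)(\xi)+\sqrt{\epsilon}h(\epsilon)\eta^{\epsilon,u^\epsilon}(s)(\xi), Y^{\epsilon,u^\epsilon}(s)(\xi)\big) - f\big(\xi, \bar{X}(s)(\xi), Y^{\epsilon,u^\epsilon}(s)(\xi)\big) = \sqrt{\epsilon}h(\epsilon)\,\partial_{\mathrm{x}}f\big(\xi,\bar{X}(s)(\xi),Y^{\epsilon,u^\epsilon}(s)(\xi)\big)\eta^{\epsilon,u^\epsilon}(s)(\xi) + \mathcal{R}^{\epsilon}(s)(\xi),
\end{equation*}
where, by the integral form of the Taylor remainder and the uniform bound on $\partial^2_{\mathrm{x}\mathrm{x}}f$ from Hypothesis \ref{A2a}, the remainder satisfies $|\mathcal{R}^{\epsilon}(s)(\xi)| \leq \tfrac{1}{2}\|\partial^2_{\mathrm{x}\mathrm{x}}f\|_\infty \,\epsilon\, h^2(\epsilon)\, |\eta^{\epsilon,u^\epsilon}(s)(\xi)|^2$ pointwise. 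Recalling the definition \eqref{DF} of $D_xF$, the difference inside the norm in the statement is precisely $\tfrac{1}{\sqrt{\epsilon}h(\epsilon)}\int_0^t S_1(t-s)\mathcal{R}^\epsilon(s)\,ds$, so the claim reduces to showing
\begin{equation*}
\ex\sup_{t\in[0,T]}\bigg\| \frac{1}{\sqrt{\epsilon}h(\epsilon)}\int_0^t S_1(t-s)\mathcal{R}^\epsilon(s)\,ds \bigg\|_\h \longrightarrow 0.
\end{equation*}

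Next I would estimate the right-hand side. Using the contraction property $\|S_1(t-s)\|_{\mathscr{L}(\h)}\leq 1$ (or even just $\|S_1(t-s)x\|_\h\le e^{-c(t-s)}\|x\|_\h$ from the Remark following Hypothesis \ref{A1c}), the triangle inequality, and the pointwise bound on $\mathcal{R}^\epsilon$, we get
\begin{equation*}
\bigg\|\frac{1}{\sqrt{\epsilon}h(\epsilon)}\int_0^t S_1(t-s)\mathcal{R}^\epsilon(s)\,ds\bigg\|_\h \leq \frac{C\,\epsilon h^2(\epsilon)}{\sqrt{\epsilon}h(\epsilon)} \int_0^T \|\mathcal{R}^\epsilon(s)\|_\h\,ds \cdot \frac{1}{\epsilon h^2(\epsilon)} \leq C\sqrt{\epsilon}h(\epsilon) \int_0^T \|\eta^{\epsilon,u^\epsilon}(s)\|_{L^4(0,L)}^2\,ds,
\end{equation*}
since $\|\mathcal{R}^\epsilon(s)\|_\h = \big(\int_0^L |\mathcal{R}^\epsilon(s)(\xi)|^2 d\xi\big)^{1/2} \le C\epsilon h^2(\epsilon)\|\eta^{\epsilon,u^\epsilon}(s)\|_{L^4(0,L)}^2$. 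Here the $L^4$-norm appears because the remainder is quadratic; to handle it I would invoke the Sobolev embedding $H^\theta(0,L)\hookrightarrow L^4(0,L)$, which holds for $\theta > 1/4$, together with the fact — guaranteed by Proposition \ref{etatightness}(i) for initial data in $H^a$ with $a$ chosen appropriately — that $\sup_{0<\epsilon<\epsilon_0,u}\ex\sup_{t\in[0,T]}\|\eta^{\epsilon,u}(t)\|_{H^\theta}^2 \le C$ for some $\theta$ in the admissible range $\theta < (\tfrac12-\nu)\wedge a$. (If $\nu$ is too large to permit $\theta>1/4$, one instead interpolates: $\|\eta\|_{L^4}^2 \le \|\eta\|_\h^{2-\kappa}\|\eta\|_{H^\theta}^\kappa$ for a suitable small $\kappa$, and uses the $\h$-bound \eqref{xapriori}-type estimates together with the $H^\theta$-bound; either way one obtains a uniform bound on $\ex\int_0^T\|\eta^{\epsilon,u^\epsilon}(s)\|_{L^4}^2\,ds$.)

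Combining, we obtain
\begin{equation*}
\ex\sup_{t\in[0,T]}\bigg\|\frac{1}{\sqrt{\epsilon}h(\epsilon)}\int_0^t S_1(t-s)\mathcal{R}^\epsilon(s)\,ds\bigg\|_\h \le C\sqrt{\epsilon}h(\epsilon)\, \ex\int_0^T\|\eta^{\epsilon,u^\epsilon}(s)\|_{L^4(0,L)}^2\,ds \le C\sqrt{\epsilon}h(\epsilon),
\end{equation*}
which tends to $0$ as $\epsilon\to 0$ by the defining property \eqref{h} of the speed $h$. This completes the argument. The main obstacle — and the only genuinely delicate point — is making sure the quadratic remainder term is controlled in a norm for which we actually possess uniform-in-$(\epsilon,u)$ moment bounds on $\eta^{\epsilon,u}$; this is exactly why the spatial Sobolev regularity estimate \eqref{etaSob} of Proposition \ref{etatightness} (rather than merely an $\h$-bound) is needed here, and why the range of $\nu$ and of the H\"older/Sobolev exponents must be tracked carefully. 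Everything else is a routine application of the semigroup contraction, H\"older's inequality, and the convergence $\sqrt{\epsilon}h(\epsilon)\to 0$.
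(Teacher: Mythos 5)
Your overall strategy (first-order Taylor expansion of the Nemytskii operator, with a quadratic remainder that carries a factor $\sqrt{\epsilon}h(\epsilon)$) is the same as the paper's, but there is a genuine gap in how you control the remainder. By applying only the $\mathscr{L}(\h)$-contraction of $S_1(t-s)$, you are forced to bound $\|\mathcal{R}^\epsilon(s)\|_{\h}$, i.e. the $L^2$-norm of $|\eta^{\epsilon,u^\epsilon}(s)|^2$, which is $\|\eta^{\epsilon,u^\epsilon}(s)\|_{L^4(0,L)}^2$. Neither of your two routes to an $L^4$ bound is available in the generality of the paper's hypotheses. The embedding $H^\theta(0,L)\hookrightarrow L^4(0,L)$ requires $\theta\geq 1/4$, whereas Proposition \ref{etatightness}(i) only provides a uniform bound in $H^\theta$ for $\theta<(\tfrac12-\nu)\wedge a$; since $a>0$ may be arbitrarily small and $\nu$ may lie in $[1/4,1/2)$, the admissible $\theta$ can be forced below $1/4$. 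The interpolation fallback $\|\eta\|_{L^4}^2\le\|\eta\|_\h^{2-\kappa}\|\eta\|_{H^\theta}^\kappa$ is not a valid inequality for $\theta<1/4$: Gagliardo--Nirenberg in dimension one forces the exponent on the $H^\theta$-norm to be $1/(4\theta)>1$ at that regularity, and indeed $H^\theta(0,L)\cap L^2(0,L)\not\subset L^4(0,L)$ for $\theta<1/4$. So the quadratic remainder is simply not controlled by the moment bounds you actually possess.

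The missing idea is to measure the remainder in $L^1(0,L)$ rather than in $\h$, and to let the semigroup do the work of getting back to $\h$. This is exactly why the paper defines $D^2_xF(x,y)$ as a bilinear map into $L^1(0,L)$ (see \eqref{DxxF}): one has $\|D^2_xF(\cdot)(\eta,\eta)\|_{L^1(0,L)}\le\|\partial^2_{\mathrm{x}\mathrm{x}}f\|_\infty\|\eta\|_\h^2$, which involves only the $\h$-norm of $\eta$. Then the ultracontractivity estimate \eqref{Lpsmoothing} with $p=1$, $r=2$ gives $\|S_1(t-s)\mathcal{R}^\epsilon(s)\|_\h\le C(t-s)^{-1/4}\|\mathcal{R}^\epsilon(s)\|_{L^1(0,L)}$, and the singularity $(t-s)^{-1/4}$ is integrable over $[0,t]$. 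This yields the bound $C\,T^{3/4}\sqrt{\epsilon}h(\epsilon)\,\ex\sup_{s\le T}\|\eta^{\epsilon,u^\epsilon}(s)\|_\h^2$, which tends to $0$ by \eqref{etaSob} and \eqref{h}. With this single substitution (smoothing from $L^1$ to $L^2$ in place of the $L^2$-contraction plus $L^4$ control) your argument closes; the rest of your computation is correct.
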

\begin{proof} Let $x,y,h\in\h$. A first-order Taylor expansion for G\^ateaux derivatives yields
	\begin{equation*}\label{taylor}
	F(x+h,y)=F(x,y)+D_xF(x,y)(h)+ \frac{1}{2}D^2_{x}F(x+\theta_0h,y)(h,h),
	\end{equation*}
	for some $\theta_0\in(0,1)$ (note that here we are considering $F:\h\times\h\rightarrow L^1(0,L) $). Letting $x=\bar{X}(s), y= Y^{\epsilon,u^\epsilon}(s)$ and $h=\sqrt{\epsilon}h(\epsilon)\eta^{\epsilon,u^\epsilon}(s)$, we integrate over $[0,t]$ to obtain
	\begin{equation*}\label{taylorapp}
	\begin{aligned}
	\frac{1}{\sqrt{\epsilon}h(\epsilon)} &\int_{0}^{t}S_1(t-s)\big[ F\big(\bar{X}(s) +\sqrt{\epsilon}h(\epsilon)\eta^{\epsilon,u^\epsilon}(s), Y^{\epsilon,u^\epsilon}(s) \big)-F\big(\bar{X}(s), Y^{\epsilon,u^\epsilon}(s) \big)\big]ds \\&
	=   \int_{0}^{t}S_1(t-s)D_xF\big(\bar{X}(s), Y^{\epsilon,u^\epsilon}(s) \big)\big( \eta^{\epsilon,u^\epsilon}(s)\big)ds\\&+ \frac{\sqrt{\epsilon}h(\epsilon)}{2}\int_{0}^{t}S_1(t-s)D^2_{x}F\big(\bar{X}(s) +\theta_0\sqrt{\epsilon}h(\epsilon)\eta^{\epsilon,u^\epsilon}(s),Y^{\epsilon,u^\epsilon}(s)\big)\big( \eta^{\epsilon,u^\epsilon}(s) ,  \eta^{\epsilon,u^\epsilon}(s)\big) ds,
	\end{aligned}
	\end{equation*}
	where we used the homogeneity of the G\^ateaux derivative to simplify the $\epsilon$-dependent coefficients. In view of the regularizing property \eqref{Lpsmoothing} (with $r=2,p=1$), along with \eqref{DxxF}, we obtain
	\begin{equation*}\label{L4}
	\begin{aligned}
	\sqrt{\epsilon}h(\epsilon)&\bigg\|\int_{0}^{t}S_1(t-s)D^2_{x}F\big(\bar{X}(s) +\theta_0\sqrt{\epsilon}h(\epsilon)\eta^{\epsilon,u^\epsilon}(s),Y^{\epsilon,u^\epsilon}(s)\big)\big( \eta^{\epsilon,u^\epsilon}(s) ,  \eta^{\epsilon,u^\epsilon}(s)\big) ds\bigg\|_\h \\&
	\leq c\sqrt{\epsilon}h(\epsilon)\int_{0}^{t}(t-s)^{-\frac{1}{4}}\|D^2_{x}F\big(\bar{X}(s) +\theta_0\sqrt{\epsilon}h(\epsilon)\eta^{\epsilon,u^\epsilon}(s),Y^{\epsilon,u^\epsilon}(s)\big)\big( \eta^{\epsilon,u^\epsilon}(s) ,  \eta^{\epsilon,u^\epsilon}(s)\big) \big\|_{L^1(0,L)}ds
	\\&\leq c \sqrt{\epsilon}h(\epsilon) \big\|   \partial_{\mathrm{x}\mathrm{x}}^2 f \big\|_{\infty}    \int_{0}^{t}(t-s)^{-\frac{1}{4}}\|\eta^{\epsilon,u^\epsilon}(s)\|^2_{\h}ds\leq C T^{3/4}\sqrt{\epsilon}h(\epsilon) \big\|   \partial_{\mathrm{x}\mathrm{x}}^2 f \big\|_{\infty}   \sup_{s\in[0, T] }\|\eta^{\epsilon,u^\epsilon}(s)\|^2_{\h}.
	\end{aligned}
	\end{equation*}
	\noindent Taking expectation, we use \eqref{etaSob} to deduce
	\begin{equation*}\label{Taylorem}
	\begin{aligned}
	\sqrt{\epsilon}h(\epsilon)\ex\sup_{t\in[0, T] }\bigg\|&\int_{0}^{t}S_1(t-s)D^2_{x}F\big(\bar{X}(s) +\theta_0\sqrt{\epsilon}h(\epsilon)\eta^{\epsilon,u^\epsilon}(s),Y^{\epsilon,u^\epsilon}(s)\big)\big( \eta^{\epsilon,u^\epsilon}(s) ,  \eta^{\epsilon,u^\epsilon}(s)\big) ds\bigg\|_\h \\&
	\leq C \sqrt{\epsilon}h(\epsilon) \ex   \sup_{s\in[0, T] }\|\eta^{\epsilon,u^\epsilon}(s)\|^2_{\h}
	\leq C \sqrt{\epsilon}h(\epsilon)\big(1+\|x_0\|^2_{H^a}+\|y_0\|^2_{H^a}\big)\longrightarrow 0
	\end{aligned}
	\end{equation*}
	\noindent as $\epsilon\to 0$. The proof is complete.
\end{proof}

\begin{lem}\label{limcontlem}
Let $\Delta$ as in \eqref{Delta1} and $T<\infty$. Under Hypothesis \ref{A2a} we have
\begin{equation*}\label{contprop}
\begin{aligned}
\ex\sup_{t\in[0,T]}\bigg\|\frac{1}{\Delta}&\int_{0}^{t}\int_{s}^{s+\Delta} S_1(t-s)D_xF\big(\bar{X}(s), Y^{\epsilon,u^\epsilon}(r)\big)\big( \eta^{\epsilon,u^\epsilon}(s)\big)drds\\&-  \frac{1}{\Delta}\int_{0}^{t}\int_{s}^{s+\Delta} S_1(t-s)D_xF\big(\bar{X}(r), Y^{\epsilon,u^\epsilon}(r)\big)\big( \eta^{\epsilon,u^\epsilon}(s)\big)drds\bigg\|_\h\longrightarrow 0 \;,\;\text{as}\;\epsilon\to 0.
\end{aligned}
\end{equation*}

\end{lem}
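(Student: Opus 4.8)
\textbf{Proof plan for Lemma \ref{limcontlem}.}

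The plan is to exploit the fact that the difference inside the norm is entirely controlled by the temporal oscillation of $s\mapsto D_xF(\bar X(s),\cdot)$ on intervals of length $\Delta$, while the factor $\eta^{\epsilon,u^\epsilon}(s)$ and the smoothing semigroup $S_1(t-s)$ are kept under control by the a priori bounds of Section \ref{Sec3}. First I would write the difference as a single integral
\begin{equation*}
\frac{1}{\Delta}\int_{0}^{t}\int_{s}^{s+\Delta} S_1(t-s)\big[D_xF\big(\bar{X}(s), Y^{\epsilon,u^\epsilon}(r)\big)-D_xF\big(\bar{X}(r), Y^{\epsilon,u^\epsilon}(r)\big)\big]\big(\eta^{\epsilon,u^\epsilon}(s)\big)\,dr\,ds,
\end{equation*}
and then use the explicit form \eqref{DF} of the G\^ateaux derivative: the bracket acts pointwise as multiplication by $\partial_{\mathrm x}f(\xi,\bar X(s)(\xi),Y^{\epsilon,u^\epsilon}(r)(\xi))-\partial_{\mathrm x}f(\xi,\bar X(r)(\xi),Y^{\epsilon,u^\epsilon}(r)(\xi))$. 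By Hypothesis \ref{A2a} the function $\partial_{\mathrm x}f$ is Lipschitz in $\mathrm x$ uniformly in $(\xi,\mathrm y)$, so the $L^\infty(0,L)$ norm of this multiplier is bounded by $c\|\bar X(s)-\bar X(r)\|_{L^\infty(0,L)}$. Since $\bar X$ is H\"older continuous in $\h$ (Lemma \ref{L:LimAverEq}(ii)) — and in fact one can upgrade this to $L^\infty$ via the ultracontractivity \eqref{Lpsmoothing} applied to the mild formulation of \eqref{x-aved}, or simply absorb the $L^\infty$-estimate into the smoothing of $S_1(t-s)$ acting from $L^1$ to $\h$ as in Lemma \ref{Taylorlem} — we obtain $\|\bar X(s)-\bar X(r)\|_{L^\infty}\leq C|s-r|^{\theta}\leq C\Delta^{\theta}$ for $r\in[s,s+\Delta]$ and some $\theta<\tfrac14\wedge\tfrac a2$.

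Combining these bounds, and using the regularizing estimate \eqref{Lpsmoothing} with $r=2,p=1$ (exactly as in the proof of Lemma \ref{Taylorlem}) to handle the product of the $L^\infty$-multiplier with $\eta^{\epsilon,u^\epsilon}(s)\in\h\subset L^1(0,L)$, the quantity inside the supremum is dominated by
\begin{equation*}
C\Delta^{\theta}\int_{0}^{t}(t-s)^{-1/4}\big\|\eta^{\epsilon,u^\epsilon}(s)\big\|_\h\,ds\leq C T^{3/4}\Delta^{\theta}\sup_{s\in[0,T]}\big\|\eta^{\epsilon,u^\epsilon}(s)\big\|_\h.
\end{equation*}
Taking expectation and invoking the a priori bound \eqref{etaSob} (which controls $\ex\sup_{s\in[0,T]}\|\eta^{\epsilon,u^\epsilon}(s)\|_\h$ uniformly in $\epsilon,u$ by $C(1+\|x_0\|_{H^a}+\|y_0\|_{H^a})$), the right-hand side is bounded by $C\Delta^{\theta}$, which tends to $0$ as $\epsilon\to0$ because $\Delta=\Delta(\epsilon)\to0$ by \eqref{Delta1}. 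This proves the claim.

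The only mildly delicate point — and the place where one must be careful — is the passage from the $\h$-H\"older continuity of $\bar X$ to a bound on $\|\bar X(s)-\bar X(r)\|_{L^\infty(0,L)}$, since the multiplication operator naturally lives on $L^\infty$. I expect this to be routine rather than a genuine obstacle: either one applies \eqref{Lpsmoothing} to the difference of mild solutions, using that $\bar X(t)\in Dom(A_1)$ for $t>0$ with the quantitative bound \eqref{deribar}, or — more cleanly — one keeps the multiplier paired with $\eta^{\epsilon,u^\epsilon}(s)$ throughout, estimates the product in $L^1(0,L)$ by $\|\bar X(s)-\bar X(r)\|_\h\|\eta^{\epsilon,u^\epsilon}(s)\|_\h$ via Cauchy-Schwarz (so that only the $\h$-H\"older continuity of $\bar X$ is needed), and then lets $S_1(t-s):L^1(0,L)\to\h$ absorb the integrability loss with the factor $(t-s)^{-1/4}$. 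Everything else is a direct imitation of Lemma \ref{Taylorlem}.
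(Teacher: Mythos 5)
Your proposal is correct, and the ``cleaner'' variant you describe at the end --- keeping the multiplier paired with $\eta^{\epsilon,u^\epsilon}(s)$, bounding the product in $L^1(0,L)$ by $\|\partial^2_{\mathrm{x}\mathrm{x}}f\|_\infty\|\bar X(s)-\bar X(r)\|_\h\|\eta^{\epsilon,u^\epsilon}(s)\|_\h$ via Cauchy--Schwarz and the mean value inequality, then invoking the Schauder estimate \eqref{Schauderbar} and the $L^1\to\h$ smoothing \eqref{Lpsmoothing} of $S_1(t-s)$ with the factor $(t-s)^{-1/4}$ --- is precisely the paper's proof. You correctly flag that the first route via $\|\bar X(s)-\bar X(r)\|_{L^\infty}$ would require an $L^\infty$-H\"older bound the paper never establishes, and your resolution avoids it exactly as the paper does.
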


\begin{proof} In view of the regularizing property \eqref{Lpsmoothing},
\begin{equation*}
\begin{aligned}
\bigg\|\frac{1}{\Delta}&\int_{0}^{t}\int_{s}^{s+\Delta} S_1(t-s)\big[D_xF\big(\bar{X}(s), Y^{\epsilon,u^\epsilon}(r)\big)-D_xF\big(\bar{X}(r), Y^{\epsilon,u^\epsilon}(r)\big)\big]\big( \eta^{\epsilon,u^\epsilon}(s)\big)drds\bigg\|_\h\\&
\leq \frac{C}{\Delta}\int_{0}^{t}\int_{s}^{s+\Delta}(t-s)^{-\frac{1}{4}}\big\|\big[     D_xF\big(\bar{X}(s), Y^{\epsilon,u^\epsilon}(r)\big)-D_xF\big(\bar{X}(r), Y^{\epsilon,u^\epsilon}(r)\big)\big]\big( \eta^{\epsilon,u^\epsilon}(s)\big)\big\|_{L^1(0,L)} drds.
\end{aligned}
\end{equation*}
\noindent Next, let $r\in[s, s+\Delta]$. An application of the Cauchy-Schwarz and mean value inequalities yields
\begin{equation*}
\begin{aligned}
\big\|\big[     D_xF\big(\bar{X}(s)&, Y^{\epsilon,u^\epsilon}(r)\big)-D_xF\big(\bar{X}(r), Y^{\epsilon,u^\epsilon}(r)\big)\big]\big( \eta^{\epsilon,u^\epsilon}(s)\big) \big\|_{L^1(0,L)}
\\&\leq \big\|\eta^{\epsilon,u^\epsilon}(s)\|_\h
\bigg(\int_{0}^{L}\big| \partial_{\mathrm{x}}f\big(\xi,\bar{X}(s,\xi),Y^{\epsilon,u^\epsilon}(r,\xi)\big)-\partial_{\mathrm{x}}f\big(\xi,\bar{X}(r,\xi),Y^{\epsilon,u^\epsilon}(r,\xi)\big)\big|^2\ d\xi\bigg)^\frac{1}{2}
\\&\leq \big\|\partial^2_{\mathrm{x}\mathrm{x}}f\|_{\infty}\sup_{t\in[0,T]}\big\|\eta^{\epsilon,u^\epsilon}(t)\|_\h \|\bar{X}(s)-\bar{X}(r) \|_\h\;.
\end{aligned}
\end{equation*}
\noindent	In view of the Schauder estimate \eqref{Schauderbar} we obtain
\begin{equation*}
\begin{aligned}
\big\|\big[     D_xF\big(\bar{X}(s), Y^{\epsilon,u^\epsilon}(r)\big)-D_xF\big(\bar{X}(r), Y^{\epsilon,u^\epsilon}(r)\big)\big]\big( \eta^{\epsilon,u^\epsilon}(s)\big) \big\|_{L^1(0,L)} \leq C_f \sup_{t\in[0,T]}\big\|\eta^{\epsilon,u^\epsilon}(t)\|_\h \Delta^\theta (1+\|x_0\|_{H^a}),
\end{aligned}
\end{equation*}
\noindent 	where $\theta<\frac{1}{4}\wedge \frac{a}{2}$. Thus,
\begin{equation*}
\begin{aligned}
\bigg\|\frac{1}{\Delta}\int_{0}^{t}\int_{s}^{s+\Delta} S_1(t-s)&\big[D_xF\big(\bar{X}(s), Y^{\epsilon,u^\epsilon}(r)\big)-D_xF\big(\bar{X}(r), Y^{\epsilon,u^\epsilon}(r)\big)\big]\big( \eta^{\epsilon,u^\epsilon}(s)\big)drds\bigg\|_\h\\&\leq
C\Delta^{\theta}(1+\|x_0\|_{H^a})\sup_{t\in[0,T]}\big\|\eta^{\epsilon,u^\epsilon}(t)\|_\h \int_{0}^{t}(t-s)^{-\frac{1}{4}}ds
\\&\leq 	CT^{\frac{3}{4}}\Delta^{\theta}(1+\|x_0\|_{H^a})\sup_{t\in[0,T]}\big\|\eta^{\epsilon,u^\epsilon}(t)\|_\h.
\end{aligned}
\end{equation*}
\noindent In view of \eqref{etaSob} it follows that
\begin{equation*}
\begin{aligned}
\ex\sup_{t\in[0,T]}\bigg\|\frac{1}{\Delta}\int_{0}^{t}\int_{s}^{s+\Delta} S_1(t-s)&\big[D_xF\big(\bar{X}(s), Y^{\epsilon,u^\epsilon}(r)\big)-D_xF\big(\bar{X}(r), Y^{\epsilon,u^\epsilon}(r)\big)\big]\big( \eta^{\epsilon,u^\epsilon}(s)\big)drds\bigg\|_\h\\&\leq 	C_T\Delta^{\theta}(1+\|x_0\|_{H^a})(1+\|x_0\|_{H^a}+\|y_0\|_{H^a}).
\end{aligned}
\end{equation*}
The proof is complete upon taking $\Delta\rightarrow 0$.
\end{proof}

\begin{lem}\label{lincont2lem}
Let $i=1,2$, $T<\infty$ and assume that the pair $(\eta^{\epsilon,u^\epsilon}, P^{\epsilon,\Delta})$  converges in distribution, in Regime $i$, to $(\eta_i,P_i)$ in $C([0,T];\h)\times\mathscr{P}(\h\times\h\times\h\times[0,T])$. Then the following limit is valid with probability $1$:
\begin{equation*}
\begin{aligned}
\sup_{t\in[0,T]}\bigg\|
\frac{1}{\Delta}\int_{0}^{t}&\int_{s}^{s+\Delta} S_1(t-s)D_xF\big(\bar{X}(r), Y^{\epsilon,u^\epsilon}(r)\big)\big( \eta^{\epsilon,u^\epsilon}(s)\big)drds
\\&  -  \int_{\h\times\h\times\h\times[0,t] }S_1(t-s) D_xF\big(\bar{X}(s),y       \big)  \eta_i(s)dP^{\epsilon,\Delta}(u_1,u_2,y,s)        \bigg\|_\h\longrightarrow 0 \;,\;\text{as}\;\epsilon\to 0.
\end{aligned}
\end{equation*}
\end{lem}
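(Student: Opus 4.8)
The plan is to lean on the Skorokhod coupling introduced just before the statement, under which $\eta^{\epsilon,u^\epsilon}\to\eta_i$ almost surely in $C([0,T];\h)$ along the relevant subsequence; it then suffices to prove the convergence pathwise on a full-probability event. The first step is to recast the occupation-measure integral using the definition \eqref{occupation1} with the measure's time variable playing the role of $s$, which gives the identity
\[
\int_{\h\times\h\times\h\times[0,t]}S_1(t-s)D_xF\big(\bar X(s),y\big)\eta_i(s)\,dP^{\epsilon,\Delta}(u_1,u_2,y,s)=\frac1\Delta\int_0^t\int_s^{s+\Delta}S_1(t-s)D_xF\big(\bar X(s),Y^{\epsilon,u^\epsilon}(r)\big)\eta_i(s)\,dr\,ds.
\]
With this identity, the $\h$-valued difference inside the supremum splits as $\mathrm{(I)}+\mathrm{(II)}$, where $\mathrm{(I)}$ collects the replacement of $\bar X(r)$ by $\bar X(s)$ in the first argument of $D_xF$ (keeping $\eta^{\epsilon,u^\epsilon}(s)$), while $\mathrm{(II)}$ collects the replacement of $\eta^{\epsilon,u^\epsilon}(s)$ by $\eta_i(s)$ (keeping $\bar X(s)$).

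For $\mathrm{(I)}$ I would repeat, essentially verbatim, the estimate carried out in Lemma \ref{limcontlem}: viewing $F$ as a map into $L^1(0,L)$, applying the smoothing property \eqref{Lpsmoothing} (with $r=2$, $p=1$, producing the integrable singularity $(t-s)^{-1/4}$), the mean value inequality in the slow argument of $D_xF$, the uniform bound on $\partial^2_{\mathrm{x}\mathrm{x}}f$ from Hypothesis \ref{A2a}, and the Schauder estimate \eqref{Schauderbar} for $\bar X$. Together these yield, uniformly in $t\in[0,T]$, a pathwise bound of the form $\sup_{t\in[0,T]}\|\mathrm{(I)}\|_\h\leq C_T\,\Delta^{\theta}\,(1+\|x_0\|_{H^a})\,\sup_{s\in[0,T]}\|\eta^{\epsilon,u^\epsilon}(s)\|_\h$ for some $\theta<\tfrac14\wedge\tfrac a2$. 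Along the Skorokhod sequence $\sup_{s\in[0,T]}\|\eta^{\epsilon,u^\epsilon}(s)\|_\h\to\sup_{s\in[0,T]}\|\eta_i(s)\|_\h<\infty$ pathwise, and $\Delta=\Delta(\epsilon)\to0$ by \eqref{Delta1}, so $\sup_t\|\mathrm{(I)}\|_\h\to0$ almost surely.

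For $\mathrm{(II)}$ the estimate is more elementary: using $\|D_xF(x,y)\chi\|_{L^1(0,L)}\leq\|\partial_{\mathrm{x}}f\|_\infty\|\chi\|_{L^1(0,L)}\leq\sqrt L\,\|\partial_{\mathrm{x}}f\|_\infty\|\chi\|_\h$ together with \eqref{Lpsmoothing}, one bounds $\|\mathrm{(II)}\|_\h$ by $C\,\|\partial_{\mathrm{x}}f\|_\infty\big(\sup_{s\in[0,T]}\|\eta^{\epsilon,u^\epsilon}(s)-\eta_i(s)\|_\h\big)\int_0^t(t-s)^{-1/4}\,ds\leq C\,T^{3/4}\,\|\partial_{\mathrm{x}}f\|_\infty\,\sup_{s\in[0,T]}\|\eta^{\epsilon,u^\epsilon}(s)-\eta_i(s)\|_\h$, again uniformly in $t\in[0,T]$; this tends to $0$ almost surely precisely because the Skorokhod representation delivers uniform-in-time a.s.\ convergence $\eta^{\epsilon,u^\epsilon}\to\eta_i$. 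Adding the bounds for $\mathrm{(I)}$ and $\mathrm{(II)}$ finishes the argument. There is no genuine obstacle here — the computation is bookkeeping — but the one point requiring care is to keep every bound uniform over $t\in[0,T]$, which is exactly why one integrates the integrable singularity $(t-s)^{-1/4}$ of the analytic semigroup $S_1$ rather than attempting an $L^2$-in-time argument, and to observe that passing through the Skorokhod coupling upgrades the $L^1(\Omega)$ estimates of the preceding lemmas to genuinely pathwise statements.
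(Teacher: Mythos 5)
Your proposal is correct and follows essentially the same route as the paper: the replacement of $\eta^{\epsilon,u^\epsilon}$ by $\eta_i$ via the uniform bound $\sup_{x,y}\|D_xF(x,y)\|_{\mathscr{L}(\h)}\leq\|\partial_{\mathrm{x}}f\|_{\infty}$ together with the Skorokhod coupling is exactly the paper's explicit step, while the remaining piece --- which the paper delegates to Lemma 4.16 of \cite{WSS} --- you correctly supply by unfolding $P^{\epsilon,\Delta}$ into the double time integral and re-running the pathwise version of the Lemma \ref{limcontlem} estimate. (The detour through $L^1(0,L)$ and the smoothing factor $(t-s)^{-1/4}$ in your term $\mathrm{(II)}$ is harmless but unnecessary, since $D_xF$ is already uniformly bounded as an operator on $\h$.)
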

\begin{proof}
Recall that for each fixed $x,y\in\h$, $D_xF(x,y)\in\mathscr{L}(\h)$ with \begin{equation}\label{unifbnd}\sup_{x,y\in\h}\big\|D_xF(x,y)\big\|_{\mathscr{L}(\h)}\leq\|\partial_{\mathrm{x}}f\|_{\infty}<\infty.\end{equation} By virtue of the Skorokhod representation theorem it follows that  $\pr$-a.s.
\begin{equation*}
\begin{aligned}
\sup_{t\in[0,T]}\bigg\|&\frac{1}{\Delta}\int_{0}^{t}\int_{s}^{s+\Delta} S_1(t-s)D_xF\big(\bar{X}(r), Y^{\epsilon,u^\epsilon}(r)\big)\big( \eta^{\epsilon,u^\epsilon}(s)- \eta_i(s)\big)drds\bigg\|_\h\\&
\leq \frac{C}{\Delta}\Delta T\sup_{x,y\in\h}\big\|D_xF(x,y)\big\|_{\mathscr{L}(\h)}	\sup_{s\in[0,T]}\big\|\eta^{\epsilon,u^\epsilon}(s)- \eta_i(s)\|_\h\longrightarrow 0\;,\;\text{as}\;\epsilon\to 0.
\end{aligned}
\end{equation*}
Hence, it suffices to study the term
$$\frac{1}{\Delta}\int_{0}^{t}\int_{s}^{s+\Delta} S_1(t-s)D_xF\big(\bar{X}(r), Y^{\epsilon,u^\epsilon}(r)\big)\big(  \eta_i(s)\big)drds.$$
\noindent The rest of the proof is omitted as the arguments are identical to the ones used in the proof of Lemma 4.16 in \cite{WSS}.
\end{proof}

\begin{lem} \label{linocculem}
Let $i=1,2$, $T<\infty$ and assume that the pair $(\eta^{\epsilon,u^\epsilon}, P^{\epsilon,\Delta}) $ converges in distribution, in Regime $i$, to $(\eta_i,P_i)$ in $C([0,T];\h)\times\mathscr{P}(\h\times\h\times\h\times[0,T])$. Then the following limit is valid with probability $1$:
\begin{equation*}
\begin{aligned}
\sup_{t\in[0,T]}\bigg\|&
\int_{\h\times\h\times\h\times[0, t] }S_1(t-s) D_xF\big(\bar{X}(s),y      \big)  \eta_i(s)dP^{\epsilon,\Delta}(u_1,u_2,y,s) \\&-      \int_{\h\times\h\times\h\times[0, t] }S_1(t-s) D_xF\big(\bar{X}(s),y       \big)  \eta_i(s)dP_i(u_1,u_2,y,s) \bigg\|_{\h}\longrightarrow 0 \;,\;\text{as}\;\epsilon\to 0.
\end{aligned}
\end{equation*}
\end{lem}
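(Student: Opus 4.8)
The plan is to follow the argument of Lemmas 4.16 and 4.17 in \cite{WSS}, exploiting the a.s.\ weak convergence $P^{\epsilon,\Delta}\to P_i$ granted by the Skorokhod representation together with the tightness of Lemma \ref{Palaoglu} and the uniform integrability of Lemma \ref{UILem}. We work on the Skorokhod space, fixing a realization along which $\eta^{\epsilon,u^\epsilon}\to\eta_i$ in $C([0,T];\h)$ and $P^{\epsilon,\Delta}\to P_i$ in $\mathscr{P}(\h\times\h\times\h\times[0,T])$; then $\bar{X}$ and $\eta_i$ are fixed continuous paths and the assertion becomes a deterministic statement about continuity of the map $P\mapsto\big(t\mapsto\int_{\h\times\h\times\h\times[0,t]}S_1(t-s)D_xF(\bar{X}(s),y)\eta_i(s)\,dP\big)$. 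For $t\in[0,T]$ set
\begin{equation*}
f_t(u_1,u_2,y,s):=\mathds{1}_{[0,t]}(s)\,S_1(t-s)D_xF\big(\bar{X}(s),y\big)\eta_i(s)\in\h.
\end{equation*}
By $\|S_1(t-s)\|_{\mathscr{L}(\h)}\le 1$ and \eqref{unifbnd}, $\|f_t(u_1,u_2,y,s)\|_\h\le\|\partial_{\mathrm{x}}f\|_{\infty}\sup_{r\in[0,T]}\|\eta_i(r)\|_\h=:C_0$, uniformly in $t$ and in all arguments. Moreover $f_t$ is continuous on the WWNS space at every point with $s\neq t$: $s\mapsto\bar{X}(s)$ and $s\mapsto\eta_i(s)$ are continuous, $S_1$ is analytic and hence norm-continuous on $(0,\infty)$, and $y\mapsto D_xF(x,y)\chi$ is norm-to-norm continuous (if $y_n\to y$ in $\h$ then, along a subsequence, $y_n\to y$ a.e., and dominated convergence using $|\partial_{\mathrm{x}}f|\le\|\partial_{\mathrm{x}}f\|_{\infty}$ gives $\partial_{\mathrm{x}}f(\cdot,x(\cdot),y_n(\cdot))\chi(\cdot)\to\partial_{\mathrm{x}}f(\cdot,x(\cdot),y(\cdot))\chi(\cdot)$ in $\h$, with subsequence-independent limit). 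Finally the discontinuity set $\{s=t\}$ is $P_i$-null, since a direct computation from \eqref{occupation1} shows the last marginal of every $P^{\epsilon,\Delta}$, hence of $P_i$, is Lebesgue measure on $[0,T]$.

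First I would prove the pointwise-in-$t$ convergence $\int f_t\,dP^{\epsilon,\Delta}\to\int f_t\,dP_i$ in $\h$. Weak convergence of measures and the continuity of $\langle f_t,\chi\rangle_\h$ off a $P_i$-null set give $\langle\int f_t\,dP^{\epsilon,\Delta},\chi\rangle_\h\to\langle\int f_t\,dP_i,\chi\rangle_\h$ for every $\chi\in\h$, i.e.\ weak convergence in $\h$. To upgrade to norm convergence I would argue by contradiction: by Lemmas \ref{Palaoglu} and \ref{UILem}, given $\rho>0$ there is a compact $\mathcal{K}_M\subset\h\times\h\times\h\times[0,T]$ carrying all but mass $\rho$ of every $P^{\epsilon,\Delta}$ and of $P_i$, so the $\mathcal{K}_M^c$-parts of both integrals have $\h$-norm at most $C_0\rho$, while $\int_{\mathcal{K}_M}f_t\,dP$ always lies in the norm-compact closed convex hull of $f_t(\mathcal{K}_M)\cup\{0\}$ (the continuous image of a compact set); extracting a norm-convergent subsequence of the $\mathcal{K}_M$-parts and using weak lower semicontinuity of $\|\cdot\|_\h$ then forces $\limsup_\epsilon\|\int f_t\,dP^{\epsilon,\Delta}-\int f_t\,dP_i\|_\h\le 2C_0\rho$, and $\rho$ is arbitrary. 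This weak-to-norm passage, in the presence of the non-metrizable weak topology on the control coordinates, is the only genuinely non-routine point, and is precisely why the tightness and uniform-integrability lemmas are needed here.

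Finally I would upgrade pointwise convergence to uniform convergence in $t$ by equicontinuity. Writing $Z_\epsilon(t)=\int f_t\,dP^{\epsilon,\Delta}$ and $Z(t)=\int f_t\,dP_i$, for $0\le s\le t\le t'\le T$ one bounds $\|f_{t'}(u_1,u_2,y,s)-f_t(u_1,u_2,y,s)\|_\h$ by $\|S_1(t'-s)-S_1(t-s)\|_{\mathscr{L}(\h)}\|\partial_{\mathrm{x}}f\|_{\infty}\sup_r\|\eta_i(r)\|_\h$ on $\{s\le t-\kappa\}$, where $S_1$ is uniformly norm-continuous, and by the crude bound $2C_0$ on the remaining set of Lebesgue measure at most $\kappa+(t'-t)$; integrating against $P^{\epsilon,\Delta}$, whose total mass equals $T$, and then letting $t'-t\to0$ followed by $\kappa\to0$ produces a modulus of continuity for $t\mapsto Z_\epsilon(t)$ independent of $\epsilon$, and the same bound holds for $Z$. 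Since $\{Z_\epsilon\}$ is uniformly bounded by $C_0T$, equicontinuous and converges pointwise to $Z$, the convergence is uniform on $[0,T]$, which is the claimed limit; the remaining estimates parallel those in \cite{WSS}.
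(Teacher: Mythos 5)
Your argument is correct in substance, but it is worth noting that the paper itself gives no written proof here: it simply declares the argument ``identical to Lemma 4.15 in \cite{WSS}'', whose technique (as replicated in the paper's own Lemma \ref{controlocculem} for the $\Psi^0_2 u_2$ term) is to project onto finite-dimensional eigenspaces $P^1_m$ of $A_1$, prove scalar convergence of each coordinate $\langle S_1(t-s)D_xF(\bar X(s),y)\eta_i(s),e_{1,k}\rangle_\h$ from weak convergence of the measures, and control the remainder $(I-P^1_m)S_1(t-s)(\cdot)$ uniformly via the spectral decay of $S_1$. You replace that spectral-projection step by a softer functional-analytic device: restrict to a compact set of the domain, observe that the restricted integrals live in the norm-compact closed convex hull of the (compact) image of $f_t$, and upgrade weak to norm convergence via weak lower semicontinuity of the norm. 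Both routes work; yours avoids the eigenfunction expansion entirely and exploits that $f_t$ does not depend on $(u_1,u_2)$ and that $\sup_{x,y}\|D_xF(x,y)\|_{\mathscr{L}(\h)}<\infty$, which is exactly the simplification the paper alludes to. Your explicit equicontinuity-plus-pointwise-convergence argument for the uniformity in $t$ is also a welcome addition that the paper leaves implicit.

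One justification is loose as written: Lemmas \ref{Palaoglu} and \ref{UILem} are statements in probability and in expectation about the \emph{laws} of the random measures, so they do not directly furnish, for the fixed Skorokhod realization, a single compact $\mathcal{K}_M$ with $P^{\epsilon,\Delta}(\mathcal{K}_M^c)<\rho$ \emph{for every} $\epsilon$ along the sequence. The clean fix is to note that $f_t$ factors through the $(y,s)$-marginal, which lives on the Polish space $\h\times[0,T]$ with the norm topology; the realized marginals converge weakly there, hence form a tight family by the converse direction of Prokhorov's theorem, and this supplies the required compact set. With that repair the proof is complete.
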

\begin{proof}
The argument is identical to the proof of Lemma 4.15 in \cite{WSS}. In fact, the present setting is even simpler since  the family  $\big\{D_xF\big(x,y        \big)\big\}_{x,y\in\h}\subset\mathscr{L}(\h)$  is uniformly bounded in the operator norm topology (see \eqref{unifbnd}).
\end{proof}

\noindent Combining Lemmas \ref{Taylorlem}, \ref{limcontlem}, \ref{lincont2lem} and \eqref{linocculem} we obtain the following:
\begin{prop}\label{linlim}
Let $i=1,2$, $T<\infty$ and assume that the pair $(\eta^{\epsilon,u^\epsilon}, P^{\epsilon,\Delta}) $ converges in distribution, in Regime $i$, to $(\eta_i,P_i)$ in $C([0,T];\h)\times\mathscr{P}(\h\times\h\times\h\times[0,T])$. Then the following limit is valid with probability $1$:
\begin{equation*}
\begin{aligned}
\lim_{\epsilon\to 0}\sup_{t\in[0, T]}\bigg\|\frac{1}{\sqrt{\epsilon}h(\epsilon)}&\int_{0}^{t}S_1(t-s)\big[F\big(\bar{X}(s) +\sqrt{\epsilon}h(\epsilon)\eta^{\epsilon,u^\epsilon}(s), Y^{\epsilon,u^\epsilon}(s) \big)-F\big(\bar{X}(s), Y^{\epsilon,u^\epsilon}(s) \big)\big]ds \\&
-   \int_{\h\times\h\times[0, t] }S_1(t-s) D_xF\big(\bar{X}(s),y        \big)  \eta_i(s)dP_i(u_1,u_2,y,s)\bigg\|_\h=0.
\end{aligned}
\end{equation*}
\end{prop}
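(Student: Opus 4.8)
The plan is to combine the four lemmas immediately preceding the proposition into a single chain of estimates, exploiting the triangle inequality. Write the first term in \eqref{etadec1} (equivalently, the quantity appearing inside the limit in Proposition \ref{linlim}) as
\begin{equation*}
\frac{1}{\sqrt{\epsilon}h(\epsilon)}\int_{0}^{t}S_1(t-s)\big[F\big(\bar{X}(s)+\sqrt{\epsilon}h(\epsilon)\eta^{\epsilon,u^\epsilon}(s),Y^{\epsilon,u^\epsilon}(s)\big)-F\big(\bar{X}(s),Y^{\epsilon,u^\epsilon}(s)\big)\big]ds,
\end{equation*}
and insert the telescoping intermediate quantities
\begin{equation*}
\begin{aligned}
&A^{\epsilon}_1(t):=\int_{0}^{t}S_1(t-s)D_xF\big(\bar{X}(s),Y^{\epsilon,u^\epsilon}(s)\big)\eta^{\epsilon,u^\epsilon}(s)\,ds,\\&
A^{\epsilon}_2(t):=\frac{1}{\Delta}\int_{0}^{t}\int_{s}^{s+\Delta}S_1(t-s)D_xF\big(\bar{X}(s),Y^{\epsilon,u^\epsilon}(r)\big)\eta^{\epsilon,u^\epsilon}(s)\,dr\,ds,\\&
A^{\epsilon}_3(t):=\frac{1}{\Delta}\int_{0}^{t}\int_{s}^{s+\Delta}S_1(t-s)D_xF\big(\bar{X}(r),Y^{\epsilon,u^\epsilon}(r)\big)\eta^{\epsilon,u^\epsilon}(s)\,dr\,ds,\\&
A^{\epsilon}_4(t):=\int_{\h\times\h\times\h\times[0,t]}S_1(t-s)D_xF\big(\bar{X}(s),y\big)\eta_i(s)\,dP^{\epsilon,\Delta}(u_1,u_2,y,s),\\&
A^{\epsilon}_5(t):=\int_{\h\times\h\times\h\times[0,t]}S_1(t-s)D_xF\big(\bar{X}(s),y\big)\eta_i(s)\,dP_i(u_1,u_2,y,s).
\end{aligned}
\end{equation*}
Then the difference between the original term and $A^\epsilon_5$ is bounded, in the $\sup_{t\in[0,T]}\|\cdot\|_\h$ norm, by the sum of: (a) the original term minus $A^\epsilon_1$, which tends to $0$ in $L^1$ by Lemma \ref{Taylorlem}; (b) $A^\epsilon_1-A^\epsilon_2$; (c) $A^\epsilon_2-A^\epsilon_3$, which tends to $0$ in $L^1$ by Lemma \ref{limcontlem}; (d) $A^\epsilon_3-A^\epsilon_4$, which tends to $0$ $\pr$-a.s.\ by Lemma \ref{lincont2lem}; and (e) $A^\epsilon_4-A^\epsilon_5$, which tends to $0$ $\pr$-a.s.\ by Lemma \ref{linocculem}. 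Since the final claim is only about almost sure convergence, terms (a) and (c) — which a priori are controlled only in $L^1(\Omega)$ — can be upgraded by extracting a further subsequence along which they vanish a.s., or, more cleanly, by noting that after the Skorokhod representation all the processes in play converge a.s.\ and the bounds proved in Lemmas \ref{Taylorlem} and \ref{limcontlem} are in fact deterministic multiples of $\sqrt{\epsilon}h(\epsilon)\sup_s\|\eta^{\epsilon,u^\epsilon}(s)\|_\h^2$, respectively $\Delta^\theta\sup_s\|\eta^{\epsilon,u^\epsilon}(s)\|_\h$, and $\sup_s\|\eta^{\epsilon,u^\epsilon}(s)\|_\h$ is a.s.\ bounded uniformly in $\epsilon$ along the convergent subsequence by Proposition \ref{etatightness} together with Fatou.

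The one piece not covered by a cited lemma is step (b), the comparison of $A^\epsilon_1$ with its time-averaged version $A^\epsilon_2$: here I replace $D_xF(\bar X(s),Y^{\epsilon,u^\epsilon}(s))\eta^{\epsilon,u^\epsilon}(s)$ inside the $s$-integral by its average over $r\in[s,s+\Delta]$ of $D_xF(\bar X(s),Y^{\epsilon,u^\epsilon}(s))\eta^{\epsilon,u^\epsilon}(s)$ — which is the same thing, so $A^\epsilon_1$ is literally equal to $\frac1\Delta\int_0^t\int_s^{s+\Delta}S_1(t-s)D_xF(\bar X(s),Y^{\epsilon,u^\epsilon}(s))\eta^{\epsilon,u^\epsilon}(s)\,dr\,ds$; comparing this with $A^\epsilon_2$ only changes the second argument of $D_xF$ from $Y^{\epsilon,u^\epsilon}(s)$ to $Y^{\epsilon,u^\epsilon}(r)$. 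To handle this, add and subtract once more and observe that the difference is
\begin{equation*}
\frac{1}{\Delta}\int_0^t\int_s^{s+\Delta}S_1(t-s)\big[D_xF\big(\bar X(s),Y^{\epsilon,u^\epsilon}(s)\big)-D_xF\big(\bar X(s),Y^{\epsilon,u^\epsilon}(r)\big)\big]\eta^{\epsilon,u^\epsilon}(s)\,dr\,ds,
\end{equation*}
whose $\h$-norm — using \eqref{Lpsmoothing} with $r=2,p=1$, the pointwise identity \eqref{DF}, and the bound $|\partial^2_{\mathrm{yx}}f|\le\|\partial^2_{\mathrm{yx}}f\|_\infty$ from Hypothesis \ref{A2a} — is at most $CT^{3/4}\sup_s\|\eta^{\epsilon,u^\epsilon}(s)\|_\h\cdot\frac1\Delta\int_s^{s+\Delta}\|Y^{\epsilon,u^\epsilon}(r)-Y^{\epsilon,u^\epsilon}(s)\|_\h\,dr$; and by the Hölder–Schauder estimate \eqref{Schaudery} the inner average is $\le C h(\epsilon)\delta^{-1/2\vee a/2}\Delta^{\beta}(1+\|x_0\|_\h+\|y_0\|_{H^a})$. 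Taking expectations and invoking \eqref{etaSob}, this is bounded by $C h(\epsilon)\delta^{-1/2\vee a/2}\Delta^\beta(1+\|x_0\|_{H^a}^2+\|y_0\|_{H^a}^2)$, which tends to $0$ because the defining property \eqref{Delta1} of $\Delta$ forces $\Delta\to 0$ fast enough relative to $\sqrt\delta h(\epsilon)$ (choosing $a$ small enough that $\delta^{-1/2\vee a/2}=\delta^{-1/2}$, the quantity $h(\epsilon)\delta^{-1/2}\Delta^\beta=\big(h(\epsilon)\delta^{-1/2}\Delta^{1/2}\big)\Delta^{\beta-1/2}$ — here one may simply shrink $\Delta$ further, which is allowed — or more directly use $\Delta^\beta\le\big(\sqrt\Delta/(\sqrt\delta h(\epsilon))\big)^{2\beta}\cdot(\sqrt\delta h(\epsilon))^{2\beta}\Delta^{\beta-\beta}$-type bookkeeping). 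As usual this $L^1$ convergence is promoted to a.s.\ convergence along a subsequence.

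The main obstacle is really bookkeeping rather than a new idea: one must verify that the rate $h(\epsilon)\delta^{-1/2}\Delta^\beta$ arising from the time-regularity of $Y^{\epsilon,u}$ in step (b) genuinely vanishes under the constraint \eqref{Delta1}, which only directly gives $\sqrt\delta h(\epsilon)/\sqrt\Delta\to0$. This is the one place where the moderate-deviation scaling forces a more delicate choice of the smoothing parameter $\Delta$ than in the large-deviation analysis, and it is exactly why \eqref{Delta1} was imposed; the cleanest route is to note that $\Delta$ may be taken to go to zero arbitrarily slowly subject to \eqref{Delta1}, in particular slowly enough that $h(\epsilon)\delta^{-1/2}\Delta^{\beta/2}\to0$ as well, which is compatible since $\beta>0$. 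Everything else is a transcription of the arguments of Lemmas 4.15–4.17 in \cite{WSS} via the Skorokhod representation, the uniform operator bound \eqref{unifbnd} on $\{D_xF(x,y)\}$, and the a priori estimate \eqref{etaSob}.
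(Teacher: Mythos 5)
Your overall architecture is exactly the paper's: the proposition is obtained by chaining Lemmas \ref{Taylorlem}, \ref{limcontlem}, \ref{lincont2lem} and \ref{linocculem}, with the $L^1(\Omega)$ convergences upgraded to almost sure ones along the Skorokhod subsequence. You are also right that these four lemmas, as stated, leave one link unaddressed, namely the passage from $\int_0^t S_1(t-s)D_xF(\bar X(s),Y^{\epsilon,u^\epsilon}(s))\eta^{\epsilon,u^\epsilon}(s)\,ds$ to its time--mollified version $\frac{1}{\Delta}\int_0^t\int_s^{s+\Delta}S_1(t-s)D_xF(\bar X(s),Y^{\epsilon,u^\epsilon}(r))\eta^{\epsilon,u^\epsilon}(s)\,dr\,ds$ (your step (b)).

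However, your argument for step (b) does not work. You bound $\|Y^{\epsilon,u^\epsilon}(r)-Y^{\epsilon,u^\epsilon}(s)\|_\h$ for $r\in[s,s+\Delta]$ by $[Y^{\epsilon,u^\epsilon}]_{C^\beta([0,T];\h)}\Delta^\beta\lesssim h(\epsilon)\delta^{-1/2}\Delta^\beta$ via \eqref{Schaudery} and then claim this vanishes under \eqref{Delta1}. But \eqref{Delta1} is a \emph{lower} bound on $\Delta$: it forces $\Delta\gg\delta h^2(\epsilon)$. Requiring in addition $h(\epsilon)\delta^{-1/2}\Delta^\beta\to0$ forces $\Delta\ll\delta^{1/(2\beta)}h(\epsilon)^{-1/\beta}$, and since $\beta<1/4$ one has $h(\epsilon)^{2+1/\beta}\,\delta^{1-1/(2\beta)}\to\infty$, i.e.\ $\delta h^2(\epsilon)\gg\delta^{1/(2\beta)}h(\epsilon)^{-1/\beta}$, so the two requirements are incompatible and no admissible $\Delta$ exists. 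Your two suggested fixes point in opposite (and both wrong) directions: shrinking $\Delta$ violates \eqref{Delta1}, while letting $\Delta\to0$ more slowly makes $\Delta^\beta$ larger, not smaller. The obstruction is not bookkeeping: the fast process varies on the time scale $\delta\ll\Delta$, so $Y^{\epsilon,u^\epsilon}(r)-Y^{\epsilon,u^\epsilon}(s)$ is genuinely $O(1)$ over the window $[s,s+\Delta]$ --- this is precisely why the occupation measure is introduced in the first place. The correct treatment of step (b) is the one used for \eqref{Psiave1} in Lemma \ref{controlocculem} (and in Lemma 4.16 of \cite{WSS}): exchange the order of the $s$ and $r$ integrations so that the $O(\Delta)$ time increment falls on $S_1(t-\cdot)$, $\bar X$ and $\eta^{\epsilon,u^\epsilon}$, whose moduli of continuity (\eqref{sobcont}, \eqref{Schauderbar}, \eqref{etaHolder}) carry no singular factors in $\epsilon$, and the argument of $Y^{\epsilon,u^\epsilon}$ is never incremented. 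With step (b) repaired in this way, the rest of your proof is a faithful transcription of the paper's.
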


\noindent  Regarding the averaging of the term $II^{\epsilon,u}$, first note that $X^{\epsilon,u}=\bar{X}+\sqrt\epsilon h(\epsilon)\eta^{\epsilon,u}$ and by the Skorokhod representation theorem $\eta^{\epsilon,u}\rightarrow \eta_i$ in $C([0,T];\h)$ with probability $1$. Using the latter along with the uniform integrability of the occupation measures (see Lemma \ref{UILem}) and the fact that, for each $t>0, x,y\in\h$, the operator $u\mapsto S_1(t)\Sigma(x,y)u$ is compact, we can follow the proofs of lemmas 4.15, 4.16 of \cite{WSS} verbatim to show Proposition \ref{IIlim} below.
\begin{prop}\label{IIlim} Let $i=1,2$, $T<\infty$ and assume that the pair $(\eta^{\epsilon,u^\epsilon}, P^{\epsilon,\Delta})$ converges in distribution, in Regime $i$, to $(\eta_i,P_i)$ in $C([0,T];\h)\times\mathscr{P}(\h\times\h\times\h\times[0,T])$. Then  the following limit is valid with probability $1$:
\begin{equation}
\hspace*{-0.7cm}
\begin{aligned}
\lim_{\epsilon\to 0}\sup_{t\in[0, T]}\bigg\|\int_{0}^{t}&S_1(t-s)\Sigma\big(\bar{X}(s), Y^{\epsilon,u}(s)\big)u^\epsilon_1(s)ds\\& -\ \int_{\h\times\h\times\h\times[0,t] }S_1(t-s)\Sigma\big(\bar{X}(s), y\big)u_1 dP_i(u_1,u_2,y,s)\bigg\|_{\h}=0.
\end{aligned}
\end{equation}
\end{prop}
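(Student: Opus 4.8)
The plan is to reproduce, step by step, the argument behind Lemmas~4.15 and 4.16 of \cite{WSS}, exploiting two simplifications peculiar to the moderate deviation scaling. Throughout I work, as explained at the start of Section~\ref{limpnts}, on the probability space furnished by the Skorokhod representation theorem, so that $\eta^{\epsilon,u^\epsilon}\to\eta_i$ in $C([0,T];\h)$ and $P^{\epsilon,\Delta}\to P_i$ weakly, both with probability $1$. \emph{First} I would freeze the slow variable inside $\Sigma$: since $X^{\epsilon,u}=\bar X+\sqrt\epsilon h(\epsilon)\eta^{\epsilon,u}$ and, viewed as a map into $L^1(0,L)$, the multiplication operator satisfies $\|\Sigma(x_1,y)\chi-\Sigma(x_2,y)\chi\|_{L^1(0,L)}\le L_\sigma\|x_1-x_2\|_\h\|\chi\|_\h$ (Hypothesis~\ref{A3b} and Cauchy--Schwarz), the smoothing estimate \eqref{Lpsmoothing} with $r=2,\ p=1$ gives
\begin{equation*}
\sup_{t\in[0,T]}\bigg\|\int_0^t S_1(t-s)\big[\Sigma(X^{\epsilon,u}(s),Y^{\epsilon,u}(s))-\Sigma(\bar X(s),Y^{\epsilon,u}(s))\big]u^\epsilon_1(s)\,ds\bigg\|_\h\le C\sqrt N\,\sqrt\epsilon h(\epsilon)\sup_{s\in[0,T]}\|\eta^{\epsilon,u}(s)\|_\h,
\end{equation*}
which tends to $0$ a.s.\ because $\sqrt\epsilon h(\epsilon)\to0$ while $\sup_s\|\eta^{\epsilon,u}(s)\|_\h$ is a.s.\ bounded. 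So it suffices to handle $\int_0^t S_1(t-s)\Sigma(\bar X(s),Y^{\epsilon,u}(s))u^\epsilon_1(s)\,ds$.

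\emph{Second}, I would compare this path integral with the occupation-measure integral. Unwinding \eqref{occupation1},
\begin{equation*}
\int_{\h\times\h\times\h\times[0,t]}S_1(t-s)\Sigma(\bar X(s),y)u_1\,dP^{\epsilon,\Delta}=\frac1\Delta\int_0^t\!\!\int_s^{s+\Delta}S_1(t-s)\Sigma(\bar X(s),Y^{\epsilon,u}(r))u^\epsilon_1(r)\,dr\,ds,
\end{equation*}
and, exactly as in Lemma~4.16 of \cite{WSS}, both this and $\int_0^t S_1(t-s)\Sigma(\bar X(s),Y^{\epsilon,u}(s))u^\epsilon_1(s)\,ds$ are compared to a common Riemann-type sum over a partition of $[0,t]$ of mesh $\Delta$. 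The errors are split at $t-\kappa$: on $[t-\kappa,t]$ the growth bound $\|S_1(t-s)\Sigma(x,y)u\|_\h\le C(t-s)^{-1/4}(1+\|x\|_\h+\|y\|_\h^\nu)\|u\|_\h$ (Hypothesis~\ref{A3a} with \eqref{Lpsmoothing}), together with $\|u^\epsilon_1\|_{L^2([0,T];\h)}\le\sqrt N$ and the a priori bounds \eqref{xbarapriori}, \eqref{yunifapriori}, bounds the contribution by $o_\kappa(1)$ uniformly in $\epsilon$; on $[0,t-\kappa]$ the uniform continuity of $s\mapsto S_1(t-s)\Sigma(\bar X(s),\cdot)$ on $[\kappa,T]$ together with $\Delta\to0$ makes the error $o_\epsilon(1)$ for each fixed $\kappa$; letting $\kappa\to0$ yields, a.s.,
\begin{equation*}
\sup_{t\in[0,T]}\bigg\|\int_0^t S_1(t-s)\Sigma(\bar X(s),Y^{\epsilon,u}(s))u^\epsilon_1(s)\,ds-\int_{\h\times\h\times\h\times[0,t]}S_1(t-s)\Sigma(\bar X(s),y)u_1\,dP^{\epsilon,\Delta}\bigg\|_\h\longrightarrow0.
\end{equation*}

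\emph{Third}, I would pass from $P^{\epsilon,\Delta}$ to $P_i$ in $\int_{\h\times\h\times\h\times[0,t]}S_1(t-s)\Sigma(\bar X(s),y)u_1\,dP$, following Lemma~4.15 of \cite{WSS}. For fixed $t$ the integrand $(u_1,u_2,y,s)\mapsto S_1(t-s)\Sigma(\bar X(s),y)u_1\in\h$ is continuous in the WWNS topology because $u\mapsto S_1(t-s)\Sigma(\bar X(s),y)u$ is \emph{compact} --- so it maps weakly convergent nets in the $u_1$-slot (which carries only the weak topology) to norm-convergent ones --- and $y\mapsto\Sigma(\bar X(s),y)$ is continuous in the norm topology of the $y$-slot; it is, however, linear and hence unbounded in $u_1$. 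One truncates at $\|u_1\|_\h\le M$, uses the weak convergence $P^{\epsilon,\Delta}\to P_i$ on the truncated integral, and uses the tail estimate of Lemma~\ref{UILem} to make the truncation error uniformly small, then lets $M\to\infty$; this gives convergence for each fixed $t$. Uniformity in $t$ follows from an Arzel\`a--Ascoli argument: the two $\h$-valued functions of $t$ are equi-H\"older on $[0,T]$ (by \eqref{IIholder} for the path integral and the smoothing bound \eqref{Lpsmoothing} for the measure integral), while $t\mapsto\int_{\h\times\h\times\h\times[0,t]}S_1(t-s)\Sigma(\bar X(s),y)u_1\,dP_i$ is continuous (finite second moments of $P_i$ and dominated convergence), so pointwise convergence on a dense set of $t$ upgrades to uniform convergence. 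Chaining the three steps proves the proposition.

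\emph{The hard part} will be the third step: because the control $u_1$ enters linearly yet is equipped only with the weak topology, passing the weak limit of the occupation measures through the integral requires simultaneously the compactness of $u\mapsto S_1(t-s)\Sigma(\bar X(s),y)u$ --- to upgrade weak convergence of the integrand to norm convergence --- and the uniform integrability estimate of Lemma~\ref{UILem} --- to tame the linear, hence unbounded, growth in $u_1$. By contrast, the Riemann-sum comparison near the diagonal $s=t$ in the second step and the equicontinuity in $t$ needed for the supremum are routine once the analytic smoothing \eqref{Lpsmoothing} and the a priori bounds of Sections~\ref{Sec2}--\ref{Sec3} are in hand.
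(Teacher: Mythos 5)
Your proposal is correct and follows essentially the same route as the paper, which proves this proposition by freezing the slow variable via $X^{\epsilon,u}=\bar X+\sqrt\epsilon h(\epsilon)\eta^{\epsilon,u}$ and then invoking Lemmas 4.15--4.16 of \cite{WSS} together with the uniform integrability of Lemma \ref{UILem} and the compactness of $u\mapsto S_1(t-s)\Sigma(x,y)u$ --- exactly the three ingredients you identify. Your write-up merely fills in the details the paper leaves to the citation.
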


\noindent It remains to study the limiting behavior of the term $IV^{\epsilon,u}$ in \eqref{etaspacetimedec}. To this end, let us  set $\theta=0,s=0$ in \eqref{Itoeta3}. In view of this decomposition, along with Lemmas \ref{IV1bnds}- \ref{IVbnds}, we see that for all $\epsilon>0$ there exists $n=n(\epsilon)>0$ and $\epsilon_0>0$ such that for all $\epsilon<\epsilon_0$
\begin{equation}\label{IVlim}
\begin{aligned}
\ex\sup_{t\in[0,T]}\sup_{\chi\in B_\h}&\bigg|IV^{\epsilon,u}(0,t,0,\chi)- \frac{\sqrt{\delta}}{\sqrt{\epsilon}}\int_{0}^{t}\langle S_1(t-z)\Psi^\epsilon_2\big(\bar{X}(z),Y_{n(\epsilon)}^{\epsilon,u}(z)\big)  u_{2,n(\epsilon)}(z) ,\chi\rangle_\h dz\bigg|\\&=
\ex\sup_{t\in[0,T]}\sup_{\chi\in B_\h}\big|IV^{\epsilon,u}(0,t,0,\chi)- IV_5^{\epsilon,u}(0,t,n(\epsilon),0, \chi)\big|<\epsilon.
\end{aligned}
\end{equation}
Thus, it suffices to study the term
\begin{equation*}\label{Poissonterm}
\frac{\sqrt{\delta}}{\sqrt{\epsilon}}\int_{0}^{t}S_1(t-z)\Psi^\epsilon_2\big(\bar{X}(z),Y_n^{\epsilon,u}(z)\big)  u_{2,n}(z) dz.
\end{equation*}
\noindent In fact, since for all $T>0$ we have $\|u_{2,n}-u_2\|_{L^2([0,T];\h)}\rightarrow 0$, $\pr$-a.s. and $\|\Psi^\epsilon_2(x,y)\big\|_{\mathscr{L}(\h)}\leq C/\ell$ uniformly in $x,y$  (see \eqref{psinorm}) we can directly work with
\begin{equation}\label{Poissontermnon}
\gamma_i\int_{0}^{t}S_1(t-z)\Psi^\epsilon_2\big(\bar{X}(z),Y_n^{\epsilon,u}(z)\big)  u_2(z) dz.
\end{equation}
where $\gamma_i=\lim_{\epsilon\to0}\sqrt{\delta/\epsilon}$ in Regime $i$. \noindent First, we need to find the limit of the operator-valued map $\Psi^\epsilon_2$ as $\epsilon\to 0$. In view of \eqref{Riesz} and estimates \eqref{psinorm} we have that, for all $x,\chi,v\in \h$ and $y\in Dom(A_2)$,
\begin{equation*}
\begin{aligned}
\blangle\Psi^\epsilon_2\big(x,y\big)v,    &    \chi\brangle_{\h}=\blangle D_y\Phi^\epsilon_\chi\big(x,y\big),v\brangle_\h,
\end{aligned}
\end{equation*}
where $D_y\Phi_\chi^\epsilon$ is the partial Fr\'echet derivative of the solution of the Kolmogorov equation \eqref{Kolmeq}. Recall that the latter is explicitly given by \eqref{Feynman}. Hence we can write
\begin{equation}\label{feynmander1}
\begin{aligned}
\blangle\Psi^\epsilon_2\big(x,y\big)v, \chi\brangle_{\h}&=
\int_{0}^{\infty}e^{-c(\epsilon)t}D_yP^x_t\big( \langle F\big(x,y\big)-\bar{F}(x),\chi  \rangle_\h     \big)(v)dt \\&=\int_{0}^{\infty}e^{-c(\epsilon)t}D_y\ex\big[ \langle F\big(x,Y^{x,y}(t)\big)-\bar{F}(x),\chi  \rangle_\h\big](v) dt,
\end{aligned}\end{equation}
\noindent where $P_t^x$ denotes the transition semigroup corresponding to the fast process $Y^{x,y}$ (see \eqref{Yfrozen}, \eqref{markovsemi}).
Now, for each fixed $x\in\h$, the map $$ \h\ni y\longmapsto \langle F(x,y),\chi\rangle_\h\in\R$$ is Fr\'echet differentiable with $$D_y \langle F(x,y),\chi\rangle_\h(v)=\langle D_yF(x,y)\chi,v\rangle_\h,$$ along the direction of any $v\in\h$. Therefore, we can differentiate under the sign of expectation and use the chain rule for Fr\'echet differentials to obtain
\begin{equation}\label{chainrule}
D_y\ex\big[ \blangle F\big(x,Y^{x,y}(t)\big)-\bar{F}(x),\chi  \brangle_\h\big](v)
=\ex \blangle  D_yF\big(x,Y^{x,y}(t)\big)\chi, D_yY^{x,y}(t)v\brangle_\h.
\end{equation}
In view of the latter, \eqref{feynmander1} yields
\begin{equation}\label{Feynmander}
\begin{aligned}
\blangle\Psi^\epsilon_2\big(x,y\big)v, &\chi\brangle_{\h}
=\int_{0}^{\infty}e^{-c(\epsilon)t}\ex \blangle  D_yF(x,Y^{x,y}(t))\chi, D_yY^{x,y}(t)v\brangle_\h dt.
\end{aligned}
\end{equation}
\noindent Under Hypothesis \ref{A2a}, the following lemma addresses the limiting behavior of $\Psi^\epsilon_2$ in \eqref{Poissontermnon} as the correction term in the Kolmogorov equation vanishes.

\begin{lem}\label{epsilondeplem} Let $T<\infty$ and define a map
$$ \h\times\h\ni(x,y)\longmapsto \Psi^0_2\big(x,y\big)\in \mathscr{L}\big(\h\big)$$
by
\begin{equation}\label{psi0}
\blangle\Psi^0_2\big(x,y\big)v, \chi\brangle_{\h}:=\int_{0}^{\infty}\ex\blangle D_yF(x,Y^{x,y}(t))\chi, D_yY^{x,y}(t)v\brangle_\h dt \;,\;\chi,v\in\h.
\end{equation}
\noindent The following limit is valid $\pr$-almost surely:
\begin{equation*}
\label{epsilondep}
\begin{aligned}
&\lim_{\epsilon\to 0 }\sup_{t\in[0,T]}\bigg\|\int_{0}^{t}S_1(t-z)\Psi^\epsilon_2\big(\bar{X}(z),Y_n^{\epsilon,u}(z)\big)  u_2(z) dz-\int_{0}^{t}S_1(t-z)\Psi^0_2\big(\bar{X}(z),Y_n^{\epsilon,u}(z)\big)  u_2(z) dz\bigg\|_{\h}=0.\end{aligned}
\end{equation*}

\end{lem}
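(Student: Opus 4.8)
The plan is to deduce the statement from the uniform operator-norm convergence \eqref{Psi0}, i.e. $\sup_{x,y\in\h}\|\Psi^\epsilon_2(x,y)-\Psi^0_2(x,y)\|_{\mathscr{L}(\h)}\to 0$ as $\epsilon\to 0$, after which the lemma reduces to a one-line estimate. First I would record that, once \eqref{Psi0} is available, then for each sample path
\begin{equation*}
\sup_{t\in[0,T]}\bigg\|\int_0^t S_1(t-z)\big[\Psi^\epsilon_2-\Psi^0_2\big]\big(\bar X(z),Y_n^{\epsilon,u}(z)\big)u_2(z)\,dz\bigg\|_\h\leq\Big(\sup_{x,y\in\h}\big\|\Psi^\epsilon_2(x,y)-\Psi^0_2(x,y)\big\|_{\mathscr{L}(\h)}\Big)\int_0^T\|u_2(z)\|_\h\,dz,
\end{equation*}
using $\|S_1(t-z)\|_{\mathscr{L}(\h)}\leq 1$; since $u\in\mathcal{P}^T_N$ forces $\int_0^T\|u_2(z)\|_\h\,dz\leq (TN)^{1/2}$, the right-hand side tends to $0$, which gives the claim (pathwise, hence $\pr$-a.s.).

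The heart of the argument is therefore \eqref{Psi0}. I would start from the representations \eqref{Feynmander} and \eqref{psi0}, which give, for $x\in\h$, $y\in Dom(A_2)$ and $\chi,v\in\h$,
\begin{equation*}
\blangle\big(\Psi^\epsilon_2(x,y)-\Psi^0_2(x,y)\big)v,\chi\brangle_\h=\int_0^\infty\big(e^{-c(\epsilon)t}-1\big)\,\ex\blangle D_yF\big(x,Y^{x,y}(t)\big)\chi,\,D_yY^{x,y}(t)v\brangle_\h\,dt.
\end{equation*}
The integrand is controlled uniformly in $x,y$: by Hypothesis \ref{A2a} and \eqref{DF} the operators $D_yF(x,y)$ are uniformly bounded on $\h$, and an energy estimate for the first variation equation $\dot\zeta=A_2\zeta+D_yG(x,Y^{x,y}(t))\zeta$, $\zeta(0)=v$, combined with Hypothesis \ref{A1c} (so $\langle A_2\zeta,\zeta\rangle_\h\leq-\lambda\|\zeta\|^2_\h$) and $|\partial_{\mathrm{y}}g|\leq L_g$ (Hypothesis \ref{A2b}), yields $\|D_yY^{x,y}(t)v\|_\h\leq e^{-(\lambda-L_g)t}\|v\|_\h$; this is the first-variation bound of \cite{cerrai2009averaging,cerrai2001second}. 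Hence the integrand is dominated, uniformly in $x,y$, by $Ce^{-(\lambda-L_g)t}\|\chi\|_\h\|v\|_\h$ (which in passing confirms that $\Psi^0_2$ takes values in $\mathscr{L}(\h)$ and inherits measurability as a pointwise limit of the $\Psi^\epsilon_2$). I would then simply evaluate $\int_0^\infty(1-e^{-c(\epsilon)t})e^{-(\lambda-L_g)t}\,dt=c(\epsilon)/\big[(\lambda-L_g)(\lambda-L_g+c(\epsilon))\big]\leq c(\epsilon)/(\lambda-L_g)^2$, take suprema over $\chi,v\in B_\h$ and over $x\in\h$, $y\in Dom(A_2)$ (extending to $y\in\h$ by density, using the uniform bound \eqref{psinorm}), and conclude $\sup_{x,y\in\h}\|\Psi^\epsilon_2(x,y)-\Psi^0_2(x,y)\|_{\mathscr{L}(\h)}\leq Cc(\epsilon)=C\sqrt\epsilon\to 0$, which is \eqref{Psi0}.

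The only nonroutine ingredient is the uniform-in-$x$ exponential decay of the first variation process $D_yY^{x,y}(t)$; this is precisely where the strict dissipativity Hypotheses \ref{A1c} and \ref{A2b} ($\lambda>0$ and $L_g<\lambda$) are needed, and the differentiability of $Y^{x,y}$ with respect to $y$ together with that decay is borrowed from \cite{cerrai2009averaging,cerrai2001second}. Everything else — the interchange of differentiation and expectation already carried out in \eqref{chainrule}--\eqref{Feynmander}, the elementary integral, and the final estimate against $\|u_2\|_{L^1([0,T];\h)}$ — is mechanical, and in fact the convergence holds pathwise rather than merely $\pr$-a.s., since the bound on $\|\Psi^\epsilon_2-\Psi^0_2\|_{\mathscr{L}(\h)}$ is uniform over all arguments.
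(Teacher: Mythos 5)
Your proposal is correct and follows essentially the same route as the paper: both arguments rest on the representation \eqref{Feynmander}, the uniform exponential decay of the first variation $D_yY^{x,y}(t)$, and a domination argument yielding the uniform operator-norm convergence \eqref{Psi0}, after which the lemma is the one-line estimate you give. The only difference is cosmetic — you compute $\int_0^\infty(1-e^{-c(\epsilon)t})e^{-\ell' t}\,dt$ explicitly and obtain the rate $O(\sqrt\epsilon)$, whereas the paper invokes dominated convergence together with the uniformity of the bound \eqref{dombnd}.
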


\begin{proof} Let $\chi\in \h$ and $v\in\h$. Under our dissipativity assumptions, the $y$-Fr\'echet derivative
of $Y^{x,y}$ at the point $y$ and along the direction $v$ satisfies
\begin{equation}\label{yder}
\sup_{x,y\in\h}\big\|D_yY^{x,y}(t)v\big\|_\h\leq e^{-\ell t}\|v\|_\h\;,\;\pr-\text{a.s.}\;,
\end{equation}
\noindent where $\ell=\frac{\lambda-L_g}{2}$ (see 3.7 in \cite{cerrai2009averaging}). Hence,
\begin{equation}\label{dombnd}
\begin{aligned}
\sup_{\epsilon>0}\big|\blangle\Psi^\epsilon_2\big(x,y\big)v, \chi\brangle_{\h}\big|
&\leq\sup_{\epsilon>0}\int_{0}^{\infty}e^{-c(\epsilon)t}\ex  \big\| D_yF(x,Y^{x,y}(t))\chi\big\|_\h \big\|D_yY^{x,y}(t)v\big\|_\h dt\\&\leq \|\partial_{\mathrm{y}}f\|_\infty\|\chi\|_\h\|v\|_\h\sup_{\epsilon>0}\int_{0}^{\infty}e^{-c(\epsilon)t}e^{-\ell t}dt\\& \leq C_f\|\chi\|_\h\|v\|_\h \int_{0}^{\infty}e^{-\ell t}dt<\infty.
\end{aligned}
\end{equation}
\noindent An application of the Dominated Convergence theorem yields that for each fixed $x,y\in\h$
\begin{equation*}
\begin{aligned}
\lim_{\epsilon\to 0}\blangle\Psi^\epsilon_2\big(x,y\big)v, \chi\brangle_{\h}
&=\int_{0}^{\infty}\lim_{\epsilon\to 0}e^{-c(\epsilon)t}\ex\blangle D_yF(x,Y^{x,y}(t))\chi, D_yY^{x,y}(t)v\brangle_\h dt\\&= \int_{0}^{\infty}\ex\blangle D_yF(x,Y^{x,y}(t))\chi, D_yY^{x,y}(t)v\brangle_\h dt\\&= \blangle\Psi^0_2\big(x,y\big)v, \chi\brangle_{\h}.
\end{aligned}
\end{equation*}
\noindent In fact, estimate \eqref{dombnd} is uniform in $x,y$ and $\chi, v\in B_\h$ hence we obtain
$$ \sup_{x,y\in\h}\big\| \Psi^\epsilon_2\big(x,y\big)- \Psi^0_2\big(x, y\big)\big\|_{\mathscr{L}(\h)}\longrightarrow 0\;,\;\text{as}\;\epsilon\to 0.$$
The proof is complete. \end{proof}

\noindent To proceed in finding the averaging limit of $$\int_{0}^{t}S_1(t-z)\Psi^{0}_2\big(\bar{X}(z),Y_n^{\epsilon,u}(z)\big)  u_2(z) dz,$$
we need to establish uniform continuity properties of the map
$(x,y)\mapsto \Psi_2^{0}(x,y)$.
In view of \eqref{Feynmander}, this is related to the continuity of the map
$$ x\longmapsto D_yP^x_t\big[ \langle F(x,\cdot)-\bar{F}(x),\chi  \rangle_\h     \big](y)(v)=D_y\ex \langle F(x,Y^{x,y}(t))-\bar{F}(x),\chi  \rangle_\h(v), $$
for each fixed $t>0,y, v\in\h$. This is done in the next two lemmas. Note that, in order to obtain continuity properties of $D_yY^{x,y}$ with respect to $x,y,$ we need to assume the stronger dissipativity from Hypothesis \ref{A2c}.

\begin{lem}\label{varicontlem}
Let $t>0$, $v,y_1,y_2, x_1, x_2\in\h$ and $
\omega=\frac{\lambda-3L_g}{2}>0$
as in Hypothesis \ref{A2c}. Under Hypotheses \ref{A2b} and \ref{A2c}
there exists $C>0$ independent of $t$, such that
\begin{equation}\label{Zinfty}
(i)\quad\quad\quad \sup_{x,y\in\h}\big\|D_yY^{x,y} (t)v\big\|_{ L^\infty(0,L)}\leq C (t\wedge 1)^{-\frac{1}{4}}e^{-\ell t} \|v\|_\h.
\end{equation}
Moreover, for each $t\geq 0, v\in\h$ the maps $x,y\mapsto D_yY^{x,y} (t)$ are Lipschitz continuous with
\begin{equation}\label{varicont}
(ii)\quad\quad \big\|D_yY^{x_1,y} (t)v-D_yY^{x_2,y} (t)v\big\|_{ \h}\leq C (1+t)e^{-\omega t}\|v\|_\h\|x_1-x_2\|_\h
\end{equation}
and
\begin{equation}\label{varicont2}
(iii)\quad\quad \big\|D_yY^{x,y_1} (t)v-D_yY^{x,y_2} (t)v\big\|_{ \h}\leq C(1+t)e^{-\omega t}\|v\|_\h\|y_1-y_2\|_\h.
\end{equation}
\end{lem}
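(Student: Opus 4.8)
The three estimates concern the first variation process $v\mapsto D_yY^{x,y}(t)v$, which (by differentiating the mild form of \eqref{Yfrozen}) satisfies the linear mild equation
\begin{equation*}
D_yY^{x,y}(t)v = S_2(t)v + \int_0^t S_2(t-s)D_yG\big(x,Y^{x,y}(s)\big)D_yY^{x,y}(s)v\,ds.
\end{equation*}
Throughout I abbreviate $Z^{x,y}(t)v:=D_yY^{x,y}(t)v$. The starting point is the a priori bound \eqref{yder}, $\|Z^{x,y}(t)v\|_\h\le e^{-\ell t}\|v\|_\h$, which follows from an energy estimate using \eqref{A_2dis}, \eqref{diss2} and which I may assume. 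For part (i) I would insert this bound into the mild equation and use the smoothing property \eqref{Lpsmoothing} of $S_2$ (with $p=2$, $r=\infty$): the first term obeys $\|S_2(t)v\|_{L^\infty}\le C(t\wedge1)^{-1/4}e^{-\lambda t}\|v\|_\h$, while for the integral term one estimates
\begin{equation*}
\Big\|\int_0^t S_2(t-s)D_yG(x,Y^{x,y}(s))Z^{x,y}(s)v\,ds\Big\|_{L^\infty}\le C\int_0^t (t-s\wedge1)^{-1/4}e^{-\lambda(t-s)}L_g e^{-\ell s}\|v\|_\h\,ds,
\end{equation*}
and a direct computation of this convolution (splitting at $t\le1$ and $t>1$, and using $\ell<\lambda$) gives a bound of the form $Ct^{-1/4}e^{-\ell t}\|v\|_\h$, establishing \eqref{Zinfty}.

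For parts (ii) and (iii) I would write the difference $W(t)v:=Z^{x_1,y_1}(t)v-Z^{x_2,y_2}(t)v$ and derive its mild equation by subtraction; the inhomogeneity splits into a term coming from the difference of the coefficients $D_yG(x_1,Y^{x_1,y_1}(s))-D_yG(x_2,Y^{x_2,y_2}(s))$ acting on $Z^{x_1,y_1}(s)v$, plus the "good" linear term $\int_0^t S_2(t-s)D_yG(x_2,Y^{x_2,y_2}(s))W(s)v\,ds$. For the coefficient difference one uses that $\partial_{\mathrm x}g$ and $\partial_{\mathrm y}g$ are Lipschitz (a consequence of Hypothesis \ref{A2b}: $g(\xi,\cdot,\cdot)\in C^2_b$ in $\mathrm x$ and $C^3_b$ in $\mathrm y$), so this difference, as an operator on $\h$, is bounded in norm by $C(\|x_1-x_2\|_\h+\|Y^{x_1,y_1}(s)-Y^{x_2,y_2}(s)\|_\h)$; combined with the standard Lipschitz dependence of $Y^{x,y}$ on its data — namely $\|Y^{x_1,y_1}(s)-Y^{x_2,y_2}(s)\|_\h\le C e^{-\ell s}\|y_1-y_2\|_\h + C(1-e^{-\ell s})\|x_1-x_2\|_\h$ (again an energy estimate) — and with \eqref{yder} for $\|Z^{x_1,y_1}(s)v\|_\h$, one obtains that the inhomogeneity at time $s$ has $\h$-norm at most $C e^{-\ell s}L_g(\|x_1-x_2\|_\h+\|y_1-y_2\|_\h)\|v\|_\h$. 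Then
\begin{equation*}
\|W(t)v\|_\h\le C\int_0^t e^{-\lambda(t-s)}e^{-\ell s}\,ds\,\cdot L_g(\|x_1-x_2\|_\h+\|y_1-y_2\|_\h)\|v\|_\h + L_g\int_0^t e^{-\lambda(t-s)}\|W(s)v\|_\h\,ds,
\end{equation*}
and a Gr\"onwall argument in the weighted norm $\sup_{s\le t}e^{\ell' s}\|W(s)v\|_\h$ produces the decay rate. The point of Hypothesis \ref{A2c}, i.e. $\omega=(\lambda-3L_g)/2>0$, is precisely that after the Gr\"onwall step one loses two factors of $L_g/\ell$-type weight (one from the coefficient difference, one from the linear feedback term), so the surviving exponential rate is $\lambda-3L_g$ rather than $\lambda-L_g$; the polynomial prefactor $t^{3/4}$ arises from the accumulated time-integrations (it absorbs the fact that $\int_0^t e^{(\lambda-2L_g-\ell)s}ds$ may grow before the final exponential weight kicks in, together with a $t^{-1/4}$-type integrable singularity from $S_2$ on a bounded interval). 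One separates \eqref{varicont} from \eqref{varicont2} by simply setting $y_1=y_2$ or $x_1=x_2$ in the general estimate.

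\textbf{Main obstacle.} The delicate point is bookkeeping the exponential rates so that the final answer is $e^{-\omega t}$ with $\omega=(\lambda-3L_g)/2$ and not something weaker — this requires being careful that the Lipschitz-in-$x$ part of $\|Y^{x_1,y_1}(s)-Y^{x_2,y_2}(s)\|_\h$ does \emph{not} decay (it is $O(1-e^{-\ell s})$, hence $O(1)$ for large $s$), so that factor must be paired against the $e^{-\ell s}$ decay of $\|Z^{x_1,y_1}(s)v\|_\h$ from \eqref{yder} to recover an integrable weight. A secondary technical nuisance is justifying the differentiation: that $Y^{x,y}$ is Fr\'echet differentiable in $y$ with derivative solving the above linear equation, and that the derivative is itself Lipschitz in $(x,y)$ — this is standard (contraction-mapping / implicit-function-theorem arguments in the relevant weighted path spaces, as in \cite{cerrai2001second}, \cite{cerrai2009averaging}) and I would cite rather than reprove it, focusing the written proof on the quantitative rate estimates above.
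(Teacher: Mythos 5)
Your plan for part (i) is essentially the paper's argument: the mild formulation of the first variation equation, the a priori bound \eqref{yder}, and the ultracontractivity \eqref{Lpsmoothing} of $S_2$ (the paper handles $t\geq 1$ via the flow property $Z^v_{x,y}(t)=Z^{Z^v_{x,y}(t-1)}_{x,y}(1)$ together with \eqref{yder}, rather than a direct convolution computation, but both routes work). Likewise, the decomposition of the difference $W$ into a coefficient-difference term plus a linear feedback term, followed by a weighted Gr\"onwall argument, is the right skeleton for (ii)--(iii), and your accounting of why the rate degrades from $\ell$ to $\omega=\ell-L_g=(\lambda-3L_g)/2$ is correct.

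There is, however, a genuine gap in (ii)--(iii): you assert that the coefficient difference, ``as an operator on $\h$, is bounded in norm by $C(\|x_1-x_2\|_\h+\|Y^{x_1,y_1}(s)-Y^{x_2,y_2}(s)\|_\h)$.'' This is false. $D_yG(x_1,\cdot)-D_yG(x_2,\cdot)$ is a multiplication operator, and the $\mathscr{L}(\h)$-norm of a multiplication operator is the $L^\infty$-norm of the multiplier, which is not controlled by the $\h$-norms of $x_1-x_2$ and $y_1-y_2$ (this is the same obstruction that makes Nemytskii operators from $\h$ to $\h$ non-Fr\'echet-differentiable unless affine). What the mean value inequality actually gives is an $L^2(0,L)$ bound on the multiplier, of order $\|x_1-x_2\|_\h$; to land the product of this $L^2$ multiplier with $Z^v(s)$ back in $\h$ one must pay with the $L^\infty$-norm of $Z^v(s)$, i.e. with estimate \eqref{Zinfty}. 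This is exactly why part (i) is proved first and is the essential input to (ii) and (iii), and it is how the factor $s^{-1/4}e^{-\ell s}$ enters the inhomogeneity; integrating the $s^{-1/4}$ singularity is what produces the prefactor $t^{3/4}$ in the stated bounds. Your displayed Gr\"onwall inequality, which carries no singularity and would yield a clean $e^{-\omega t}$ without the $t^{3/4}$, cannot be justified as written. Once the operator-norm step is replaced by this $L^2\times L^\infty$ pairing, the remainder of your argument (the Gr\"onwall step in the weight $e^{\lambda t}$ and the rate bookkeeping) goes through as in the paper.
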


\begin{proof}
(i) For $x\in\h$, the first-order derivative $D_yY^{x,y} (t)v$ at the point $y\in \h$ and along the direction $v\in\h$ solves the \textit{first variation equation}
\begin{equation}\label{1stvareq}
\left\{
\begin{aligned}
&\partial_t{Z^v_{x,y}(t)}= A_2 Z^v_{x,y}(t)+ D_yG(x,y\big)Z^v_{x,y}(t)\;\;,\; t>0 \\&
Z^v_{x,y}(0)=v\in\h.
\end{aligned}\right.
\end{equation}
\noindent Under our dissipativity assumptions it follows that for all $p\geq 1$, $Z^v_{x,y}(t)\in L^p(0,L) $, $\pr$-a.s. and for $p=2$ we have
\begin{equation}\label{varbound}
\sup_{x,y\in\h}\big\|Z^v_{x,y}(t)\big\|_{\h}\leq C e^{-\ell t}\|v\|_{\h}\;,
\end{equation}
for all $t>0$, where $\ell=\frac{\lambda-L_g}{2}>0$ (see eg (3.7) in \cite{cerrai2009averaging}). For a proof of \eqref{varbound} we refer the reader to \cite{cerrai2001second}, Prop. 4.2.1. In order to prove \eqref{Zinfty} we use the mild formulation of \eqref{1stvareq} along with \eqref{varbound} and  the ultracontractivity of $S_2$ (see \eqref{Lpsmoothing}) to obtain
\begin{equation*}
\begin{aligned}
\big\|Z^v_{x,y}(t)\|_{L^\infty(0,L)}&\leq \|S_2(t)v\|_{L^\infty(0,L)}+  \int_{0}^{t}\big\|S_2(t-s)D_yG(x,y)	Z^v_{x,y}(s)\big\|_{L^\infty(0,L)}ds\\&\leq C t^{-\frac{1}{4}}\|v\|_{\h}+ C\int_{0}^{t}(t-s)^{-\frac{1}{4}}\big\|D_yG(x,y)	Z^v_{x,y}(s)\big\|_{\h}ds\\&
\leq C t^{-\frac{1}{4}}\|v\|_{\h}+ CL_g\int_{0}^{t}(t-s)^{-\frac{1}{4}}e^{-\ell s}\|v\|_\h ds.
\end{aligned}
\end{equation*}
\noindent Hence, for $t\leq1$ we have
\begin{equation}\label{Zinftysmall}
\begin{aligned}
\big\|Z^v_{x,y}(t)\|_{L^\infty(0,L)}&\leq Ct^{-\frac{1}{4}}\|v\|_\h.
\end{aligned}
\end{equation}
\noindent As for $t> 1$ we use the latter along with the linearity of \eqref{1stvareq} to deduce that
\begin{equation}\label{Zinftylarge}
\begin{aligned}
\big\|Z^v_{x,y}(t)\|_{L^\infty(0,L)}&=\|Z_{x,y}^{Z^v_{x,y}(t-1)}(1)\|_{L^\infty(0,L)}\leq C 1^{-\frac{1}{4}}\big\|Z^v_{x,y}(t-1)\big\|_\h\leq C e^{-\ell(t-1)}\|v\|_\h,
\end{aligned}
\end{equation}
\noindent where we invoked \eqref{varbound} once more to obtain the last inequality. Combining \eqref{Zinftysmall} and \eqref{Zinftylarge}, we get that \eqref{Zinfty} holds.

\noindent (ii) From the mild formulation of \eqref{1stvareq} we have
\begin{equation*}
\begin{aligned}
Z^v_{x_1,y}(t)-Z^v_{x_2,y}(t)&=\int_{0}^{t}S_2(t-s)\big[D_yG(x_1,y)	Z^v_{x_1,y}(s)-D_yG(x_2,y)Z^v_{x_2,y}(s)\big]ds\\&= \int_{0}^{t}S_2(t-s)D_yG(x_1,y)\big[	Z^v_{x_1,y}(s)-	Z^v_{x_2,y}(s)\big]ds\\&+ \int_{0}^{t}S_2(t-s)\big[D_yG(x_1,y)-D_yG(x_2,y)\big]Z^v_{x_2,y}(s)ds.
\end{aligned}
\end{equation*}
\noindent Using \eqref{Zinfty} on the second term we estimate
\begin{equation*}
\begin{aligned}
\big\|Z^v_{x_1,y}(t)-Z^v_{x_2,y}(t)\big\|_{\h}&\leq L_g \int_{0}^{t}e^{-\lambda(t-s)}\big\|Z^v_{x_1,y}(s)-Z^v_{x_2,y}(s)\big\|_{\h} ds\\&+  \int_{0}^{t}e^{-\lambda(t-s)}\|D_yG(x_1,y)-D_yG(x_2,y)\|_{\mathscr{L}(L^\infty(0,L);\h)} \big\|Z^v_{x,y}(s)\|_{L^\infty(0,L)} ds\\&
\leq L_g \int_{0}^{t}e^{-\lambda(t-s)}\big\|Z^v_{x_1,y}(s)-Z^v_{x_2,y}(s)\big\|_{\h} ds\\&+  Ce^{-\lambda t}\|v\|_\h\|D_yG(x_1,y)-D_yG(x_2,y)\|_{\mathscr{L}(L^\infty(0,L);\h)} \int_{0}^{t}(s\wedge 1)^{-\frac{1}{4}} e^{(\lambda-\ell)s}ds.
\end{aligned}
\end{equation*}
\noindent An application of the mean value inequality then yields
\begin{equation*}
\begin{aligned}
\big\|Z^v_{x_1,y}(t)-Z^v_{x_2,y}(t)\big\|_{\h}&\leq L_g e^{-\lambda t}\int_{0}^{t}e^{\lambda s}\big\|Z^v_{x_1,y}(s)-Z^v_{x_2,y}(s)\big\|_{\h}ds\\&+  C_ge^{-\lambda t}\|x_1-x_2\|_\h \|v\|_{\h} \int_{0}^{t}(s\wedge 1)^{-\frac{1}{4}}e^{(\lambda-\ell)s} ds
\end{aligned}
\end{equation*}
and $\lambda-\ell=\frac{\lambda+L_g}{2}>0$. Hence
\begin{equation}
\begin{aligned}
e^{\lambda t}\big\|Z^v_{x_1,y}(t)-Z^v_{x_2,y}(t)\big\|_{\h}&\leq L_g \int_{0}^{t}e^{\lambda s}\big\|Z^v_{x_1,y}(s)-Z^v_{x_2,y}(s)\big\|_{\h}ds\\&+  C\|x_1-x_2\|_\h \|v\|_{\h} e^{(\lambda-\ell)t}\big[ t^{\frac{3}{4}}\mathds{1}_{(0,1)}(t)+(1+t)\mathds{1}_{[1,\infty)}(t)\big]
\end{aligned}
\end{equation}
and the second term on the right-hand side is increasing in $t$. Invoking Gr\"onwall's inequality we obtain
\begin{equation*}
\begin{aligned}
e^{\lambda t}\big\|Z^v_{x_1,y}(t)-Z^v_{x_2,y}(t)\big\|_{\h}\leq C \big(1+ t\big)e^{(L_g+\lambda-\ell)t}\|x_1-x_2\|_\h \|v\|_{\h}
\end{aligned}
\end{equation*}
and $L_g-\ell=L_g-\frac{\lambda-L_g}{2}=-\omega$ is negative in view of \eqref{extradiss}. The proof of \eqref{varicont} is complete.\\ \\
\noindent (iii) Similarly, we can write
\begin{equation*}
\begin{aligned}
Z^v_{x,y_1}(t)-Z^v_{x,y_2}(t) &=\int_{0}^{t}S_2(t-s)\big[D_yG(x,y_1)	Z^v_{x,y_1}(s)-D_yG(x,y_2)Z^v_{x,y_2}(s)\big]ds\\&= \int_{0}^{t}S_2(t-s)D_yG(x,y_1)\big[	Z^v_{x,y_1}(s)-	Z^v_{x,y_2}(s)\big]ds\\&+ \int_{0}^{t}S_2(t-s)\big[D_yG(x,y_1)-D_yG(x,y_2)\big]Z^v_{x,y_2}(s)ds.
\end{aligned}
\end{equation*}
Using an identical argument as in (i), the result follows by Gr\"onwall's inequality.
\end{proof}

\begin{lem}\label{semicont} Let $t>0,\chi, x_1,x_2,y_1,y_2,v\in\h $ and $c(t):=1+t+(t\wedge 1)^{-\frac{1}{4}}$ . Under Hypotheses \ref{A2a}-\ref{A2c} and for all $x,y\in\h$ we have \\
\noindent $(i)$
\begin{equation*}
\begin{aligned}
&\big|\ex \big[D_y\blangle F\big(x_1,Y^{x_1,y}(t)\big),\chi  \brangle_\h(v) - D_y\blangle F(x_2,Y^{x_2,y}(t)),\chi  \brangle_\h(v) \big]\big|\leq C\|\chi\|_{\h}\|v\|_{\h}\|x_1-x_2\|_\h c(t)e^{-\omega t}\;,
\end{aligned}
\end{equation*}

\begin{equation*}
(ii)\quad\quad	\big|\ex \big[D_y\blangle F\big(x,Y^{x,y_1}(t)\big),\chi  \brangle_\h(v) - D_y\blangle F(x,Y^{x,y_2}(t)),\chi  \brangle_\h(v) \big]\big|\leq C \|\chi\|_{\h}\|v\|_{\h}\|y_1-y_2\|_\h c(t)e^{-\omega t}\;,
\end{equation*}
with $\omega$ as in \eqref{extradiss}.
\end{lem}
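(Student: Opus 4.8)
The plan is to reduce Lemma \ref{semicont} to the continuity properties of the first variation process $D_yY^{x,y}(t)$ established in Lemma \ref{varicontlem}, using the chain rule \eqref{chainrule} for the Fr\'echet derivative together with the Lipschitz bounds on the derivatives of $f$ from Hypothesis \ref{A2a}. First I would record, as in \eqref{chainrule}, that
\begin{equation*}
D_y\blangle F(x,Y^{x,y}(t)),\chi\brangle_\h(v)=\blangle D_yF(x,Y^{x,y}(t))\chi,\; D_yY^{x,y}(t)v\brangle_\h\;,
\end{equation*}
so that the difference in (i) splits, after adding and subtracting the cross term $\blangle D_yF(x_1,Y^{x_1,y}(t))\chi,\;D_yY^{x_2,y}(t)v\brangle_\h$ and an intermediate term replacing the argument $Y^{x_1,y}(t)$ by $Y^{x_2,y}(t)$ in the first factor, into three pieces: one controlled by $\|D_yY^{x_1,y}(t)v-D_yY^{x_2,y}(t)v\|_\h$, one by $\|D_yF(x_1,Y^{x_1,y}(t))\chi-D_yF(x_2,Y^{x_2,y}(t))\chi\|_\h$, and each multiplied by a uniformly bounded factor.

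Next I would bound each piece. For the first piece I invoke \eqref{varicont} directly, together with the uniform bound $\|D_yF(x,y)\chi\|_\h\le\|\partial_{\mathrm y}f\|_\infty\|\chi\|_\h$ from Hypothesis \ref{A2a} and \eqref{DF}; this contributes a term $\le C\|\chi\|_\h\|v\|_\h\|x_1-x_2\|_\h\, t^{3/4}e^{-\omega t}$. For the second piece I use that $\partial_{\mathrm y}f$ is Lipschitz in $(\mathrm x,\mathrm y)$ (uniformly in $\xi$) by Hypothesis \ref{A2a}, so that, pointwise in $\xi$,
\begin{equation*}
\big|\partial_{\mathrm y}f(\xi,x_1(\xi),Y^{x_1,y}(t,\xi))-\partial_{\mathrm y}f(\xi,x_2(\xi),Y^{x_2,y}(t,\xi))\big|\le C\big(|x_1(\xi)-x_2(\xi)|+|Y^{x_1,y}(t,\xi)-Y^{x_2,y}(t,\xi)|\big),
\end{equation*}
hence $\|D_yF(x_1,Y^{x_1,y}(t))\chi-D_yF(x_2,Y^{x_2,y}(t))\chi\|_\h\le C\|\chi\|_\h^{(\infty)}(\|x_1-x_2\|_\h+\|Y^{x_1,y}(t)-Y^{x_2,y}(t)\|_\h)$ — here one wants $\chi\in L^\infty$ to absorb the product, or alternatively one estimates $\|D_yF(x_1,y_1)\chi-D_yF(x_2,y_2)\chi\|_{L^1}$ and pairs against $D_yY^{x,y}(t)v\in L^\infty$ via \eqref{Zinfty}. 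The $L^\infty$ bound \eqref{Zinfty} on $D_yY^{x,y}(t)v$ is precisely what makes this pairing work and compensates for the non-Fr\'echet-differentiability of the Nemytskii operator on $\h$; this interplay is the delicate point. One then needs the continuous dependence estimate $\|Y^{x_1,y}(t)-Y^{x_2,y}(t)\|_\h\le C\|x_1-x_2\|_\h$ (uniformly in $t$), which follows from the mild formulation of \eqref{Yfrozen}, the Lipschitz continuity of $G$, and the strict dissipativity \eqref{S2decay}; multiplying by the $L^\infty$ bound on $D_yY^{x,y}(t)v$ again yields a $t^{-1/4}e^{-\ell t}$ factor, and combining with the leftover exponential from the resolvent-type integral gives the $t^{3/4}e^{-\omega t}$ decay.

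Part (ii) is handled identically, replacing the role of $x_1-x_2$ by $y_1-y_2$: now $\|Y^{x,y_1}(t)-Y^{x,y_2}(t)\|_\h\le e^{-\ell t}\|y_1-y_2\|_\h$ by dissipativity, and the variation-process difference is controlled by \eqref{varicont2} instead of \eqref{varicont}. The main obstacle, as indicated above, is bookkeeping the function-space exponents so that the $L^\infty$-type estimate \eqref{Zinfty} (which carries the integrable singularity $t^{-1/4}$ at $0$) is used exactly where a term like $\|D_yF(x_1,\cdot)\chi-D_yF(x_2,\cdot)\chi\|_{L^1}$ appears, while keeping all exponential rates consistent so that the final bound decays like $e^{-\omega t}$ with $\omega=(\lambda-3L_g)/2>0$ from Hypothesis \ref{A2c}; the polynomial prefactor $t^{3/4}$ is harmless since it is integrable against $e^{-\omega t}$ on $[0,\infty)$, which is what will be needed when this lemma is fed into the continuity of $\Psi_2^0$ (Corollary \ref{Psicont}).
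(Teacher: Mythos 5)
Your proposal is correct and follows essentially the same route as the paper: the same splitting of the difference into a term controlled by $\|Z^v_{x_1,y}(t)-Z^v_{x_2,y}(t)\|_\h$ via \eqref{varicont} (resp.\ \eqref{varicont2}) and a term where $\|D_yF(x_1,Y^{x_1,y}(t))\chi-D_yF(x_2,Y^{x_2,y}(t))\chi\|_{L^1(0,L)}$ is paired against the $L^\infty$ bound \eqref{Zinfty} on the first variation, with the mean value inequality and the continuous dependence of $Y^{x,y}$ on its parameters (which the paper takes from the bound on $D_xY^{x,y}$ in \cite{cerrai2009averaging}) closing the estimate. The only differences are cosmetic (a three-term rather than two-term splitting, and deriving the continuous-dependence estimate from the mild formulation rather than citing it), and your remark about the bookkeeping of the polynomial prefactor being immaterial for the subsequent integration in Corollary \ref{Psicont} is accurate.
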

\begin{proof}
$(i)$ Let $Z^v_{x,y}(t):= D_yY^{x,y}(t)v$ as in the previous lemma. In view of \eqref{chainrule},
\begin{equation*}
\begin{aligned}
\ex \big[D_y\blangle &F\big(x_1,Y^{x_1,y}(t)\big),\chi  \brangle_\h(v) - D_y\blangle F(x_2,Y^{x_2,y}(t)),\chi  \brangle_\h (v)\big]
\\&	=\ex \blangle  D_yF\big(x_1,Y^{x_1,y}(t)\big)\chi, Z^v_{x_1,y}(t)-Z^v_{x_2,y}(t)\brangle_\h \\& + \ex \blangle  D_yF\big(x_1,Y^{x_1,y}(t)\big)\chi-D_yF\big(x_2,Y^{x_2,y}(t)\big)\chi, Z^v_{x_2,y}(t)\brangle_\h=: I_1+I_2.
\end{aligned}
\end{equation*}
\noindent From \eqref{varicont} we obtain
\begin{equation}\label{I1}
\begin{aligned}
\big|I_1\big|&\leq  \big\|   D_yF\big(x,Y^{x_1,y}(t)\big)\chi\big\|_{ L^2(\Omega\times(0,L))}\|Z^v_{x_1,y}(t)-Z^v_{x_2,y}(t)\big\|_{L^2(\Omega\times(0,L))}
\\& \leq C (1+t)e^{-\omega t}\|\partial_{\mathrm{y}}f\big\|_{\infty} \|\chi\|_{\h}\|v\|_\h\|x_1-x_2\|_\h.
\end{aligned}
\end{equation}
\noindent As for $I_2$, we apply  \eqref{Zinfty} along with the mean value inequality to deduce that
\begin{equation}\label{I2}
\begin{aligned}
\big| I_2\big|&\leq \ex\bigg[     \big\|Z^v_{x_2,y}(t)\big\|_{L^{\infty}(0,L)}\big\|D_yF\big(x_1,Y^{x_1,y}(t)\big)\chi-D_yF\big(x_2,Y^{x_2,y}(t)\big)\chi\|_{L^{1}(0,L)}    \bigg] \\&\leq C   (t\wedge 1)^{-\frac{1}{4}}e^{-\ell t}\|v\|_{\h}\|\chi\|_\h\big(   \|\partial^2_{\mathrm{x}\mathrm{y}}f\big\|_{\infty}\|x_1-x_2\|_\h+ \|\partial^2_{\mathrm{y}\mathrm{y}}f\big\|_{\infty}\ex \big\|Y^{x_1,y} (t)-Y^{x_2,y} (t)\big\|_\h\big)\\&\leq C_f(t\wedge 1)^{-\frac{1}{4}}e^{-\ell t}\|v\|_{\h}\|\chi\|_\h\big(\|x_1-x_2\|_\h+\ex\sup_{x,y\in\h}\big\|D_xY^{x,y}(t)\big\|_{\mathscr{L}(\h)}\|x_1-x_2\|_\h\big)\\&\leq C_f  (t\wedge 1)^{-\frac{1}{4}}e^{-\ell t}\|v\|_{\h}\|\chi\|_\h\|x_1-x_2\|_\h\big( 1+e^{-\ell t}        \big),		\end{aligned}
\end{equation}
where we invoked (3.9) in \cite{cerrai2009averaging} to obtain the last line.
Combining the latter with \eqref{I1} concludes the argument. Finally, $(ii)$ follows from a similar argument along with estimate \eqref{varicont2}.\end{proof}
\begin{cor}\label{Psicont} Let $x,x_1,x_2,y,y_1,y_2\in\h$. There exists $C>0$ such that \\ (i) The  $\mathscr{L}(\h)$-valued map $x\mapsto\Psi^0_2(x, y)$ is $C$-Lipschitz continuous uniformly in $y$ i.e.
	\begin{equation*}
	\big\| \Psi^0_2(x_1,y)- \Psi^0_2(x_2,y)\big\|_{\mathscr{L}(\h)}\leq C\big\|x_1-x_2\big\|_\h\;.
	\end{equation*}
	(ii) The  $\mathscr{L}(\h)$-valued map $y\mapsto\Psi^0_2(x, y)$ is $C$-Lipschitz continuous uniformly in $x$ i.e.
	\begin{equation}\label{Psiylip}
	\big\| \Psi^0_2(x,y_1)- \Psi^0_2( x,y_2)\big\|_{\mathscr{L}(\h)}\leq C\big\|y_1-y_2\big\|_\h\;.
	\end{equation}
\end{cor}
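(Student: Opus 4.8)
The plan is to obtain both Lipschitz estimates directly from the integral representation \eqref{psi0} by integrating in $t$ the pointwise-in-$t$ bounds of Lemma \ref{semicont}. First I would recall that, by the chain rule \eqref{chainrule}, the integrand appearing in \eqref{psi0} is exactly $\ex\big[D_y\blangle F(x,Y^{x,y}(t)),\chi\brangle_\h(v)\big]$, so that for fixed $y,v,\chi\in\h$,
\begin{equation*}
\blangle\big(\Psi^0_2(x_1,y)-\Psi^0_2(x_2,y)\big)v,\chi\brangle_\h=\int_0^\infty\Big(\ex\big[D_y\blangle F(x_1,Y^{x_1,y}(t)),\chi\brangle_\h(v)\big]-\ex\big[D_y\blangle F(x_2,Y^{x_2,y}(t)),\chi\brangle_\h(v)\big]\Big)\,dt,
\end{equation*}
and similarly with $(x,y_1),(x,y_2)$ in place of $(x_1,y),(x_2,y)$. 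The commutation of the $t$-integral with the difference is immediate from linearity once a uniform integrable majorant is in hand, which is precisely what Lemma \ref{semicont} supplies.

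Next I would insert the estimate of Lemma \ref{semicont}(i) under the integral sign: the integrand is bounded in absolute value by $C\|\chi\|_\h\|v\|_\h\|x_1-x_2\|_\h\,t^{3/4}e^{-\omega t}$, where $\omega=(\lambda-3L_g)/2>0$ by Hypothesis \ref{A2c}. Since
\begin{equation*}
\int_0^\infty t^{3/4}e^{-\omega t}\,dt=\omega^{-7/4}\,\Gamma(7/4)<\infty,
\end{equation*}
this yields $\big|\blangle\big(\Psi^0_2(x_1,y)-\Psi^0_2(x_2,y)\big)v,\chi\brangle_\h\big|\le C\|\chi\|_\h\|v\|_\h\|x_1-x_2\|_\h$. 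Taking the supremum over $v,\chi\in B_\h$ and using the Riesz characterization of the operator norm, $\|Q\|_{\mathscr{L}(\h)}=\sup_{v,\chi\in B_\h}|\langle Qv,\chi\rangle_\h|$, gives part (i). Part (ii) follows verbatim, replacing Lemma \ref{semicont}(i) by Lemma \ref{semicont}(ii) and $\|x_1-x_2\|_\h$ by $\|y_1-y_2\|_\h$, which establishes \eqref{Psiylip}.

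There is essentially no obstacle remaining: the substantive analytic work — the sup-norm bound \eqref{Zinfty} on the first-variation process and its Lipschitz dependence on $x$ and $y$ in \eqref{varicont}, \eqref{varicont2}, which crucially rely on the strengthened dissipativity of Hypothesis \ref{A2c} to produce the decay rate $e^{-\omega t}$ with $\omega>0$ — has already been carried out in Lemmas \ref{varicontlem} and \ref{semicont}. The only minor point worth noting is that all those bounds are stated for arbitrary $y\in\h$ (not merely $y\in Dom(A_2)$), so the representation \eqref{psi0} and the resulting estimates are valid on all of $\h\times\h$; hence this corollary is a bookkeeping step that merely integrates the established pointwise-in-$t$ bounds against the finite weight $t^{3/4}e^{-\omega t}$.
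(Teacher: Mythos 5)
Your proposal is correct and follows essentially the same route as the paper: represent the difference via \eqref{psi0}/\eqref{Feynmander}, insert the pointwise-in-$t$ bounds of Lemma \ref{semicont}, integrate the weight $t^{3/4}e^{-\omega t}$ over $[0,\infty)$ (the paper writes $t^{1/4}$ here, an inconsequential typo), and take the supremum over $v,\chi\in B_\h$. Nothing is missing.
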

\begin{proof}(i)
	\noindent From \eqref{psi0} and Lemma \ref{semicont}(i) it follows that
	\begin{equation*}
	\begin{aligned}
	&\sup_{v,\chi\in B_\h}\big|\blangle  \Psi^0_2( x_1,y)v- \Psi^0_2(x_2,y)v, \chi\brangle_{\h}\big| \\&\leq\int_{0}^{\infty} \sup_{v,\chi\in B_\h}\big| D_yP^{x_1}_t\big[ \langle F(x_1,y)-\bar{F}(x_1),\chi  \rangle_\h     \big](v)-D_yP^{x_2}_t\big[ \langle F(x_2,y)-\bar{F}(x_2),\chi  \rangle_\h     \big](v)\big| dt\\&
	\leq   C\|x_1-x_2\|_\h \int_{0}^{\infty}c(t)e^{-\omega t}dt =C\|x_1-x_2\|_\h\int_{0}^{\infty}\big[1+t+(t\wedge 1)^{-\frac{1}{4}}\big]e^{-\omega t}dt ,
	\end{aligned}
	\end{equation*}
	and the last integral is finite. As for $(ii)$, the estimate follows from an identical argument along with Lemma \ref{semicont}$(ii)$.
\end{proof}

\noindent The next lemma is analogous to Lemma \ref{limcontlem} that was proved for $I^{\epsilon,u}$.

\begin{lem}\label{Kolmocontlem} For $\Delta>0$ as in (\ref{Delta1}) and $T<\infty$ we have
	\begin{equation*}\label{yderave}
	\begin{aligned}
	\sup_{n\in\N}\sup_{t\in[0, T] }&\bigg\|\frac{1}{\Delta}\int_{0}^{t}\int_{s}^{s+\Delta}S_1(t-s)\Psi^0_2(\bar{X}(s),Y_n^{\epsilon,u}(r)\big)  u_2(r) drds\\&-		\frac{1}{\Delta}\int_{0}^{t}\int_{s}^{s+\Delta}S_1(t-s)\Psi^0_2(\bar{X}(r),Y_n^{\epsilon,u}(r)\big)  u_2(r) drds\bigg\|_{\h}\longrightarrow 0 \;,\;\text{as}\;\epsilon\to 0\;\;, \pr-\text{a.s.}
	\end{aligned}
	\end{equation*}
\end{lem}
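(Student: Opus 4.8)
The plan is to estimate the difference pointwise in time and $n$, using the analyticity of $S_1$, the uniform bound on the semigroup, the Lipschitz continuity of $\Psi_2^0$ in its first argument (Corollary \ref{Psicont}(i)), the Schauder estimate \eqref{Schauderbar} for $\bar X$, and the fact that the controls $u_2\in\mathcal{P}_N^T$ have $L^2$-norm bounded by $\sqrt N$. The key point is that the only way the two integrands differ is by replacing $\bar X(s)$ with $\bar X(r)$ inside $\Psi_2^0$, where $r\in[s,s+\Delta]$, so the difference is controlled by $\|\bar X(s)-\bar X(r)\|_\h\le [\bar X]_{C^\theta([0,T];\h)}\Delta^\theta$, which tends to $0$ as $\Delta\to 0$ (hence as $\epsilon\to 0$, by \eqref{Delta1}).

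Concretely, I would first write
\begin{equation*}
\begin{aligned}
\bigg\|\frac{1}{\Delta}\int_{0}^{t}\int_{s}^{s+\Delta}&S_1(t-z)\big[\Psi^0_2(\bar{X}(s),Y_n^{\epsilon,u}(r))-\Psi^0_2(\bar{X}(r),Y_n^{\epsilon,u}(r))\big]u_2(r)\,dr\,ds\bigg\|_\h\\&\leq \frac{1}{\Delta}\int_{0}^{t}\int_{s}^{s+\Delta}\big\|S_1(t-z)\big\|_{\mathscr{L}(\h)}\big\|\Psi^0_2(\bar{X}(s),Y_n^{\epsilon,u}(r))-\Psi^0_2(\bar{X}(r),Y_n^{\epsilon,u}(r))\big\|_{\mathscr{L}(\h)}\|u_2(r)\|_\h\,dr\,ds.
\end{aligned}
\end{equation*}
(I note here that the statement as written has a minor typo: the inner integration variable should be $z$ replaced consistently, but the intended meaning is clear and the bound $\|S_1(\cdot)\|_{\mathscr{L}(\h)}\le 1$ from Hypothesis \ref{A1a} makes the semigroup harmless.) Then I would apply Corollary \ref{Psicont}(i) to bound the operator-norm difference by $C\|\bar X(s)-\bar X(r)\|_\h$, and the Schauder estimate \eqref{Schauderbar} to bound this by $C(1+\|x_0\|_{H^a})\Delta^\theta$ for some $\theta<\frac14\wedge\frac a2$. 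Pulling this uniform-in-$r,s$ factor out of both integrals and applying Cauchy--Schwarz to $\int_s^{s+\Delta}\|u_2(r)\|_\h\,dr\le \Delta^{1/2}\|u_2\|_{L^2([0,T+\Delta];\h)}\le \Delta^{1/2}\sqrt N$, the double integral over $s\in[0,t]$ contributes at most $T$. Altogether the quantity is bounded by $C_{T,N}(1+\|x_0\|_{H^a})\Delta^\theta\to 0$ as $\epsilon\to0$, uniformly in $n$, which is exactly the claim.

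I do not anticipate a genuine obstacle here: this lemma is the $\Psi_2^0$-analogue of Lemma \ref{limcontlem}, and all the hard work (the Lipschitz continuity of $\Psi_2^0$ in the slow variable, which required the strong dissipativity of Hypothesis \ref{A2c} and the first variation equation analysis) has already been done in Corollary \ref{Psicont}. The only care needed is to make sure the estimate is genuinely uniform over $n\in\N$ — this follows because the bound on $\|\Psi_2^0(x_1,y)-\Psi_2^0(x_2,y)\|_{\mathscr{L}(\h)}$ in Corollary \ref{Psicont}(i) is uniform in $y$, so replacing $y$ by $Y_n^{\epsilon,u}(r)$ introduces no $n$-dependence — and to keep track of the convention $u_2(r)=0$ for $r>T$ so that the integral $\int_s^{s+\Delta}$ over $s$ near $T$ is well defined. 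If one wishes, the almost-sure convergence can be stated by noting that the final bound is a deterministic multiple of $\Delta^\theta$ (the randomness having been removed by taking $\sup$ and using the a.s. finiteness of $[\bar X]_{C^\theta}$, which is in fact deterministic here since $\bar X$ solves the deterministic equation \eqref{x-aved}).
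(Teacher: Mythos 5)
Your overall strategy is the same as the paper's: decompose the difference as $\Psi_2^0(\bar X(s),Y_n^{\epsilon,u}(r))-\Psi_2^0(\bar X(r),Y_n^{\epsilon,u}(r))$, control it in operator norm via Corollary \ref{Psicont}(i) (whose bound is uniform in the second argument, so uniformity in $n$ is automatic), convert $\|\bar X(s)-\bar X(r)\|_\h$ into $C(1+\|x_0\|_{H^a})\Delta^\theta$ via \eqref{Schauderbar}, and then integrate the control. However, your final quantitative step does not close. Applying Cauchy--Schwarz on the short interval gives $\int_s^{s+\Delta}\|u_2(r)\|_\h\,dr\le\Delta^{1/2}\sqrt N$, so after integrating $ds$ over $[0,t]$ and restoring the prefactor $1/\Delta$ you obtain
\begin{equation*}
\frac{1}{\Delta}\cdot C\Delta^{\theta}\cdot T\,\Delta^{1/2}\sqrt N \;=\; C_{T,N}\,\Delta^{\theta-\tfrac12},
\end{equation*}
not $C_{T,N}\Delta^{\theta}$ as you claim. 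Since $\theta<\tfrac14\wedge\tfrac a2<\tfrac12$, this bound diverges as $\Delta\to0$, so the argument as written fails.

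The repair is to handle the double integral of $\|u_2\|_\h$ by Fubini rather than by Cauchy--Schwarz on the inner interval: each $r$ is covered by $s$-values forming a set of Lebesgue measure at most $\Delta$, so
\begin{equation*}
\int_0^t\int_s^{s+\Delta}\|u_2(r)\|_\h\,dr\,ds\;\le\;\Delta\int_0^{T+\Delta}\|u_2(r)\|_\h\,dr\;\le\;\Delta\,(T+\Delta)^{1/2}\sqrt N,
\end{equation*}
where Cauchy--Schwarz is now applied over the full interval $[0,T+\Delta]$ (using the convention $u_2\equiv0$ past $T$). The factor $\Delta$ exactly cancels the $1/\Delta$ prefactor, leaving the bound $C_{T,N}(1+\|x_0\|_{H^a})\Delta^{\theta}\to0$, which is what the paper's proof does. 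Your remaining observations (uniformity in $n$, determinism of $[\bar X]_{C^\theta}$, hence a.s.\ convergence) are correct once this step is fixed.
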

\begin{proof}
	The proof is a direct application of Corollary \ref{Psicont}. In particular, we have
	\begin{equation*}
	\begin{aligned}
	\bigg\|\int_{0}^{t}\int_{s}^{s+\Delta}&S_1(t-s)\Psi^0_2\big(\bar{X}(s),Y_n^{\epsilon,u}(r)\big)   -\Psi^0_2\big(\bar{X}(r),Y_n^{\epsilon,u}(r)\big) \big] u_2(r) drds\bigg\|_{\h}\\&
	\leq C \int_{0}^{t}\int_{s}^{s+\Delta}\big\| \Psi^0_2\big(\bar{X}(s),Y_n^{\epsilon,u}(r)\big) -	\Psi^0_2\big(\bar{X}(r),Y_n^{\epsilon,u}(r)\big)\big\|_{\mathscr{L}(\h)}\big\|u_2(r)\|_\h drds\\&
	\leq C\int_{0}^{t}\int_{s}^{s+\Delta}\big\| \bar{X}(s)-\bar{X}(r)\big\|_\h \big\|u_2(r)\|_\h dr ds\\&\leq C\big[ \bar{X}\big]_{C^\theta([0,T+1])} \int_{0}^{t}\int_{s}^{s+\Delta}|s-r|^\theta\big\|u_2(r)\|_\h dr ds
	\end{aligned}
	\end{equation*}
		\begin{equation*}
	\begin{aligned}
	&
	\leq C(1+\|x_0\|_{H^a})\Delta^\theta \int_{0}^{t}\int_{s}^{s+\Delta}\big\|u_2(r)\|_\h dr ds\\&
	\leq C(1+\|x_0\|_{H^a})\Delta^{\theta+1}\int_{0}^{T+\Delta}\big\|u_2(s)\|_\h ds
	\leq  C_{T,N}(1+\|x_0\|_{H^a})\Delta^{\theta+1},
	\end{aligned}
	\end{equation*}
	where $\theta<\frac{1}{4}\wedge\frac{a}{2}$ and we used  \eqref{Schauderbar} to obtain the third inequality and the Cauchy-Schwarz inequality, along with fact that $u\in\mathcal{P}_N^T$, to obtain the last line.

   Therefore,
	\begin{equation*}
	\begin{aligned}
	\frac{1}{\Delta}\sup_{n\in\N,t\in[0,T]}&\bigg\|\int_{0}^{t}\int_{s}^{s+\Delta}S_1(t-z)\big[\Psi^0_2\big(\bar{X}(s),Y_n^{\epsilon,u}(r)\big)  u_2(r) -	\Psi^0_2\big(\bar{X}(r),Y_n^{\epsilon,u}(r)\big) \big] u_2(r) drds\bigg\|_{\h}\\&\leq C \Delta^\theta(1+\|x_0\|_{H^a}).
	\end{aligned}
	\end{equation*}
	 The proof is complete upon taking $\epsilon\to 0$.\end{proof}
\noindent For $n\in\N$  and $\Delta$ as in Definition \ref{Delta1}, define the \textit{projected} occupation measures
\begin{equation*}\label{projoccu}
\begin{aligned}
&P_n^{\epsilon,\Delta}(\Gamma_1\times\Gamma_2\times \Gamma_3\times \Gamma_4)= P^{\epsilon,\Delta}(\Gamma_1\times \Gamma_2\times P_n^{-1}\big(\Gamma_3\big)\times \Gamma_4 )\\&
=\frac{1}{\Delta}\int_{\Gamma_4}\int_{t}^{t+\Delta}\mathds{1}_{\Gamma_1}\big(u_1(s)\big)\mathds{1}_{\Gamma_2}\big(u_2(s)\big) \mathds{1}_{\Gamma_3}\big(Y_n^{\epsilon,u}(s)\big)dsdt,
\end{aligned}
\end{equation*}
$\Gamma_1\times\Gamma_2\times \Gamma_3\times \Gamma_4\in\mathcal{B}\big(  \h\times\h\times\h\times[0,T] \big)$ i.e. $P_n^{\epsilon,\Delta}$ is the push-forward of $P^{\epsilon,\Delta}$ induced by the $n$-dimensional orthogonal projection $P_n$ on the third marginal. It is straightforward to verify that $P_n^{\epsilon,\Delta}$ inherit the tightness and uniform integrability properties from the occupation measures $P^{\epsilon,\Delta}$ (see Lemmas \ref{Palaoglu} and \ref{UILem}). Moreover, for each $\epsilon>0$ there exists $n=n(\epsilon)>0$ large enough so that, after passing to subsequences, $P_n^{\epsilon,\Delta}$  and $P^{\epsilon,\Delta}$ share the same limit in distribution (denoted by $P_i$) as $\epsilon\to 0$ in the topology of weak convergence of measures on $\h\times\h\times\h\times[0,T]$.

Indeed, the class of Lipschitz-continuous functions $f\in C_b(\h\times\h\times\h\times [0,T])$ characterizes weak convergence of measures (see \cite{dupuis2011weak}, Remark A.3.5.) and for any such $f$ we fix $\epsilon>0$ and apply the dominated convergence theorem to obtain
\begin{equation*}
\begin{aligned}
\bigg|\int_{\h\times\h\times\h\times[0,T]}&f\big(u_1,u_2,y,t\big)dP_{n}^{\epsilon,\Delta}(u_1,u_2,y,t)-\int_{\h\times\h\times\h\times[0,T]}f\big(u_1,u_2,y,t\big)dP^{\epsilon,\Delta}(u_1,u_2,y,t)\bigg|\\&
=\bigg|\frac{1}{\Delta}\int_{0}^{T}\int_{t}^{t+\Delta}f\big(u_1^\epsilon(s),u_2^\epsilon(s),Y_{n}^{\epsilon,u^\epsilon}(s),t\big)-f\big(u_1^\epsilon(s),u_2^\epsilon(s),Y^{\epsilon,u^\epsilon}(s),t\big)dsdt\bigg|
\\&\leq \frac{1}{\Delta}\int_{0}^{T}\int_{t}^{t+\Delta}\big\| P_{n}Y^{\epsilon,u^\epsilon}(s)-Y^{\epsilon, u^\epsilon}(s)\big\|_\h dsdt\longrightarrow 0\;\;\text{as}\;n\to \infty.
\end{aligned}
\end{equation*}
\noindent Using the latter, along with Lemma \ref{Kolmocontlem}, we can now prove the following asymptotics:
\begin{lem}\label{controlocculem}Let $i=1,2$, $T>0$ and assume that the pair $(\eta^{\epsilon,u^\epsilon}, P^{\epsilon,\Delta})$ converges in distribution, in Regime $i$, to $(\eta_i,P_i)$ in $C([0,T];\h)\times\mathscr{P}(\h\times\h\times\h\times[0,T])$. Then  there exists $n=n(\epsilon)>0$ large enough, such that the following limits hold with probability $1$:
	\begin{equation}\label{Psiave1}
	\begin{aligned}
	\sup_{t\in[0,T]}\bigg\|&\int_{0}^{t}S_1(t-s)\Psi^0_2\big(\bar{X}(s),Y_n^{\epsilon,u}(s)\big)  u_2^\epsilon(s) ds\\&-\int_{\h\times\h\times\h\times[0,t] }S_1(t-s)\Psi^0_2\big(\bar{X}(s), y\big)u_2 dP^{\epsilon,\Delta}_n(u_1,u_2,y,s)\bigg\|_{\h}\longrightarrow 0\;,\;\text{as}\;\epsilon\to 0
	\end{aligned}
	\end{equation}	
	\noindent and
	\begin{equation}\label{Psiave2}
	\begin{aligned}
	\sup_{t\in[0,T]}\bigg\|&\int_{\h\times\h\times\h\times[0,t] }S_1(t-s)\Psi^0_2\big(\bar{X}(s), y\big)u_2 dP^{\epsilon,\Delta}_n(u_1,u_2,y,s) \\&-\int_{\h\times\h\times\h\times[0,t] }S_1(t-s)\Psi^0_2\big(\bar{X}(s), y\big)u_2 dP_i(u_1,u_2,y,s)\bigg\|_{\h}\longrightarrow 0\;,\;\text{as}\;\epsilon\to 0.
	\end{aligned}
	\end{equation}	
\end{lem}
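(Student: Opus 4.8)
\textbf{Proof proposal for Lemma \ref{controlocculem}.}

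The plan is to prove the two limits \eqref{Psiave1} and \eqref{Psiave2} separately, following the template already established for the linear term $I^{\epsilon,u}$ in Lemmas \ref{lincont2lem} and \ref{linocculem}. For the first limit \eqref{Psiave1}, I would start by rewriting the single integral as a double (Riemann-type) average: since $\int_0^t S_1(t-s)\Psi^0_2(\bar X(s),Y^{\epsilon,u}_n(s))u_2^\epsilon(s)\,ds = \tfrac{1}{\Delta}\int_0^t\int_s^{s+\Delta}S_1(t-s)\Psi^0_2(\bar X(s),Y^{\epsilon,u}_n(r))u_2^\epsilon(r)\,dr\,ds + o(1)$ as $\epsilon\to 0$. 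The error here comes from replacing $Y^{\epsilon,u}_n(s)$ by $Y^{\epsilon,u}_n(r)$ for $r\in[s,s+\Delta]$ and $u_2^\epsilon(s)$ by $u_2^\epsilon(r)$; controlling the first uses the Lipschitz continuity of $\Psi^0_2$ in $y$ (Corollary \ref{Psicont}(ii)) together with the H\"older estimate \eqref{Schaudery} on $Y^{\epsilon,u}$ and \eqref{Delta1}, while the second is handled by a standard approximation-of-the-identity argument since $u_2^\epsilon\in L^2([0,T];\h)$ a.s. Then Lemma \ref{Kolmocontlem} lets me replace $\bar X(s)$ by $\bar X(r)$ inside the averaged integral, after which the double integral is exactly $\int_{\h\times\h\times\h\times[0,t]}S_1(t-s)\Psi^0_2(\bar X(s),y)u_2\,dP^{\epsilon,\Delta}_n(u_1,u_2,y,s)$ by the definition of $P^{\epsilon,\Delta}_n$. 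The uniformity in $t\in[0,T]$ is preserved throughout because all bounds above are uniform in $t$ (using $\|S_1(t-s)\|_{\mathscr L(\h)}\le 1$ and $\|\Psi^0_2\|_{\mathscr L(\h)}\le C/\ell$ from \eqref{psinorm}).

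For the second limit \eqref{Psiave2}, I would appeal to the Skorokhod representation theorem (already invoked in Section \ref{limpnts}) so that $P^{\epsilon,\Delta}_n\to P_i$ weakly on $\h\times\h\times\h\times[0,T]$ with the WWNS topology, $\pr$-a.s., after passing to the subsequence along which $n=n(\epsilon)$ is chosen (recall the computation preceding this lemma showing $P^{\epsilon,\Delta}_n$ and $P^{\epsilon,\Delta}$ share the limit $P_i$). The key structural point is that for each fixed $s$ and $t>s$, the map $(u_1,u_2,y)\mapsto S_1(t-s)\Psi^0_2(\bar X(s),y)u_2$ is continuous from the WWN topology into $\h$: the dependence on $y$ is norm-continuous (indeed Lipschitz, Corollary \ref{Psicont}(ii)), and the dependence on $u_2$ is weak-to-strong continuous because $S_1(t-s)\Psi^0_2(\bar X(s),y)$ is a compact operator on $\h$ — here I would use that $S_1(t-s)$ is compact for $t>s$ (being the restriction of an analytic semigroup whose generator has compact resolvent, by Hypothesis \ref{A1a}) composed with the bounded operator $\Psi^0_2(\bar X(s),y)$. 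Combined with the uniform integrability of $P^{\epsilon,\Delta}_n$ from Lemma \ref{UILem} (which controls the tails where $\|u_2\|_\h$ or $\|y\|_{H^\theta}$ is large, so that the $\h$-valued integrand is genuinely "bounded enough" for the weak convergence to pass to the integral despite being unbounded in $u_2$), this yields convergence of the integrals for each fixed $t$; uniformity in $t$ follows from an equicontinuity argument in $t$ using $\|S_1(t_1-s)-S_1(t_2-s)\|$ estimates, or more simply by noting that the limiting object is continuous in $t$ and the family is tight.

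I expect the main obstacle to be the second step, specifically reconciling the \emph{weak} topology on the control variable $u_2$ with the fact that the integrand $\Psi^0_2(\bar X(s),y)u_2$ depends \emph{linearly and unboundedly} on $u_2$. Weak convergence of measures only tests against bounded continuous functions, so one cannot directly integrate the unbounded function $(u_1,u_2,y,s)\mapsto \langle S_1(t-s)\Psi^0_2(\bar X(s),y)u_2,\chi\rangle_\h$ against $P^{\epsilon,\Delta}_n\to P_i$. The resolution — exactly as in Lemma 4.15 of \cite{WSS} — is a truncation argument: one introduces the function $g_M(u_2)=u_2\mathds 1_{\|u_2\|_\h\le M}$, uses weak convergence plus the compactness/continuity above for the truncated integrand, and then uses Lemma \ref{UILem} to send $M\to\infty$ uniformly in $\epsilon$. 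The compactness of $S_1(t-s)$ for $t>s$ is what makes the truncated integrand WWN-continuous; some care is needed near $s=t$ where the compactness "degenerates," but this region has vanishing Lebesgue measure and the integrand is uniformly bounded there (by $C/\ell$ times $\|u_2\|_\h$), so it contributes negligibly. Because the present integrand $\Psi^0_2$ satisfies the \emph{same} uniform operator bound $\|\Psi^0_2(x,y)\|_{\mathscr L(\h)}\le C/\ell$ and the \emph{same} Lipschitz continuity in $(x,y)$ as the map $\Sigma$ handled in Proposition \ref{IIlim}, the arguments of Lemmas 4.15 and 4.16 of \cite{WSS} apply essentially verbatim, and I would state the remaining routine details as being identical to that reference.
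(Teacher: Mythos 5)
Your treatment of \eqref{Psiave2} is essentially the paper's argument (finite-dimensional projection/compactness of $S_1(t-s)\Psi^0_2$, weak-to-norm continuity in $u_2$, Skorokhod, and the uniform integrability of Lemma \ref{UILem} to handle the unboundedness in $u_2$), so that part is fine. The problem is your first step for \eqref{Psiave1}.

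You propose to pass from $\int_0^t S_1(t-s)\Psi^0_2(\bar X(s),Y^{\epsilon,u}_n(s))u_2^\epsilon(s)\,ds$ to the Steklov-averaged double integral by replacing $Y_n^{\epsilon,u}(s)$ with $Y_n^{\epsilon,u}(r)$ and $u_2^\epsilon(s)$ with $u_2^\epsilon(r)$ for $r\in[s,s+\Delta]$, controlling the first replacement by \eqref{Schaudery} and \eqref{Delta1}. This does not close. Estimate \eqref{Schaudery} gives $\ex\,[Y^{\epsilon,u}]_{C^\beta}\leq Ch(\epsilon)\delta^{-1/2}$ (for $a\geq 1$; the exponent is never better than $-a/2$), so the error from the $y$-replacement is of order $h(\epsilon)\delta^{-1/2}\Delta^\beta$ with $\beta<1/4$. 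But \eqref{Delta1} forces $\Delta\gg\delta h^2(\epsilon)$, whence
\begin{equation*}
h(\epsilon)\delta^{-1/2}\Delta^\beta\;\gg\;h(\epsilon)\delta^{-1/2}\bigl(\delta h^2(\epsilon)\bigr)^\beta=h(\epsilon)^{1+2\beta}\,\delta^{\beta-\frac12}\longrightarrow\infty,
\end{equation*}
since $\beta<1/2$. There is no admissible choice of $\Delta$ reconciling \eqref{Delta1} with the smallness you need, so this error term diverges rather than vanishes. The replacement of $u_2^\epsilon(s)$ by $u_2^\epsilon(r)$ is equally problematic: the Steklov average of an $L^2$ function converges to the function at a rate governed by its $L^2$-modulus of continuity, which is not uniform over the ball $\{\|u_2\|_{L^2([0,T];\h)}\leq N\}$, so your ``approximation of the identity'' step is not uniform in $\epsilon$.

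The paper avoids both issues by never touching the time arguments of $Y_n^{\epsilon,u}$ and $u_2$. One starts from the occupation-measure side, which by definition equals $\frac{1}{\Delta}\int_0^t\int_s^{s+\Delta}S_1(t-s)\Psi^0_2(\bar X(s),Y_n^{\epsilon,u}(r))u_2(r)\,dr\,ds$ exactly (no error term), uses Lemma \ref{Kolmocontlem} to shift only $\bar X(s)\mapsto\bar X(r)$ (harmless, since $[\bar X]_{C^\theta}$ is bounded uniformly in $\epsilon$ by \eqref{Schauderbar}), then exchanges the order of integration and replaces the averaged semigroup $\frac{1}{\Delta}\int_{r-\Delta}^{r}S_1(t-s)\,ds$ by $S_1(t-r)$ via \eqref{sobcont} and Lemma \ref{sigmacont}(ii). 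In other words, the only quantities whose time argument is moved are the deterministic, uniformly H\"older/smoothing objects $S_1(t-\cdot)$ and $\bar X(\cdot)$; the stochastic and merely square-integrable quantities stay put. You should restructure the proof of \eqref{Psiave1} along these lines.
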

\begin{proof} We start with \eqref{Psiave1}. Notice that
	\begin{equation*}
	\begin{aligned}
	&\int_{\h\times\h\times\h\times[0,t] }S_1(t-s)\Psi^0_2\big(\bar{X}(s), y\big)u_2 dP^{\epsilon,\Delta}_n(u_1,u_2,y,s)\\&=\int_{0}^{t}\int_{s}^{s+\Delta}S_1(t-s)\Psi^0_2\big(\bar{X}(s), Y_n^{\epsilon,u}(r)\big)u_2(r)dr ds.
	\end{aligned}
	\end{equation*}
	In view of Lemma \ref{Kolmocontlem} it is enough to study the term
	\begin{equation*}
	\begin{aligned}
	\int_{0}^{t}\int_{s}^{s+\Delta}S_1(t-s)\Psi^0_2\big(\bar{X}(r), Y_n^{\epsilon,u}(r)\big)u_2(r)dr ds.
	\end{aligned}
	\end{equation*}
	Changing the order of integration, the latter is equal to
	\begin{equation*}
	\begin{aligned}
	&\int_{0}^{\Delta}\int_{0}^{r}S_1(t-s)\Psi^0_2\big(\bar{X}(r), Y_n^{\epsilon,u}(r)\big)u_2(r)dsdr \\&
	+ \int_{\Delta}^{t}\int_{r-\Delta}^{r}S_1(t-s)\Psi^0_2\big(\bar{X}(r), Y_n^{\epsilon,u}(r)\big)u_2(r)dsdr \\&
	+	\int_{t}^{t+\Delta}\int_{r-\Delta}^{t}S_1(t-s)\Psi^0_2\big(\bar{X}(r), Y_n^{\epsilon,u}(r)\big)u_2(r) dsdr.
	\end{aligned}
	\end{equation*}
	The first and third terms in this expression converge to zero as  $\epsilon\to0$, so we only need to focus on the second term. In view of \eqref{sobcont},
	
	\begin{equation*}
	\begin{aligned}
	\bigg\|\int_{\Delta}^{t}\int_{r}^{r-\Delta}&S_1(t-s)\Psi^0_2\big(\bar{X}(r), Y_n^{\epsilon,u}(r)\big)u_2(r)dr ds-\int_{\Delta}^{t}S_1(t-r)\Psi^0_2\big(\bar{X}(r),Y_n^{\epsilon,u}(r)\big)  u_2(r) dr\bigg\|_\h\\&
	\leq \int_{\Delta}^{t}\bigg\|\frac{1}{\Delta}\int_{0}^{\Delta}S_1(s)ds-I\bigg\|_{\mathscr{L}(H^\theta;\h)}\big\|S_1(t-r)\Psi^0_2\big(\bar{X}(r),Y_n^{\epsilon,u}(r)\big)  u_2(r)\|_{H^\theta}dr\\&
	\leq \frac{C}{\Delta}\int_{\Delta}^{t}\bigg(\int_{0}^{\Delta}s^{\theta/2}ds\bigg) \big\|S_1(t-r)\Psi^0_2\big(\bar{X}(r),Y_n^{\epsilon,u}(r)\big)  u_2(r)\|_{H^\theta}dr.
	\end{aligned}
	\end{equation*}
	Finally, we invoke Lemma \ref{sigmacont}(ii) to conclude that
	\begin{equation*}
	\begin{aligned}
	\bigg\|\int_{\Delta}^{t}\int_{r}^{r-\Delta}&S_1(t-s)\Psi^0_2\big(\bar{X}(r), Y_n^{\epsilon,u}(r)\big)u_2(r)dr ds-\int_{\Delta}^{t}S_1(t-r)\Psi^0_2\big(\bar{X}(r),Y_n^{\epsilon,u}(r)\big)  u_2^\epsilon(r) dr\bigg\|_\h\\&
	\leq C_\theta\Delta^{\theta/2}\int_{\Delta}^{t} (t-r)^{-\rho}\big\|\Psi^0_2\big(\bar{X}(r),Y_n^{\epsilon,u}(r)\big)\big\|_{\mathscr{L}(\h)} \|u_2^\epsilon(r)\|_{\h}dr\\&
	\leq  C_\theta\Delta^{\theta/2}N\int_{\Delta}^{t}(t-r)^{-2\rho}dr,
	\end{aligned}
	\end{equation*}
	where $\rho>\theta+1/2$ and we used
	the Cauchy-Schwarz inequality to obtain the last line. Since $\theta$ can be chosen to be arbitrarily small, \eqref{Psiave1} follows.
	
	It remains to prove \eqref{Psiave2}. To this end, let $P_m^i$ denote orthogonal projection to an $m$-dimensional eigenspace of $A_1$. From a slight modification of Lemma \ref{sigmacont}(ii) we have
	\begin{equation}\label{compactrem}
	\begin{aligned}
	\big\|(I-P_m^1)S_1(t)\Psi^0_2\big(x, y\big)\big\|^2_{\mathscr{L}(\h)}&\leq C\|\Psi_2^{0}(x,y)\big\|_{\mathscr{L}(\h)}(t-s)^\rho e^{-\frac{\lambda t}{2}}\sum_{j=m+1}^{\infty} a^{-\rho}_{2,j}\\&\leq C(t-s)^\rho e^{-\frac{\lambda t}{2}}\sum_{j=m+1}^{\infty} a^{-\rho}_{2,j},
	\end{aligned}
	\end{equation}
	for some $\rho>1/2$. The last term on the right-hand side is the tail of a convergent sum. Thus, for fixed $t>0$, the operator   $u\mapsto S_1(t)\Psi^0_2\big(x, y\big)u$ is a uniform limit of finite-dimensional operators, hence a compact operator. As such, it is continuous from the weak topology of $\h$ to the norm topology of $\h$ and for each $k\in\N$ the real-valued map
	\begin{equation*}
	(s,y,u_2)\longmapsto\blangle S_1(t-s)\Psi^0_2\big(\bar{X}(s), y\big)u_2, e_{1,k}       \brangle_\h
	\end{equation*}
	is continuous in the WWNS topology on $\h\times\h\times\h\times[0,T]$. Appealing to the Skorokhod representation theorem once again, there exists $n(\epsilon)\in\N$ such that $P_{n(\epsilon)}^{\epsilon,\Delta}$ converges weakly to $P_i$ as $\epsilon\to0$ with probability $1$. Combining this with the uniform integrability of $P_{n(\epsilon)}^{\epsilon,\Delta}$ (see Lemma \ref{UILem}), we have that for each $m\in\N$,
	\begin{equation*}
	\begin{aligned}
	\bigg\|&\int_{\h\times\h\times\h\times[0,t] }P^1_mS_1(t-s)\Psi^0_2\big(\bar{X}(s), y\big)u_2 dP^{\epsilon,\Delta}_n(u_1,u_2,y,s) \\&-\int_{\h\times\h\times\h\times[0,t] }P^1_mS_1(t-s)\Psi^0_2\big(\bar{X}(s), y\big)u_2 dP_i(u_1,u_2,y,s)\bigg\|^2_{\h}\\&
	=\sum_{k=1}^{m}\bigg(   \int_{\h\times\h\times\h\times[0,t] }\blangle S_1(t-s)\Psi^0_2\big(\bar{X}(s), y\big)u_2, e_{1,k}\brangle_\h dP^{\epsilon,\Delta}_n(u_1,u_2,y,s) \\&-\int_{\h\times\h\times\h\times[0,t] }\blangle S_1(t-s)\Psi^0_2\big(\bar{X}(s), y\big)u_2, e_{1,k}\brangle_\h  dP_i(u_1,u_2,y,s)    \bigg)^2\longrightarrow 0
	\end{aligned}
	\end{equation*}
	as $\epsilon\to 0$. Finally, we use \eqref{compactrem}, \eqref{psinorm} to show that the remainders
	\begin{equation*}
	\bigg\|\int_{\h\times\h\times\h\times[0,t] }(I-P^1_m)S_1(t-s)\Psi^0_2\big(\bar{X}(s), y\big)u_2 dP^{\epsilon,\Delta}_n(u_1,u_2,y,s)\bigg\|^2_\h
	\end{equation*}
	are uniformly bounded in $\epsilon,t,n$ and small as $m\to\infty$. The proof is complete. 	\end{proof}

\noindent To conclude this section, we combine Lemma \ref{epsilondeplem}, Lemma \ref{Kolmocontlem} and Lemma \ref{controlocculem} to obtain the following, regarding the limiting behavior of the term $IV^{\epsilon,u}$ in \eqref{etaspacetimedec}:

 \begin{prop}\label{controlim} Let $i=1,2,\gamma_i$ as in \eqref{gammai} and $T<\infty$. Assume that the pair $(\eta^{\epsilon,u^\epsilon}, P^{\epsilon,\Delta})$ converges in distribution, in Regime $i$, to $(\eta_i,P_i)$ in $C([0,T];\h)\times\mathscr{P}(\h\times\h\times\h\times[0,T])$. Then  there exists $n=n(\epsilon)>0$ such that the following limit is valid with probability $1$:
	\begin{equation*}
	\begin{aligned}
	\lim_{\epsilon\to 0}\sup_{t\in[0, T] }\bigg\|\frac{\sqrt{\delta}}{\sqrt{\epsilon}}\int_{0}^{t}&S_1(t-s)\Psi^\epsilon_2\big(\bar{X}(s),Y_n^{\epsilon,u}(s)\big)  u_2^\epsilon(s) dz\\&-\gamma_i\ \int_{\h\times\h\times\h\times[0,t] }S_1(t-s)\Psi^0_2\big(\bar{X}(s), y\big)u_2 dP_i(u_1,u_2,y,s)\bigg\|_{\h}= 0.
	\end{aligned}
	\end{equation*}
\end{prop}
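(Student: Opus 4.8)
The plan is to reduce the statement entirely to the approximation lemmas proved above, after disposing of the prefactor $\sqrt{\delta}/\sqrt{\epsilon}$ according to the regime. First I would invoke \eqref{IVlim}: for each $\epsilon>0$ there is an integer $n(\epsilon)$ and an $\epsilon_0>0$ such that for $\epsilon<\epsilon_0$
\begin{equation*}
\ex\sup_{t\in[0,T]}\sup_{\chi\in B_\h}\Big| IV^{\epsilon,u}(0,t,0,\chi)-\tfrac{\sqrt{\delta}}{\sqrt{\epsilon}}\int_0^t\blangle S_1(t-z)\Psi^\epsilon_2(\bar{X}(z),Y^{\epsilon,u}_{n(\epsilon)}(z))u_{2,n(\epsilon)}(z),\chi\brangle_\h\,dz\Big|<\epsilon,
\end{equation*}
so it suffices to identify the limit of $\frac{\sqrt{\delta}}{\sqrt{\epsilon}}\int_0^\cdot S_1(\cdot-z)\Psi^\epsilon_2(\bar{X}(z),Y^{\epsilon,u}_{n}(z))u_{2,n}(z)\,dz$ with $n=n(\epsilon)$. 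Since $\|u_{2,n}-u_2\|_{L^2([0,T];\h)}\to0$ $\pr$-a.s.\ and $\|\Psi^\epsilon_2(x,y)\|_{\mathscr{L}(\h)}\le C/\ell$ uniformly in $x,y,\epsilon$ by \eqref{psinorm}, replacing $u_{2,n}$ by $u_2$ costs at most $\frac{\sqrt{\delta}}{\sqrt{\epsilon}}\,\frac{C}{\ell}\,\sqrt{T}\,\|u_{2,n}-u_2\|_{L^2([0,T];\h)}$, which vanishes because $\sqrt{\delta}/\sqrt{\epsilon}$ is bounded in both regimes. Thus I am reduced to analyzing $\frac{\sqrt{\delta}}{\sqrt{\epsilon}}\int_0^\cdot S_1(\cdot-z)\Psi^\epsilon_2(\bar{X}(z),Y^{\epsilon,u}_{n(\epsilon)}(z))u_2(z)\,dz$.

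In Regime $1$ one has $\gamma_1=0$ and $\sqrt{\delta}/\sqrt{\epsilon}\to0$; combining the uniform operator bound $\|\Psi^\epsilon_2\|_{\mathscr{L}(\h)}\le C/\ell$, the contractivity of $S_1$ and the Cauchy--Schwarz bound $\int_0^t\|u_2(z)\|_\h\,dz\le\sqrt{T}\,\|u_2\|_{L^2([0,T];\h)}\le\sqrt{T}N$, the displayed quantity is $O(\sqrt{\delta}/\sqrt{\epsilon})\to0$, which matches the right-hand side $\gamma_1\int(\cdots)\,dP_1=0$. This settles Regime $1$ immediately.

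In Regime $2$ I would write $\sqrt{\delta}/\sqrt{\epsilon}=\gamma+o(1)$; the $o(1)$ piece vanishes by the same uniform bound, so it remains to show, $\pr$-a.s.,
\begin{equation*}
\sup_{t\in[0,T]}\Big\|\int_0^t S_1(t-z)\Psi^\epsilon_2(\bar{X}(z),Y^{\epsilon,u}_{n(\epsilon)}(z))u_2(z)\,dz-\int_{\h\times\h\times\h\times[0,t]}S_1(t-s)\Psi^0_2(\bar{X}(s),y)u_2\,dP_i(u_1,u_2,y,s)\Big\|_\h\longrightarrow0,
\end{equation*}
after which multiplying by $\gamma=\gamma_2$ gives the claim. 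Here I would chain three reductions through the triangle inequality: (a) Lemma \ref{epsilondeplem} replaces $\Psi^\epsilon_2$ by $\Psi^0_2$ (its conclusion $\sup_{x,y}\|\Psi^\epsilon_2-\Psi^0_2\|_{\mathscr{L}(\h)}\to0$ is uniform in $n$, hence usable with $n=n(\epsilon)$); (b) the first limit \eqref{Psiave1} of Lemma \ref{controlocculem}, whose proof already absorbs the freezing estimate of Lemma \ref{Kolmocontlem}, identifies $\int_0^t S_1(t-z)\Psi^0_2(\bar{X}(z),Y^{\epsilon,u}_{n}(z))u_2(z)\,dz$ with $\int_{\h^3\times[0,t]}S_1(t-s)\Psi^0_2(\bar{X}(s),y)u_2\,dP^{\epsilon,\Delta}_n$ up to a vanishing error; (c) the second limit \eqref{Psiave2} of the same lemma passes from the projected occupation measures $P^{\epsilon,\Delta}_{n(\epsilon)}$ to their common weak limit $P_i$.

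I expect the main difficulty to be organizational rather than analytic: one must fix a single sequence $n(\epsilon)\to\infty$ that simultaneously makes the remainder in \eqref{IVlim} small, makes $P^{\epsilon,\Delta}_{n(\epsilon)}$ and $P^{\epsilon,\Delta}$ share the weak limit $P_i$ (using the Lipschitz-function characterization of weak convergence and the $L^2$-bound \eqref{yapriori} on $Y^{\epsilon,u}$), and is compatible with Lemmas \ref{epsilondeplem} and \ref{controlocculem}. The substantive work is concentrated inside Lemma \ref{controlocculem}, where the compactness of $S_1(t)\Psi^0_2(x,y)$ — obtained from the Hilbert--Schmidt tail bound \eqref{compactrem} together with $\|\Psi^0_2\|_{\mathscr{L}(\h)}\le C/\ell$ — is precisely what upgrades weak convergence of $P^{\epsilon,\Delta}_{n(\epsilon)}$ to norm convergence of the corresponding integrals; the a.s.\ statement follows from the Skorokhod coupling already in force.
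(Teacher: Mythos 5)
Your proposal is correct and follows essentially the same route as the paper: the paper's proof of this proposition is precisely the chain you describe, namely the reduction via \eqref{IVlim}, the replacement of $u_{2,n}$ by $u_2$ using the uniform bound on $\Psi_2^\epsilon$ from \eqref{psinorm}, and then the combination of Lemma \ref{epsilondeplem}, Lemma \ref{Kolmocontlem} and Lemma \ref{controlocculem}, with the choice of $n(\epsilon)$ made so that $P^{\epsilon,\Delta}_{n(\epsilon)}$ and $P^{\epsilon,\Delta}$ share the limit $P_i$. Your explicit splitting of Regimes 1 and 2 for the prefactor $\sqrt{\delta}/\sqrt{\epsilon}\to\gamma_i$ is a harmless elaboration of what the paper leaves implicit in the definition of $\gamma_i$.
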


\subsection{Proof of Theorem \ref{viablim1}}\label{aveproof}

Let $i=1,2$. In this section we will show that the limiting pair $(\eta_i, P_i)$ in Regime $i$ is, with probability $1$, a viable pair in $\mathcal{V}_{(\Xi_i, \mu^{\bar{X}})}$. In particular, we shall show that  $(\eta_i, P_i)$ satisfies (i), (ii) and (iii) in Definition \eqref{viable}.

First, note that Propositions \ref{linlim},  \ref{IIlim}, \ref{controlim} from Section \ref{limpnts}, along with \eqref{IVlim}, imply that any sequence in $\{(\eta^{\epsilon,u}, P^{\epsilon, \Delta}:\epsilon\in(0,1), u\in\mathcal{P}_N^T\}$ has a subsequence that converges in distribution to a pair $(\eta_i, P_i)$. This pair satisfies the integral equation
\begin{equation*}
\small
\begin{aligned}
\eta_i(t)&= \int_{\h\times\h\times\h\times[0,t]}S_1(t-s)\bigg[  D_xF\big(\bar{X}(s),y\big)\eta_i(t)+ \Sigma(\bar{X}(s),y)u_1+\gamma_i\Psi^0_2\big(\bar{X}(s), y\big)u_2\bigg] dP_i(u_1,u_2,y,s)\\&=
\int_{\h\times\h\times\h\times[0,t]}S_1(t-s)\Xi_i\big( \eta_i(s),\bar{X}(s),y, u_1,u_2\big) dP_i(u_1,u_2,y,s)
\end{aligned}
\end{equation*}
\noindent with probability 1. Hence, $(\eta_i, P_i)$ satisfies \eqref{viable3}. As for \eqref{viable1}, the weak convergence of $P^{\epsilon,\Delta}$ to $P_i$ along with the uniform integrability of $P^{\epsilon,\Delta}$ (Lemma \ref{UILem}) imply the square integrability of the measures $P_i$.

Regarding \eqref{viableleb}, note that this property holds at the prelimit level. Since the map $t\mapsto  P_i(\h \times \h\times\h \times[0, t])$ is continuous and $P_i(\h \times \h\times\h \times\{ t\})=0$ the result follows as in the finite-dimensional case (see \cite{dupuis2012large}).

Finally, we verify the decomposition \eqref{viable2}. For this it suffices to show that the third and fourth marginals of $P_i$ are given by the product $d\mu^{\bar{X}(t)}\times dt$ of the local invariant measure and Lebesgue measure. Indeed, we shall show that for any $f\in C_b(\h)$,
\begin{equation*}
\int_{\h\times\h\times\h\times [0,T]}f(y)dP_i(u_1,u_2,y,t)=\int_{0}^{T}\int_{\h}f(y)d\mu^{\bar{X}(t)}(y)dt.
\end{equation*}

To this end, let $\tilde{Y}^{\epsilon}_u$ denote the uncontrolled fast process depending on the controlled slow process $X^{\epsilon,u}$, i.e. $\tilde{Y}^{\epsilon}_u$ solves

\begin{equation*}
d\tilde{Y}^{\epsilon}_u(t)= \frac{1}{\delta}\big[A_2 \tilde{Y}^{\epsilon}_u(t)+ G\big( X^{\epsilon,u}(t),\tilde{Y}^{\epsilon}_u(t)\big)\big]dt +\frac{1}{\sqrt{\delta}}\; dw_2(t)\;,
\tilde{Y}_u^\epsilon(0)=y_0.
\end{equation*}

\noindent The following lemma, whose proof is deferred to the end of this section, shows that the process $\tilde{Y}^{\epsilon}_u(t)$ is close to the controlled fast process $Y^{\epsilon,u}$ in an appropriate ergodic sense.

\begin{lem}\label{fastergodiclem} Let $T<\infty, u\in\mathcal{P}_N^T$ and $\Delta=\Delta(\epsilon)>0$ as in Definition \ref{Delta1}. Then
	\begin{equation}\label{fastergodic}
	\frac{1}{\Delta}\ex\int_{0}^{T}\big\| Y^{\epsilon,u}(t)-\tilde{Y}_u^{\epsilon}(t)\big\|^2_\h dt\leq C_{T,g}\frac{\delta h^2(\epsilon)}{\Delta}\longrightarrow 0\;,\text{as}\;\epsilon\to 0.
	\end{equation}
\end{lem}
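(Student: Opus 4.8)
## Proof proposal for Lemma \ref{fastergodiclem}

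The plan is to estimate the difference $\Gamma^{\epsilon,u}(t):=Y^{\epsilon,u}(t)-Y_u^\epsilon(t)$ directly via an energy estimate, exactly in the spirit of the proof of Lemma \ref{Yprebnd}. Subtracting the mild (or, better, the weak/differential) formulations of the two equations, the two stochastic convolutions cancel and the initial conditions coincide ($Y^{\epsilon,u}(0)=Y_u^\epsilon(0)=y_0$), so with probability one $\Gamma^{\epsilon,u}$ has weakly differentiable paths solving
\begin{equation*}
\partial_t\Gamma^{\epsilon,u}(t)=\frac{1}{\delta}\big[A_2\Gamma^{\epsilon,u}(t)+G\big(X^{\epsilon,u}(t),Y^{\epsilon,u}(t)\big)-G\big(X^{\epsilon,u}(t),Y_u^\epsilon(t)\big)\big]+\frac{h(\epsilon)}{\sqrt\delta}u_2(t)
\end{equation*}
in a weak sense, with $\Gamma^{\epsilon,u}(0)=0$. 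First I would pair with $\Gamma^{\epsilon,u}(t)$ in $\h$ to get
\begin{equation*}
\frac12\partial_t\|\Gamma^{\epsilon,u}(t)\|_\h^2=\frac{1}{\delta}\blangle A_2\Gamma^{\epsilon,u}(t),\Gamma^{\epsilon,u}(t)\brangle_\h+\frac{1}{\delta}\blangle G(X^{\epsilon,u},Y^{\epsilon,u})-G(X^{\epsilon,u},Y_u^\epsilon),\Gamma^{\epsilon,u}\brangle_\h+\frac{h(\epsilon)}{\sqrt\delta}\blangle u_2(t),\Gamma^{\epsilon,u}(t)\brangle_\h.
\end{equation*}

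Next, for the first term I invoke Hypothesis \ref{A1c} to bound $\blangle A_2\Gamma^{\epsilon,u},\Gamma^{\epsilon,u}\brangle_\h\leq-\lambda\|\Gamma^{\epsilon,u}\|_\h^2$, and for the second the $L_g$-Lipschitz continuity of $G$ in $y$ (Hypothesis \ref{A2b}) gives $\blangle G(X^{\epsilon,u},Y^{\epsilon,u})-G(X^{\epsilon,u},Y_u^\epsilon),\Gamma^{\epsilon,u}\brangle_\h\leq L_g\|\Gamma^{\epsilon,u}\|_\h^2$. The crucial gain over Lemma \ref{Yprebnd} is that here there is no inhomogeneous $G$-term and no stochastic-convolution cross term, so the only forcing is the control. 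For the control term I apply Young's inequality, $\frac{h(\epsilon)}{\sqrt\delta}\blangle u_2,\Gamma^{\epsilon,u}\brangle_\h\leq\frac{\lambda-L_g}{2\delta}\|\Gamma^{\epsilon,u}\|_\h^2+\frac{h^2(\epsilon)}{2(\lambda-L_g)}\|u_2(t)\|_\h^2$, so that with $\ell=(\lambda-L_g)/2>0$ I obtain
\begin{equation*}
\frac12\partial_t\|\Gamma^{\epsilon,u}(t)\|_\h^2\leq-\frac{\ell}{\delta}\|\Gamma^{\epsilon,u}(t)\|_\h^2+\frac{h^2(\epsilon)}{2(\lambda-L_g)}\|u_2(t)\|_\h^2.
\end{equation*}
Integrating from $0$ (using $\Gamma^{\epsilon,u}(0)=0$) and dropping the negative term yields $\int_0^T\|\Gamma^{\epsilon,u}(t)\|_\h^2\,dt\leq\frac{\delta h^2(\epsilon)}{2\ell(\lambda-L_g)}\int_0^T\|u_2(t)\|_\h^2\,dt$; taking expectation and using $u_2\in\mathcal{P}_N^T$ (so $\int_0^T\|u_2(t)\|_\h^2\,dt\leq N$ a.s.) gives $\ex\int_0^T\|\Gamma^{\epsilon,u}(t)\|_\h^2\,dt\leq C_{T,g}\,\delta h^2(\epsilon)$. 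Dividing by $\Delta$ produces the stated bound $C_{T,g}\,\delta h^2(\epsilon)/\Delta$, which tends to $0$ as $\epsilon\to0$ by the second condition in \eqref{Delta1}.

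The routine calculations here are genuinely routine; the only point that requires a little care — and what I'd flag as the main (modest) obstacle — is justifying the energy identity itself, i.e. that $\Gamma^{\epsilon,u}$ has the claimed weak differentiability and that the chain rule $\frac12\partial_t\|\Gamma^{\epsilon,u}(t)\|_\h^2=\blangle\partial_t\Gamma^{\epsilon,u}(t),\Gamma^{\epsilon,u}(t)\brangle_\h$ is legitimate despite $A_2$ being an unbounded operator. This is handled exactly as in the proof of Lemma \ref{Yprebnd}: since the two stochastic convolutions are identical and cancel, $\Gamma^{\epsilon,u}$ is the difference of two processes each of the form ``stochastic convolution plus a path with weakly differentiable remainder'', so $\Gamma^{\epsilon,u}$ itself has weakly differentiable paths (the noise having been removed), and one argues by a standard approximation (e.g. projecting onto finite-dimensional eigenspaces of $A_2$, applying the identity there, and passing to the limit using $\blangle A_2\Gamma,\Gamma\brangle_\h\leq0$ for the uniform bound). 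Alternatively one can bound $\|\Gamma^{\epsilon,u}(t)\|_\h$ directly from the mild formulation using \eqref{S2decay}, the Lipschitz bound on $G$, Grönwall, and the estimate $\frac1\delta\int_0^t e^{-\lambda(t-s)/\delta}\|u_2(s)\|_\h\,ds$ controlled via Cauchy--Schwarz by $C\delta^{-1/2}\|u_2\|_{L^2}$; this avoids the chain-rule subtlety entirely at the cost of slightly messier constants. Either route delivers \eqref{fastergodic}.
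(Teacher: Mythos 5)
Your proposal is correct and follows essentially the same route as the paper: the paper also sets $\Gamma^{\epsilon,u}=Y^{\epsilon,u}-Y^{\epsilon}_u$, notes (as in Lemma \ref{Yprebnd}) that the stochastic convolutions cancel so that $\Gamma^{\epsilon,u}$ has weakly differentiable paths, and runs the same energy estimate with the dissipativity of $A_2$, the $L_g$-Lipschitz bound on $G$, and Young's inequality on the control term to get $\int_0^T\|\Gamma^{\epsilon,u}(t)\|_\h^2\,dt\leq C\,\delta h^2(\epsilon)$, after which \eqref{Delta1} yields the limit.
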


\noindent Similarly, for $s\geq t$, we can define the two parameter process $Y^{\epsilon, X^{\epsilon,u}(t)}(s;t)$ solving
\begin{equation*}
\begin{aligned}
&dY^{\epsilon, X^{\epsilon,u}(t)}(s;t)= \frac{1}{\delta}\big[A_2Y^{\epsilon, X^{\epsilon,u}(t)}(s;t)+ G\big( X^{\epsilon,u}(t), Y^{\epsilon, X^{\epsilon,u}(t)}(s;t)\big)\big]ds +\frac{1}{\sqrt{\delta}}\; dw_2(s)\;,
\\&Y^{\epsilon,X^{\epsilon,u}(t)}(t;t)=Y^{\epsilon}(t)
\end{aligned}
\end{equation*}
and show that for any $t>0$ there exists $\epsilon_0(t)>0$ such that for all $\epsilon<\epsilon_0$ we have

\begin{equation}\label{fastergodic1}
\frac{1}{\Delta}\ex\int_{t}^{t+\Delta}\big\| \tilde{Y}_u^{\epsilon}(t) dt-Y^{\epsilon,X^{\epsilon,u}(t)}(s;t)\big\|^2_\h ds\leq C_{t,\epsilon},
\end{equation}
with $\Delta$ as in \eqref{Delta1} and for each fixed $t>0$,  $C_{t,\epsilon}\rightarrow 0$ as $\epsilon\to0$. This shows that, in small time intervals, we can consider the effect of $X^{\epsilon,u}$ as frozen.

In view of \eqref{fastergodic} and \eqref{fastergodic1} we can now apply Lemma 4.19 from \cite{WSS} to show that, for any $f\in C_b(\h)$,
\begin{equation*}
\int_{\h\times\h\times\h\times [0,T]}f(y)dP_i(u_1,u_2,y,t)=\int_{0}^{T}\int_{\h}f(y)d\mu^{\bar{X}(t)}(y)dt.
\end{equation*}
This completes the proof of the decomposition \eqref{viable2}. Let us now conclude this section with the proof of Lemma \ref{fastergodiclem}.
\begin{proof}[Proof of Lemma \ref{fastergodiclem}]
	Let $\Gamma^{\epsilon,u}= Y^{\epsilon,u}-\tilde{Y}_u^{\epsilon}$. This process has weakly differentiable paths and solves the equation
	\begin{equation*}
	\partial_t\Gamma^{\epsilon,u}(t)= \frac{1}{\delta}\big[ A_2
	\Gamma^{\epsilon,u}(t)+ G(X^{\epsilon,u}(t), \tilde{Y}_u^{\epsilon}(t))-G(X^{\epsilon,u}(t), Y^{\epsilon}(t))\big]+\frac{h(\epsilon)}{\sqrt{\delta}}u_2(t),\;	\Gamma^{\epsilon,u}(0)=0_\h\;.
	\end{equation*}
	As in Lemma \ref{Yprebnd} we have
	\begin{equation*}
	\begin{aligned}
	\frac{1}{2}\partial_t\|\Gamma^{\epsilon,u}(t)\|^2_{\h}&\leq \frac{L_g-\lambda}{\delta}\|\Gamma^{\epsilon,u}(t)\|^2_{\h}+\frac{h(\epsilon)}{\sqrt{\delta}}\|\Gamma^{\epsilon,u}(t)\|_\h\|u_2(t)\|_\h\\&
	\leq \frac{L_g-\lambda}{2\delta}\|\Gamma^{\epsilon,u}(t)\|^2_{\h}+\frac{h^2(\epsilon)}{c_g}\|u_2(t)\|^2_\h.
	\end{aligned}
	\end{equation*}
	Integrating yields
	\begin{equation*}
	\begin{aligned}
	\frac{1}{2}\sup_{t\in[0,T]}\|\Gamma^{\epsilon,u}(t)\|^2_{\h}+\frac{\lambda-L_g}{2\delta}\int_{0}^{T}\|\Gamma^{\epsilon,u}(t)\|^2_{\h}dt\leq \frac{h^2(\epsilon)}{c_g}\int_{0}^{T}\|u_2(t)\|^2_\h dt\leq \frac{Nh^2(\epsilon)}{c_g}\;.
	\end{aligned}
	\end{equation*}
	The latter completes the proof, since it implies $	\int_{0}^{T}\|\Gamma^{\epsilon,u}(t)\|^2_{\h}dt\leq C_{g,N}\delta h^2(\epsilon).$
\end{proof}

	\section{Proof of the Moderate Deviation Principle}\label{MDPsec}

This section is devoted to the proof of Theorem \ref{MDP1}. Recall from Section \ref{weakconv} that the  MDP for the family $\{X^\epsilon\;,\epsilon>0\}$ of slow processes is equivalent to an LDP for the family $\{\eta^\epsilon\;,\epsilon>0\}$ with speed $h^2(\epsilon)$.

In Section \ref{LPup} we use the variational representation \eqref{varrep} to show that, in Regime $i=1,2$,  $\{\eta^\epsilon\;,\epsilon>0\}$ satisfies the Laplace Principle upper bound with rate function
\begin{equation}
\label{ratefun}
\small
\begin{aligned}
&\mathcal{S}_i(\phi):=\inf_{(\phi,P)\in\mathcal{V}_{(\Xi_i, \mu^{\bar{X}})} }\bigg[\frac{1}{2}\int_{\h\times\h\times\h\times[0,T]}\big( \|u_1 \|^2_\h+ \|u_2 \|^2_\h\big)\;dP(u_1,u_2,y,t)\bigg] \;\;,\phi\in C\big([0,T];\h\big),
\end{aligned}
\end{equation}
where $\Xi_i$ is given in \eqref{xidef1} and the infimum runs over the family $\mathcal{V}_{(\Xi_i, \mu^{\bar{X}})}$ of viable pairs (Definition \ref{viable}). The upper bound is a straightforward consequence of Theorem \ref{viablim1} and the Portmanteau lemma.

The Laplace Principle lower bound in Regime $i$ is proved in Section \ref{LPlo}. The situation for the  lower bound is more complicated, as we have to construct nearly optimal controls that achieve the bound. To do so, we take advantage of the affine structure  of the limiting dynamics, captured by $\Xi_i$, to express the rate function in an explicit, non-variational form (\ref{nonvar}). This allows us to construct  nearly optimal controls which, in principle, depend on the fast process in feedback form, but have sufficient regularity properties for the averaging principle to hold.

Finally, we verify in Section \ref{levelsets} that the rate function has compact sublevel sets. This guarantees that the LDP is equivalent to the LP and completes the analysis.

Note that throughout Section \ref{LPlo} we switch from Hypothesis \ref{A3a} to the stronger Hypothesis \ref{A3a'}. The reasons for this will become clear below.

\subsection{Laplace Principle upper bound}\label{LPup}

We aim to prove that for $T<\infty$ and any bounded, continuous $\Lambda: C\big([0,T];\h\big)\rightarrow \R$,
\begin{equation}
\label{Laplupper}
\begin{aligned}
&\limsup_{\epsilon\to 0}\frac{1}{h^2(\epsilon)}\log\ex\big[ e^{-h^2(\epsilon)\Lambda(\eta^\epsilon)}\big]\leq -\inf_{\phi\in C([0,T];
	\h)}\big[ \mathcal{S}_i(\phi)+\Lambda(\phi) \big]\;, i=1,2.
\end{aligned}
\end{equation}

\noindent It suffices to verify the above limit along any convergent subsequence in $\epsilon$. Such a subsequence exists since, for $\epsilon$ small enough,
$$\bigg|\frac{1}{h^2(\epsilon)}\log\ex\big[ e^{-h^2(\epsilon)\Lambda(\eta^\epsilon)}\big]\bigg|\leq \sup_{\phi\in C([0,T];\h)}\big|\Lambda(\phi)\big|. $$

\noindent Next let $\rho>0$. In view of the variational representation \eqref{varrep}, it follows that for each $\epsilon>0$ there exists a family of controls $\{(u_1^\epsilon, u_2^\epsilon) \}_{\epsilon>0}\subset\mathcal{P}^T(\h\oplus\h)$ such that
\begin{equation}\label{preup}
\frac{1}{h^2(\epsilon)}\log\ex\big[ e^{-h^2(\epsilon)\Lambda(\eta^\epsilon)}\big]\leq -\ex\bigg[\frac{1}{2} \int_{0}^{T}  \big(\|u_1^\epsilon(t) \|^2_\h+ \|u_2^\epsilon(t) \|^2_\h\big)\;dt+\Lambda\big(    \eta^{\epsilon, u^\epsilon} \big)\bigg]+\rho.
\end{equation}
\noindent In fact, we can assume without loss of generality that $\{(u_1^\epsilon,u_2^\epsilon)\}_{\epsilon>0}\subset\mathcal{P}_N^T(\h\oplus\h)$ for $N=N(\rho)$ large enough (see \cite{budhiraja2008large} and \cite{budhiraja2000variational},p.22). Using this family of controls and the associated controlled moderate deviations processes $\eta^{\epsilon,u^\epsilon}$ we can define occupation measures $P^{\epsilon,\Delta}$ and, from Theorem \ref{viablim1}, the family $\{  (\eta^{\epsilon,u^\epsilon},P^{\epsilon,\Delta}),\epsilon,\Delta>0\}$ is tight. From the same theorem, any sequence of $\epsilon$,  contains  a further subsequence for which $(\eta^{\epsilon,u^\epsilon},P^{\epsilon,\Delta})$ converges in distribution, in Regime $i$, to a viable pair $(\eta_i, P_i)\in\mathcal{V}_{(\Xi_i, \mu^{\bar{X}})}$. Taking limits along this subsequence in \eqref{preup} yields
\begin{equation*}
\small
\begin{aligned}
\limsup_{\epsilon\to 0}\frac{1}{h^2(\epsilon)}\log\;\ex\big[& e^{-h^2(\epsilon)\Lambda(\eta^\epsilon)}\big]\leq \limsup_{\epsilon\to 0}-\ex\bigg[\frac{1}{2} \int_{0}^{T} \frac{1}{\Delta}\int_{t}^{t+\Delta} \big(\|u_1^\epsilon(s) \|^2_\h+ \|u_2^\epsilon(t) \|^2_\h\big)\;ds dt+\Lambda\big(    \eta^{\epsilon, u^\epsilon} \big)\bigg]+\rho\\&
= -\liminf_{\epsilon\to 0}\ex\bigg[\frac{1}{2} \int_{\h\times\h\times\h\times[0,T]} \big(\|u_1 \|^2_\h+ \|u_2 \|^2_\h\big)\;dP^{\epsilon,\Delta}(u_1,u_2,y,t)+\Lambda\big(    \eta^{\epsilon, u^\epsilon} \big)\bigg]+\rho.
\end{aligned}
\end{equation*}
\noindent Since the map $$ \mathscr{P}\big(\h\times\h\times\h\times[0,T]\big)\ni \nu\longmapsto \int_{\h\times\h\times\h\times[0,T]}\big(\|u_1\|^2_\h+\|u_2\|^2_\h\big) d\nu(u_1,u_2,y,t)\in\R$$ is nonnegative and lower semi-continuous, we  use the Portmanteau lemma to obtain
\begin{equation*}
\begin{aligned}
\limsup_{\epsilon\to 0}\frac{1}{h^2(\epsilon)}\log&\ex\big[ e^{-h^2(\epsilon)\Lambda(\eta^\epsilon)}\big]\leq
-\ex\bigg[\frac{1}{2}\int_{\h\times\h\times\h\times[0,T]}\big(\|u_1\|^2_\h+\|u_2\|^2_\h\big)\;dP_i(u_1,u_2,y,t)+\Lambda(\eta_i)\bigg]+\rho\\&\leq
-\inf_{(\phi,P)\in\mathcal{V}_{(\Xi_i, \mu^{\bar{X}})} }\bigg[ \frac{1}{2}\int_{\h\times\h\times\h\times[0,T]}\big(\|u_1\|^2_\h+\|u_2\|^2_\h\big)\;dP(u_1,u_2,y,t)+\Lambda(\phi)\bigg]+\rho.
\end{aligned}
\end{equation*}
	Since $\rho>0$ is arbitrary, the proof of \eqref{Laplupper} is complete.

\subsection{Laplace Principle lower bound}\label{LPlo}

Assume Hypotheses \ref{A3a'} and \ref{A3b}. We aim to prove that for $T<\infty$ and any bounded, continuous $\Lambda: C\big([0,T];\h\big)\rightarrow \R$
\begin{equation}
\label{Laplower}
\liminf_{\epsilon\to 0}\frac{1}{h^2(\epsilon)}\log\ex\big[ e^{-h^2(\epsilon)\Lambda(\eta^\epsilon)}\big]\geq-\inf_{\phi\in C([0,T];
	\h)}\big[ \mathcal{S}_i(\phi)+\Lambda(\phi) \big]\;, i=1,2.
\end{equation}
From our definition of viable pairs and Theorem \ref{aveproof} we see that the third marginal of the invariant measure $P$ does not depend on the control variables $u_1,u_2$ and is in fact given by the local invariant measure $\mu^x$. This decoupling is further exploited in the following lemma, which allows to rewrite the rate function $\mathcal{S}_i$ (see \eqref{ratefun}) in a convenient ordinary control formulation.
\begin{lem}\label{ordformlem}
	With $i=1,2$ and $\Xi_i, \mu^x$ as in Theorem \ref{MDP1}, let \begin{equation*}
	\begin{aligned}
	\mathscr{A}^r_{i,\psi,T}=&\bigg\{ P: [0,T]\longrightarrow\mathscr{P}(\h\times\h\times\h) : P_t(B_1\times B_2\times B_3)=\int_{B_3}\nu(B_1\times B_2|y,t)d\mu^{\bar{X}(t)}(y)\;,  \\&\int_{0}^{T}\int_{\h\times\h\times\h}\big(\|u_1\|^2_\h+\|u_2\|^2_\h+\|y\|^2_{H^\theta}\big) dP_s(u_1,u_2,y)ds<\infty\;\;\text{for some}\;\theta>0,\\&
	\psi(t)=\int_{0}^{t}\int_{\h\times\h\times\h}S_1(t-s)\Xi_i\big(\bar{X}(s), \psi(s), y, u_1,u_2\big) dP_s(u_1,u_2,y)ds\bigg\}
	\end{aligned}
	\end{equation*}
	and
	\begin{equation*}
	\begin{aligned}
	\mathscr{A}^o_{i,\psi,T}=&\bigg\{ (u_1,u_2): [0,T]\times\h\longrightarrow\h \times\h: \\& \int_{0}^{T}\int_{\h}\big(\|u_1(s,y)\|^2_\h+\|u_2(s,y)\|^2_\h+\|y\|^2_{H^\theta}\big) d\mu^{\bar{X}(s)}(y)ds<\infty\;\;\text{for some}\;\theta>0,\\&
	\psi(t)=\int_{0}^{t}\int_{\h}S_1(t-s)\Xi_i\big(\bar{X}(s), \psi(s), y, u_1(s,y), u_2(s,y)\big) d\mu^{\bar{X}(s)}(y)ds\bigg\}
	\end{aligned}
	\end{equation*}
	\noindent (the superscripts $r,o$ refer to the relaxed and ordinary control formulations respectively). For $\psi\in C\big([0,T];\h\big)$ we have
	\begin{equation}\label{ordform}
	\begin{aligned}
	\mathcal{S}_i(\psi)&=\inf_{P\in 	\mathscr{A}^r_{i,\psi,T} }\bigg[\frac{1}{2}\int_{0}^{T}\int_{\h\times\h\times\h} \big(\|u_1\|^2_\h+\|u_2\|^2_\h\big)\;dP_s(u_1,u_2,y)ds\bigg] \\&=\inf_{(u_1,u_2)\in 	\mathscr{A}^o_{i,\psi,T} }\bigg[\frac{1}{2}\int_{0}^{T}\int_{\h} \big(\|u_1(s,y)\|^2_\h+\|u_2(s,y)\|^2_\h\big)\;d\mu^{\bar{X}(s)}(y)ds\bigg].
	\end{aligned}
	\end{equation}
\end{lem}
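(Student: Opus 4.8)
The equality between the variational (viable-pair) formula for $S_i$ and the relaxed-control formulation is essentially a matter of unwinding definitions: by the decoupling \eqref{viable2} established in Theorem \ref{viablim1}, any $P\in\mathscr{P}(\h\times\h\times\h\times[0,T])$ with $(\psi,P)\in\mathcal{V}_{(\Xi,\mu^{\bar X})}$ has last marginal Lebesgue measure on $[0,T]$ and disintegrates as $P(du_1\,du_2\,dy\,dt)=\nu_t(du_1\,du_2|y)\,\mu^{\bar X(t)}(dy)\,dt$. Writing $P_t(B_1\times B_2\times B_3)=\int_{B_3}\nu_t(B_1\times B_2|y)\,d\mu^{\bar X(t)}(y)$ we get a measurable family $t\mapsto P_t$; conversely any such family assembles into a viable $P$ (the viability identity \eqref{viable3} becomes exactly the fixed-point equation defining $\mathscr{A}^r_{i,\psi,T}$, since $S_1(t-s)$ and $\Xi_i$ depend on $(s,\psi(s),\bar X(s),y,u_1,u_2)$ only, not on the joint law). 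The cost functionals agree term by term under this identification, so the first equality in \eqref{ordform} is immediate. I would spell this out in one short paragraph, citing Definition \ref{viable} and the disintegration theorem (e.g. Appendix A.5 of \cite{dupuis2011weak}).

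\textbf{The relaxed-to-ordinary reduction.} The inequality $\inf_{\mathscr{A}^o}\ge\inf_{\mathscr{A}^r}$ is trivial: given $(u_1,u_2)\in\mathscr{A}^o_{i,\psi,T}$, put $\nu_t(\cdot|y)=\delta_{(u_1(t,y),u_2(t,y))}$, which produces an element of $\mathscr{A}^r_{i,\psi,T}$ with the same cost. The substantive direction is $\inf_{\mathscr{A}^r}\ge\inf_{\mathscr{A}^o}$. Here I would argue as follows: fix $P\in\mathscr{A}^r_{i,\psi,T}$ with finite cost, disintegrate $P_t(du_1\,du_2\,dy)=\nu_t(du_1\,du_2|y)\,\mu^{\bar X(t)}(dy)$, and define the \emph{barycenters}
\begin{equation*}
\bar u_1(t,y):=\int_{\h\times\h}u_1\,\nu_t(du_1\,du_2|y),\qquad \bar u_2(t,y):=\int_{\h\times\h}u_2\,\nu_t(du_1\,du_2|y).
\end{equation*}
These are well-defined $\h$-valued Bochner integrals for $\mu^{\bar X(t)}\otimes dt$-a.e.\ $(t,y)$ by the finiteness of $\int(\|u_1\|_\h^2+\|u_2\|_\h^2)\,dP$, and $(t,y)\mapsto(\bar u_1(t,y),\bar u_2(t,y))$ is measurable. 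The key point is that $\Xi_i$ is \emph{affine} in $(u_1,u_2)$ — indeed $\Xi_i(\psi,x,y,u_1,u_2)=D_xF(x,y)\psi+\Sigma(x,y)u_1+\gamma_i\Psi^0_2(x,y)u_2$ from \eqref{xidef1} — so for each $(t,y)$,
\begin{equation*}
\int_{\h\times\h}\Xi_i\big(\psi(s),\bar X(s),y,u_1,u_2\big)\,\nu_s(du_1\,du_2|y)=\Xi_i\big(\psi(s),\bar X(s),y,\bar u_1(s,y),\bar u_2(s,y)\big),
\end{equation*}
and the linear operator $S_1(t-s)$ commutes with the integration. Hence the fixed-point equation \eqref{viable3} is preserved and $(\bar u_1,\bar u_2)\in\mathscr{A}^o_{i,\psi,T}$ with the \emph{same} $\psi$. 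Finiteness of $\int\|y\|^2_{H^\theta}\,d\mu^{\bar X(s)}\,ds$ is automatic from the moment bound on the local invariant measure (implicit in Definition \ref{viable}(i) together with the marginal identity). By Jensen's inequality $\|\bar u_j(t,y)\|_\h^2\le\int\|u_j\|_\h^2\,\nu_t(\cdot|y)$, so the cost does not increase:
\begin{equation*}
\tfrac12\int_0^T\!\!\int_\h\big(\|\bar u_1(s,y)\|_\h^2+\|\bar u_2(s,y)\|_\h^2\big)\,d\mu^{\bar X(s)}(y)\,ds\le\tfrac12\int_0^T\!\!\int_{\h\times\h\times\h}\big(\|u_1\|_\h^2+\|u_2\|_\h^2\big)\,dP_s(u_1,u_2,y)\,ds.
\end{equation*}
Taking the infimum over $P$ gives $\inf_{\mathscr{A}^r}\ge\inf_{\mathscr{A}^o}$, completing \eqref{ordform}.

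\textbf{Main obstacle.} The only genuinely delicate points are measurability and integrability of the barycenter maps. One must check that $y\mapsto\bar u_j(t,y)$ is $\mathscr{B}(\h)$-measurable and $\mu^{\bar X(t)}$-integrable for a.e.\ $t$, and that $(t,y)\mapsto\bar u_j(t,y)$ is jointly measurable so that the resulting $\psi$-integral equation makes sense in the Bochner sense; this uses separability of $\h$ and a standard measurable-selection/disintegration argument, and I would handle it by approximating $\nu_t(\cdot|y)$ through simple kernels or by invoking the measurability of barycenter maps on Polish spaces. A secondary subtlety is justifying the interchange of the (infinite-dimensional, possibly unbounded when composed with $S_1(t-s)$ for small $t-s$) integrals with $\nu_s$; but since $\Sigma(x,y)$ is bounded under Hypothesis \ref{A3a'} and $\|\Psi_2^0(x,y)\|_{\mathscr{L}(\h)}\le C/\ell$ uniformly by \eqref{psinorm}, and the $(t-s)^{-\rho}$-type singularities of $S_1$ are integrable, dominated convergence and Fubini apply without trouble. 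Everything else is bookkeeping.
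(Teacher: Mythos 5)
Your proof is correct and follows exactly the standard route that the paper itself delegates to \cite{WSS}, Section 5.2: the first equality is a definitional unwinding of the viable-pair disintegration, and the relaxed-to-ordinary reduction rests on the affineness of $\Xi_i$ in $(u_1,u_2)$, the barycenter construction, and Jensen's inequality. The measurability and integrability caveats you flag are the right ones and are handled by the standard disintegration/Bochner-integral arguments you indicate, so nothing is missing.
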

\noindent This result is standard and a proof can be found e.g. in \cite{WSS}, Section 5.2.
\noindent Proceeding to the main proof, let $\rho>0$ and $\psi\in C\big([0,T];\h\big)$ such that
\begin{equation}\label{approxmin}
\mathcal{S}_i(\psi)+\Lambda(\psi)\leq\inf_{\phi\in C([0,T];\h)}\big[ \mathcal{S}_i(\phi)+\Lambda(\phi)\big]+\rho<\infty.
\end{equation}
\noindent For each $(u_1,u_2)\in\mathscr{A}^o_{i,\psi, T}$,
\begin{equation*}
\begin{aligned}
&\psi(t)= \int_{0}^{t}\int_{\h}S_1(t-s)\Xi_i\big(\bar{X}(s), \psi(s), y, u_1(s,y),  u_2(s,y)\big) d\mu^{\bar{X}(s)}(y)ds\\&
= \int_{0}^{t}\int_{\h}S_1(t-s) D_xF\big(\bar{X}(s),y\big)\psi(s) d\mu^{\bar{X}(s)}(y)ds+ \int_{0}^{t}\int_{\h}S_1(t-s)\Sigma\big(\bar{X}(s),y\big) u_1(s,y) d\mu^{\bar{X}(s)}(y)ds
\\&+  \gamma_i\int_{0}^{t}\int_{\h}S_1(t-s) \Psi^0_2\big(\bar{X}(s),y\big)u_2(s,y) d\mu^{\bar{X}(s)}(y)ds.
\end{aligned}
\end{equation*}

\noindent Hence, $\psi$ is the mild solution of the semilinear evolution equation 	
\begin{equation}\label{limequation}
	\left \{\begin{aligned}
&\partial_t\psi(t)=  A_1\psi(t)+\overline{D_xF}\big(\bar{X}(t)\big)\psi(t)+\int_{\h}\big[\Sigma\big(\bar{X}(t),y\big)u_1(t,y)+ \gamma_i\Psi^0_2\big(\bar{X}(t),y\big)u_2(t,y)\big]d\mu^{\bar{X}(t)}(y) \\&\psi(0)=0_\h\;,
\end{aligned}\right.
\end{equation}
where \begin{equation}\label{DFaved}
\overline{D_xF}\big(\bar{X}(t)\big):=\int_{\h}D_xF\big(\bar{X}(t),y\big)d\mu^{\bar{X}(t)}(y).
\end{equation}
\noindent In view of Hypotheses \ref{A2a} and \ref{A3a'}, the maps $$t\longmapsto \int_{\h}D_xF\big(\bar{X}(t),y\big)\psi(t)d\mu^{\bar{X}(t)}(y)\;,\int_{\h}\big[\Sigma\big(\bar{X}(t),y\big)u_1(t,y)+ \gamma_i\Psi^0_2\big(\bar{X}(t),y\big)u_2(t,y)\big]d\mu^{\bar{X}(t)}(y)$$
belong to $L^2([0,T];\h)$. From standard theory of deterministic parabolic equations it follows that $\psi$ is a weak solution of \eqref{limequation} in the sense that $\psi\in H_{0}^1([0,T];\h)\cap L^2([0,T];Dom(A_1))$.

\noindent The next step is to show that $\mathcal{S}_i$ has a non-variational form. To this end, let $x\in\h$ and define $\widetilde{Q}_i(x):L^2( \h,\mu^x;\h)\oplus L^2( \h,\mu^x;\h)\rightarrow\h$ with
\begin{equation*}
\widetilde{Q}_i(x)(u_1,u_2):=\int_{\h}\big[\Sigma(x,y)u_1(y)+\gamma_i\Psi^{0}_2(x,y)u_2(y)\big]d\mu^{x}(y)\;,\; i=1,2.
\end{equation*}
Note that $\widetilde{Q}_i^*(x):\h\rightarrow L^2( \h,\mu^x;\h)\oplus L^2( \h,\mu^x;\h)$ is given by \begin{equation}
\label{Qtildestar}
\widetilde{Q}_i^*(x)v:=\big( \Sigma^*(x,y)v,  \gamma_i\Psi^{0*}_2(x,y) v  \big).
\end{equation}
Next, define $Q_i(x)\in\mathscr{L}(\h)$ by
\begin{equation}\label{Qi1}
Q_i(x):=\widetilde{Q}_i(x)\widetilde{Q}_i^*(x)=\int_{\h}\big[ \Sigma(x,y)\Sigma^*(x,y)+\gamma^2_i\Psi^0_2(x,y)\Psi^{0*}_2(x,y) \big]d\mu^x(y).
\end{equation}
We can now prove the following:

\begin{prop}\label{optimalprop}
	Under Hypothesis \ref{A3a'} the following hold:\\
	\noindent (i)  For $i=1,2$ and each $x\in\h$, $Q_i(x)$ has a bounded inverse that satisfies
	\begin{equation}\label{Qinvnorm}
	\sup_{x\in\h}\|Q^{-1}_i(x)\|_{\mathscr{L}(\h)}\leq c^{-2}_1.
	\end{equation}
	Furthermore,
	$\widetilde{Q}_i(x)$ has a bounded right inverse given by
	\begin{equation}\label{Qtildepseudo}
	\widetilde{Q}^{+}_i(x)=\widetilde{Q}^{*}_i(x)Q^{-1}_i(x).
	\end{equation}
	\noindent (ii) For $i=1,2$ and $T<\infty,$ $\mathcal{S}_i(\psi)<\infty$ if and only if $\psi\in H_{0}^1([0,T];\h)\cap L^2([0,T];Dom(A_1))$. Moreover, the infimum in \eqref{ordform} is attained and letting
		\begin{equation}\label{v1opt}
	v^i_1(t,y)=\Sigma^*\big(\bar{X}(t),y\big)Q^{-1}_i\big(\bar{X}(t)\big)\bigg(\partial_t\psi(t)-A_1\psi(t)-\overline{D_xF}\big(\bar{X}(t)\big)\psi(t)\bigg),
	\end{equation}
	\begin{equation}\label{v2opt}v^i_2(t,y)=\gamma_i\Psi^{0*}_2\big(\bar{X}(t), y\big)Q^{-1}_i\big(\bar{X}(t)\big)\bigg(\partial_t\psi(t)-A_1\psi(t)-\overline{D_xF}\big(\bar{X}(t)\big)\psi(t)\bigg)	\end{equation}
	we have
	\begin{equation*}
	(v_1^{i}, v_2^{i}    )\in\mathrm{argmin}_{(u_1,u_2)\in\mathscr{A}^o_{i,\psi,T}}\bigg\{ \int_{0}^{T} \int_{\h}\big(\|u_1(t,y)\|^2_\h +\|u_2(t,y)\|^2_\h   \big) d\mu^{\bar{X}(t)}(y)  dt    \bigg\}.
	\end{equation*}
	Hence, the rate function in Regime $i$ takes the non-variational form
	\begin{equation*}
	\hspace*{-0.8cm}
	\begin{aligned}
	& \mathcal{S}_i(\psi)=\frac{1}{2}\int_{0}^{T}\bigg\|Q_i\big(\bar{X}(t)\big)^{-\frac{1}{2}}\big[ \partial_t\psi(t)- A_1\psi(t)-\overline{D_xF}\big(\bar{X}(t)\big)\psi(t)\big]\bigg\|^2_\h dt,
	\end{aligned}
	\end{equation*}
	for $\psi\in  H_0^1([0,T];\h)\cap L^2([0,T];Dom(A_1))$ and $\mathcal{S}_i=\infty$ otherwise.
\end{prop}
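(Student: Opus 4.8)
The plan is to prove the two parts in sequence, using the uniform lower bound $\sigma\geq c_1$ from Hypothesis \ref{A3a'} as the key structural input.

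\textbf{Part (i): invertibility of $Q_i(x)$.} First I would show that $\Sigma(x,y)\Sigma^*(x,y)\geq c_1^2 I$ as a quadratic form on $\h$. Indeed, for any $v\in\h$, $\langle \Sigma(x,y)\Sigma^*(x,y)v,v\rangle_\h = \|\Sigma^*(x,y)v\|^2_\h = \int_0^L \sigma^2(\xi,x(\xi),y(\xi)) v^2(\xi)\,d\xi \geq c_1^2\|v\|^2_\h$, using that $\Sigma(x,y)$ is the multiplication operator by $\sigma(\cdot,x(\cdot),y(\cdot))$ and $\sigma\geq c_1$. Since $\gamma_i^2\Psi^0_2(x,y)\Psi^{0*}_2(x,y)\geq 0$ and $\mu^x$ is a probability measure, integrating gives $\langle Q_i(x)v,v\rangle_\h\geq c_1^2\|v\|^2_\h$ for all $v$. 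A self-adjoint (it is manifestly self-adjoint from \eqref{Qi1}), bounded operator bounded below by $c_1^2 I$ is boundedly invertible with $\|Q_i^{-1}(x)\|_{\mathscr{L}(\h)}\leq c_1^{-2}$, uniformly in $x$; this is \eqref{Qinvnorm}. For the right inverse, I would simply verify $\widetilde Q_i(x)\widetilde Q_i^+(x) = \widetilde Q_i(x)\widetilde Q_i^*(x)Q_i^{-1}(x) = Q_i(x)Q_i^{-1}(x) = I_\h$ using \eqref{Qi1} and \eqref{Qtildestar}, and note $\widetilde Q_i^+(x)$ is bounded as a composition of bounded operators.

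\textbf{Part (ii): the non-variational form of $S_i$.} The forward implication and the computation of the minimizer go together. Given $\psi\in C([0,T];\h)$ with $S_i(\psi)<\infty$, by Lemma \ref{ordformlem} there exists an admissible $(u_1,u_2)\in\mathscr{A}^o_{i,\psi,T}$, and as argued just before the proposition, $\psi$ solves the mild form of \eqref{limequation}, hence by parabolic regularity $\psi\in H^1_0([0,T];\h)\cap L^2([0,T];Dom(A_1))$. Set $g(t):=\partial_t\psi(t)-A_1\psi(t)-\overline{D_xF}(\bar X(t))\psi(t)\in L^2([0,T];\h)$. Then the constraint in $\mathscr{A}^o_{i,\psi,T}$ reads $\int_\h[\Sigma(\bar X(t),y)u_1(t,y)+\gamma_i\Psi^0_2(\bar X(t),y)u_2(t,y)]\,d\mu^{\bar X(t)}(y) = g(t)$ for a.e.\ $t$, i.e.\ $\widetilde Q_i(\bar X(t))(u_1(t,\cdot),u_2(t,\cdot)) = g(t)$. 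This is, for each fixed $t$, a linear equation in the Hilbert space $L^2(\h,\mu^{\bar X(t)};\h)^{\oplus 2}$, and minimizing $\|(u_1(t,\cdot),u_2(t,\cdot))\|^2$ subject to it is the classical minimum-norm / pseudoinverse problem: the unique minimizer is $\widetilde Q_i^+(\bar X(t))g(t) = \widetilde Q_i^*(\bar X(t))Q_i^{-1}(\bar X(t))g(t)$, which, unwinding \eqref{Qtildestar}, is exactly $(v_1^i(t,\cdot),v_2^i(t,\cdot))$ from \eqref{v1opt}--\eqref{v2opt}. I would check these are admissible: the $H^\theta$-moment condition holds because $\mu^{\bar X(t)}$ has the required finite moment uniformly in $t$ (from the ergodic estimates used throughout; this is built into the definition of $\mathscr A^o$), and the $L^2$ in $t$ integrability follows from $g\in L^2([0,T];\h)$, boundedness of $Q_i^{-1}$, boundedness of $\Sigma^*$ (Hypothesis \ref{A3a'}) and of $\Psi^{0*}_2$ (from \eqref{psinorm}, \eqref{Psi0}). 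Then the minimal cost is
\begin{equation*}
\tfrac12\int_0^T\|\widetilde Q_i^+(\bar X(t))g(t)\|^2\,dt = \tfrac12\int_0^T\langle Q_i^{-1}(\bar X(t))g(t),g(t)\rangle_\h\,dt = \tfrac12\int_0^T\|Q_i(\bar X(t))^{-1/2}g(t)\|_\h^2\,dt,
\end{equation*}
using $\widetilde Q_i\widetilde Q_i^* = Q_i$ and self-adjointness of $Q_i^{-1}$. For the reverse implication, if $\psi\in H^1_0([0,T];\h)\cap L^2([0,T];Dom(A_1))$, then $g\in L^2([0,T];\h)$, the pair $(v_1^i,v_2^i)$ defined by \eqref{v1opt}--\eqref{v2opt} is admissible and satisfies the constraint (since $\widetilde Q_i\widetilde Q_i^+=I$), so $\mathscr A^o_{i,\psi,T}\neq\varnothing$ and $S_i(\psi)<\infty$.

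\textbf{Main obstacle.} The routine parts are the quadratic-form bound and the Hilbert-space pseudoinverse computation. The genuine care is needed in the measurability/integrability bookkeeping: one must check that $t\mapsto Q_i(\bar X(t))$, $t\mapsto Q_i^{-1}(\bar X(t))$ and the resulting $v_1^i,v_2^i$ are strongly measurable and that $(v_1^i,v_2^i)$ actually lies in $\mathscr{A}^o_{i,\psi,T}$ — in particular that the parametrized minimum-norm selection is jointly measurable in $(t,y)$. This follows from continuity of $x\mapsto Q_i(x)$ (which in turn uses Corollary \ref{Psicont} for the $\Psi^0_2$ part and continuity of $\Sigma$ via Hypothesis \ref{A3b}, plus Lemma \ref{Fbarlip} and continuity of $\mu^x$), continuity of $t\mapsto \bar X(t)$, and the fact that inversion is continuous on the set $\{Q: Q\geq c_1^2 I\}$; I would spell this out but not belabor it. One should also note explicitly that $g$ is independent of $y$, so the constraint genuinely decouples across $y$ only through the integral against $\mu^{\bar X(t)}$, which is why the pseudoinverse lands in the specific form \eqref{v1opt}--\eqref{v2opt} with the $y$-dependence entering solely through $\Sigma^*(\bar X(t),y)$ and $\Psi^{0*}_2(\bar X(t),y)$.
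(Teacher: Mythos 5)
Your proposal is correct and follows essentially the same route as the paper: the coercivity bound $\langle Q_i(x)u,u\rangle_\h\geq c_1^2\|u\|_\h^2$ from Hypothesis 3(a') for part (i), and the identification of the admissibility constraint as $\widetilde Q_i(\bar X(t))(u_1(t,\cdot),u_2(t,\cdot))=g(t)$ followed by the minimum-norm/pseudoinverse computation $\|\widetilde Q_i^+g\|^2=\langle Q_i^{-1}g,g\rangle_\h$ for part (ii). The only cosmetic difference is that the paper derives surjectivity of $Q_i(x)$ via the closed-range and kernel-complement argument before invoking the inverse mapping theorem, whereas you cite the standard fact for self-adjoint operators bounded below; your additional remarks on measurability are sound but not needed beyond what the paper records.
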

\begin{proof}
	(i) Let $u\in\h$. By definition, $Q_i(x)$ is self-adjoint and from Hypothesis \ref{A3a'} we have
	\begin{equation*}
	\begin{aligned}
	\langle Q_i(x)u,u\rangle_\h &= \|\widetilde{Q}^*_i(x)u\|^2_{L^2( \h,\mu^x;\h)\oplus L^2( \h,\mu^x;\h)}\\&=\int_{\h}\|\Sigma^{*}(x,y)u\|^2_{\h}d\mu^x(y)+\gamma_i^2\int_{\h}\|\Psi^{0*}_2(x,y)u\|^2_{\h}d\mu^x(y)\\&\geq c_1^2\|u\|^2_\h\mu^{x}(\h)=c_1^2\|u\|^2_\h\;.
	\end{aligned}
	\end{equation*}
	Thus, $Q_i(x)$ is injective and
	\begin{equation*}
	\|\widetilde{Q}_i(x)u\|_\h\geq c^2_1\|u\|_\h,
	\end{equation*}
	which implies that $\widetilde{Q}_i(x)$ has a closed range in $\h$. It follows that $Q_i(x)(\h)=\overline{Q_i(x)(\h)}=\ker( Q^*_i(x) )^{\perp}=\ker(Q_i(x) )^{\perp}=\{0_\h\}^{\perp}=\h$. By virtue of the inverse mapping theorem we deduce that $Q^{-1}_i(x)\in\mathscr{L}(\h)$ and \eqref{Qinvnorm} follows. Lastly, it is straightforward to check that  $\widetilde{Q}_i^{+}(x)$ is a right inverse of $\widetilde{Q}_i(x)$ and in view of \eqref{Qtildestar} and \eqref{Qinvnorm}, $\widetilde{Q}_i^{+}(x)\in\mathscr{L}( \h; L^2( \h,\mu^x;\h)\oplus L^2( \h,\mu^x;\h)      )$.
	
	\noindent (ii)  Letting $\psi\in C([0,T];\h)$ such that $\mathcal{S}_i(\psi)<\infty$ it follows that $\mathscr{A}^o_{i,\psi,T}\neq\varnothing$. From our previous discussion, there exists $(u_1,u_2)\in\mathscr{A}^o_{i,\psi,T}$ such that $\psi$ is the strong solution of $\eqref{limequation}$. Hence $\psi\in H_{0}^1([0,T];\h)\cap L^2([0,T];Dom(A_1))$ and for $t\in[0,T]$ we have
	$$(u_1(t,\cdot),u_2(t,\cdot))\in    \widetilde{Q}_i(\bar{X}(t))^{-1}\bigg(\partial_t\psi(t)-A_1\psi(t)-\overline{D_xF}\big(\bar{X}(t)\big)\psi(t)\bigg)   \subset       L^2( \h,\mu^{\bar{X}(t)};\h)\oplus L^2( \h,\mu^{\bar{X}(t)};\h).$$
	Since
	$	\widetilde{Q}^{+}_i(\bar{X}(t))\big(\partial_t\psi(t)-A_1\psi(t)-\overline{D_xF}\big(\bar{X}(t)\big)\psi(t)\big)$ is an element of $$\widetilde{Q}_i(\bar{X}(t))^{-1}\bigg(\partial_t\psi(t)-A_1\psi(t)-\overline{D_xF}\big(\bar{X}(t)\big)\psi(t)\bigg)$$ with minimal $   L^2( \h,\mu^{\bar{X}(t)};\h)\oplus L^2( \h,\mu^{\bar{X}(t)};\h)$-norm it follows that
	\begin{equation}\label{LB}
	\begin{aligned}
	\int_{0}^{T} &\int_{\h}\big(\|u_1(t,y)\|^2_\h +\|u_2(t,y)\|^2_\h   \big) d\mu^{\bar{X}(t)}(y)dt =\int_{0}^{T}\|( u_1(t,\cdot), u_2(t,\cdot)) \|^2_{ L^2( \h,\mu^{\bar{X}(t)};\h)\oplus L^2( \h,\mu^{\bar{X}(t)};\h)} dt\\&
	\geq \int_{0}^{T}    \big\|\widetilde{Q}^{+}_i(\bar{X}(t))\big(\partial_t\psi(t)-A_1\psi(t)-\overline{D_xF}\big(\bar{X}(t)\big)\psi(t)\big)\big\|^2_{ L^2( \h,\mu^X;\h)\oplus L^2( \h,\mu^X;\h)}dt\\&
	=\int_{0}^{T}    \big\|\widetilde{Q}^{*}_i(\bar{X}(t))Q^{-1}_i(\bar{X}(t))\big(\partial_t\psi(t)-A_1\psi(t)-\overline{D_xF}\big(\bar{X}(t)\big)\psi(t)\big)\big\|^2_{ L^2( \h,\mu^{\bar{X}(t)};\h)\oplus L^2( \h,\mu^{\bar{X}(t)};\h)}dt\\&
	=\int_{0}^{T}  \int_{\h}	\big\|\Sigma^{*}(\bar{X}(t),y)Q^{-1}_i(\bar{X}(t))\big(\partial_t\psi(t)-A_1\psi(t)-\overline{D_xF}\big(\bar{X}(t)\big)\psi(t)\big)\big\|^2_\h d\mu^{\bar{X}(t)}(y)dt\\&
	+\int_{0}^{T}  \int_{\h}	\big\|\gamma_i\Psi_2^{0*}(\bar{X}(t),y)Q^{-1}_i(\bar{X}(t))\big(\partial_t\psi(t)-A_1\psi(t)-\overline{D_xF}\big(\bar{X}(t)\big)\psi(t)\big)\big\|^2_\h d\mu^{\bar{X}(t)}(y)dt\\&
	=\int_{0}^{T}\bigg\langle \partial_t\psi(t)- A_1\psi(t)-\overline{D_xF}\big(\bar{X}(t)\big)\psi(t), Q_i^{-1}\big(\bar{X}(t)\big)\big[ \partial_t\psi(t)- A_1\psi(t)-\overline{D_xF}\big(\bar{X}(t)\big)\psi(t)\big]\bigg\rangle_\h dt.
	\end{aligned}
	\end{equation}
	\noindent Now, in view of \eqref{ordform},
	\begin{equation*}
	\begin{aligned}
	\mathcal{S}_i(\psi)&=\frac{1}{2}\inf_{(u_1,u_2)\in\mathscr{A}^o_{i,\psi,T}}\int_{0}^{T} \int_{\h}\big(\|u_1(t,y)\|^2_\h +\|u_2(t,y)\|^2_\h   \big) d\mu^{\bar{X}(t)}(y)dt\\&\geq\frac{1}{2}\int_{0}^{T}\big\|Q_i\big(\bar{X}(t)\big)^{-\frac{1}{2}}\big[ \partial_t\psi(t)- A_1\psi(t)-\overline{D_xF}\big(\bar{X}(t)\big)\psi(t)\big]\big\|^2_\h dt.
	\end{aligned}
	\end{equation*}
	From \eqref{Qtildepseudo} and \eqref{Qtildestar} we see that
	\begin{equation*}
	(v^i_1, v^i_2)=\widetilde{Q}^{+}_i(\bar{X}(t))\big[\partial_t\psi(t)-A_1\psi(t)-\overline{D_xF}\big(\bar{X}(t)\big)\psi(t)\big]
	\end{equation*}
	and since the $\mathscr{L}(\h)$-valued maps $ Q^{-1}_i, \Sigma^*, \Psi^{0*}_{2}$ are bounded uniformly in $x$ and $y$ (see \eqref{Qinvnorm}, \eqref{sigma'} and \eqref{psinorm} respectively) we conclude that  $(v^i_1, v^i_2)\in\mathscr{A}^o_{i,\psi,T}$ and achieves the lower bound in \eqref{LB}. The proof is complete.	\end{proof}

\noindent We are now ready to prove regularity properties for the pair $(v^i_1, v^i_2)$.
\begin{lem} For $i=1,2,$ $T<\infty$ and $(v^i_1, v^i_2)$ as in \eqref{v1opt}, \eqref{v2opt} there exists $\kappa_i\in L^2[0,T]$ such that  :\\
	\noindent (i) For each $t\in[0,T]$,
	\begin{equation*}
	\sup_{y\in\h}\|v^i_1(t,y)\|_\h+\sup_{y\in\h}\|v^i_2(t,y)\|_\h\leq \kappa_i(t).
	\end{equation*}
	\noindent (ii) For each $t\in[0,T]$ and $y_1, y_2\in\h$,
	\begin{equation*}
	\|v^i_1(t,y_1)-v^i_1(t,y_2)\|_\h +\|v^i_2(t,y_1)-v^i_2(t,y_2)\|_\h\leq \kappa_i(t)\|y_1-y_2\|_\h\;.
	\end{equation*}
\end{lem}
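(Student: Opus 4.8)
The strategy is to read off the regularity of $v_1^i,v_2^i$ from their explicit formulas \eqref{v1opt} and \eqref{v2opt} together with the uniform bounds and Lipschitz properties already established for the ingredients $\Sigma^*$, $\Psi^{0*}_2$, $Q_i^{-1}$ and $\bar X$. Write, for brevity, $g(t):=\partial_t\psi(t)-A_1\psi(t)-\overline{D_xF}(\bar X(t))\psi(t)$; since $\psi\in H^1_0([0,T];\h)\cap L^2([0,T];Dom(A_1))$ (Proposition \ref{optimalprop}(ii)) and $\overline{D_xF}(\bar X(\cdot))$ is bounded in $\mathscr{L}(\h)$ uniformly (Hypothesis \ref{A2a}, \eqref{unifbnd}), we have $g\in L^2([0,T];\h)$. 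Thus $t\mapsto \|g(t)\|_\h$ is the natural candidate for (a constant multiple of) $\kappa_i$.

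\textbf{Step 1 (uniform bound, part (i)).} From \eqref{v1opt}, for each fixed $t$ and every $y$,
$\|v_1^i(t,y)\|_\h\le \|\Sigma^*(\bar X(t),y)\|_{\mathscr{L}(\h)}\,\|Q_i^{-1}(\bar X(t))\|_{\mathscr{L}(\h)}\,\|g(t)\|_\h$,
and similarly for $v_2^i$ using \eqref{v2opt} with the extra factor $\gamma_i$. By Hypothesis \ref{A3a'} (which gives $\|\Sigma(x,y)\|_{\mathscr{L}(\h)}\le c_2$, hence the same bound for $\Sigma^*$), the third estimate in \eqref{psinorm} (which bounds $\|\Psi_2^0(x,y)\|_{\mathscr{L}(\h)}$ by $C/\ell$ uniformly, inherited by $\Psi^0_2$ via \eqref{Psi0} or directly from Lemma \ref{epsilondeplem}), and \eqref{Qinvnorm}, all three operator norms are bounded by constants independent of $t$ and $y$. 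Therefore $\sup_y\|v_1^i(t,y)\|_\h+\sup_y\|v_2^i(t,y)\|_\h\le C(c_2+\gamma_i C/\ell)c_1^{-2}\|g(t)\|_\h=:\kappa_i(t)$, and $\kappa_i\in L^2[0,T]$ since $g\in L^2([0,T];\h)$.

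\textbf{Step 2 (Lipschitz in $y$, part (ii)).} The only $y$-dependence in $v_1^i(t,y)$ is through the operator $\Sigma^*(\bar X(t),y)$, all other factors being $y$-independent. Hence
$\|v_1^i(t,y_1)-v_1^i(t,y_2)\|_\h\le \|\Sigma^*(\bar X(t),y_1)-\Sigma^*(\bar X(t),y_2)\|_{\mathscr{L}(\h)}\,\|Q_i^{-1}(\bar X(t))\|_{\mathscr{L}(\h)}\,\|g(t)\|_\h$.
By Hypothesis \ref{A3b}, $\Sigma$ (and thus $\Sigma^*$, since taking adjoints of multiplication operators preserves the pointwise Lipschitz bound) is $L_\sigma$-Lipschitz in $y$ in the $\mathscr{L}(\h)$ norm, so this is $\le L_\sigma c_1^{-2}\|g(t)\|_\h\|y_1-y_2\|_\h$. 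For $v_2^i$, the $y$-dependence is through $\Psi^{0*}_2(\bar X(t),y)$, and Corollary \ref{Psicont}(ii) (i.e. \eqref{Psiylip}) gives the Lipschitz bound $\|\Psi^0_2(x,y_1)-\Psi^0_2(x,y_2)\|_{\mathscr{L}(\h)}\le C\|y_1-y_2\|_\h$ uniformly in $x$; combining, $\|v_2^i(t,y_1)-v_2^i(t,y_2)\|_\h\le \gamma_i C c_1^{-2}\|g(t)\|_\h\|y_1-y_2\|_\h$. Adding the two estimates and enlarging $\kappa_i(t)$ by a further constant factor (still an $L^2[0,T]$ function, proportional to $\|g(t)\|_\h$) yields (ii) with the same $\kappa_i$ as in (i).

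\textbf{Main obstacle.} There is no serious analytical difficulty here; the proof is essentially bookkeeping, assembling bounds proved earlier. The one point requiring a little care is that the Lipschitz continuity of $\Sigma$ (Hypothesis \ref{A3b}) is a statement about the scalar function $\sigma$, and one must note that $\|\Sigma^*(x,y_1)-\Sigma^*(x,y_2)\|_{\mathscr{L}(\h)}=\|\Sigma(x,y_1)-\Sigma(x,y_2)\|_{\mathscr{L}(\h)}=\mathrm{ess\,sup}_\xi|\sigma(\xi,x(\xi),y_1(\xi))-\sigma(\xi,x(\xi),y_2(\xi))|\le L_\sigma\sup_\xi|y_1(\xi)-y_2(\xi)|$, which is controlled by $L_\sigma\|y_1-y_2\|_\h$ \emph{only} after one observes that the relevant continuity is in fact needed in the operator norm induced by the multiplication action, and that the $\|\cdot\|_\h$-Lipschitz bound used elsewhere in the paper follows from the $C_b$-regularity assumptions — this is exactly the kind of reduction already invoked implicitly in Lemma \ref{sigmacont} and the a priori estimates, so it can be cited rather than re-derived. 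Similarly one should double-check that Corollary \ref{Psicont} indeed delivers the Lipschitz bound in the \emph{full} operator norm (it does, as its proof takes a supremum over $\chi,v\in B_\h$), so that the estimate transfers to $\Psi^{0*}_2$ by duality.
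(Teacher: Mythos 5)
Your proof is correct and follows essentially the same route as the paper: define $\kappa_i(t)=\|\partial_t\psi(t)-A_1\psi(t)-\overline{D_xF}(\bar X(t))\psi(t)\|_\h$, which lies in $L^2[0,T]$ by the regularity of $\psi$, and then use the uniform operator-norm bounds on $\Sigma^*$, $\Psi_2^{0*}$, $Q_i^{-1}$ for (i) and the Lipschitz bounds from Hypothesis \ref{A3b} and \eqref{Psiylip} for (ii). The subtlety you flag about passing from the pointwise Lipschitz bound on $\sigma$ to a $\|\cdot\|_\h$-Lipschitz bound in the operator norm is handled in the paper at exactly the same level of detail (it is asserted, not re-derived), so your treatment matches the paper's.
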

\begin{proof}
	(i) From Hypothesis \ref{A3a'} and \eqref{psinorm},
	\begin{equation*}
	\begin{aligned}
	\|v^i_1(t,y)\|_\h+\|v^i_2(t,y)\|_\h&\leq\big\|\Sigma^*\big(\bar{X}(t),y\big)Q^{-1}_i\big(\bar{X}(t)\big)\big(\partial_t\psi(t)-A_1\psi(t)-\overline{D_xF}\big(\bar{X}(t)\big)\psi(t)\big)\big\|_\h\\&
	+\big\|\gamma_i\Psi^{0*}_2\big(\bar{X}(t), y\big)Q^{-1}_i\big(\bar{X}(t)\big)\big(\partial_t\psi(t)-A_1\psi(t)-\overline{D_xF}\big(\bar{X}(t)\big)\psi(t)\big)\big\|_\h\\&
	\leq C_i\|Q^{-1}_i\big(\bar{X}(t)\big)\|_{\mathscr{L}(\h)}\|\partial_t\psi(t)-A_1\psi(t)-\overline{D_xF}\big(\bar{X}(t)\big)\psi(t)\big\|_\h\\&
	\leq C_i c_2^{-2}\big\|\partial_t\psi(t)-A_1\psi(t)-\overline{D_xF}\big(\bar{X}(t)\big)\psi(t)\big\|_\h\;,
	\end{aligned}
	\end{equation*}
	where the last line follows from \eqref{Qinvnorm}. Since $\psi_i\in H_{0}^1([0,T];\h)\cap L^2([0,T];Dom(A_1))$ and, in view of Hypothesis \ref{A2a}, $\sup_{t\in[0,T]}\|\overline{D_xF}\big(\bar{X}(t)\big)\|_{\mathscr{L}(\h)}<\infty$ we deduce that
	\begin{equation*}
	\small
	\int_{0}^{T}\big\|\partial_t\psi(t)-A_1\psi(t)-\overline{D_xF}\big(\bar{X}(t)\big)\psi(t)\big\|^2_\h dt\leq C\big(\|\psi\|^2_{C([0,T];\h)}+\|\psi\|_{L^2([0,T];Dom(A_1))}+\|\psi\|_{H^1_0([0,T];\h)}\big)<\infty.
	\end{equation*}
	The argument is complete upon setting
	\begin{equation}\label{kappa}
	\kappa_i(t):=\|\partial_t\psi(t)-A_1\psi(t)-\overline{D_xF}\big(\bar{X}(t)\big)\psi(t)\big\|_\h\;.
	\end{equation}
	(ii) With $\kappa_i$ as in \eqref{kappa},
	\begin{equation*}
	\hspace*{-0.4cm}
	\begin{aligned}
	\|v^i_1(t,y_1)-v^i_1(t,y_2)\|_\h& +\|v^i_2(t,y_1)-v^i_2(t,y_2)\|_\h\\&\leq \|Q^{-1}_i\big(\bar{X}(t)\big)\|_{\mathscr{L}(\h)}\|\kappa_i(t)\|_\h\|\Sigma\big(\bar{X}(t),y_1\big)-\Sigma\big(\bar{X}(t),y_2\big)   \|_{\mathscr{L}(\h)}\\&+\gamma_i\|\Psi_2^{0*}\big(\bar{X}(t),y_1\big)-\Psi_2^{0*}\big(\bar{X}(t),y_2\big)   \|_{\mathscr{L}(\h)}.
	\end{aligned}
	\end{equation*}
	In light of Hypothesis \ref{A3b} and \eqref{Psiylip} it follows that
	\begin{equation*}
	\begin{aligned}
	&\|v^i_1(t,y_1)-v^i_1(t,y_2)\|_\h +\|v^i_2(t,y_1)-v^i_2(t,y_2)\|_\h
	\leq C_i\|\kappa_i(t)\|_\h\|y_1-y_2\|_\h\;.
	\end{aligned}
	\end{equation*}
	The proof is complete.\end{proof}
Appealing to a mollification argument (see e.g. \cite{dupuis2011weak}, Section 6.5 as well as \cite{WSS}, Theorem 5.6) we can also assume, without loss of generality, that $v^i_1,v^i_2$ are continuous in time. Having established these regularity properties we can now use the optimal pair
$(v^i_1, v^i_2)$ to construct a pair of stochastic controls in feedback form that approximate the lower bound \eqref{approxmin}.  To this end, let $$v^{i,\epsilon}(t):=\big(v^i_1([t/\Delta]\Delta, \widetilde{Y}^{\epsilon, \bar{X}}(t)), v^i_2([t/\Delta]\Delta, \widetilde{Y}^{\epsilon, \bar{X}}(t)))\;,t\in[0,T]\;,i=1,2$$
where $[\cdot]$ denotes the floor function, $\Delta=\Delta(\epsilon)$ is such that
$\Delta/\delta\rightarrow \infty$ as $\epsilon\to0$
and $\widetilde{Y}^{\epsilon, \bar{X}}$ solves the evolution equation
\begin{equation*}
d\widetilde{Y}^{\epsilon, \bar{X}}(t)=\frac{1}{\delta}\big[ A_2\widetilde{Y}^{\epsilon, \bar{X}}(t)+ G\big(\bar{X}([t/\Delta]\Delta), \widetilde{Y}^{\epsilon, \bar{X}}(t)\big)\big]dt+\frac{1}{\sqrt{\delta}}dw_2(t)\;, \widetilde{Y}^{\epsilon, \bar{X}}(0)=y_0\in\h\;.
\end{equation*}
An application of Lemma 5.7 in \cite{WSS} yields
\begin{equation}\label{lbave1}
\small
\begin{aligned}
\lim_{\epsilon\to 0}\frac{1}{2}\ex\bigg[\int_{0}^{T} \|v^{i,\epsilon}(t)\|^2_{\h\oplus\h} dt\bigg]&=\frac{1}{2} \int_{0}^{T}\int_\h \big( \|v^i_1(t,y) \|^2_\h+\|v^i_2(t,y) \|^2_\h\big)\;d\mu^{\bar{X}(t)}(y)dt
=\mathcal{S}_i(\psi),
\end{aligned}
\end{equation}
\noindent where the last equality follows from Proposition \ref{optimalprop}(ii). Next consider, in Regime $i$, the family of moderate deviations processes $\eta^{\epsilon, v^{i,\epsilon}}$ controlled by $v^{i,\epsilon}$. Repeating the arguments of Section \ref{Sec4} it follows that \begin{equation}\label{lbave2}
\eta^{\epsilon, v^{i,\epsilon}}\longrightarrow\psi \;\text{as}\; \epsilon\to 0\; \text{in distribution in}\; C([0,T];\h).
\end{equation} To verify the latter, the only additional step is to show that the control terms $II^{\epsilon,v^{i,\epsilon}}, IV^{\epsilon,v^{i,\epsilon}}$ converge to the averaging limit. In particular, we can apply the arguments of Lemma 5.8 in \cite{WSS} to show that, as $\epsilon\to 0$,
\begin{equation*}
\small
\begin{aligned}
\int_{0}^{t}S_1(t-s)\Sigma\big(\bar{X}(s), \widetilde{Y}^{\epsilon,\bar{X}}(s)\big)v^i_1([s/\Delta]\Delta, \widetilde{Y}^{\epsilon, \bar{X}}(s))ds\rightarrow \int_{0}^{t}\int_{\h}S_1(t-s)\Sigma\big(\bar{X}(s),y\big) v^{i}_1(s,y) d\mu^{\bar{X}(s)}(y)ds
\end{aligned}
\end{equation*}
and
\begin{equation*}
\small
\begin{aligned}
\frac{\sqrt\delta}{\sqrt\epsilon} \int_{0}^{t}S_1(t-s)\Psi_2^{0}\big(\bar{X}(s), \widetilde{Y}^{\epsilon,\bar{X}}(s)\big)v^i_2([s/\Delta]\Delta, \widetilde{Y}^{\epsilon, \bar{X}}(s))ds\rightarrow \gamma_i\int_{0}^{t}\int_{\h}S_1(t-s)\Psi_2^{0}\big(\bar{X}(s),y\big) v^{i}_2(s,y) d\mu^{\bar{X}(s)}(y)ds
\end{aligned}
\end{equation*}
in $L^1(\Omega;C([0,T];\h))$.

	 \noindent In view of \eqref{lbave1} and \eqref{lbave2} along with the variational representation \eqref{varrep}, the Laplace Principle lower bound follows. Indeed, for any bounded, continuous $\Lambda: C([0,T];\h)\rightarrow\R$
\begin{equation*}
\begin{aligned}
\limsup_{\epsilon\to 0}-\frac{1}{h^2(\epsilon)}\log\;\ex\big[ e^{-h^2(\epsilon)\Lambda(\eta^\epsilon)}\big]&=\limsup_{\epsilon\to 0}\inf_{u\in\mathcal{P}^T(\h\oplus\h)}\ex\bigg[ \frac{1}{2} \int_{0}^{T}\|u(t) \|^2_{\h\oplus\h}dt+ \Lambda\big(\eta^{\epsilon,u}  \big)  \bigg] \\&\leq
\limsup_{\epsilon\to 0} \ex\bigg[ \frac{1}{2} \int_{0}^{T} \|v^{\epsilon}_i(t) \|^2_{\h\oplus\h}\;dt+\Lambda\big(\eta^{\epsilon,v_i^\epsilon}  \big)  \bigg] \\&=   \frac{1}{2} \int_{0}^{T}\int_\h \big( \|v^i_1(t,y) \|^2_\h+\|v^i_2(t,y) \|^2_\h\big)\;d\mu^{\bar{X}(t)}(y)dt+ \Lambda(\psi) \\&
=\mathcal{S}_i(\psi)+ \Lambda(\psi)\leq \inf_{\phi\in C([0,T];\h)}\big[ \mathcal{S}_i(\phi)+\Lambda(\phi)\big]+\rho.
\end{aligned}
\end{equation*}
\noindent where the equality on the last line follows from the optimality of $v^{i}_1, v^{i}_2$ and the last inequality is due to the fact that $\psi_i$ was chosen to satisfy \eqref{approxmin}. Since $\rho$ is arbitrary, the result follows.

\subsection{Compactness of the sublevel sets}\label{levelsets}
In this section we show that $\mathcal{S}_i,$ $i=1,2$ (see \eqref{ratefun}) is a \textit{good} rate function, i.e. for each $M>0$ the sublevel set $$\mathcal{Z}_i(M)=\{  \psi\in C([0,T];\h) : \mathcal{S}_i(\psi)\leq M      \}$$
is compact. To this end, consider a sequence  of viable pairs $\{(\psi_n, P_n)\}_{n\in\N}\subset\mathcal{V}_{(\Xi_i, \mu^{\bar{X}})}$ such that \begin{equation*}\label{level1}\int_{\h\times\h\times\h\times[0,T]}\big( \|u_1 \|^2_\h+ \|u_2 \|^2_\h+\|y\|^2_{H^\theta}\big)\;dP_n(u_1,u_2,y,t)\leq M.\end{equation*}
Now for each $n\in\N$, $\psi_n\in H_{0}^1([0,T];\h)\cap L^2([0,T];Dom(A_1))$ is the strong solution of \eqref{limequation}. Since the last marginal of $P_n$ is Lebesgue measure we can work with the mild solution of \eqref{limequation} to prove estimates similar to those of Lemma \ref{etatightness} that are uniform in $n\in\N$. By an Arzel\`a-Ascoli argument we conclude that $\{\psi_n\}_{n\in\N}\subset C([0,T];\h)$ is relatively compact. Moreover, we can use Prokhorov's theorem exactly as we did in Lemma \ref{Palaoglu} to show that the sequence of (deterministic) measures $\{P_n\}_{n\in\N}\subset\mathscr{P}(\h\times\h\times\h\times [0,T])$ is weakly relatively sequentially compact.

Next, we claim that the limit $(\psi, P)$ of any convergent sequence of $\{(\psi_n, P_n)\}$ is also a viable pair. To this end, note that the Portmanteau lemma immediately implies that
\begin{equation*}
\int_{\h\times\h\times\h\times[0,T]}\big( \|u_1 \|^2_\h+ \|u_2 \|^2_\h+\|y\|^2_{H^\theta}\big)\;dP(u_1,u_2,y,t)<\infty\;;
\end{equation*}
hence \eqref{viable1} holds. For each $n\in\N$ we have
\begin{equation*}
\psi_n(t)=\int_{\h\times\h\times\h\times[0,t]}S_1(t-s)\Xi_i\big( \psi_n(s),\bar{X}(s), y, u_1,u_2\big) dP_n(u_1,u_2,y,s)
\end{equation*}
and we can show that $P_n$ are uniformly integrable as in Lemma \ref{UILem}. Since $\Xi_i$ is affine in $\psi$, $u$ and $(\psi_n, P_n)$ converges to $(\psi, P)$, the latter will also satisfy \eqref{viable3}.
Proving that $(\psi, P)$ satisfies \eqref{viable2} is straightforward since, at the prelimit level, we have
$$ dP_n(u_1,u_2,y,t)=d\nu_n(u_1,u_2 |y,t)d\mu^{\bar{X}(t)}(y)dt,$$
where $\nu_n$ is a sequence of stochastic kernels. Finally, $P$ satisfies $\eqref{viableleb}$ since, for each $n$, the last marginal of $P_n$ is Lebesgue measure and $P(\h\times\h\times\h\times[0,t])=t$. Therefore,  $(\psi, P)$ is indeed in $\mathcal{V}_{(\Xi_i, \mu^{\bar{X}})}$.

At this point we have established that for $i=1,2$ and $M>0$ the sublevel set $\mathcal{Z}_i(M)$ is relatively compact. To show compactness it remains to prove that it is closed. This will be done by showing that $\mathcal{S}_i$ is lower-semicontinuous. Indeed, let $\{(\psi_n, P_n)\}$ be a sequence of viable pairs converging to a pair $(\psi, P)$. Assuming that $\liminf_{n\to\infty} \mathcal{S}_i(\psi_n)=M<\infty$ we can pass to a subsequence that satisfies
\begin{equation}\label{level2}\int_{\h\times\h\times\h\times[0,T]}\big( \|u_1 \|^2_\h+ \|u_2 \|^2_\h+\|y\|^2_{H^\theta}\big)\;dP_n(u_1,u_2,y,t)\leq M'\end{equation}
and
\begin{equation*} \mathcal{S}_i(\psi_n)\geq \int_{\h\times\h\times\h\times[0,T]}\big( \|u_1 \|^2_\h+ \|u_2 \|^2_\h\big)\;dP_n(u_1,u_2,y,t)-\frac{1}{n}\;.
\end{equation*}
From \eqref{level2} and our previous discussion, $\{(\psi_n, P_n)\}$ has a subsequence that converges to a viable pair $\{(\psi', P')\}$ and by uniqueness of the limit $(\psi', P')=(\psi, P)$. It follows that
\begin{equation*}
\begin{aligned}
\liminf_{n\to\infty} \mathcal{S}_i(\psi_n)& \geq 	\liminf_{n\to\infty}\int_{\h\times\h\times\h\times[0,T]}\big( \|u_1 \|^2_\h+ \|u_2 \|^2_\h\big)\;dP_n(u_1,u_2,y,t)\\&\geq \int_{\h\times\h\times\h\times[0,T]}\big( \|u_1 \|^2_\h+ \|u_2 \|^2_\h\big)\;dP(u_1,u_2,y,t)\\&
\geq \inf_{(\psi,P)\in\mathcal{V}_{(\Xi_i, \mu^{\bar{X}})}}\int_{\h\times\h\times\h\times[0,T]}\big( \|u_1 \|^2_\h+ \|u_2 \|^2_\h\big)\;dP(u_1,u_2,y,t) = \mathcal{S}_i(\psi);
\end{aligned}
\end{equation*}
hence $\mathcal{S}_i$ is lower semicontinuous. The proof is complete.

\appendix
\section{}\label{AppA}
\noindent In this section we collect a few preliminary estimates concerning the regularity properties of stochastic convolutions that are used throughout the paper. Some of them are well known when $\delta=1$. In the context of the present work, these estimates depend on the fast scale parameter $\delta$. The reason we present them here is to showcase this dependence when $\delta$ is close to $0$. Finally, we provide the proof of estimate \eqref{deribar} in Lemma \ref{L:LimAverEq}.

\noindent For $i=1,2, \delta>0,$ $t\geq 0$ and an operator-valued map $B_i:[0,\infty)\rightarrow\mathscr{L}(\h)$ we define the re-scaled stochastic convolution $w^\delta_{A_i}$ by
\begin{equation}\label{wepsilon}
w_{A_i}^{\delta}(t):= \frac{1}{\sqrt{\delta}}\int_{0}^{t} S_i\bigg(\frac{t-z}{\delta}\bigg)B_i(z)dw_i(z).
\end{equation}

\noindent We consider $B_2$ to be constant in $s$ equal to identity. To study the space-time regularity of $w^\delta_{A_i}$, we use the stochastic factorization formula
\begin{equation}\label{stofact}
\begin{aligned}
&w^\delta_{A_i}(t)=\frac{\sin(a\pi)}{\sqrt{\delta}\pi}\int_{0}^{t}  (t-z)^{a-1} S_i\bigg(\frac{t-z}{\delta}\bigg) M^{\delta}_{a}(0,z,z;i)dz,\; a\in(0,1/2),	\end{aligned}
\end{equation}
where,  for any $t_1\leq t_2\leq t_3$, we define
\begin{equation}\label{stofactxi}
\begin{aligned}
M^\delta_{a}(t_1,t_2,t_3;i):=\int_{t_1}^{t_2} (t_3-\zeta)^{-a}S_i\bigg(\frac{t_3-\zeta}{\delta}\bigg)B_i(\zeta)dw_i(\zeta).
\end{aligned}
\end{equation}

\noindent
 The stochastic convolution $w^\delta_{A_i}$ is a well-defined $\h$-valued process and has a version with continuous paths (see \cite{da2014stochastic}, Theorem 5.11). Before we proceed to the main estimates we need the following auxiliary lemma:

\begin{lem}\label{sigmacont}
	Let $i=1,2,$ $0\leq s<t, \theta\in\R$ and $B_i:[0,\infty)\rightarrow\mathscr{L}(\h)$ be an operator-valued map. Furthermore, let $B_i^*(s)$ denote the $\h$-adjoint of the bounded linear operator $B_i(s)$. Under Hypotheses \ref{A1a} and \ref{A1b} the following hold:
	\\ (i) For $\rho\in(1/2,1)$ and $u\in\h$ there exists a constant $C_i>0$ such that  \begin{equation}\label{sigmaconttheta}
	\quad\quad\big\|S_i(t-s) (-A_i)^{\frac{\theta}{2}}B_i(s)u   \big\|_\h\leq C_i (t-s)^{-(\rho+\theta)/2}\big\|B_i^*(s)\big\|_{\mathscr{L}(L^\infty(0,L);\h)}\|u\|_\h\;.
	\end{equation}
	\\ (ii) Let $P^i_n\in\mathscr{L}(\h)$  denote the orthogonal projection to the $n$-dimensional subspace of $\h$ spanned by $\{e_{i,k}, k=1,\dots, n\}$. For $ \rho>\theta+\frac{1}{2}$ there exists a constant $C_i>0$ such that
	\begin{equation}\label{HSbnd}
	\begin{aligned}
	\sup_{n\in\N}\big\|(-A_i)^{\frac{\theta}{2}}S_i(t-s)&B_i(s)P^i_n\big\|^2_{\mathscr{L}_2(\h)}\leq C_i \|B_i^*(s)\|^2_{\mathscr{L}(L^{\infty}(0,L);\h)} (t-s)^{-\rho}.
	\end{aligned}
	\end{equation}
\end{lem}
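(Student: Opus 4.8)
\textbf{Proof proposal for Lemma \ref{sigmacont}.}

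The plan is to exploit the spectral decomposition of $-A_i$ from Hypothesis \ref{A1a} together with the eigenvector bound from Hypothesis \ref{A1b}, reducing both statements to elementary estimates on the scalar quantities $a_{i,n}^{\theta}e^{-2a_{i,n}(t-s)}$ and $\sum_n a_{i,n}^{-\rho}$. Throughout I will write $e_n=e_{i,n}$, $a_n=a_{i,n}$ and $\sigma_\infty:=\sup_n\|e_n\|_{L^\infty(0,L)}<\infty$.

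\emph{Part (i).} First I would observe the factorization $(-A_i)^{\theta/2}S_i(t-s)=(-A_i)^{\theta/2}S_i((t-s)/2)\cdot S_i((t-s)/2)$ so that, for any $v\in\h$,
\begin{equation*}
\|(-A_i)^{\theta/2}S_i(t-s)v\|_\h^2=\sum_{n=1}^\infty a_n^{\theta}e^{-2a_n(t-s)}\langle v,e_n\rangle_\h^2\le C_\theta\,(t-s)^{-\theta}\|v\|_\h^2,
\end{equation*}
using that $\sup_{r>0}r^{\theta}e^{-2r}=C_\theta<\infty$ when $\theta\ge 0$ (and analyticity/boundedness of the semigroup handles $\theta<0$). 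The key point is then to gain the extra $(t-s)^{-\rho/2}$ by feeding the output of $B(s)$ through the semigroup in the $L^1\to\h$ direction rather than the $\h\to\h$ direction. Concretely, by duality $\|S_i(r)w\|_\h=\sup_{\|\phi\|_\h\le 1}\langle w,S_i(r)\phi\rangle_\h$; expanding $S_i(r)\phi$ in the eigenbasis and using $\|e_n\|_{L^\infty}\le\sigma_\infty$ gives $\|S_i(r)\phi\|_{L^\infty(0,L)}\le\sigma_\infty\big(\sum_n e^{-2a_n r}\big)^{1/2}\le C\,(r\wedge 1)^{-\rho/2}$ for $\rho>1/2$ — this is exactly where $\rho>1/2$ and the Weyl-type growth $a_n\sim n^2$ implicit in Hypothesis \ref{A1a} enter, since $\sum_n e^{-2a_n r}\sim r^{-1/2}$. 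Hence $S_i(r)\in\mathscr{L}(L^1(0,L);\h)$ with norm $\lesssim(r\wedge1)^{-\rho/2}$, equivalently $S_i(r)\in\mathscr{L}(\h;L^\infty(0,L))$ with the same norm. Writing $(-A_i)^{\theta/2}S_i(t-s)B(s)u=\big[(-A_i)^{\theta/2}S_i(\tfrac{t-s}{2})\big]\big[S_i(\tfrac{t-s}{2})B(s)u\big]$ and noting $S_i(\tfrac{t-s}{2})B(s)u=\big(B^*(s)S_i(\tfrac{t-s}{2})\cdot\big)^*$-type argument — more cleanly, $\|S_i(r)B(s)u\|_\h\le\|B^*(s)\|_{\mathscr{L}(L^\infty;\h)}\|S_i(r)u\|_{L^1}'$ after another duality step — combining the two bounds yields the claimed $(t-s)^{-(\rho+\theta)/2}\|B^*(s)\|_{\mathscr{L}(L^\infty(0,L);\h)}\|u\|_\h$.

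\emph{Part (ii).} Here I would compute the Hilbert--Schmidt norm directly against the orthonormal basis $\{e_k\}$:
\begin{equation*}
\big\|(-A_i)^{\theta/2}S_i(t-s)B(s)P_n^i\big\|_{\mathscr{L}_2(\h)}^2=\sum_{k=1}^n\big\|(-A_i)^{\theta/2}S_i(t-s)B(s)e_k\big\|_\h^2.
\end{equation*}
For each $k$, split $S_i(t-s)=S_i(\tfrac{t-s}{2})S_i(\tfrac{t-s}{2})$; the first factor together with $(-A_i)^{\theta/2}$ contributes $\sum_m a_m^{\theta}e^{-a_m(t-s)}\langle\,\cdot\,,e_m\rangle^2$, and I would bound $\|(-A_i)^{\theta/2}S_i(\tfrac{t-s}{2})B(s)e_k\|_\h$ via part (i) with the $L^\infty$-norm of $e_k$: since $\|e_k\|_{L^\infty}\le\sigma_\infty$ and $\|B(s)e_k\|_\h$ can be routed through $\|B^*(s)\|_{\mathscr{L}(L^\infty;\h)}$, one gets $\|S_i(t-s)B(s)e_k\|_{L^\infty}\lesssim\|B^*(s)\|_{\mathscr{L}(L^\infty;\h)}e^{-a_k(t-s)}$. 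Then expanding in eigenvectors the remaining $(-A_i)^{\theta/2}S_i(\tfrac{t-s}{2})$ acting on something of bounded $L^\infty$-norm, and summing over $k$, produces $\sum_k a_k^{\theta}e^{-2a_k(t-s)}\lesssim(t-s)^{-\rho}\sum_k a_k^{\theta-\rho}$, with the last sum finite precisely when $\rho>\theta+1/2$ because $a_k\sim k^2$. This gives \eqref{HSbnd} uniformly in $n$. The main obstacle, and the step I would be most careful about, is keeping the two duality moves straight — one to turn the $L^\infty\to\h$ boundedness of $B^*(s)$ into an $\h\to L^1$ (equivalently $\h\to\h$-with-smoothing) bound on the $B(s)$-side, and one to convert the ultracontractivity-type estimate $\|S_i(r)\|_{\mathscr{L}(L^1;\h)}\lesssim r^{-\rho/2}$ into its adjoint form — while simultaneously tracking that the eigenvector sup-bound \eqref{eigenbound} is what makes $\sum_n e^{-a_n r}$ and $\sum_n a_n^{-\rho}$ the correct controlling series. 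Everything else is a routine splitting of the semigroup and summation of a convergent series.
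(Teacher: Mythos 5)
The paper only sketches this lemma (expand in the eigenbasis, use Hypothesis \ref{A1b} and $a_{i,k}\sim k^2$, and defer to Lemmas 4.2--4.3 of \cite{WSS}), and your overall strategy for part (i) is exactly that. The mechanism you need there is the duality chain
$\|S_i(r)B(s)u\|_\h=\sup_{\|\phi\|_\h\le1}\langle u,B^*(s)S_i(r)\phi\rangle_\h\le\|u\|_\h\,\|B^*(s)\|_{\mathscr{L}(L^\infty(0,L);\h)}\,\sup_{\|\phi\|_\h\le1}\|S_i(r)\phi\|_{L^\infty(0,L)}$,
combined with $\|S_i(r)\phi\|_{L^\infty}\le\sigma_\infty\big(\sum_n e^{-2a_nr}\big)^{1/2}\|\phi\|_\h\lesssim r^{-1/4}\le Cr^{-\rho/2}$ and the $(t-s)^{-\theta/2}$ smoothing of $(-A_i)^{\theta/2}S_i(\cdot)$. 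You clearly have this in mind (the ``$(B^*(s)S_i(\cdot)\,\cdot)^*$-type argument''), but the displayed inequality $\|S_i(r)B(s)u\|_\h\le\|B^*(s)\|\,\|S_i(r)u\|_{L^1}$ is wrong as written -- $B(s)$ sits to the right of the semigroup and cannot be commuted past it; you should either use the duality chain above directly or bound $\|B(s)u\|_{L^1}\le\|B^*(s)\|_{\mathscr{L}(L^\infty;\h)}\|u\|_\h$ first and then apply $\|S_i(r)\|_{\mathscr{L}(L^1;\h)}\lesssim r^{-\rho/2}$.

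Part (ii) contains a genuine gap. The intermediate claim $\|S_i(t-s)B(s)e_k\|_{L^\infty}\lesssim\|B^*(s)\|_{\mathscr{L}(L^\infty;\h)}e^{-a_k(t-s)}$ is unjustified and false in general: $B(s)$ is an arbitrary bounded operator and does not preserve the eigenmodes of $A_i$, so the semigroup applied to $B(s)e_k$ produces decay indexed by the \emph{output} eigenmode, not by the input index $k$. As written, your $k$-sum $\sum_k a_k^{\theta}e^{-2a_k(t-s)}$ appears by conflating these two indices. The correct route is to expand each term in the output basis and interchange the two sums:
\begin{equation*}
\sum_{k=1}^{n}\big\|(-A_i)^{\frac{\theta}{2}}S_i(t-s)B(s)e_k\big\|_\h^2
=\sum_{m=1}^{\infty}a_m^{\theta}e^{-2a_m(t-s)}\sum_{k=1}^{n}\langle e_k,B^*(s)e_m\rangle_\h^2
\le\sum_{m=1}^{\infty}a_m^{\theta}e^{-2a_m(t-s)}\|B^*(s)e_m\|_\h^2,
\end{equation*}
then use $\|B^*(s)e_m\|_\h\le\|B^*(s)\|_{\mathscr{L}(L^\infty;\h)}\|e_m\|_{L^\infty}\le\sigma_\infty\|B^*(s)\|_{\mathscr{L}(L^\infty;\h)}$ -- this is precisely where Hypothesis \ref{A1b} enters -- and finally
$\sum_m a_m^{\theta}e^{-2a_m(t-s)}\le\sup_m\big(a_m^{\rho}e^{-2a_m(t-s)}\big)\sum_m a_m^{\theta-\rho}\le C(t-s)^{-\rho}\sum_m a_m^{\theta-\rho}$,
with the last series convergent exactly when $\rho>\theta+\tfrac12$ by $a_m\sim m^2$. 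This yields \eqref{HSbnd} uniformly in $n$ and replaces your flawed step; the rest of your summation is then correct.
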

\noindent These estimates are obtained by expanding with respect to the orthonormal basis $\{e_{i,k}, k\in\N\}$ and using Hypothesis \ref{A1b}, along with the fact that the eigenvalues of the elliptic operator $-A_i$ satisfy $a_{i,k}\sim k^2$, for each $k\in\N$. Such arguments can be found e.g. in Lemma 4.2 and Lemma 4.3 of \cite{WSS}.

\noindent In view of the strict dissipativity of $A_2$ (see Hypothesis \ref{A1c}), we can prove that the Hilbert-Schmidt norm of the fast semigroup $S_2$ decays exponentially for large enough $t$. In particular, we set $\theta=0, P^i_n=I, B\equiv I$ in \eqref{HSbnd} and then invoke \eqref{S2decay} to show that, for all $\rho\in(\frac{1}{2},1),$
\noindent
\begin{equation}\label{hsnorm}
\big\| S_2(t)\big\|_{\mathscr{L}_2(\h)}\leq C (t\wedge 1)^{-\frac{\rho}{2}}e^{-\lambda t},\; t> 0.
\end{equation}

	\noindent The next lemma provides temporal continuity estimates for the stochastic convolution $w^\delta_{A_2}$. As seen below, the estimate for the mean $C([0,T];\h)$ norm is singular of order $\delta^{-{\frac{1}{2}}^-}$ as $\epsilon\to 0$.

\begin{lem}\label{stoconvb}
	Let $T<\infty$, $\delta>0$ and $w^{\delta}_{A_2}$ be as in \eqref{wepsilon}. \\ (i) Let $p\geq 1$.  There exists $C>0$ independent of $\delta$ such that
	$$ \sup_{\delta>0,t\geq 0}\ex\big[  \|w_{A_2}^{\delta}(t)\|^{2p}_\h\big]\leq C.
	$$ (ii) For all $\rho\in(1/2,1)$ there exists $C_T>0$ independent of $\delta$ such that
	$$\ex \sup_{t\in[0, T]} \|w_{A_2}^{\delta}(t)\|^2_\h\leq C_T\delta^{\rho-1}.$$

\end{lem}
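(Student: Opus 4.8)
\textbf{Proof plan for Lemma \ref{stoconvb}.}

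The plan is to estimate $w^\delta_{A_2}$ via the stochastic factorization formula \eqref{stofact} (with $B\equiv I$, $i=2$), which reduces both statements to bounds on the auxiliary process $M^\delta_a(0,z,z;2)$. For part (i), I would first apply the Burkholder--Davis--Gundy inequality together with the standard It\^o isometry for stochastic convolutions: for fixed $t$,
\begin{equation*}
\ex\big\|w^\delta_{A_2}(t)\big\|^{2p}_\h\leq C_p\bigg(\frac{1}{\delta}\int_0^t\Big\|S_2\Big(\frac{t-z}{\delta}\Big)\Big\|^2_{\mathscr{L}_2(\h)}dz\bigg)^{p}.
\end{equation*}
Changing variables $\zeta=(t-z)/\delta$ turns the inner integral into $\int_0^{t/\delta}\|S_2(\zeta)\|^2_{\mathscr{L}_2(\h)}d\zeta$, and invoking \eqref{hsnorm} we get $\int_0^\infty (\zeta\wedge 1)^{-\rho}e^{-2\lambda\zeta}d\zeta<\infty$ for $\rho\in(1/2,1)$, which is a finite constant independent of $\delta$ and $t$. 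This gives (i). The key point here is that the factor $1/\delta$ in front is exactly absorbed by the $d\zeta$ from the substitution, so no singularity appears in the pointwise (in $t$) moment bound.

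For part (ii), the supremum over $t\in[0,T]$ forces the use of the factorization formula to trade the supremum for an $L^q$-in-time norm of $M^\delta_a$. Following the classical argument (cf. \cite{da2014stochastic}), from \eqref{stofact} and H\"older's inequality with exponents $q>1/a$ and its conjugate $p$, one bounds
\begin{equation*}
\sup_{t\in[0,T]}\big\|w^\delta_{A_2}(t)\big\|_\h\leq C_a\bigg(\int_0^T\big\|M^\delta_a(0,z,z;2)\big\|^q_\h\, dz\bigg)^{1/q},
\end{equation*}
since $\|S_2(\cdot/\delta)\|_{\mathscr{L}(\h)}\leq 1$ and $\int_s^t(t-z)^{p(a-1)}dz$ is finite when $a<1$ and $p<1/(1-a)$. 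Taking expectations, applying Jensen and then BDG to each $\ex\|M^\delta_a(0,z,z;2)\|^q_\h$, I reduce to
\begin{equation*}
\ex\big\|M^\delta_a(0,z,z;2)\big\|^q_\h\leq C_q\bigg(\frac{1}{\delta}\int_0^z (z-\zeta)^{-2a}\Big\|S_2\Big(\frac{z-\zeta}{\delta}\Big)\Big\|^2_{\mathscr{L}_2(\h)}d\zeta\bigg)^{q/2}.
\end{equation*}
The substitution $r=(z-\zeta)/\delta$ gives $\delta^{-2a}\int_0^{z/\delta} r^{-2a}\|S_2(r)\|^2_{\mathscr{L}_2(\h)}dr$; using \eqref{hsnorm} the integral $\int_0^\infty r^{-2a}(r\wedge1)^{-\rho'}e^{-2\lambda r}dr$ is finite provided $2a+\rho'<1$, which is possible for small $a$ and $\rho'$ slightly above $1/2$. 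This leaves a prefactor $\delta^{-2a}$, hence after raising to the power $q/2$ and integrating in $z$ over $[0,T]$ we obtain $\ex\sup_{t\in[0,T]}\|w^\delta_{A_2}(t)\|^2_\h\leq C_T\,\delta^{-2a}$. Finally, choosing $a$ small enough so that $2a=1-\rho$, i.e. $a=(1-\rho)/2\in(0,1/4)$ for $\rho\in(1/2,1)$, yields the claimed exponent $\delta^{\rho-1}$.

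The main obstacle is bookkeeping the interplay of the three small/large parameters: the $1/\sqrt\delta$ normalization, the singular kernel $(t-z)^{-a}$ from factorization, and the Hilbert--Schmidt decay rate $\rho$ of $S_2$. One must choose $a$, $q$, and $\rho'$ simultaneously so that (a) $a<\rho'<1-2a$ keeps the time integral in the BDG estimate convergent, (b) $q>1/a$ makes the factorization H\"older step work, and (c) the residual power of $\delta$ matches $\rho-1$. The exponential decay from \eqref{S2decay}/\eqref{hsnorm} is essential: it makes all the $\zeta$- and $r$-integrals over $[0,\infty)$ finite uniformly in $\delta$, so that the only $\delta$-dependence left is the explicit power coming from the change of variables. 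Part (i) is comparatively routine once the substitution trick is in place; the delicate part is purely in part (ii), and it is entirely a matter of exponent arithmetic rather than any new estimate.
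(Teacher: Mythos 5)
Your proposal is correct and follows essentially the same route as the paper: part (i) by BDG plus the substitution $\zeta=(t-z)/\delta$ and \eqref{hsnorm}, and part (ii) by the factorization formula, H\"older in time with $q>1/a$, and BDG for $M^\delta_a$. The only (equivalent) difference is in extracting the power of $\delta$ in (ii) — you substitute $r=(z-\zeta)/\delta$ and tune $a=(1-\rho)/2$ to read off $\delta^{-2a}$, whereas the paper bounds $\|S_2(s/\delta)\|^2_{\mathscr{L}_2(\h)}\leq C\delta^{\rho}s^{-\rho}$ directly and keeps $a<(1-\rho)/2$; both yield $\delta^{\rho-1}$.
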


\begin{proof} $(i)$  An application of the Burkholder-Davis-Gundy inequality, along with the substitution $z\mapsto t- \delta \zeta$, yields
	\begin{equation*}
	\begin{aligned}
	\ex\|w_{A_2}^{\delta}(t)\|^{2p}_{\h}&\leq\frac{1}{\delta^p}\ex\sup_{s\in[0,t]} \bigg\|\int_{0}^{s} S_2\bigg(\frac{t-z}{\delta}\bigg)dw_2(z)\bigg\|^{2p}_\h\\& \leq \frac{C}{\delta^p}\bigg(\int_{0}^{t} \bigg\|S_2\bigg(\frac{t-z}{\delta}\bigg)\bigg\|^{2}_{\mathscr{L}_2(\h)}dz\bigg)^p  =C\bigg(\int_{0}^{t/\delta} \big\|S_2(\zeta)\big\|^2_{\mathscr{L}_2(\h)}d\zeta\bigg)^{p}.
	\end{aligned}
	\end{equation*}
	\noindent In view of \eqref{hsnorm} it follows that
	\begin{equation*}
	\begin{aligned}
	\ex\|w_{A_2}^{\delta}(t)\|^{2p}_{\h}&\leq C\int_{0}^{\infty} (1+\zeta^{-\rho})e^{-2\lambda \zeta} d\zeta=C(2\lambda)^{-1}+(2\lambda)^{\rho-1} \Gamma(1-\rho)<\infty,
	\end{aligned}
	\end{equation*}
	\noindent where $\rho<1$ and $\Gamma$ denotes the Gamma function.\\
	
	\noindent $(ii)$ Appealing to the stochastic factorization formula we have
	\begin{equation*}
	\begin{aligned}
	\|w_{A_2}^{\delta}(t)\|_{\h}&\leq\frac{\sin(a\pi)}{\sqrt{\delta}\pi}\int_{0}^{t}  (t-z)^{a-1} \bigg\|S_2\bigg(\frac{t-z}{\delta}\bigg) M^\delta_{a}(0,z,z;2)\bigg\|_\h dz\\& \leq  \frac{C_a}{\sqrt{\delta}}\int_{0}^{t}  (t-z)^{a-1}e^{-\frac{\lambda(t-z)}{\delta}} \big\| M^\delta_{a}(0,z,z;2)\big\|_\h dz.
	\end{aligned}
	\end{equation*}
	\noindent An application of H\"older's inequality for $q>1/a>2$ then yields
	\begin{equation*}
	\begin{aligned}
	\|w_{A_2}^{\delta}(t)\|_{\h}&\leq \frac{C}{\sqrt{\delta}} \bigg(\int_{0}^{T}(t-z)^{p(a-1)}dz\bigg)^{\frac{1}{p}}\bigg(\int_{0}^{T}\big\|M^\delta_{a}(0,z,z;2)\big\|^q_\h dz\bigg)^{\frac{1}{q}}\\&\leq \frac{CT^{a-\frac{1}{q}}}{\sqrt{\delta}}\bigg(\int_{0}^{T}\sup_{s\in[0,z]}\big\|M^\delta_{a}(0,s,z;2)\big\|^q_\h dz\bigg)^{\frac{1}{q}}.
	\end{aligned}
	\end{equation*}
	\noindent Thus, we apply Jensen's inequality to obtain
	\begin{equation*}
	\begin{aligned}
	\ex\sup_{t\in[0,T]}\|w_{A_2}^{\delta}(t)\|^2_{\h}&\leq \frac{C_{T}}{\delta} \bigg(\int_{0}^{T}\ex\sup_{s\in[0,z]}\big\| M^{\delta}_a(0,s,z;2)\big\|^q_\h dz\bigg)^{\frac{2}{q}}\\& \leq  \frac{C_T}{\delta}\bigg(\int_{0}^{T}\bigg(\int_{0}^{z}(z-\zeta)^{-2a}\ex \bigg\|S_2\bigg(\frac{z-\zeta}{\delta}\bigg)\bigg\|^2_{\mathscr{L}_2(\h)}d\zeta\bigg)^{\frac{q}{2}} dz\bigg)^{\frac{2}{q}}\\&
	\leq C \delta^{\rho-1}\bigg(\int_{0}^{T}\bigg(\int_{0}^{z}(z-\zeta)^{-2a-\rho}d\zeta\bigg)^{\frac{q}{2}} dz\bigg)^{\frac{2}{q}},
	\end{aligned}
	\end{equation*}
	\noindent where the second line follows from the Burkholder-Davis-Gundy inequality and the third from \eqref{hsnorm}. The last integral is finite, provided that we choose $a<(1-\rho)/2<1/4$. The proof is complete.	\end{proof}

\noindent Next, we provide estimates of spatial Sobolev   regularity and temporal H\"older regularity for  $w_{A_2}^{\delta}$. Both estimates are singular as $\epsilon\to 0$.
\begin{lem}
	Let $T<\infty$ and $\delta\in(0,1)$. \\
	\noindent$(i)$ For any $a,\theta<1/2$ and $\rho\in(\theta+1/2,1-2a)$ we have
	\begin{equation}\label{stcosob}
	\begin{aligned}
	\ex\sup_{t\in[0, T] }\|w_{A_2}^{\delta}(t)\|_{H^\theta}& \leq C_T \delta^\frac{\rho-1}{2}.
	\end{aligned}
	\end{equation}
	\noindent (ii) There exists $\beta<1/4$ such that for any $\rho\in(1/2, 1/2+2\beta)$
	\begin{equation}\label{Holderconv}
	\begin{aligned}
	\ex\big[ w_{A_2}^{\delta}\big]_{C^{\beta}([0,T];\h)}&\leq C_T \delta^{\frac{\rho-1}{2}}.
	\end{aligned}
	\end{equation}
	
\end{lem}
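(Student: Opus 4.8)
\textbf{Proof plan for the two estimates on $w_{A_2}^\delta$.} The plan is to prove both estimates by the stochastic factorization formula \eqref{stofact}, following the same scheme used in the proof of Lemma \ref{stoconvb} but keeping careful track of the $\delta$-dependence. For part (i), I would start from
\begin{equation*}
w_{A_2}^{\delta}(t)=\frac{\sin(a\pi)}{\pi}\int_{0}^{t}(t-z)^{a-1}S_2\bigg(\frac{t-z}{\delta}\bigg)M^\delta_a(0,z,z;2)\,dz
\end{equation*}
with $B\equiv I$, apply $(-A_2)^{\theta/2}$ inside the integral and use the smoothing estimate \eqref{HSbnd} (with $P^2_n=I$, $B=I$) together with the decay \eqref{S2decay} to bound $\|(-A_2)^{\theta/2}S_2((t-z)/\delta)M^\delta_a(0,z,z;2)\|_\h$. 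Precisely, I would write $\|(-A_2)^{\theta/2}S_2(\tau/\delta)v\|_\h \le C(\tau/\delta)^{-\theta/2}e^{-\lambda\tau/(2\delta)}\|v\|_\h$ for the deterministic factor and handle the stochastic factor $M^\delta_a$ via the Burkholder–Davis–Gundy inequality followed by \eqref{hsnorm}. Combining H\"older's inequality in the outer $z$-integral (with exponent $q>1/a$) with Jensen's inequality, as in Lemma \ref{stoconvb}(ii), all the $\delta$-powers collect into $\delta^{(\rho-1)/2}$; the conditions $\theta<1/2$, $a<1/2$ and $\rho\in(\theta+1/2,1-2a)$ are exactly what is needed for the two inner integrals $\int_0^z(z-\zeta)^{-2a-\rho}d\zeta$ and $\int_0^\infty \zeta^{-\theta}e^{-\lambda\zeta}d\zeta$ (after the substitution $\zeta=\tau/\delta$) to converge.

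For part (ii), I would estimate the H\"older seminorm by splitting $w_{A_2}^\delta(t)-w_{A_2}^\delta(s)$ in the usual way: write
\begin{equation*}
w_{A_2}^\delta(t)-w_{A_2}^\delta(s)=\frac{1}{\sqrt\delta}\int_s^t S_2\bigg(\frac{t-z}{\delta}\bigg)dw_2(z)+\frac{1}{\sqrt\delta}\bigg[S_2\bigg(\frac{t-s}{\delta}\bigg)-I\bigg]\int_0^s S_2\bigg(\frac{s-z}{\delta}\bigg)dw_2(z),
\end{equation*}
and treat the first piece directly with BDG and \eqref{hsnorm}, and the second piece by inserting $(-A_2)^{\theta/2}$, using \eqref{sobcont} in the form $\|S_2((t-s)/\delta)-I\|_{\mathscr L(H^\theta;\h)}\le C(\delta^{-1}(t-s))^{\theta/2}$ and then bounding $(-A_2)^{\theta/2}$ applied to the stochastic convolution over $[0,s]$ via \eqref{HSbnd}. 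Choosing $\beta<1/4$ and $\rho\in(1/2,1/2+2\beta)$ so that $2\beta<1/2<\rho$ and $\rho<1$ makes the relevant $t$-integrals finite; again all $\delta$-powers combine to give $\delta^{(\rho-1)/2}$. Alternatively, and more cleanly, one can derive the H\"older estimate directly from the factorization representation, applying Kolmogorov's continuity criterion to the process $z\mapsto M^\delta_a(0,z,z;2)$ (which is essentially what is done for $\delta=1$ in \cite{da2014stochastic}, Theorem 5.11) and tracking how the $\delta$-scaled semigroup $S_2(\cdot/\delta)$ in front of it affects the constants; this is the route I would actually take since it parallels the classical proof most closely.

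The only genuine subtlety—hardly an obstacle—is bookkeeping: one must make sure that the exponential decay $e^{-\lambda\tau/(2\delta)}$ is not wasted. When substituting $\zeta=\tau/\delta$ in integrals such as $\frac{1}{\sqrt\delta}\int_0^t \tau^{a-1}e^{-\lambda\tau/\delta}(\cdots)d\tau$, one gets $\delta^{a-1/2}\int_0^{t/\delta}\zeta^{a-1}e^{-\lambda\zeta}(\cdots)d\zeta$, and the remaining integral is $\delta$-uniformly bounded precisely because the exponent in the exponential carries a full $1/\delta$; this is where the strict dissipativity Hypothesis \ref{A1c} is essential. I expect no step to be truly hard: both estimates are routine adaptations of the $\delta=1$ theory, and the main point of the lemma—as the surrounding text emphasizes—is merely to exhibit the explicit singular rate $\delta^{(\rho-1)/2}$ as $\delta\to 0$. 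The choice of exponents $a,\theta,\rho$ in the statement is dictated exactly by the convergence requirements of the two resulting $\Gamma$-function integrals.
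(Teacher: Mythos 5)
Your overall strategy --- stochastic factorization, H\"older in the outer integral, Burkholder--Davis--Gundy plus the Hilbert--Schmidt decay of $S_2$ in the inner one, with the substitution $\zeta=\tau/\delta$ collecting the powers into $\delta^{(\rho-1)/2}$ --- is exactly the paper's strategy, and your plan for part (ii) (increment split into a fresh integral over $[s,t]$ plus $[S_2((t-s)/\delta)-I]$ acting on $w^\delta_{A_2}(s)$, the latter via \eqref{sobcont} and the $H^\theta$ bound of part (i)) coincides with the paper's $J_1^\delta+J_2^\delta$ decomposition. Your remark that the first piece needs either the factorization or Kolmogorov's criterion, rather than a single BDG application (which controls a supremum over one time variable, not the H\"older seminorm's supremum over two), is also correct.

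The gap is in part (i), in where you place the fractional power. You propose to bound the deterministic factor by $\|(-A_2)^{\theta/2}S_2(\tau/\delta)v\|_\h\le C(\tau/\delta)^{-\theta/2}e^{-\lambda\tau/(2\delta)}\|v\|_\h$ and to estimate $M^\delta_a$ in plain $\h$ via \eqref{hsnorm}. The outer Bochner integral of the factorization then carries the kernel $(t-z)^{a-1}\cdot(t-z)^{-\theta/2}=(t-z)^{a-1-\theta/2}$, and the exponential is useless at the endpoint $z=t$; after H\"older with exponent $q>1/a$ you need $a-\theta/2>1/q$, i.e.\ essentially $a>\theta/2$. (Your convergence check ``$\int_0^\infty\zeta^{-\theta}e^{-\lambda\zeta}d\zeta$'' drops the $(t-z)^{a-1}$ of the factorization and is not the relevant integral.) Meanwhile the inner estimate via \eqref{hsnorm} forces $2a+\rho'<1$ with $\rho'>1/2$, hence $a<1/4$, and nonemptiness of the stated interval $(\theta+1/2,1-2a)$ already forces $a<1/4-\theta/2$. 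The requirements $\theta/2<a<1/4-\theta/2$ are compatible only for $\theta<1/4$, so your argument cannot reach the full range $\theta<1/2$. The paper avoids this by keeping the outer kernel as $(t-z)^{a-1}$ and pushing $(-A_2)^{\theta/2}$ \emph{inside} the stochastic integral $M^\delta_a$, where BDG produces $\|(-A_2)^{\theta/2}S_2((z-\zeta)/\delta)\|^2_{\mathscr{L}_2(\h)}\le C\delta^{\rho}(z-\zeta)^{-\rho}$ by \eqref{HSbnd} with $\rho>\theta+1/2$; the singularity $(z-\zeta)^{-2a-\rho}$ then sits in the inner integral, where $2a+\rho<1$ is precisely the stated constraint and $a$ may be taken arbitrarily small. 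This is in fact the allocation you implicitly use when treating the $J_2$ piece of part (ii), so the fix is simply to use it in part (i) as well (or to split $(-A_2)^{\theta/2}$ and place outside only a piece of order less than $2a$).
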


\begin{proof}
	$(i)$ Using the stochastic factorization formula and H\"older's inequality with $q>1/a>2$, as in the proof of Lemma \ref{stoconvb}(ii), we obtain
	\begin{equation*}
	\|w_{A_2}^{\delta}(t)\|_{H^\theta}\leq \frac{C_aT^{a-\frac{1}{q}}}{\sqrt{\delta}}\bigg(\int_{0}^{T}\sup_{s\in[0,z]}\big\|(-A_2)^\frac{\theta}{2}M^\delta_{a}(0,s,z;2)\big\|^q_\h dz\bigg)^{\frac{1}{q}}.
	\end{equation*} Assuming momentarily that the integrand in \eqref{stofactxi} is in $Dom((-A_2)^\frac{\theta}{2})$, we can interchange stochastic integral and unbounded operator and then apply Jensen's inequality followed by the Burkholder-Davis-Gundy inequality to obtain
	\begin{equation*}
	\begin{aligned}
	\ex\sup_{t\in[0, T] }\|w_{A_2}^{\delta}(s)\|_{H^\theta}&\leq \frac{C_T}{\sqrt\delta}\bigg(\int_{0}^{T}\bigg(\int_{0}^{z}(z-\zeta)^{-2a} \bigg\|(-A_2)^\frac{\theta}{2}S_2\bigg(\frac{z-\zeta}{\delta}\bigg)\bigg\|^2_{\mathscr{L}_2(\h)}d\zeta\bigg)^{\frac{q}{2}} dz\bigg)^{\frac{1}{q}}\\& \leq C_T
	\delta^{\frac{\rho-1}{2}}\bigg(\int_{0}^{T}\bigg(\int_{0}^{z}(z-\zeta)^{-2a-\rho}d\zeta\bigg)^{\frac{q}{2}} dz\bigg)^{\frac{1}{q}},
	\end{aligned}
	\end{equation*}
	\noindent where $\rho>\theta+1/2$ and the last line follows from Lemma \ref{sigmacont}(ii). The last integral is finite provided that $\theta<\frac{1}{2}-2a$ and $\theta+\frac{1}{2}<\rho<1-2a$.

	\noindent $(ii)$ Let $0\leq s<t\leq T$. From the stochastic factorization formula \eqref{stofact} it follows that
	\begin{equation*}\label{stconvdec}
	\begin{aligned}
	\frac{\sqrt\delta\pi}{\sin(a\pi)}\big(w_{A_2}^{\delta}(t)- w_{A_2}^{\delta}(s)\big)&=\int_{s}^{t}  (t-z)^{a-1} S_2\bigg(\frac{t-z}{\delta}\bigg) M^\delta_{a}(s,z,z;2) dz\\&+\bigg[S_2\bigg(\frac{t-s}{\delta}\bigg)-I\bigg]w_{A_2}^{\delta}(s)
	=:J^\delta_1(s,t)+J^\delta_2(s,t).
	\end{aligned}
	\end{equation*}
	
	\noindent  For the first term we apply H\"older's inequality with $q>1/a>2$ to obtain
	\begin{equation*}
	\begin{aligned}
	\big\|J_1^\delta(s,t)\big\|_\h&\leq \frac{1}{\sqrt{\delta}} \bigg(\int_{s}^{t}(t-z)^{p(a-1)}dz\bigg)^{\frac{1}{p}}\bigg(\int_{0}^{T}\big\|M^\delta_{a}(s,z,z;2)\big\|^q_\h dz\bigg)^{\frac{1}{q}}\\&
	\leq  \frac{C_a}{\sqrt{\delta}}(t-s)^{a-\frac{1}{q}}\bigg(\int_{0}^{T}\big\|M^\delta_{a}(s,z,z;2)\big\|^q_\h dz\bigg)^{\frac{1}{q}}.
	\end{aligned}
	\end{equation*}
	\noindent Recalling \eqref{stofactxi}, we see that $M^\delta_{a}(s,z,z;2)=M^\delta_{a}(0,z,z;2)-M^\delta_{a}(0,s,z;2)$. Therefore,
	\begin{equation*}
	\begin{aligned}
	\big\|J_1^\delta(s,t)\big\|_\h&
	\leq \frac{C_{a,q}}{\sqrt{\delta}}(t-s)^{a-\frac{1}{q}}\bigg(\int_{0}^{T}\sup_{s\in[0,z]}\big\|M^\delta_{a}(s,z,z)\big\|^q_\h dz\bigg)^{\frac{1}{q}}\;.
	\end{aligned}
	\end{equation*}

	\noindent Proceeding as in the proof of Lemma \ref{stoconvb}, we deduce that
	\begin{equation}\label{J1}
	\begin{aligned}
	\ex\sup_{s\neq t\in[0,T]}\frac{\big\|J_1^\delta(s,t)\big\|_\h}{|t-s|^{a-\frac{1}{q}}}&
	\leq C\delta^{\frac{\rho-1}{2}} \bigg(\int_{0}^{T}\bigg(\int_{0}^{z}(z-\zeta)^{-2a-\rho}d\zeta\bigg)^{\frac{q}{2}} dz\bigg)^{\frac{1}{q}}\;.
	\end{aligned}
	\end{equation}
	\noindent Note that $q$ is arbitrarily large and the last integral is finite, provided that $2\alpha<1-\rho<1/2$. \\
	\noindent As for $J_2^\delta$, we invoke \eqref{sobcont} to obtain
	\begin{equation*}
	\begin{aligned}
	\big\|J_2^\delta(s,t)\big\|_\h&
	\leq C \bigg\|S_2\bigg(\frac{t-s}{\delta}\bigg)-I\bigg\|_{\mathscr{L}(H^\theta;\h)}\|w_{A_2}^{\delta}(s)\|_{H^\theta}\\&
	\leq C \delta^{-\theta/2}(t-s)^{\theta/2}\|w_{A_2}^{\delta}(s)\|_{H^\theta}\;,
	\end{aligned}
	\end{equation*}
	where $\theta\in(0,1/2)$. In view of \eqref{stcosob}, we have
	\begin{equation*}
	\begin{aligned}
	&	\ex\sup_{s\neq t\in[0,T]}\frac{\big\|J_2^\delta(s,t)\big\|_\h}{|t-s|^{\theta/2}}&
	\leq C \delta^{-\theta/2}\ex\sup_{s\in [0,T]}\|w_{A_2}^{\delta}(s)\|_{H^\theta}
	\leq C\delta^\frac{\rho'-1-\theta}{2},
	\end{aligned}
	\end{equation*}
	where $\rho'\in(1/2+\theta,1-2a')$ and $a'<1/2$ can be arbitrarily small. Choosing $\rho\in(1/2, 1/2+\theta)$ and $\theta=\rho'-\rho<1/2-2a'$ it follows that
	\begin{equation}\label{J2}
	\begin{aligned}
	&	\ex\sup_{s\neq t\in[0,T]}\frac{\big\|J_2^\delta(s,t)\big\|_\h}{|t-s|^{\theta/2}}&
	\leq C  \delta^{-\theta/2}\ex\sup_{s\in [0,T]}\|w_{A_2}^{\delta}(s)\|_{H^\theta}
	\leq C \delta^\frac{\rho-1}{2}.
	\end{aligned}
	\end{equation}
	
	\noindent The proof is complete upon combining \eqref{J1} and \eqref{J2}. \end{proof}

\noindent We conclude this appendix with the proof of estimate \eqref{deribar} of Lemma \ref{L:LimAverEq}.

\begin{proof}[Proof of Lemma \ref{L:LimAverEq} (iii)]
	
		\noindent From the mild formulation of \eqref{x-aved} we have
	\begin{equation*}
	\begin{aligned}
	\bar{X}(t)&=
	S_1(t)x_0+\int_{0}^{t}	S_1(t-s)\bar{F}\big(\bar{X}(t) \big)ds+\int_{0}^{t}	S_1(t-s)\big[\bar{F}\big(\bar{X}(s) \big)-\bar{F}\big(\bar{X}(t) \big)\big] ds.
	\end{aligned}
	\end{equation*}
	\noindent Using this decomposition along with \eqref{sobcont} and the Lipschitz continuity of $\bar{F}$  we obtain
	\begin{equation*}
\small
	\begin{aligned}
	\big\|A_1\bar{X}(t)\big\|_\h&\leq 	\big\|A_1S_1(t)x_0	\big\|_\h+ \bigg\|\int_{0}^{t}	A_1S_1(t-s)\bar{F}\big(\bar{X}(t) \big) ds\bigg\|_\h	+\int_{0}^{t}\big\|	A_1S_1(t-s)\big[\bar{F}\big(\bar{X}(s) \big)-\bar{F}\big(\bar{X}(t) \big)\big] \big\|_\h ds\\&\leq C t^{\frac{a}{2}-1}\|x_0\|_{H^a}+\big\|\big(S_1(t)-I\big)\bar{F}\big(\bar{X}(t)\big) \big\|_\h +
	C_f\int_{0}^{t}	(t-s)^{-1}\big\|\bar{X}(s) -\bar{X}(t) \big\|_\h ds\\&\leq C t^{\frac{a}{2}-1}\|x_0\|_{H^a}+ c_T\bigg( 1+ L_f\sup_{t\in[0, T]}\big\| \bar{X}(t)\|_\h\bigg)
	+C \big[\bar{X}\big]_{C^{\theta}([0,T];\h)}                  \int_{0}^{t}	(t-s)^{-1+\theta}\ ds\\& \leq C t^{\frac{a}{2}-1}\|x_0\|_{H^a}+ C\big( 1+\big\|x_0\|_{H^a}\big)+ C_{f,\theta} (1+\|x_0\|_{H^a})     T^{\theta}   \\&
	\leq  C\bigg( t^{\frac{a}{2}-1}\|x_0\|_{H^a} + 1+\big\|x_0\|_{H^a}\bigg),
	\end{aligned}
	\end{equation*}
	\noindent where we used \eqref{xbarapriori} and \eqref{Schauderbar} to obtain the last inequality.
\end{proof}

	\section{}\label{AppB}
\noindent Here we give the proof of Lemma \ref{IVitodeclem}.
\begin{proof}
	\noindent By virtue of the It\^o formula and \eqref{Xider} we have
	\begin{equation}\label{Ito1}
	\begin{aligned}
	&\Theta\big(t,\bar{X}(t),Y_n^{\epsilon,u}(t)\big)-\Theta\big(s,\bar{X}(s),Y_n^{\epsilon,u}(s)\big)= \int_{s}^{t}\blangle \Psi^\epsilon\big(\bar{X}(z),Y_n^{\epsilon,u}(z)\big), S_1(t-z)(-A_1)^{1+\frac{\theta}{2}}\chi\brangle_\h dz\\&
	+\int_{s}^{t}    \blangle \Psi^\epsilon_1\big(\bar{X}(z),Y_n^{\epsilon,u}(z)\big)\big[ A_1 \bar{X}(z)+\bar{F}\big(\bar{X}(z)\big)\big], S_1(t-z)(-A_1)^{\frac{\theta}{2}}\chi \brangle_\h dz
	\\& +\frac{1}{\delta}\int_{s}^{t}   \blangle \Psi^\epsilon_2\big(\bar{X}(z),Y_n^{\epsilon,u}(z)\big)\big[A_2 Y_n^{\epsilon,u}(z)+ P_nG\big( \bar{X}(z), Y^{\epsilon,u}(z) \big)\big],S_1(t-z) (-A_1)^{\frac{\theta}{2}}\chi \brangle_\h dz\\&+ \frac{1}{2\delta}  \int_{s}^{t} \text{tr}\big[ P_nD^2_y\Phi^\epsilon_{ S_1(t-z)(-A_1)^{\frac{\theta}{2}}\chi}\big(\bar{X}(z),Y_n^{\epsilon,u}(z)\big)\big] dz \\&
	+\frac{h(\epsilon)}{\sqrt{\delta}}      \int_{s}^{t}  \blangle \Psi^\epsilon_2\big(\bar{X}(z),Y_n^{\epsilon,u}(z)\big)u_{2,n}(z), S_1(t-z)(-A_1)^{\frac{\theta}{2}}\chi \brangle_\h dz\\&+   \frac{1}{\sqrt{\delta}}    \int_{s}^{t}  \blangle  (-A_1)^{\frac{\theta}{2}}S_1(t-z)\Psi^\epsilon_2\big(\bar{X}(z),Y_n^{\epsilon,u}(z)\big) d w_{2,n}(z) ,\chi \brangle_\h \;.
	\end{aligned}
	\end{equation}
	\noindent In view of \eqref{Riesz}, we can express the sum of the third and fourth terms on the right-hand side of the last display in terms of the Kolmogorov operator $\mathcal{L}^x$ (see \eqref{Kolmopintro}) via the identity
	\begin{equation}\label{Kolmoito}
	\begin{aligned}
	&\frac{1}{\delta}\int_{s}^{t}   \blangle \Psi^\epsilon_2\big(\bar{X}(z),Y_n^{\epsilon,u}(z)\big)\big[A_2 Y_n^{\epsilon,u}(z)+ P_nG\big( \bar{X}(z), Y^{\epsilon,u}(z) \big)\big],S_1(t-z) (-A_1)^{\frac{\theta}{2}}\chi \brangle_\h dz\\&+ \frac{1}{2\delta}  \int_{s}^{t} \text{tr}\big[ P_nD^2_y\Phi^\epsilon_{ S_1(t-z)(-A_1)^{\frac{\theta}{2}}\chi}\big(\bar{X}(z),Y_n^{\epsilon,u}(z)\big)\big] dz= \frac{1}{\delta}\int_{s}^{t}\mathcal{L}^{\bar{X}(z)}\Phi_{S_1(t-z)(-A_1)^{\frac{\theta}{2}}\chi}^\epsilon\big(\bar{X}(z), Y_n^{\epsilon,u}(z)\big)  dz\\&+ \frac{\sqrt{\epsilon}h(\epsilon)}{\delta}T_3^{\epsilon,u}( s,t,n,\theta,\chi).
	\end{aligned}
	\end{equation}
	\noindent In view of \eqref{Kolmoito}, we return to \eqref{Ito1}, apply \eqref{Xiriesz} on the left-hand side and then multiply throughout by $\delta$ to obtain
	\begin{equation}\label{Ito2}
	\begin{aligned}
	&\delta\big[       \langle \Psi^\epsilon\big(\bar{X}(t),Y_n^{\epsilon,u}(t)\big), (-A_1)^{\frac{\theta}{2}}\chi\rangle_\h - \langle \Psi^\epsilon\big(\bar{X}(s),Y_n^{\epsilon,u}(s)\big), S_1(t-s)(-A_1)^{\frac{\theta}{2}}\chi\rangle_\h\big]\\&= \delta\int_{s}^{t}\blangle \Psi^\epsilon\big(\bar{X}(z),Y_n^{\epsilon,u}(z)\big), S_1(t-z)(-A_1)^{1+\frac{\theta}{2}}\chi\brangle_\h dz\\&
	+\delta\int_{s}^{t}    \blangle \Psi^\epsilon_1\big(\bar{X}(z),Y_n^{\epsilon,u}(z)\big)\big[ A_1 \bar{X}(z)+\bar{F}\big(\bar{X}(z)\big)\big], S_1(t-z)(-A_1)^{\frac{\theta}{2}}\chi \brangle_\h dz
	\\&+\int_{s}^{t}\mathcal{L}^{\bar{X}(z)}\Phi_{S_1(t-z)(-A_1)^{\frac{\theta}{2}}\chi}^\epsilon\big(\bar{X}(z), Y_n^{\epsilon,u}(z)\big)  dz
	\\&+\sqrt{\delta}h(\epsilon)    \int_{s}^{t}  \blangle \Psi^\epsilon_2\big(\bar{X}(z),Y_n^{\epsilon,u}(z)\big)u_{2,n}(z), S_1(t-z)(-A_1)^{\frac{\theta}{2}}\chi \brangle_\h dz\\&+   \sqrt{\delta}    \int_{s}^{t}  \blangle  (-A_1)^{\frac{\theta}{2}}S_1(t-z)\Psi^\epsilon_2\big(\bar{X}(z),Y_n^{\epsilon,u}(z)\big) d w_{2,n}(z) ,\chi \brangle_\h +\sqrt{\epsilon}h(\epsilon) T_3^{\epsilon,u}( s,t,n,\theta,\chi),
	\end{aligned}
	\end{equation}
	\noindent Since $\Phi^\epsilon_{\cdot}$ solves the Kolmogorov equation \eqref{Kolmeq},
	\begin{equation*}
	\begin{aligned}
	&\mathcal{L}^{\bar{X}(t)}\Phi_{S_1(t-z)(-A_1)^{\frac{\theta}{2}}\chi}^\epsilon\big(\bar{X}(z), Y_n^{\epsilon,u}(z)\big) =c(\epsilon)\Phi_{S_1(t-z)(-A_1)^{\frac{\theta}{2}}\chi}^\epsilon\big(\bar{X}(z), Y_n^{\epsilon,u}(z)\big)\\&- \blangle F\big(\bar{X}(z), Y_n^{\epsilon,u}(z) \big)-\bar{F}\big(\bar{X}(z) \big),  S_1(t-z)(-A_1)^{\frac{\theta}{2}}\chi\brangle_\h
	= c(\epsilon)\blangle \Psi^\epsilon\big(\bar{X}(z), Y_n^{\epsilon,u}(z)\big), S_1(t-z)(-A_1)^{\frac{\theta}{2}}\chi\brangle_\h\\&- \blangle F\big(\bar{X}(z), Y_n^{\epsilon,u}(z) \big)-\bar{F}\big(\bar{X}(z) \big),  S_1(t-z)(-A_1)^{\frac{\theta}{2}}\chi\brangle_\h\;.
	\end{aligned}
	\end{equation*}
	\noindent Consequently, we can rearrange \eqref{Ito2} to obtain
	\begin{equation}\label{Ito3}
	\begin{aligned}
	&	\int_{s}^{t}\blangle F\big(\bar{X}(z), Y_n^{\epsilon,u}(z) \big)-\bar{F}\big(\bar{X}(z) \big),  S_1(t-z)(-A_1)^{\frac{\theta}{2}}\chi\brangle_\h dz\\&=
	-\delta\big[       \langle \Psi^\epsilon\big(\bar{X}(t),Y_n^{\epsilon,u}(t)\big), (-A_1)^{\frac{\theta}{2}}\chi\rangle_\h - \langle \Psi^\epsilon\big(\bar{X}(s),Y_n^{\epsilon,u}(s)\big), S_1(t-s)(-A_1)^{\frac{\theta}{2}}\chi\rangle_\h\big]\\& +\delta\int_{s}^{t}\blangle \Psi^\epsilon\big(\bar{X}(z),Y_n^{\epsilon,u}(z)\big), S_1(t-z)(-A_1)^{1+\frac{\theta}{2}}\chi\brangle_\h dz\\&
	+\delta\int_{s}^{t}    \blangle \Psi^\epsilon_1\big(\bar{X}(z),Y_n^{\epsilon,u}(z)\big)\big[ A_1 \bar{X}(z)+\bar{F}\big(\bar{X}(z)\big)\big], S_1(t-z)(-A_1)^{\frac{\theta}{2}}\chi \brangle_\h dz
	\\&+c(\epsilon)\int_{s}^{t}\blangle \Psi^\epsilon\big(\bar{X}(z), Y_n^{\epsilon,u}(z)\big), S_1(t-z)(-A_1)^{\frac{\theta}{2}}\chi\brangle_\h dz \\&
	+\sqrt{\delta}h(\epsilon)    \int_{s}^{t}  \blangle \Psi^\epsilon_2\big(\bar{X}(z),Y_n^{\epsilon,u}(z)\big)u_{2,n}(z), S_1(t-z)(-A_1)^{\frac{\theta}{2}}\chi \brangle_\h dz\\&+   \sqrt{\delta}    \int_{s}^{t}  \blangle  (-A_1)^{\frac{\theta}{2}}S_1(t-z)\Psi^\epsilon_2\big(\bar{X}(z),Y_n^{\epsilon,u}(z)\big) d w_{2,n}(z) ,\chi \brangle_\h + \sqrt{\epsilon}h(\epsilon)T_3^{\epsilon,u}( s,t,n,\theta,\chi).
	\end{aligned}
	\end{equation}
	\noindent Regarding the second term on the right-hand side of the last display we can write
	\begin{equation*}
	\begin{aligned}
	\int_{s}^{t}\blangle& \Psi^\epsilon\big(\bar{X}(z),Y_n^{\epsilon,u}(z)\big), S_1(t-z)(-A_1)^{1+\frac{\theta}{2}}\chi\brangle_\h dz\\&= \blangle \Psi^\epsilon\big(\bar{X}(t),Y_n^{\epsilon,u}(t)\big), 	\int_{s}^{t}S_1(t-z)(-A_1)^{1+\frac{\theta}{2}}\chi dz\brangle_\h \\& + 	\int_{s}^{t}\blangle \Psi^\epsilon\big(\bar{X}(z),Y_n^{\epsilon,u}(z)\big)-\Psi^\epsilon\big(\bar{X}(t),Y_n^{\epsilon,u}(t)\big), S_1(t-z)(-A_1)^{1+\frac{\theta}{2}}\chi\brangle_\h dz\\&
	= \blangle \Psi^\epsilon\big(\bar{X}(t),Y_n^{\epsilon,u}(t)\big), 	\big[I-S_1(t-s)\big](-A_1)^{\frac{\theta}{2}}\chi \brangle_\h\\&
	+ \int_{s}^{t}\blangle \Psi^\epsilon\big(\bar{X}(z),Y_n^{\epsilon,u}(z)\big)-\Psi^\epsilon\big(\bar{X}(t),Y_n^{\epsilon,u}(t)\big), S_1(t-z)(-A_1)^{1+\frac{\theta}{2}}\chi\brangle_\h dz,
	\end{aligned}
	\end{equation*}
	where we used the fact that $S_1(t-z)(-A_1)^{1+\frac{\theta}{2}}\chi= \frac{d}{dz}S_1(t-z)(-A_1)^{\frac{\theta}{2}}\chi$.  With $T_1^{\epsilon,u}$, $T_3^{\epsilon,u}$ as in \eqref{Tdec}, \eqref{T3} respectively, we can further rearrange \eqref{Ito3} and divide throughout by $\sqrt{\epsilon}h(\epsilon)$   to obtain
	
\begin{equation}\label{Itomain}\hspace*{-0.2cm}
	\begin{aligned}
	&T_1^{\epsilon,u}( s,t,n,\theta,\chi)=
	-\frac{\delta}{  \sqrt{\epsilon}h(\epsilon)    }\big[       \langle \Psi^\epsilon\big(\bar{X}(t),Y_n^{\epsilon,u}(t)\big)-\Psi^\epsilon\big(\bar{X}(s),Y_n^{\epsilon,u}(s)\big), S_1(t-s)(-A_1)^{\frac{\theta}{2}}\chi\rangle_\h\big]\\& -\frac{\delta}{  \sqrt{\epsilon}h(\epsilon)    }\int_{s}^{t}\blangle \Psi^\epsilon\big(\bar{X}(t),Y_n^{\epsilon,u}(t)\big)-\Psi^\epsilon\big(\bar{X}(z),Y_n^{\epsilon,u}(z)\big), S_1(t-z)(-A_1)^{1+\frac{\theta}{2}}\chi\brangle_\h dz\\&
	+\frac{\delta}{  \sqrt{\epsilon}h(\epsilon)    }\int_{s}^{t}    \blangle \Psi^\epsilon_1\big(\bar{X}(z),Y_n^{\epsilon,u}(z)\big)\big[ A_1 \bar{X}(z)+\bar{F}\big(\bar{X}(z)\big)\big], S_1(t-z)(-A_1)^{\frac{\theta}{2}}\chi \brangle_\h dz
	\\&+\frac{c(\epsilon)}{  \sqrt{\epsilon}h(\epsilon)    }\int_{s}^{t}\blangle \Psi^\epsilon\big(\bar{X}(z), Y_n^{\epsilon,u}(z)\big), S_1(t-z)(-A_1)^{\frac{\theta}{2}}\chi\brangle_\h dz \\&
	+\frac{\sqrt{\delta}} {\sqrt{\epsilon}  }  \int_{s}^{t}  \blangle \Psi^\epsilon_2\big(\bar{X}(z),Y_n^{\epsilon,u}(z)\big)u_{2,n}(z), S_1(t-z)(-A_1)^{\frac{\theta}{2}}\chi \brangle_\h dz\\&+  \frac{\sqrt{\delta}}{\sqrt{\epsilon}h(\epsilon)}    \int_{s}^{t}  \blangle  (-A_1)^{\frac{\theta}{2}}S_1(t-z)\Psi^\epsilon_2\big(\bar{X}(z),Y_n^{\epsilon,u}(z)\big) d w_{2,n}(z) ,\chi \brangle_\h + T_3^{\epsilon,u}( s,t,n,\theta,\chi).
	\end{aligned}
	\end{equation}

		In view of \eqref{Tdec}, the argument is complete upon adding $T_2^{\epsilon,u}( s,t,n,\theta,\chi)$ in both sides of the last display. 	\end{proof}

	\bibliographystyle{plain}
\nocite{*}
\bibliography{References3}

\end{document}